\newtheorem{theorem}{Theorem}[section]
\newtheorem{lemma}[theorem]{Lemma}
\theoremstyle{definition}
\newtheorem{definition}[theorem]{Definition}
\newtheorem{example}[theorem]{Example}
\theoremstyle{remark}
\newtheorem{remark}[theorem]{Remark}
\numberwithin{equation}{section}
\newcommand{\abs}[1]{\lvert#1\rvert}
\newcommand{\bday}[1]{\langle#1|}
\newcommand{\dday}[1]{|#1\rangle}
\newcommand\bD{{\mathbf{D}}}
\newcommand\sB{{\mathsf{B}}}
\newcommand\bB{{\mathbf{B}}}
\newcommand\cL{{\mathcal{L}}}
\newcommand\cE{{\mathcal{E}}}
\newcommand\cT{{\mathcal{T}}}
\newcommand\cS{{\mathcal{S}}}
\newcommand\cC{{\mathcal{C}}}
\newcommand\cD{{\mathcal{D}}}
\newcommand\cA{{\mathcal{A}}}
\newcommand\cQ{{\mathcal{Q}}}
\newcommand\bj{{\mathbf{j}}}
\newcommand\bu{{\mathbf{u}}}
\newcommand\oh{{\mathbf 0}}
\newcommand\res{{\,\upharpoonright\,}}
\newcommand\nbd{\nobreakdash-\hspace{0pt}}
\DeclareMathOperator{\rank}{rank}
\DeclareMathOperator{\diff}{diff} % a predicate to compare
\DeclareMathOperator{\same}{same} % a predicate to compare
\DeclareMathOperator{\bigsame}{SAME} % a predicate to check whether something
\DeclareMathOperator{\im}{im} %image
\DeclareMathOperator{\spec}{spec}
\DeclareMathOperator{\Ji}{Ji} % Join-irreducibles
\DeclareMathOperator{\DownJi}{DownJi}
\DeclareMathOperator{\Cond}{Cond} % Condition
\DeclareMathOperator{\seq}{seq} % returns the value of a S-node
\DeclareMathOperator{\env}{env} % enviroment
\DeclareMathOperator{\mt}{Maintain}
\DeclareMathOperator{\kl}{Kill} % functionals to be killed by an R-node
\DeclareMathOperator{\tp}{threshold} % threshold for killing
\DeclareMathOperator{\dw}{witness} % diagonalizing witness
\DeclareMathOperator{\ctr}{ctr} % an outcome
\DeclareMathOperator{\visit}{visit} % visit a node
\DeclareMathOperator{\enc}{encounter} % encounter a node
\DeclareMathOperator{\created}{Created}%created stage for a use block
\DeclareMathOperator{\killed}{Killed} % killed stage for a use block
\begin{document}
\title{Finite final segments of the d.c.e.\ Turing degrees}

\author[Lempp]{Steffen Lempp}
\address[Lempp]{Department of Mathematics\\
University of Wisconsin\\
480 Lincoln Drive\\
Madison, WI 53706-1325\\
USA}
\email{\href{mailto:lempp@math.wisc.edu}{lempp@math.wisc.edu}}
\urladdr{\url{http://www.math.wisc.edu/~lempp/}}

\author[Yiqun Liu]{Yiqun Liu}
\address[Yiqun Liu]{Office of the President\\
National University of Singapore\\
21 Lower Kent Ridge Road\\
Singapore 119077\\
SINGAPORE}
\email{\href{mailto:liuyq@nus.edu.sg}{liuyq@nus.edu.sg}}

\author[Yong Liu]{Yong Liu}
\address[Yong Liu]{School of Information Engineering\\
Nanjing Xiaozhuang University\\
No. 3601 Hongjing Avenue\\
Nan\-jing 211171\\
CHINA}
\email{\href{mailto:liuyong@njxzc.edu.cn}{liuyong@njxzc.edu.cn}}

\author[Ng]{Keng Meng Ng}
\address[Ng, Wu]{Division of Mathematical Sciences\\
School of Physical and Mathematical Sciences\\
College of Science\\
Nanyang Technological University\\
Singapore 639798\\
SINGAPORE}
\email[Ng]{\href{kmng@ntu.edu.sg}{kmng@ntu.edu.sg}}
\urladdr{\url{http://www.ntu.edu.sg/home/kmng/}}

\author[Wu]{Guohua Wu}
\email[Wu]{\href{guohua@ntu.edu.sg}{guohua@ntu.edu.sg}}
\urladdr{\url{http://www3.ntu.edu.sg/home/guohua/}}

\author[Peng]{Cheng Peng}
\address[Peng]{Institute of Mathematics\\
Hebei University of Technology\\
No.~5340 Xiping Road
Tianjin 300401\\
CHINA}
\email{\href{mailto:pengcheng@hebut.edu.cn}{pengcheng@hebut.edu.cn}}

\subjclass[2010]{03D28}

\keywords{d.c.e.\ degrees, final segments}

\thanks{Lempp's research was partially supported by NSF grant DMS-160022
and AMS-Simons Foundation Collaboration Grant 626304. He wishes to thank
National University of Singapore and Nanyang Technological University for
their hospitality during the time much of this research was carried out. Ng
was supported by the Ministry of Education, Singapore, under its Academic
Research Fund Tier~2 (MOE-T2EP20222-0018). Peng's research was partially
supported by NSF of China No.~12271264. Wu was supported by the Ministry of
Education, Singapore, under its Academic Research Fund Tier~1 (RG102/23).}

\begin{abstract}
We prove that every finite distributive lattice is isomorphic to a final
segment of the d.c.e.\ Turing degrees (i.e., the degrees of differences of
computably enumerable sets). As a corollary, we are able to infer the
undecidability of the \(\exists\forall\exists\)-theory of the d.c.e.\ degrees
in the language of partial ordering.
\end{abstract}

\maketitle
%\tableofcontents
\setcounter{tocdepth}{2}

\section{Introduction}
A set~\(A\) is \emph{\(\omega\)\nbd c.e.} if there are computable
functions~\(f\) and~\(g\) such that for each~\(x\), we have \(A(x)=\lim_s
f(x,s)\), \(f(x,0)=0\), and \(\abs{\{s\mid f(x,s+1)\neq f(x,s)\}}\le g(x)\).
\(A\) is \emph{c.e.} if we can choose \(g(x)=1\); \(A\) is \emph{d.c.e.} if
we can choose \(g(x)=2\). A Turing degree is d.c.e.\ if it contains a d.c.e.\
set.

The study of the d.c.e.\ Turing degrees
goes back more than half a century, to the work of Ershov~\cite{Er68a, Er68b,
Er70} in his development of what is now called the Ershov hierarchy.
Cooper~\cite{Co71} first established that there is a properly d.c.e.\ degree,
i.e., a Turing degree containing a d.c.e.\ set but no c.e.\ set. A
breakthrough in the study of the d.c.e.\ degrees was achieved in the
Nondensity Theorem~\cite{CHLLS91} quoted below, which established the
existence of a maximal incomplete d.c.e.\ degree, or, equivalently, that the
two-element linear order can be embedded into the d.c.e.\ degrees as a finite
final segment. This naturally leads to the question of the isomorphism types
of all finite final segments of the d.c.e.\ degrees. If we restrict our
attention to final segments which have a least element, then these must be
finite lattices. In this paper, we will show that all finite distributive
lattices can be realized as final segments of the d.c.e.\ degrees, leading
also to a new and sharper proof of the undecidability of the theory of the
d.c.e.\ degrees (in the language of partial ordering); in particular, we are
able to show that the \(\forall\exists\forall\)\nbd theory of the d.c.e.\
degrees is undecidable.

Our presentation will assume that the reader is familiar with priority
arguments on a tree of strategies, in particular \(\oh'''\)\nbd constructions,
as presented in, e.g., Soare~\cite{So87}. Familiarity with the proof of the
D.C.E.\ Nondensity Theorem would be helpful to understand the present
construction, which builds on this.

We begin by reviewing some definitions and some notation.

\begin{definition}\label{def:embedding}
Let~\(\cL\) be a finite distributive lattice, and denote the upper
semilattice of the d.c.e.\ degrees by~\(\bD_2\). We say \(\bj : \cL \to
\bD_2\) \emph{embeds~\(\cL\) into~\(\bD_2\) as a final segment} if the
following holds:
\begin{enumerate}[label = (\roman*)]
\item\label{it:1}
\(\bj(1) = \oh'\);
\item\label{it:ord}
\(a \le b\) implies \(\bj(a) \le \bj(b)\);
\item\label{it:nonord}
\(a \nleq b\) implies \(\bj(a) \nleq \bj(b)\); and
\item\label{it:onto}
for any d.c.e.\ degree~\(\bu\), there is some \(a \in L\) such that
\(\bj(0) \cup \bu = \bj(a)\).
\end{enumerate}
\end{definition}

The D.C.E.\ Nondensity Theorem can now be stated as follows:

\begin{theorem}[D.C.E.\ Nondensity Theorem (Cooper, Harrington, Lachlan,
Lempp, Soare~\cite{CHLLS91})]\label{thm:maxdre2} There is a maximal
incomplete d.c.e.\ Turing degree~\(\bm{d}\); in particular, the d.c.e.\
Turing degree are not densely ordered.
\end{theorem}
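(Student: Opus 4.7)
The plan is to construct, by a tree-of-strategies priority argument at the $\oh'''$ level, a d.c.e.\ set~$D$ whose degree $\bm{d}:=\deg(D)$ will be maximal incomplete. The two families of requirements will be, for each~$e$, the incompleteness requirement
\begin{equation*}
N_e\colon\ \Gamma_e^D \neq \emptyset',
\end{equation*}
and, for each triple $\langle e,i,j\rangle$, the maximality requirement
\begin{equation*}
P_{e,i,j}\colon\ \bigl(D = \Phi_i^{A_e}\bigr)\wedge\bigl(\Psi_j^D \neq A_e\bigr)\ \Longrightarrow\ \emptyset' = \Delta_{e,i,j}^{A_e},
\end{equation*}
where $(A_e)_e$ enumerates the d.c.e.\ sets, $(\Gamma_e)_e,(\Phi_i)_i,(\Psi_j)_j$ enumerate Turing functionals, and $\Delta_{e,i,j}$ is a functional built during the construction. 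Meeting all $N_e$ yields $\emptyset'\not\le_T D$, and meeting all $P_{e,i,j}$ yields that every d.c.e.\ $A$ with $D\le_T A$ is either $\le_T D$ or $\ge_T \emptyset'$.

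For $N_e$, I will use the standard Friedberg--Muchnik diagonalization with a $D$\nbd restraint: pick a fresh witness~$x$, wait for $\Gamma_e^D(x){\downarrow}=\emptyset'_s(x)$, restrain $D\res \gamma_e(x)$, and count on $\emptyset'(x)$ later changing. The heart of the argument is the $P_{e,i,j}$ module. Assuming $D=\Phi_i^{A_e}$ (so any $D$\nbd change below $\varphi_i(z)$ must be matched by an $A_e$\nbd change below $\varphi_i(z)$) and $\Psi_j^D\neq A_e$, the strategy builds $\Delta=\Delta_{e,i,j}$ with $\Delta^{A_e}=\emptyset'$ via the \emph{gap/cogap} maneuver: when $\emptyset'(y)$ changes while the current axiom defining $\Delta^{A_e}(y)$ has $A_e$\nbd use~$u$, perturb~$D$ below $\varphi_i(z)$ for some $z<u$, \emph{opening a gap}. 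The assumed equation $D=\Phi_i^{A_e}$ forces a compensating change in $A_e\res u$---otherwise $\Phi_i^{A_e}\neq D$ and $P_{e,i,j}$ is met trivially. At the \emph{cogap}, observe this $A_e$\nbd change and post a new $\Delta$\nbd axiom for~$y$ with a fresh use.

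The main obstacle will be reconciling the conflicting demands on~$D$: $N_e$\nbd strategies want $D$ preserved below $\gamma_e$\nbd uses, while $P$\nbd strategies want to perturb~$D$ below $\varphi_i$\nbd uses to open gaps. My plan is to resolve this on the tree: each $P$\nbd node branches on the outcomes of every higher-priority $N_e$\nbd node, and $D$\nbd perturbations are required to lie above all $\gamma_e$\nbd uses currently restrained along its branch. A further subtlety is the tight d.c.e.\ budget: each element of $A_e$ may change at most twice, so the strategy must choose $\Delta$\nbd uses fresh after every successful redefinition and cap the number of gaps opened on the same axiom. A standard true-path analysis will then show every requirement is met, producing $D$ d.c.e.\ with $\oh <_T D <_T \oh'$ and $\deg(D)$ maximal below $\oh'$.
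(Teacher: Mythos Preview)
Your proposal has a genuine gap, and it also diverges from the (CHLLS91) approach that the paper follows.

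First, the \(N_e\) module as you describe it does not work. ``Pick a fresh witness~\(x\), wait for \(\Gamma_e^D(x){\downarrow}=\emptyset'_s(x)\), restrain~\(D\), and count on \(\emptyset'(x)\) later changing'' is not a Friedberg--Muchnik diagonalization; whether \(x\) ever enters~\(\emptyset'\) is entirely outside your control. If the computation you freeze outputs~\(0\) and \(x\notin\emptyset'\), the requirement is simply not met at~\(x\), and there is no mechanism in your plan to recover. Ensuring \(\emptyset'\nleq_T D\) for a set~\(D\) you are building cannot be done by a single finitary witness in this way.

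Second, and more structurally, the CHLLS91 proof (which is exactly the \(2\)-element case of the paper's framework) does \emph{not} attempt to diagonalize \(\emptyset'\neq\Gamma^D\) directly, for precisely this reason. Instead one builds a d.c.e.\ set~\(E\) together with an auxiliary (c.e.) set~\(C\), and meets three kinds of requirements: a global \(K=\Theta^{E\oplus C}\); for each d.c.e.~\(U\), a dichotomy \(C=\Gamma^{E\oplus U}\) or \(U=\Delta^{E}\); and the diagonalizations \(C\neq\Psi^{E}\). Incompleteness of~\(E\) is then \emph{derived} (from \(C\nleq_T E\) and \(K\leq_T E\oplus C\)), not forced by an \(N_e\)-style module. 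Your \(P\)-strategy's ``perturb~\(D\) to open a gap'' is a rough shadow of the \(\Gamma\)/\(\Delta\) switch, but the point of the auxiliary~\(C\) is that the diagonalization target is \(C\neq\Psi^E\), where we \emph{do} control~\(C\): we enumerate the witness into~\(C\), which forces a \(\Gamma\)-correction via~\(E\); the d.c.e.\ nature of~\(U\) is then exploited so that either a \(U\)-change lets us undo the \(E\)-enumeration (restoring the \(\Psi^E\)-computation) or \(U\) is henceforth \(E\)-computable via~\(\Delta\). Your sketch lacks this mechanism, and the ``tight d.c.e.\ budget'' paragraph does not substitute for it: the budget constraint is on~\(U\) (the opponent's set), not on your own~\(D\), and it is what makes the \(\Gamma\to\Delta\) fallback succeed.

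In short: replace the unworkable \(N_e\) module and the vague \(P\) gap/cogap with the \(E,C,\Theta,\Gamma,\Delta\) setup; that is the argument the paper is specializing when \(\cL=\{0,1\}\).
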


In other words, the 2-element lattice \(\cL = \{0,1\}\) can be embedded into
the d.c.e.\ degrees as a final segment (see Definition~\ref{def:embedding}).
The case when~\(\cL\) is a Boolean algebra was covered by Lempp, Yiqun Liu,
Yong Liu, Ng, Peng, and Wu in~\cite{Li17}. This suggests that
Theorem~\ref{thm:maxdre2} can be generalized to more general finite lattices.
To this end, we prove the following theorem in this paper:

\begin{theorem}\label{thm:bool}
If~\(\cL\) is a finite distributive lattice, then~\(\cL\) can be embedded
into d.c.e.\ degrees as a final segment.
\end{theorem}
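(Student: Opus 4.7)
By Birkhoff's representation theorem, every finite distributive lattice $\cL$ is isomorphic to the lattice of order ideals of its subposet $J := \Ji(\cL) \setminus \{0_\cL\}$ of nonzero join-irreducibles, via the correspondence $a \mapsto J_a := \{j \in J : j \le a\}$; crucially, in a distributive lattice a join-irreducible $j$ lies below $a_1 \vee a_2$ iff it lies below one of $a_1, a_2$, so the correspondence turns joins into unions. The plan is to build one ``base'' d.c.e.\ set $A$ together with a d.c.e.\ set $D_j$ for each $j \in J$, and to set
\[
\bj(a) \;:=\; \deg\Bigl( A \oplus \bigoplus_{j \in J_a} D_j \Bigr) \qquad (a \in \cL).
\]
With this definition, clause~(ii) of Definition~\ref{def:embedding} is automatic; clause~(i) is a global coding requirement placing $\oh'$ below the full join $A \oplus \bigoplus_{j \in J} D_j$; and clause~(iii) reduces to the negative requirements $D_{j_0} \not\le_T A \oplus \bigoplus_{j \in J_b} D_j$ for each $j_0 \in J_a \setminus J_b$ -- of the same type as the preservation requirements in the D.C.E.\ Nondensity Theorem~\cite{CHLLS91}.

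The real content is clause~(iv). For each d.c.e.\ set $W_e$ we must identify an element $a(e) \in \cL$ together with Turing functionals witnessing $A \oplus W_e \equiv_T A \oplus \bigoplus_{j \in J_{a(e)}} D_j$. The plan is to run, for each~$e$, a ``capture'' module whose outcomes are the elements of $\cL$, ordered by the reverse of~$\le_\cL$. Under outcome~$a$, the module attempts to build both a functional computing $W_e$ from $A \oplus \bigoplus_{j \in J_a} D_j$ (by coding $W_e$ into the $D_j$'s) and, for each $j \in J_a$, a functional computing $D_j$ from $A \oplus W_e$. If the current outcome fails -- because $W_e$ changes in a way incompatible with its coding into~$J_a$, or because it cannot recover some~$D_j$ -- the module shifts outcome: upward, enlarging $a$ and performing a nondensity-style diagonalization by retracting $D_{j_0}$ on a reserved witness to compel $W_e$ to cooperate with the enlarged coding; or downward, shrinking $a$ when $W_e$ turns out to lie strictly below the current guess. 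The whole construction is a $\oh'''$\nbd priority argument on a tree of strategies, modeled on~\cite{CHLLS91} and its Boolean-algebra refinement~\cite{Li17}.

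The principal obstacle lies in the interaction between capture modules for distinct indices~$e$ and between these modules and the preservation requirements of clause~(iii): a module that retracts a value of $D_{j_0}$ can threaten preservation computations of lower priority, and two modules with incomparable outcomes $a_1, a_2$ may each want to control the same $D_{j_0}$. Here the lattice structure saves us: any such $j_0$ satisfies $j_0 \in J_{a_1} \cap J_{a_2} = J_{a_1 \wedge a_2}$, so activity on $D_{j_0}$ localizes to a single ``column'' that is simultaneously consistent with every outcome $a$ containing $j_0$, and the gap/cogap subsystem of~\cite{CHLLS91} can be replicated join-irreducible by join-irreducible. The final verification that $\bj$ satisfies clauses~(i)--(iv) then proceeds requirement by requirement along the true path of the tree, generalizing the corresponding verification in the Nondensity Theorem.
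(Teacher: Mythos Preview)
Your setup is correct and essentially matches the paper's: the map $\bj(a)=\deg\bigl(E\oplus\bigoplus_{c\in\spec(a)}C\bigr)$ is exactly what is used, and the requirement list you sketch (global coding, diagonalization $D_{j_0}\not\le_T\ldots$, and a capture module per d.c.e.\ set) is the right shape. Where the plan goes wrong is the paragraph on conflicts. You identify the interaction between two modules with \emph{incomparable} outcomes and observe that any shared $j_0$ lies in $J_{a_1\wedge a_2}$; that observation is fine but handles only the easy case. The hard case, which drives essentially all of the new machinery in the paper, comes from \emph{comparable} join-irreducibles $d<c$: an $R_c$-strategy that wants to diagonalize may find that the only live $\Delta$-functional it conflicts with was built by an $R_d$-strategy, and then the set $C$ you want to change is \emph{not} in the oracle of that $\Delta$ (this is the content of Lemmas~\ref{lem:useful}\eqref{it:useful-mon} and~\ref{lem:conflicts}\eqref{it:R-minus}). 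So ``replicating the gap/cogap subsystem join-irreducible by join-irreducible'' does not resolve this; you have no way to correct that $\Delta$ without injuring your own computation, and the original Nondensity argument gives no guidance here.

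The paper's solution to this single obstacle is substantial: it introduces a dynamic notion of \emph{controller} together with $U$-problems (Definition~\ref{def:1U controller}), runs each $S(U)$-strategy not once but $m=\lvert\Ji(\cL)\rvert+1$ times in layers so that a pigeonhole argument (Lemma~\ref{lem:1U pair}) produces two $U$-problems of the \emph{same} type~$d$, and then assembles \emph{strong} $U$-data (Definition~\ref{def:1U strong data}) from those two to escape the impasse. None of this is visible from your plan, and it is genuinely new relative to~\cite{CHLLS91}. Indeed the paper notes that $m=1$ suffices exactly for Boolean algebras (where all join-irreducibles are atoms, hence pairwise incomparable); your plan as written is essentially the Boolean-algebra argument of~\cite{Li17} and would work there, but it does not extend to a lattice like the three-element chain. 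A smaller technical point: in the paper the sets $C$ for $c\in\Ji(\cL)$ are built as c.e.\ sets and it is the base set $E$ that is d.c.e.\ and supports retractions, so ``retracting $D_{j_0}$ on a reserved witness'' is not quite the right mechanics.
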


We conjecture that this theorem can be extended to a much wider class of
finite lattices, in particular to the join-semidistributive or even the
so-called interval dismantlable lattices introduced by Adaricheva, Hyndman,
Lempp, and Nation~\cite{AHLN18}; at this point, extending it to all finite
lattices appears out of reach but not impossible.

We note an immediate important consequence of Theorem~\ref{thm:bool}, a
sharper undecidability result for the first-order theory of the d.c.e.\
degrees in the language of partial ordering; this theory had previously been
proven to be undecidable by Cai/Shore/Slaman \cite{CSS12}; a closer analysis
of their proof yields the undecidability of the
\(\forall\exists\forall\exists\)-theory. Our result strengthens this:

\begin{theorem}\label{thm:undec}
The \(\exists\forall\exists\)-theory of the d.c.e.\ degrees in the language of
partial ordering is undecidable.
\end{theorem}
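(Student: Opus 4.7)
Plan. Our strategy is to carry out the coding of an undecidable theory into the d.c.e.\ degrees used by Cai-Shore-Slaman~\cite{CSS12}, but with the outermost $\forall\exists$ block of quantifiers replaced by a single existential block, using Theorem~\ref{thm:bool}. The key observation is that, for any fixed finite distributive lattice~$\cL$, the statement \emph{``the interval $[\bm{d}, \oh']$ is isomorphic to~$\cL$''} (as a partial order) can be written as an $\exists\forall$-formula in~$\bm{d}$: one existentially names the finitely many elements of~$\cL$ in the interval, and then universally asserts that every degree above~$\bm{d}$ is among them and satisfies the (finitely many, quantifier-free) order relations defining~$\cL$; the top~$\oh'$ is definable without quantifiers as the largest element of~$\bD_2$. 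Theorem~\ref{thm:bool} guarantees that such a~$\bm{d}$ exists for every~$\cL$, giving definable finite intervals of any prescribed finite distributive type.

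Given this template, we may prepend an $\exists \bm{d}\,\exists a_1, \dots, a_n$ block to any further coding condition of the form $\exists \bar y\,\theta(\bm{d}, \bar a, \bar y)$, collapsing the quantifiers into an $\exists\forall\exists$-sentence. The task is then to reprove the Cai-Shore-Slaman interpretation, replacing their outermost $\forall\exists$ block (which in their argument secures coding parameters realizing a specific combinatorial configuration in~$\bD_2$) by a single~$\exists$: we fix~$\cL$ to be a finite distributive lattice that already exhibits that configuration, and directly existentially quantify over~$\bm{d}$ whose final segment realizes~$\cL$. The inner $\forall\exists$-block of their coding then fits inside the $\forall\exists$ from our ``$[\bm{d},\oh']\cong\cL$'' scheme and the subsequent~$\exists$ block of the coding, yielding an $\exists\forall\exists$ prefix overall. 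Standard tricks (naming~$\oh'$ by its quantifier-free definability, pushing the verification of the quantifier-free part~$\chi_{\cL}$ into the matrix) keep the prefix from growing.

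The main obstacle is combinatorial rather than recursion-theoretic. We must exhibit, for each sentence~$\varphi$ of the undecidable base theory, a single finite distributive lattice~$\cL_\varphi$ whose internal structure (join-irreducibles, meet- and join-relations, antichains and chains of prescribed lengths) encodes enough of the Cai-Shore-Slaman coding apparatus that the entire interpretation of~$\varphi$ can be carried out \emph{inside} $[\bm{d}, \oh']$, rather than requiring auxiliary parameters from elsewhere in~$\bD_2$. Since Theorem~\ref{thm:bool} allows us to realize arbitrarily rich finite distributive lattices as final segments (including those containing prescribed Boolean subalgebras and generic distributive sublattices), this flexibility should suffice; but checking that the restricted combinatorics available inside such a final segment reproduce the full coding fidelity of Cai-Shore-Slaman is where the real work lies. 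Once the coding is recast in this form, the undecidability of the $\exists\forall\exists$-theory follows by a direct quantifier count from the known undecidability of the interpreted theory.
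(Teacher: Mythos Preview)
Your proposal has a genuine gap, and the paper takes an entirely different (and much shorter) route.

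The paper's proof follows Lachlan's argument for the Turing degrees: by Ershov--Taitslin, there is no computable set~$R$ with $F \subseteq R \subseteq S$, where~$F$ is the set of $\le$-sentences true in all finite distributive lattices and~$S$ the set true in all distributive lattices. One lets~$\theta(x)$ be a $\le$-formula asserting that $[x,1]$ is a distributive lattice, and maps each~$\varphi$ to the $\bD_2$-sentence $\forall x\,(\theta(x)\to\varphi^{[x,1]})$. If~$\varphi\in S$ this is trivially true; if it is true in~$\bD_2$ then~$\varphi\in F$, because Theorem~\ref{thm:bool} realises every finite distributive lattice as some $[x,1]$. Thus the set of such true sentences separates~$F$ from~$S$ and is undecidable; a quantifier count gives the $\exists\forall\exists$ bound. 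No Cai--Shore--Slaman coding is involved at all.

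Your plan, by contrast, cannot work as written. You propose to choose for each~$\varphi$ a specific finite distributive lattice~$\cL_\varphi$ and to assert ``$\exists\bm d\,([\bm d,\oh']\cong\cL_\varphi\ \wedge\ \text{coding conditions inside }[\bm d,\oh'])$''. But by Theorem~\ref{thm:bool} such a~$\bm d$ \emph{always} exists, and any first-order condition evaluated inside $[\bm d,\oh']$ depends only on the isomorphism type~$\cL_\varphi$, which is a fixed finite structure. Hence your sentence is true in~$\bD_2$ iff the coding condition holds in~$\cL_\varphi$, a decidable question; the map $\varphi\mapsto\cL_\varphi$ would then decide the base theory. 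The Cai--Shore--Slaman interpretation encodes \emph{infinite} models using parameters ranging over the whole degree structure; it cannot be ``carried out inside'' a single finite final segment, and allowing your~$\bar y$ to range outside the interval puts you back in the original CSS quantifier count with no gain from the choice of~$\cL_\varphi$. The ``real work'' you flag is not merely hard; the scheme is incoherent as stated.
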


\begin{proof}
The proof is almost exactly the same as the proof for the Turing degrees by
Lachlan \cite[Section~3]{La68} (see Lerman \cite[Theorem VI.4.6]{Le83} for a
textbook exposition): Let~\(S\) be the set of the \(\le\)-sentences true of all
distributive lattices, and~\(F\) the set of the \(\le\)-sentences true of all
finite distributive lattices; then by Ershov/Taitslin \cite{ET63}, there is
no computable set~\(R\) with \(F \subseteq R \subseteq S\). Let \(\theta(x)\)
be a
\(\le\)-formula stating that the interval \([x,1]\) is a distributive lattice;
then by Theorem~\ref{thm:bool}, the set~\(H\) of \(\le\)-sentences of the form
\(\forall x\, (\theta(x) \rightarrow \varphi(x))\) is a set of
\(\exists\forall\exists\)-sentences and satisfies \(F \subseteq H \subseteq S\).
\end{proof}

We now recall the definitions of Boolean algebras and distributive lattices
in Section~\ref{sec:lattice}, where we also discuss other useful properties.

\section{Basics on Lattice Theory}\label{sec:lattice}

In this section, we define and cover the relevant basic notions and
properties of lattices. We follow~\cite{Gr11} for the basic definitions. We
restrict ourselves to \emph{finite} lattices with least element~0 and
greatest element~1.

A lattice~\(\cL\) can be thought of as a poset \((L, \le)\), where any finite
subset has an infimum and a supremum, or as an algebraic structure \((L,
\vee, \wedge)\), where \(x \le y\) iff \(x \wedge y = x\) iff \(x \vee y =
y\).

A lattice~\(\cL\) is finite if~\(|L|\) is finite. A lattice~\(\cL\) is
\emph{distributive} if it satisfies the following for all \(a, b, c \in L\):
\begin{align*}
a \wedge(b \vee c) &= (a \wedge b)\vee (a \wedge c),\\
a \vee(b \wedge c) &= (a \vee b)\wedge (a \vee c).
\end{align*}

For \(a \le b\), we call the set \([a, b] = \{x \in L \mid a \le x \le b\}\)
an \emph{interval}. In particular, an interval is always a sublattice
of~\(\cL\) (note that we do not require \(0, 1 \in [a,b]\), i.e., the least
and greatest element of \([a,b]\) and of~\(\cL\) need not be the same,
respectively).

We write \(a \prec b\) if \(a<b\) and there is no element~\(c\) such that
\(a<c<b\). We call~\(a\) an \emph{atom} if \(0 \prec a\). We call \(a \in L\)
\emph{join-irreducible} in~\(\cL\) if \(a \neq 0\) and \(a = b \vee c\)
implies \(a=b\) or \(a=c\). If~\(a\) is join-irreducible, then we let~\(a_*\)
denote the unique element such that \(a_*\prec a\). Let \(\Ji(\cL)\) be the
set of all join-irreducibles of~\(\cL\). Let \(\spec(a) = \{b \in \Ji(\cL)
\mid b \le a\}\).

Let \(\DownJi(\cL)\) be the collection of sets \(A \subseteq \Ji(\cL)\) such
that if \(x, y \in \Ji(\cL)\) with \(x<y\) and \(y \in A\), then \(x \in A\).
Observe that \((\DownJi(\cL), \cap, \cup)\) is a distributive lattice with
least element~\(\varnothing\) and greatest element \(\Ji(\cL)\). The
following theorem is worth mentioning.

\begin{theorem}[see~\cite{Gr11}]
Let~\(\cL\) be a finite distributive lattice. Then the map
\[
\spec: a \mapsto \spec(a)
\]
is an isomorphism between~\(\cL\) and \(\DownJi(\cL)\).
\end{theorem}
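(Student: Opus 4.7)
The plan is to verify that $\spec$ is a lattice isomorphism by checking that it is well-defined into $\DownJi(\cL)$, that it preserves meets and joins, and that it is a bijection. Well-definedness is immediate from transitivity: if $y \in \spec(a)$ and $x < y$ with $x \in \Ji(\cL)$, then $x \le y \le a$, so $x \in \spec(a)$. Preservation of meets, $\spec(a \wedge b) = \spec(a) \cap \spec(b)$, is likewise routine from the definition, since $x \le a \wedge b$ iff $x \le a$ and $x \le b$.

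The key step is the join: $\spec(a \vee b) = \spec(a) \cup \spec(b)$. The inclusion $\supseteq$ is immediate. The reverse inclusion is where distributivity does the real work. I would argue that any join-irreducible $x$ in a finite distributive lattice is \emph{join-prime}: if $x \le a \vee b$, then
\[
x = x \wedge (a \vee b) = (x \wedge a) \vee (x \wedge b),
\]
so by join-irreducibility $x = x \wedge a$ or $x = x \wedge b$, giving $x \le a$ or $x \le b$. This is the essential ingredient and the main obstacle of the proof; it also extends by induction to finite joins $x \le \bigvee S$, forcing $x \le s$ for some $s \in S$.

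Next I would establish the decomposition $a = \bigvee \spec(a)$ for every $a \in L$, by induction on the height of the principal ideal $[0,a]$: if $a = 0$, the join over $\varnothing$ is $0$; if $a$ is join-irreducible, then $a \in \spec(a)$ and we are done; otherwise $a = b \vee c$ with $b, c < a$, and the inductive hypothesis together with the just-verified $\spec(b \vee c) = \spec(b) \cup \spec(c)$ gives the result. From this decomposition, injectivity of $\spec$ is immediate. For surjectivity, given $A \in \DownJi(\cL)$, set $a := \bigvee A$; then $A \subseteq \spec(a)$ trivially, and conversely any $x \in \spec(a)$ is join-prime with $x \le \bigvee A$, hence $x \le a'$ for some $a' \in A$, and since $A$ is downward closed in $\Ji(\cL)$ we conclude $x \in A$.

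Finally, $a \le b \iff \spec(a) \subseteq \spec(b)$ follows from the decomposition lemma together with monotonicity of $\bigvee$, so $\spec$ is an order-preserving and order-reflecting bijection, hence a lattice isomorphism. As noted, the only genuinely nontrivial step is establishing join-primeness of join-irreducibles via distributivity; everything else reduces to bookkeeping around that fact.
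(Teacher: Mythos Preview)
Your proof is correct and complete. The paper does not actually supply its own proof of this theorem; it merely cites Gr\"atzer's textbook, so there is no in-paper argument to compare against. The one point of contact is that the paper later (Lemma~\ref{lem:useful}(2)) proves the decomposition $a = \bigvee \spec(a)$ by a different device: assuming $b := \bigvee \spec(a) < a$ and arguing that the set $\{x \le a \mid x \nleq b\}$ would have to be infinite, contradicting finiteness of~$\cL$. Your inductive argument via join-primeness is the more structural route and is exactly the standard proof of Birkhoff's representation theorem.
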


In particular, a useful fact to keep in mind is that \(\spec(a \vee b) =
\spec(a)\cup \spec(b)\). As one of the equivalent definitions of finite
Boolean algebras, we say that a finite lattice~\(\cL\) is a \emph{Boolean
algebra} if~\(\cL\) is a finite distributive lattice such that for any \(c
\in \Ji(\cL)\), \(0 \prec c\).

We continue with definitions. An element~\(a\) is a \emph{complement}
of~\(b\) if \(a \vee b = 1\) and \(a \wedge b = 0\). (The complement of~\(a\)
need not always exist, even in finite distributive lattices.)  An
element~\(a^*\) is a \emph{pseudocomplement} of~\(a\) if for all~$x$, \(a
\wedge x = 0\) iff \(x \le a^*\). Unlike the complement of~\(a\), if~\(a^*\)
exists, it is unique; but in general, \(a^{**}\neq a\). If for all \(a \in
L\),~\(a^*\) exists, we call~\(\cL\) \emph{pseudocomplemented}.

The \emph{pseudocomplement of~\(a\) relative to~\(b\)} is the (unique)
element~\(a*b\) such that for all~$x$, \(a \wedge x \le b\) iff \(x \le
a*b\). If for all \(a, b \in L\),~\(a*b\) exists, we call~\(\cL\)
\emph{relatively pseudocomplemented}.

Observe that if~\(\cL\) is a finite distributive lattice, then~\(\cL\) is
relatively pseudocomplemented. To see this, let \(S = \{x \in L \mid a \wedge
x \le b\}\); this set is nonempty since \(b \in S\), and by
distributivity,~$S$ is closed under join and thus has a greatest element,
namely,~\(a*b\). Note that \(a^* = a*0\), and hence a finite distributive
lattice is also pseudocomplemented. It is not difficult to show that
if~\(\cL\) is relatively pseudocomplemented, then~\(\cL\) is distributive.

A subset \(C = \{a_0, a_1,\ldots , a_n\} \subseteq L\) is a \emph{chain of
length \(n+1\)} if \(a_0 \prec a_1 \prec \cdots \prec a_n\).

\begin{theorem}[see~\cite{Gr11}]
Let~\(\cL\) be finite distributive lattice. Then every maximal chain
in~\(\cL\) is of length \(|\Ji(\cL)|+1\).
\end{theorem}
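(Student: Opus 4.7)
The plan is to transfer the problem, via the isomorphism \(\spec : \cL \to \DownJi(\cL)\) recorded in the preceding theorem, to the concrete lattice of down-sets of the poset \(\Ji(\cL)\). There, the covering relation is transparent, and a maximal chain is forced to add exactly one join-irreducible at each step.

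First I would prove the following characterization of the covering relation in \(\DownJi(\cL)\): for \(A, B \in \DownJi(\cL)\), we have \(A \prec B\) iff \(B = A \cup \{x\}\) for some \(x \in \Ji(\cL) \setminus A\) which is minimal in \(\Ji(\cL) \setminus A\) under the order inherited from \(\cL\). The ``if'' direction is immediate: adjoining such an \(x\) to \(A\) yields a down-set with no room for an intermediate down-set. For ``only if'', assume \(A \subsetneq B\) and pick \(x\) minimal in \(B \setminus A\). Any \(y \in \Ji(\cL)\) with \(y < x\) lies in \(B\) (as \(B\) is a down-set) and in \(A\) (by minimality of \(x\)), so \(A \cup \{x\}\) is itself a down-set with \(A \subsetneq A \cup \{x\} \subseteq B\); if \(A \prec B\), this forces \(B = A \cup \{x\}\), and then \(x\) is minimal in \(\Ji(\cL) \setminus A\) as required.

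Next I would observe that a maximal chain in \(\cL\) must start at the least element and end at the greatest element (else one could prepend \(0\) or append \(1\)). Translating through \(\spec\), a maximal chain in \(\cL\) corresponds to a sequence \(\varnothing = A_0 \prec A_1 \prec \cdots \prec A_n = \Ji(\cL)\) in \(\DownJi(\cL)\). By the characterization above, \(|A_{i+1}| = |A_i| + 1\) for each \(i\), so \(n = |\Ji(\cL)|\), and the chain has length \(n + 1 = |\Ji(\cL)| + 1\).

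The argument is essentially bookkeeping once the representation theorem is in hand; there is no substantive obstacle. The only point requiring a moment of thought is the covering-relation lemma in \(\DownJi(\cL)\), which is where the down-set structure (rather than any further appeal to distributivity) does the work.
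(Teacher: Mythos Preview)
Your proof is correct. Note, however, that the paper does not actually prove this theorem: it is stated with a citation to Gr\"atzer~\cite{Gr11} and no argument is given. So there is no ``paper's own proof'' to compare against; your approach via the representation \(\spec : \cL \to \DownJi(\cL)\) and the characterization of covers in \(\DownJi(\cL)\) is the standard one and is exactly what one finds in the cited reference.
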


Thus, every chain from~\(0\) to~\(1\) is of the same length. For a
non-example, any maximal chain in the five-element modular nondistributive
lattice~\(M_3\) is of length~\(3\), but \(|\Ji(M_3)|+1 = 4\). (Here, \(M_3 =
\{0, a_1,a_2,a_3, 1 \}\) with \(0 < a_i < 1\) and \(a_i \nleq a_j\) for
distinct \(i, j \in \{1, 2, 3\}\).)

Moreover, we can define the \emph{rank} of \(a \in L\) by setting \(\rank(a)
= n\) if some chain from~\(0\) to~\(a\) has length \(n+1\) but no chain has
length \(n+2\). In particular, if~\(\cL\) is finite distributive, then
\(\rank(a) = |\spec(a)|\). An element~\(a\) is an atom iff \(\rank(a) = 1\).

We list some useful facts.

\begin{lemma}\label{lem:useful}
Let~\(\cL\) be a finite distributive lattice and \(a, b, c \in L\).
\begin{enumerate}
\item\label{it:useful1}
\(a \le b*a\).
\item
\(a = \bigvee \spec(a)\).
\item
If \(a \le b \le c\), then \({(b^*)}^{[a,c]} = (b*a)\wedge
c\).\footnote{Here, \({(b^*)}^{[a,c]}\) is the pseudocomplement of~\(b\)
computed in \([a,c]\).}
\item\label{it:useful2}
If \(c \le a\), then \(a \wedge (a*c) = c\).
\item\label{it:useful-mon}
If \(a, b \in \Ji(\cL)\) and \(a \le b\), then \(a*a_* \le b*b_*\) and \(b
\nleq a*a_*\).
\end{enumerate}
\end{lemma}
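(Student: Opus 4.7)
The plan is to verify each item either by direct application of the defining universal property $x\le a*b\iff a\wedge x\le b$ of the relative pseudocomplement, or by passing through the isomorphism $\spec:\cL\to\DownJi(\cL)$ to reduce claims to elementary set-theoretic statements about downsets of $\Ji(\cL)$. Items (1), (2), and (4) are immediate from unfolding definitions; item (3) is bookkeeping with meets; item (5) is the substantive one and will require an explicit description of $\spec(a*a_*)$ for join-irreducible~$a$.

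For (1), since $b\wedge a\le a$ trivially, $a$ is a member of $\{x:b\wedge x\le a\}$, so $a\le b*a$. For (2), applying $\spec$ gives $\spec\bigl(\bigvee\spec(a)\bigr)=\bigcup_{j\in\spec(a)}\spec(j)=\spec(a)$, using that each $j\in\spec(a)$ satisfies $j\in\spec(j)\subseteq\spec(a)$; injectivity of $\spec$ then yields $\bigvee\spec(a)=a$. For (4), $a\wedge(a*c)\le c$ holds by the universal property, and the reverse inequality $c\le a\wedge(a*c)$ follows from $c\le a$ (hypothesis) together with $c\le a*c$ (by (1)).

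For (3), I would establish both inequalities separately. The element $(b*a)\wedge c$ lies in $[a,c]$: clearly it is $\le c$, and $a\le b*a$ by (1) together with $a\le c$ give $a\le(b*a)\wedge c$. Moreover, $b\wedge\bigl((b*a)\wedge c\bigr)\le b\wedge(b*a)\le a$, so $(b*a)\wedge c$ qualifies as a candidate for the pseudocomplement of $b$ inside $[a,c]$. Conversely, any $x\in[a,c]$ with $b\wedge x\le a$ satisfies $x\le b*a$ by the universal property and $x\le c$ by hypothesis, hence $x\le(b*a)\wedge c$. Taking the largest such $x$, which is $(b^*)^{[a,c]}$ by definition, yields equality.

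For (5), the crux is an explicit description of $\spec(a*a_*)$ when $a\in\Ji(\cL)$. Since $a$ is join-irreducible, any $x<a$ satisfies $x\le a_*$ (otherwise $x\vee a_*=a$ would contradict join-irreducibility), so $\spec(a)=\spec(a_*)\cup\{a\}$ disjointly. The universal property then translates to $j\in\spec(a*a_*)\iff\spec(a)\cap\spec(j)\subseteq\spec(a_*)\iff a\notin\spec(j)\iff a\nleq j$; hence $\spec(a*a_*)=\{j\in\Ji(\cL):a\nleq j\}$, and analogously for $b$. From $a\le b$ we get $\{j:b\le j\}\subseteq\{j:a\le j\}$ by transitivity, and taking complements inside $\Ji(\cL)$ yields $\spec(a*a_*)\subseteq\spec(b*b_*)$, i.e., $a*a_*\le b*b_*$. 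Finally, $a\le b$ together with $b\in\spec(b)$ shows $b\in\{j:a\le j\}$, so $b\notin\spec(a*a_*)$, whence $b\nleq a*a_*$. The main obstacle is therefore isolating this explicit description of $\spec(a*a_*)$ as the complement in $\Ji(\cL)$ of the principal filter of~$a$; once that identification is in hand, both assertions of (5) are immediate.
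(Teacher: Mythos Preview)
Your argument is correct throughout. Items~(1), (3), and~(4) match the paper's proofs essentially verbatim. For items~(2) and~(5), however, you take a genuinely different route from the paper.

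For~(2), the paper argues directly by a finite-descent contradiction: assuming $\bigvee\spec(a)<a$, one finds an infinite descending chain of elements below~$a$ not bounded by $\bigvee\spec(a)$. You instead invoke the isomorphism $\spec:\cL\to\DownJi(\cL)$ (quoted just before the lemma), which reduces the claim to a set-theoretic triviality. Your version is shorter, though it leans on a theorem whose standard proof often \emph{uses} the fact that $a=\bigvee\spec(a)$; since the paper simply cites that isomorphism from~\cite{Gr11}, this is not circular here.

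For~(5), the paper argues slickly and purely lattice-theoretically: from $b\wedge(a*a_*)\le b$ and the observation that equality would force $a\le a_*$, one gets $b\wedge(a*a_*)\le b_*$, hence $a*a_*\le b*b_*$ by the defining property; the second clause is the same observation repeated. Your approach instead establishes the explicit formula $\spec(a*a_*)=\{j\in\Ji(\cL):a\nleq j\}$ and reads both conclusions off from it. The paper's argument is more economical; yours yields strictly more information (the explicit description of $a*a_*$), which could be independently useful elsewhere in the paper.
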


\begin{proof}
(1) Since \(b \wedge a \le a\), we have \(a \le b*a\) by definition.

(2) Clearly, \(\bigvee \spec(a) \le a\). Suppose that \(b = \bigvee \spec(a) <
a\) and consider \(A = \{x \le a \mid x \nleq b\}\), so \(a \in A\). Any \(x
\in A\) is join-reducible, hence \(x = y \vee z\) for some \(y, z < x\),
where at least one of them must be in~\(A\). Therefore, inductively, we see
that~\(A\) is infinite, a contradiction.

(3) Since \((b*a)\wedge c \in [a,c]\), we work in \([a,c]\) and see that for
any \(x \in [a,c]\), \(b \wedge x = a\) iff \(x \le (b*a)\wedge c\), which
follows directly from the definition of~\(b*a\) and the fact that \(x \in
[a,c]\).

(4) Since \(c \le a \wedge (a*c) \le c\).

(5) Observe that \(b \wedge(a*a_*)\le b\). Suppose that \(b \wedge(a*a_*) =
b\), then \(b \le a*a_*\), which implies \(a = a \wedge b \le a_*\), a
contradiction. Hence \(b \wedge(a*a_*)\le b_*\) and so \(a*a_* \le b*b_*\).
Suppose that \(b \le a*a_*\), then \(a = a \wedge b \le a_*\), again a
contradiction.
\end{proof}

\section{The Requirements and Conflicts}\label{sec:req-confl}

We introduce a preliminary version of our requirements and show that they
suffice for our theorem in Section~\ref{sub:req}. Then we rephrase the
\(S\)\nbd requirements in Section~\ref{sub:S-req} and simplify the \(R\)\nbd
requirements in Section~\ref{sub:R-req}. After achieving this final (more
useful) version of our requirements, we give three examples in
Section~\ref{sub:example}. Finally, we discuss the conflicts between
requirements in Section~\ref{sub:conflicts}

\subsection{The setup}\label{sub:setup}

Recall Definition~\ref{def:embedding}: Given a finite distributive
lattice~\(\cL\), our job is to exhibit an embedding \(\bj : \cL \to \bD_2\)
onto a final segment of the d.c.e.\ degrees. We will define a map \(j : \cL
\to D_2\) (into the set~\(D_2\) of d.c.e.\ sets) and then let \(\bj(a) =
\deg_T(j(a))\).

For each element \(a \in L\), we will construct a d.c.e.\ set~\(A\) and
map~\(a\)
to \(j(a) = A\). The set assigned to~\(0\) will be called~\(E\). Now
our~\(j\) will
be the following map:
\[
j: a \mapsto E \oplus (\bigoplus \{B \mid b \in \spec(a)\})
\]

Of course, whether~\(\bj\) embeds~\(\cL\) into~\(\bD_2\) as a final segment
depends on our choice of the set~\(E\) and of the sets~\(A\) for each \(a \in
\Ji(\cL)\). We will construct our sets meeting certain requirements.

We start with a global requirement
\[
G: K = \Theta^{j(1)},
\]
where~\(\Theta\) is a functional we build. This ensures
Definition~\ref{def:embedding}\ref{it:1} that \(j(1) \equiv_T K\).

Definition~\ref{def:embedding}\ref{it:ord} is automatic, because if \(a \le
b\), then \(\spec(a) \subseteq \spec(b)\), and so by the definition of~\(j\),
\(j(a) \le_T j(b)\). It is worth noting that \(\spec(a \vee b) = \spec(a)
\cup \spec(b)\), so we have \(j(a \vee b) \equiv_T j(a) \oplus j(b)\).
Therefore, \(\im(\bj)\) is a finite upper semilattice with least and greatest
element. (Since~\(\cL\) is finite and has a least element, and since~\(\bj\)
will be onto a final segment of~\(\bD_2\), \(\im(\bj)\) will also
automatically be a lattice.)

Next, we explain how to ensure Definition~\ref{def:embedding}\ref{it:nonord}
and~\ref{it:onto}.

\subsection{Initial version of our requirements}\label{sub:req}

In order to facilitate notation, we will agree on the following: For an
interval \(S = [a,b] \subseteq L\), we define
\[
    j[S] = \{U \in D_2 \mid j(a) \le_T U \le_T j(b)\}.
\]
\(\bj[S]\) is the degree version of \(j[S]\). In particular, if \(S = \{a\}\)
is a singleton, then \(\bj[S] = \{\deg_T(j(a))\} = \bj(a)\).

For all d.c.e.\ sets~\(U\), we have a set of requirements \(S_\sigma(U)\),
indexed by a finite set of nodes \(\sigma \in 2^{<\omega}\). For each partial
computable functional~\(\Psi\), we also have a set of
requirements~\(R_\sigma(\Psi)\), indexed by the same finite set of nodes
\(\sigma \in 2^{<\omega}\). For now, our requirements are defined recursively
as follows.

Let \(L_\lambda = L = [0,1]\). Suppose that we have defined \(L_\sigma =
[p_\sigma, q_\sigma]\). If \(p_\sigma = q_\sigma\), then we stop. Otherwise,
we choose \(p_{\sigma0}\) such that \(p_\sigma \prec p_{\sigma0} \le
q_\sigma\). We now let \(q_{\sigma0} = q_\sigma\), \(p_{\sigma1} =
p_\sigma\), and \(q_{\sigma1} = {(p_{\sigma0}^*)}^{L_\sigma}\). Set
\(L_{\sigma0} = [p_{\sigma0},q_{\sigma0}]\) and \(L_{\sigma1} =
[p_{\sigma1},q_{\sigma1}]\) and continue.

\begin{definition}\label{def:T_L}
We let \(T_\cL = \{\sigma \mid L_\sigma \text{ is defined}\}\) and \(T'_\cL =
\{\sigma \mid \sigma \text{ not a leaf of \(T_\cL\)}\}\).
\end{definition}

We then state our local requirements for now in preliminary form as follows,
for each \(\sigma \in T'_\cL\), d.c.e.\ set~\(U\) and partial computable
functional~\(\Psi\):
\begin{align*}
S_\sigma(U)&: j(p_\sigma)\oplus U \in j[L_\sigma] \to j(p_{\sigma0}) =
   \Gamma_\sigma^{j(p_\sigma)\oplus U} \lor
   U = \Delta_\sigma^{j(q_{\sigma1})}\\
R_\sigma(\Psi)&: j(p_{\sigma0}) \neq \Psi^{j(q_{\sigma1})}
\end{align*}

This finishes the definition of our requirements. Thus we have, for each
d.c.e.\ set~\(U\) and each \(\sigma \in T'_\cL\), a
requirement~\(S_\sigma(U)\). For each partial computable functional~\(\Psi\)
and each \(\sigma \in T'_\cL\), we have a requirement \(R_\sigma(\Psi)\).
When the oracle~$U$ is clear from the context, we say that~\(\Gamma_\sigma\)
is \emph{total and correct} if \(j(p_{\sigma 0}) =
\Gamma_\sigma^{j(p_\sigma)\oplus U}\), and that~\(\Delta_\sigma\) is
\emph{total and correct} if \(U = \Delta_\sigma^{j(q_{\sigma 1})}\).

In the rest of the section, we prove the following lemma.

\begin{lemma}\label{lem:req_suffice}
If all requirements are met, then \(\bj : \cL \to \bD_2\) embeds~\(\cL\)
into~\(\bD_2\) as a final segment.
\end{lemma}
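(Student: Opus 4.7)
The plan is to verify the four clauses of Definition~\ref{def:embedding} in turn. Clause~\ref{it:1} follows from the global requirement~$G$, which gives $K\le_T j(1)$, together with the fact that $j(1)$ is itself d.c.e.\ and hence $j(1)\le_T K$. Clause~\ref{it:ord} is already noted to be automatic from $\spec(a)\subseteq\spec(b)$.

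For clause~\ref{it:nonord}, the key structural observation is that for each $\sigma\in T'_\cL$ the children $L_{\sigma 0}$ and $L_{\sigma 1}$ partition $L_\sigma$. Indeed, by Lemma~\ref{lem:useful}(3), $q_{\sigma 1}=(p_{\sigma 0}*p_\sigma)\wedge q_\sigma$; for any $x\in L_\sigma$, $p_{\sigma 0}\wedge x$ lies in the 2\nbd element cover interval $[p_\sigma,p_{\sigma 0}]$, so it equals either $p_{\sigma 0}$ (whence $x\in L_{\sigma 0}$) or $p_\sigma$ (whence $x\le q_{\sigma 1}$, i.e.\ $x\in L_{\sigma 1}$); the two cases are exclusive because $p_{\sigma 0}\wedge q_{\sigma 1}=p_\sigma<p_{\sigma 0}$ forces $p_{\sigma 0}\nleq q_{\sigma 1}$. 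This partition property yields the auxiliary claim: whenever $b<c$ in~$L$, there exists $\sigma\in T'_\cL$ with $p_{\sigma 0}\le c$ and $b\le q_{\sigma 1}$. To prove it, take the deepest $\sigma$ with $\{b,c\}\subseteq L_\sigma$; since $b<c$, $\sigma$ is not a leaf and $b,c$ lie in different children, and the alternative $p_{\sigma 0}\le b<c\le q_{\sigma 1}$ is ruled out by $p_{\sigma 0}\nleq q_{\sigma 1}$. Now to verify~\ref{it:nonord}, suppose $a\nleq b$ and set $c=a\vee b>b$; the claim supplies $\sigma$ with $j(p_{\sigma 0})\le_T j(c)$ and $j(b)\le_T j(q_{\sigma 1})$, so $j(a)\le_T j(b)$ would give $j(c)\equiv_T j(a)\oplus j(b)\le_T j(b)\le_T j(q_{\sigma 1})$, hence $j(p_{\sigma 0})\le_T j(q_{\sigma 1})$, contradicting the $R_\sigma(\Psi)$ requirements.

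For clause~\ref{it:onto}, given a d.c.e.\ set~$U$ I descend the tree $T_\cL$ while maintaining at each node $\sigma$ the two invariants $U\le_T j(q_\sigma)$ and $j(p_\sigma)\le_T j(0)\oplus U$. Both hold at $\sigma=\lambda$, using~$G$ for the first. At any $\sigma\in T'_\cL$, the first invariant combined with clause~\ref{it:ord} yields the hypothesis $j(p_\sigma)\oplus U\in j[L_\sigma]$ of~$S_\sigma(U)$. If~$\Gamma_\sigma$ is total and correct, I move to $\sigma 0$: the chain $j(p_{\sigma 0})\le_T j(p_\sigma)\oplus U\le_T j(0)\oplus U$ gives the new second invariant, while the first is unchanged since $q_{\sigma 0}=q_\sigma$. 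If instead~$\Delta_\sigma$ is total and correct, I move to $\sigma 1$: the first invariant becomes $U\le_T j(q_{\sigma 1})$, and the second is preserved since $p_{\sigma 1}=p_\sigma$. At the terminal leaf~$\tau$ one has $p_\tau=q_\tau$, so the two invariants together give $j(0)\oplus U\equiv_T j(p_\tau)$, establishing~\ref{it:onto} with $a=p_\tau$.

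The main obstacle is the auxiliary structural claim about~$T_\cL$; it is what lets one reduce the asymmetric relation $a\nleq b$ to the chain form $b<a\vee b$ that the (asymmetrically split) tree was designed to witness via its $R$\nbd requirements.
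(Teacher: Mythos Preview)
Your proof is correct. Clauses~\ref{it:1}, \ref{it:ord}, and~\ref{it:onto} are handled essentially as in the paper: the paper packages the descent for~\ref{it:onto} via the notion ``$L_\sigma$ is $U$-active'' and Lemma~\ref{lem:U-active}, but your explicit pair of invariants $U\le_T j(q_\sigma)$ and $j(p_\sigma)\le_T j(0)\oplus U$ amounts to the same induction.

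For clause~\ref{it:nonord} you take a genuinely different, more economical route. The paper first proves the \emph{degree-level} partition $\bj[L_\sigma]=\bj[L_{\sigma0}]\sqcup\bj[L_{\sigma1}]$ (which needs both the $S$-requirements for the union and the $R$-requirements for disjointness), uses it to obtain injectivity of~$\bj$, and then derives~\ref{it:nonord} from injectivity via $j(b)<_T j(a\vee b)$. You instead use only the \emph{lattice-level} partition $L_\sigma=L_{\sigma0}\sqcup L_{\sigma1}$ to extract, for any $b<c$, a node~$\sigma$ with $p_{\sigma0}\le c$ and $b\le q_{\sigma1}$, and then invoke the $R_\sigma$-requirements directly. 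This bypasses injectivity entirely and, notably, does not touch the $S$-requirements at all for~\ref{it:nonord}; the paper's route has the advantage of yielding injectivity as a recorded byproduct, while yours isolates exactly which requirements are responsible for which clause.
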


We assume that all requirements are met throughout the rest of the section in
a sequence of lemmas.

We call \(S_\sigma(U)\) \emph{active} if \(j(p_\sigma)\oplus U \in
j[L_\sigma]\). Otherwise \(S_\sigma(U)\) is satisfied trivially.

\begin{lemma}\label{lem:S-active}
Let~\(S_\sigma(U)\) be active.
\begin{enumerate}
\item
If~\(\Gamma_\sigma\) is total, then \(j(p_\sigma)\oplus U \equiv_T
j(p_{\sigma0})\oplus U \in j[L_{\sigma0}]\). So if \(|L_{\sigma0}|>1\),
then \(S_{\sigma0}(U)\) is active.
\item
If~\(\Delta_\sigma\) is total, then \(j(p_\sigma)\oplus U \equiv_T
j(p_{\sigma1})\oplus U \in j[L_{\sigma1}]\). So if \(|L_{\sigma1}|>1\),
then \(S_{\sigma1}(U)\) is active.
\end{enumerate}
\end{lemma}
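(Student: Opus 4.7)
The plan is to unwind the definitions: since $S_\sigma(U)$ is met and active, the conclusion $j(p_{\sigma 0}) = \Gamma_\sigma^{j(p_\sigma) \oplus U}$ or $U = \Delta_\sigma^{j(q_{\sigma 1})}$ holds, and in each case the Turing-equivalence and interval-membership claims reduce to known comparabilities in the lattice $\cL$, transported to $\le_T$ via the monotonicity already noted (Definition~\ref{def:embedding}\ref{it:ord} / the observation that $a \le b \Rightarrow j(a) \le_T j(b)$).

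For part~(1), assume $\Gamma_\sigma$ is total; then correctness gives $j(p_{\sigma 0}) \le_T j(p_\sigma) \oplus U$. Since $p_\sigma \prec p_{\sigma 0}$, we also have $j(p_\sigma) \le_T j(p_{\sigma 0})$, so the two oracles $j(p_\sigma)\oplus U$ and $j(p_{\sigma 0})\oplus U$ are Turing-equivalent. To verify $j(p_{\sigma 0})\oplus U \in j[L_{\sigma 0}]$, recall that $L_{\sigma 0} = [p_{\sigma 0}, q_{\sigma 0}]$ with $q_{\sigma 0} = q_\sigma$. The lower bound is immediate. For the upper bound, activity of $S_\sigma(U)$ gives $U \le_T j(q_\sigma)$, and $p_{\sigma 0} \le q_\sigma$ yields $j(p_{\sigma 0}) \le_T j(q_\sigma)$; joining these, $j(p_{\sigma 0})\oplus U \le_T j(q_\sigma) = j(q_{\sigma 0})$.

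For part~(2), assume $\Delta_\sigma$ is total; then $U \le_T j(q_{\sigma 1})$. Since $p_{\sigma 1} = p_\sigma$ by construction, the equivalence $j(p_\sigma)\oplus U \equiv_T j(p_{\sigma 1})\oplus U$ is a literal identity. To check $j(p_{\sigma 1})\oplus U \in j[L_{\sigma 1}]$ with $L_{\sigma 1} = [p_\sigma, q_{\sigma 1}]$, note that $q_{\sigma 1} = {(p_{\sigma 0}^*)}^{L_\sigma} \in L_\sigma$ implies $p_\sigma \le q_{\sigma 1}$, hence $j(p_{\sigma 1}) = j(p_\sigma) \le_T j(q_{\sigma 1})$; combining with $U \le_T j(q_{\sigma 1})$ gives the required upper bound. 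The final clauses about $S_{\sigma 0}(U)$ or $S_{\sigma 1}(U)$ being active are then immediate from the definition of ``active,'' since $|L_{\sigma i}| > 1$ ensures $\sigma i \in T'_\cL$ and so the corresponding requirement is indexed.

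There is no substantive obstacle here: the lemma is a bookkeeping step that confirms our recursive splitting of $L_\sigma$ into $L_{\sigma 0}$ and $L_{\sigma 1}$ is compatible with the Turing-degree structure forced on $j(p_\sigma)\oplus U$ by the two disjuncts of $S_\sigma(U)$. The only points requiring attention are the correct identification of the endpoints of $L_{\sigma 0}$ and $L_{\sigma 1}$ from the construction, and the observation that activity of $S_\sigma(U)$ supplies the upper bound $U \le_T j(q_\sigma)$ needed in part~(1).
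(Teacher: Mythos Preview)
Your proof is correct and follows essentially the same approach as the paper's: both unwind the definitions, use totality of $\Gamma_\sigma$ (resp.\ $\Delta_\sigma$) to obtain the key reducibility, and then invoke activity of $S_\sigma(U)$ together with the lattice ordering to verify the interval membership. The only cosmetic difference is that for the upper bound in part~(1) you bound $j(p_{\sigma 0})$ and $U$ separately by $j(q_\sigma)$ and join, whereas the paper chains $j(p_{\sigma 0})\oplus U \le_T j(p_\sigma)\oplus U \le_T j(q_\sigma)$ directly.
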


\begin{proof}
(1) We have \(j(p_{\sigma0}) \le_T j(p_{\sigma0}) \oplus U \le_T
j(p_{\sigma}) \oplus U \le_T j(q_\sigma) = j(q_{\sigma 0})\), where the
second reducibility follows from the fact that~\(\Gamma_\sigma\) is total,
and the third reducibility follows from the fact that~\(S_\sigma(U)\) is
active. But \(p_\sigma \le p_{\sigma0}\) implies \(j(p_\sigma) \oplus U \le_T
j(p_{\sigma0}) \oplus U\), so the second reduction is in fact an equivalence.

(2)~\(\Delta_\sigma\) states that \(U \le_T j(q_{\sigma1})\). Since
\(p_{\sigma1}= p_\sigma \le q_{\sigma1}\), we have \(j(p_{\sigma1})\le_T
j(p_{\sigma1})\oplus U \le_T j(q_{\sigma1})\).
\end{proof}

\begin{lemma}
For \(\sigma \in T'_\cL\), we have \(L_\sigma = L_{\sigma0} \sqcup
L_{\sigma1}\) and \(\bj[L_\sigma] = \bj[L_{\sigma0}] \sqcup
\bj[L_{\sigma1}]\).
\end{lemma}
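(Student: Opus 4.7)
The plan is to prove the two stated equalities in turn, decomposing each into a coverage statement and a disjointness statement. The first equality is purely lattice-theoretic and falls out of the definitions together with the fact that $p_\sigma \prec p_{\sigma 0}$ in~$\cL$; the second is where the $S$\nbd and $R$\nbd requirements do the actual work.

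For $L_\sigma = L_{\sigma 0} \sqcup L_{\sigma 1}$, I would fix $x \in L_\sigma$ and examine $x \wedge p_{\sigma 0}$. Since $p_\sigma \le x \wedge p_{\sigma 0} \le p_{\sigma 0}$ and $p_\sigma \prec p_{\sigma 0}$, this meet equals either $p_\sigma$ or $p_{\sigma 0}$. In the former case the defining property of the pseudocomplement in~$L_\sigma$ (whose bottom is $p_\sigma$) yields $x \le (p_{\sigma 0}^*)^{L_\sigma} = q_{\sigma 1}$, hence $x \in L_{\sigma 1}$; in the latter case $x \ge p_{\sigma 0}$, hence $x \in L_{\sigma 0}$. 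Disjointness follows because $x \in L_{\sigma 0} \cap L_{\sigma 1}$ would give $p_{\sigma 0} \le x \le (p_{\sigma 0}^*)^{L_\sigma}$, whence $p_{\sigma 0} \wedge (p_{\sigma 0}^*)^{L_\sigma} = p_{\sigma 0}$; but that meet is $p_\sigma$ by the definition of the pseudocomplement, contradicting $p_\sigma \prec p_{\sigma 0}$.

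For $\bj[L_\sigma] = \bj[L_{\sigma 0}] \sqcup \bj[L_{\sigma 1}]$, the reverse inclusion $\bj[L_{\sigma 0}] \cup \bj[L_{\sigma 1}] \subseteq \bj[L_\sigma]$ is immediate from $p_\sigma = p_{\sigma 1} \le q_{\sigma 1} \le q_\sigma$ and $p_\sigma \le p_{\sigma 0} \le q_{\sigma 0} = q_\sigma$ together with Definition~\ref{def:embedding}\ref{it:ord}. For the forward direction, given $\bu \in \bj[L_\sigma]$ I pick a d.c.e.\ representative $V \in \bu$ with $j(p_\sigma) \le_T V \le_T j(q_\sigma)$, so that $S_\sigma(V)$ (applied with $U = V$) is active. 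If $\Gamma_\sigma$ is total and correct, then Lemma~\ref{lem:S-active}(1) places $V \equiv_T j(p_{\sigma 0}) \oplus V$ inside $j[L_{\sigma 0}]$, giving $\bu \in \bj[L_{\sigma 0}]$; if instead $\Delta_\sigma$ is total and correct, then $V \le_T j(q_{\sigma 1})$ and $V \ge_T j(p_{\sigma 1})$ place $\bu \in \bj[L_{\sigma 1}]$. Degree-level disjointness is where $R_\sigma$ enters: if some $\bu$ lay in $\bj[L_{\sigma 0}] \cap \bj[L_{\sigma 1}]$, then $\bj(p_{\sigma 0}) \le \bu \le \bj(q_{\sigma 1})$ would force $j(p_{\sigma 0}) \le_T j(q_{\sigma 1})$, producing a partial computable functional~$\Psi$ with $j(p_{\sigma 0}) = \Psi^{j(q_{\sigma 1})}$, which directly contradicts $R_\sigma(\Psi)$.

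The one genuinely load-bearing step is recognizing that the active case of $S_\sigma(V)$ is exactly the dichotomy needed at the degree level: its two sub-cases, processed through Lemma~\ref{lem:S-active}, are precisely what pushes $\bu$ into $\bj[L_{\sigma 0}]$ or into $\bj[L_{\sigma 1}]$. Everything else is a formal manipulation of the definitions of $L_{\sigma 0}$, $L_{\sigma 1}$, and the pseudocomplement $(p_{\sigma 0}^*)^{L_\sigma}$.
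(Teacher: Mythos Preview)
Your proposal is correct and follows essentially the same route as the paper's own proof: the lattice part uses the covering relation $p_\sigma \prec p_{\sigma 0}$ to force $x \wedge p_{\sigma 0} \in \{p_\sigma, p_{\sigma 0}\}$, and the degree part invokes the active case of $S_\sigma$ via Lemma~\ref{lem:S-active} for coverage and the $R_\sigma(\Psi)$\nbd requirements for disjointness. The only cosmetic difference is that the paper writes the given degree as $\deg_T(E \oplus U)$ while you pick an arbitrary d.c.e.\ representative $V$ with $j(p_\sigma) \le_T V \le_T j(q_\sigma)$; since such a $V$ exists by the definition of $\bj[L_\sigma]$ and satisfies $j(p_\sigma)\oplus V \equiv_T V$, this comes to the same thing.
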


\begin{proof}
For the first equation, \(L_{\sigma0} \cup L_{\sigma1} \subseteq L_\sigma\)
is obvious.

Given \(a \in L_\sigma\), if \(p_{\sigma0} \le a\), then \(a \in
L_{\sigma0}\). Otherwise, \(p_{\sigma0}\wedge a = p_\sigma\), so \(a \le
q_{\sigma1}\) and thus \(a \in L_{\sigma1}\). Hence
\(L_\sigma=L_{\sigma0}\cup L_{\sigma1}\).

Now suppose that \(b \in L_{\sigma0} \cap L_{\sigma1}\). Then \(p_{\sigma0}
\le b \le q_{\sigma1}\), contradicting \(q_{\sigma1} =
{(p_{\sigma0}^*)}^{L_\sigma}\). Hence \(L_\sigma = L_{\sigma0} \sqcup
L_{\sigma1}\).

For the second equation, \(\bj[L_{\sigma0}] \cup \bj[L_{\sigma1}] \subseteq
\bj[L_\sigma]\) is trivial.

Given \(\deg_T(E \oplus U) \in \bj[L_\sigma]\), we have that the degree of
\(E \oplus U \equiv_T j(p_\sigma) \oplus U\) is in \(\bj[L_\sigma]\).
So~\(S_\sigma(U)\) is active. If~\(\Gamma_\sigma\) is total, then
\[
E \oplus U \equiv_T j(p_\sigma) \oplus U \equiv_T j(p_{\sigma0}) \oplus U
  \in j[L_{\sigma0}].
\]
If~\(\Delta_\sigma\) is total, then
\[
E \oplus U \equiv_T j(p_\sigma) \oplus U \equiv_T j(p_{\sigma1}) \oplus U
  \in j[L_{\sigma1}].
\]
Therefore \(\bj[L_\sigma] = \bj[L_{\sigma0}] \cup \bj[L_{\sigma1}]\).

Now suppose that \(\deg_T(E \oplus U) \in \bj[L_{\sigma0}] \cap
\bj[L_{\sigma1}]\). Then we would have
\[
j(p_{\sigma0}) \le_T E \oplus U \le_T j(q_{\sigma1}),
\]
contradicting the \(R_\sigma(\Psi)\)\nbd requirements.
\end{proof}

\begin{lemma}
\(\bj\) is injective.
\end{lemma}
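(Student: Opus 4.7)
The plan is to reduce injectivity of $\bj$ to the disjointness assertion
$\bj[L_{\sigma 0}] \cap \bj[L_{\sigma 1}] = \varnothing$ supplied by the
preceding lemma, by descending the tree $T_\cL$ until any two distinct
elements of~$L$ are separated. No new requirement-level work is needed;
the argument is just bookkeeping on top of the previous lemma.

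First I would record the monotonicity observation that if $c \in L_\sigma =
[p_\sigma, q_\sigma]$, then $\spec(p_\sigma) \subseteq \spec(c) \subseteq
\spec(q_\sigma)$, and hence by the definition of $j$ as a direct sum indexed
by $\spec(\cdot)$, we have $j(p_\sigma) \le_T j(c) \le_T j(q_\sigma)$; so
$\bj(c) \in \bj[L_\sigma]$. Now, given distinct $a, b \in L$, both lie in
$L_\lambda = L$. While $\{a,b\} \subseteq L_\sigma$ with $|L_\sigma|>1$, the
previous lemma gives $L_\sigma = L_{\sigma 0} \sqcup L_{\sigma 1}$, so $a$
and $b$ are either both in $L_{\sigma 0}$, both in $L_{\sigma 1}$, or on
opposite sides. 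Since $T_\cL$ is finite and every leaf $\sigma$ has
$|L_\sigma|=1$, this descent must terminate at some $\sigma \in T'_\cL$ at
which $a$ and $b$ end up on opposite sides, say $a \in L_{\sigma i}$ and
$b \in L_{\sigma (1-i)}$ for some $i \in \{0,1\}$.

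By the monotonicity observation, $\bj(a) \in \bj[L_{\sigma i}]$ and
$\bj(b) \in \bj[L_{\sigma (1-i)}]$, and the disjointness part of the
previous lemma forces $\bj(a) \neq \bj(b)$.

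The only real obstacle has already been dispatched in the previous lemma,
whose disjointness side is exactly where the $R_\sigma(\Psi)$ requirements
do the work: without them, nothing would prevent a degree from sitting in
both $\bj[L_{\sigma 0}]$ and $\bj[L_{\sigma 1}]$. Here that input is used
as a black box, so I expect the written-out proof to be only a few lines.
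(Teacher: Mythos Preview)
Your proposal is correct and follows essentially the same approach as the paper: find a node~$\sigma$ at which $a$ and $b$ first separate into $L_{\sigma 0}$ and $L_{\sigma 1}$ (the paper phrases this as taking the longest~$\sigma$ with $a,b\in L_\sigma$), then invoke the disjointness $\bj[L_{\sigma 0}]\cap \bj[L_{\sigma 1}]=\varnothing$ from the preceding lemma. Your explicit monotonicity remark that $c\in L_\sigma$ implies $\bj(c)\in \bj[L_\sigma]$ is used tacitly in the paper's proof.
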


\begin{proof}
Suppose that \(a \neq b\). Let~\(\sigma\) be the longest such that \(a, b \in
L_\sigma\). So \(|L_\sigma| \ge 2\), and thus \(L_\sigma = L_{\sigma0} \sqcup
L_{\sigma1}\). Without loss of generality, we assume \(a \in L_{\sigma0}\)
and \(b \in L_{\sigma1}\). Hence \(\bj(a) \in \bj[L_{\sigma0}]\) and \(\bj(b)
\in \bj[L_{\sigma1}]\) and so \(j(a) \not\equiv_T j(b)\).
\end{proof}

\begin{lemma}
If \(a, b \in L\), then \(a \nleq b\) implies \(j(a) \nleq_T j(b)\).
\end{lemma}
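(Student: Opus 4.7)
The plan is to reduce the general claim to the special case of a strictly comparable pair, and then to descend the tree $T_\cL$ until a separating node is found. For the reduction, given $a \nleq b$, set $c = a \vee b$, so that $b < c$ strictly. Since $\spec(c) = \spec(a) \cup \spec(b)$, the definition of $j$ gives $j(c) \equiv_T j(a) \oplus j(b)$, so $j(a) \le_T j(b)$ would entail $j(c) \le_T j(b)$. Hence it suffices to show: for every $b < c$ in $L$, we have $j(c) \nleq_T j(b)$.

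To prove this, I will locate a node $\tau \in T'_\cL$ with $c \in L_{\tau 0}$ and $b \in L_{\tau 1}$. Once such a $\tau$ is in hand, $p_{\tau 0} \le c$ and $b \le q_{\tau 1}$ give the chain $j(p_{\tau 0}) \le_T j(c) \le_T j(b) \le_T j(q_{\tau 1})$, and the functional witnessing this composed reduction contradicts $R_\tau(\Psi)$. To produce $\tau$, I descend the tree starting from $\lambda$: at any current node $\sigma$, the hypothesis $b < c$ in $L_\sigma$ forces $|L_\sigma| \ge 2$, so $\sigma \in T'_\cL$ and the splitting $L_\sigma = L_{\sigma 0} \sqcup L_{\sigma 1}$ from the previous lemma applies.

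Four cases can then arise at $\sigma$: if $b$ and $c$ both lie in the same child $L_{\sigma i}$, I recurse on $\sigma i$; if $c \in L_{\sigma 0}$ and $b \in L_{\sigma 1}$, I take $\tau = \sigma$; the remaining asymmetric case $b \in L_{\sigma 0}$, $c \in L_{\sigma 1}$ must be excluded. This exclusion is the one subtle step, and it is precisely where I use the strict inequality $b < c$ rather than mere incomparability: the case would force $p_{\sigma 0} \le b < c \le q_{\sigma 1}$ and hence $p_{\sigma 0} \le q_{\sigma 1}$, contradicting $p_{\sigma 0} \wedge q_{\sigma 1} = p_\sigma \prec p_{\sigma 0}$. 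Finiteness of $T_\cL$, together with the fact that each recursive step preserves both elements (with the strict order) inside the new $L_\sigma$ while lengthening $\sigma$, guarantees termination at the ``take $\tau = \sigma$'' case, which is exactly what was needed.
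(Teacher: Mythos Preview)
Your proof is correct, but it takes a longer and somewhat different route from the paper's. Both arguments begin identically by setting $c = a \vee b$ and reducing to the claim that $b < c$ implies $j(c) \nleq_T j(b)$. At that point the paper simply invokes the injectivity lemma proved immediately before: since $\bj$ is injective and order-preserving, $b < c$ gives $j(b) <_T j(c)$, and combining with $j(c) \equiv_T j(a) \oplus j(b) \le_T j(b)$ yields the contradiction $j(c) <_T j(c)$ in one line.

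You instead bypass the injectivity lemma and argue directly from the $R$-requirements, descending $T_\cL$ to locate a separating node $\tau$ with $c \in L_{\tau 0}$ and $b \in L_{\tau 1}$, and then invoking $R_\tau$. This is essentially an inlined and sharpened version of the injectivity proof: you need not only that $b$ and $c$ eventually land in different halves, but that $c$ lands in the $0$-half, which is where your exclusion of the ``wrong'' case via $p_{\sigma 0} \le b < c \le q_{\sigma 1}$ comes in. The paper's approach is shorter because the asymmetry is already absorbed into the injectivity statement; your approach is more self-contained and makes the dependence on the $R$-requirements explicit, at the cost of redoing part of the work.
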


\begin{proof}
We suppose towards a contradiction that \(j(a) \le_T j(b)\). Now let \(c = b
\vee a\). Note that \(b<c\) since otherwise we would have \(a \le b\). So
\(j(b) <_T j(c)\) since~\(\bj\) is injective and order-preserving. But then
\[
j(c)\equiv_T j(b \vee a)\equiv_T j(b)\oplus j(a)\equiv_T j(b) <_T j(c),
\]
a contradiction.
\end{proof}

Thus we have Definition~\ref{def:embedding}\ref{it:nonord}.

\begin{definition}
Call~\(L_\sigma\) \emph{\(U\)\nbd active} if \(j(0)\oplus U \in
j[L_\sigma]\).
\end{definition}

\begin{lemma}
\hspace{1em}
\begin{enumerate}
\item
If~\(L_\sigma\) is \(U\)\nbd active, then \(j(0)\oplus U \equiv_T
j(p_\sigma)\oplus U\). So if \(|L_\sigma|\ge 2\), then~\(S_\sigma(U)\) is
active.
\item
Suppose that~\(S_\sigma(U)\) is active. If~\(\Gamma_\sigma\) is total,
then~\(L_{\sigma0}\) is \(U\)\nbd active. If~\(\Delta_\sigma\) is total,
then~\(L_{\sigma1}\) is \(U\)\nbd active.
\end{enumerate}
\end{lemma}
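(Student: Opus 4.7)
My plan is to prove each part by unfolding definitions and chaining Turing reductions, reading (1) and (2) together as a descent down $T_\cL$. For orientation, the base case is $\sigma = \lambda$: the global requirement $G$ will give $j(1) \equiv_T K$, so $j[L_\lambda]$ is the cone $\{V : j(0) \le_T V \le_T K\}$, which contains $j(0) \oplus U$ for every d.c.e.\ $U$; hence $L_\lambda$ is always $U$\nbd active.

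For~(1), I unfold the hypothesis to $j(p_\sigma) \le_T j(0) \oplus U \le_T j(q_\sigma)$. The first inequality together with the trivial $U \le_T j(0) \oplus U$ gives $j(p_\sigma) \oplus U \le_T j(0) \oplus U$; order-preservation applied to $0 \le p_\sigma$ gives the reverse direction. Thus $j(p_\sigma) \oplus U \equiv_T j(0) \oplus U \in j[L_\sigma]$, so $S_\sigma(U)$ is active whenever its requirement is indexed (i.e., $|L_\sigma| \ge 2$).

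For~(2), I combine~(1) with Lemma~\ref{lem:S-active}, reading~(2) as applying when $L_\sigma$ is already $U$\nbd active (so that $S_\sigma(U)$ is active by~(1)). In the $\Gamma_\sigma$ case, Lemma~\ref{lem:S-active}(1) gives $j(p_\sigma) \oplus U \equiv_T j(p_{\sigma 0}) \oplus U \in j[L_{\sigma 0}]$; chaining with~(1) yields $j(0) \oplus U \equiv_T j(p_{\sigma 0}) \oplus U$, while the upper bound $j(0) \oplus U \le_T j(q_\sigma) = j(q_{\sigma 0})$ is inherited from $U$\nbd activity of $L_\sigma$, so $j(0) \oplus U \in j[L_{\sigma 0}]$. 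In the $\Delta_\sigma$ case, $p_{\sigma 1} = p_\sigma$ trivialises the lower bound, while the reduction $U \le_T j(q_{\sigma 1})$ provided by $\Delta_\sigma$, together with $j(p_\sigma) \le_T j(q_{\sigma 1})$ (since $p_\sigma = p_{\sigma 1} \le q_{\sigma 1}$), gives the upper bound $j(0) \oplus U \le_T j(p_\sigma) \oplus U \le_T j(q_{\sigma 1})$.

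The only mildly subtle point---the ``main obstacle'' such as it is---is interpretive: $L_{\sigma 0}$ being $U$\nbd active already forces $j(p_\sigma) \le_T j(p_{\sigma 0}) \le_T j(0) \oplus U$, and hence $L_\sigma$ $U$\nbd active, so~(2) is best read as extending~(1) rather than as a standalone implication from the weaker hypothesis ``$S_\sigma(U)$ active.'' With that reading the lemma is a clean bookkeeping propagation statement, transporting $U$\nbd activity one step down $T_\cL$ along the branch picked by whichever of $\Gamma_\sigma$, $\Delta_\sigma$ is total.
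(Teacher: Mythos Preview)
Your proof is correct and follows essentially the same route as the paper. The paper's proof of~(1) is the same chain of reductions you give, and its proof of~(2) is the single sentence ``Apply Lemma~\ref{lem:S-active} inductively,'' which is exactly what you unpack: combine part~(1) (giving $j(0)\oplus U\equiv_T j(p_\sigma)\oplus U$) with Lemma~\ref{lem:S-active} to push $U$\nbd activity down one level. Your interpretive remark is on point: the literal hypothesis ``$S_\sigma(U)$ active'' in~(2) is weaker than ``$L_\sigma$ $U$\nbd active,'' and the paper's word ``inductively'' is precisely the signal that~(2) is meant to be used along the chain where~(1) has already fired, not as a free-standing implication.
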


\begin{proof}
(1) We have \(j(p_\sigma)\le_T j(0)\oplus U \le_T j(p_\sigma)\oplus U\),
where the first reducibility follows from the fact that~\(L_\sigma\) is
\(U\)\nbd active.

(2) Apply Lemma~\ref{lem:S-active} inductively.
\end{proof}

It is easy to see \(C'_\cL(U) \coloneqq \{\sigma \in T'_\cL \mid S_\sigma(U)
\text{ is active}\}\) is a maximal chain in~\(T'_\cL\). We also have that
\(C_\cL(U) \coloneqq \{\sigma \in T_\cL \mid L_\sigma \text{ is \(U\)\nbd
active}\}\) is a maximal chain in~\(T_\cL\). Clearly,~\(C'_\cL(U)\) is
\(C_\cL(U)\) without its longest node.

\begin{lemma}
For each d.c.e.\ set~\(U\), there exists \(a \in L\) such that \(j(0) \oplus
U \equiv_T j(a)\).
\end{lemma}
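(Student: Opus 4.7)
The plan is to leverage the previous lemma, which already establishes that \(C_\cL(U) = \{\sigma \in T_\cL \mid L_\sigma \text{ is }U\text{-active}\}\) is a maximal chain in~\(T_\cL\). Since~\(\cL\) is finite, the tree~\(T_\cL\) is finite, so any maximal chain in~\(T_\cL\) must terminate at a leaf. Recall that by construction, a node~\(\tau\) is a leaf of~\(T_\cL\) precisely when \(p_\tau = q_\tau\), i.e., when \(L_\tau\) is a singleton.

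So first I would let~\(\tau\) be the leaf of~\(T_\cL\) lying on the chain \(C_\cL(U)\), and set \(a\) to be the unique element of \(L_\tau\), i.e., \(a = p_\tau = q_\tau\). Since~\(L_\tau\) is \(U\)\nbd active, by definition we have
\[
j(0)\oplus U \in j[L_\tau] = \{V \in D_2 \mid j(a) \le_T V \le_T j(a)\},
\]
which immediately yields \(j(0)\oplus U \equiv_T j(a)\), as required.

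There is essentially no obstacle here: all the substantive work has already been done in the preceding lemmas, namely (a)~that a totality of~\(\Gamma_\sigma\) or~\(\Delta_\sigma\) forces the active chain to descend to the appropriate child, and (b)~that the \(S\)\nbd requirements, when active, guarantee that at least one of \(\Gamma_\sigma,\Delta_\sigma\) is total and correct. Together these facts extend the chain \(C_\cL(U)\) downward as long as the current interval has more than one element, forcing the chain to reach a singleton leaf. The current lemma is simply the extraction of the element~\(a\) at that leaf, and together with the previous lemmas it completes the verification of Definition~\ref{def:embedding}\ref{it:onto}, and hence of Lemma~\ref{lem:req_suffice}.
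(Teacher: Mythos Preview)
Your proof is correct and follows essentially the same approach as the paper: take the longest node~\(\tau\) in the maximal chain \(C_\cL(U)\), observe that \(L_\tau\) is a singleton \(\{a\}\), and conclude from \(U\)-activity that \(j(0)\oplus U \equiv_T j(a)\). The paper's version is simply a one-line compression of what you wrote.
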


\begin{proof}
Let~\(\sigma\) be the longest string in~\(C_\cL(U)\), then \(L_\sigma =
\{a_\sigma \}\) and \(j(0) \oplus U \equiv_T j(a_\sigma)\).
\end{proof}

So we have Definition~\ref{def:embedding}\ref{it:onto}, and hence
Lemma~\ref{lem:req_suffice} has been proved.

\subsection{\texorpdfstring{The \(S\)-requirements rephrased}
{The S-requirements rephrased}}\label{sub:S-req}

We need to rephrase our requirements to better fit our construction. Our
initial rephrasing of the \(S\)\nbd requirements is summarized in the
following

\begin{remark}\label{rem:p_sigma}
\hspace{1em}
\begin{enumerate}
\item\label{it:jpsigma0}
If~\(S_\sigma(U)\) is active, then \(j(0) \oplus U \equiv_T j(p_\sigma)
\oplus U\).

Since \(j(0)=E\), we can replace the disjunct \(j(p_{\sigma0}) =
\Gamma^{j(p_\sigma)\oplus U}\) in the conclusion of the \(S_\sigma(U)\)\nbd
requirement by
\[
j(p_{\sigma0})=\Gamma^{E \oplus U}.
\]
\item\label{it:psig,a0}
Since \(p_\sigma \prec p_{\sigma0}\), there is a unique \(c_\sigma \in
\Ji(\cL)\) such that
\[
\spec(p_{\sigma0}) = \spec(p_\sigma)\cup \{c_\sigma \}
\]
and
\[
p_{\sigma0} = p_\sigma \vee c_\sigma.
\]
Hence
\[
j(p_{\sigma0}) \equiv_T j(p_\sigma)\oplus C_\sigma.
\]
So if~\(S_\sigma(U)\) is active, we can rephrase the \(S_\sigma(U)\)\nbd
requirement as
\[
S_\sigma(U): C_\sigma = \Gamma^{E \oplus U} \lor
             U = \Delta_\sigma^{j(q_{\sigma1})}
\]
\item\label{it:psigma0qsigma1}
Since \(j(p_{\sigma0}) \equiv_T j(p_\sigma)\oplus C_\sigma\) and
\(j(p_\sigma)\le_T j(q_{\sigma1})\), we have
\[
j(p_{\sigma0})\nleq_T j(q_{\sigma1}) \text{ iff }
  C_\sigma \nleq_T j(q_{\sigma1}).
\]
Hence we can rewrite the requirement~\(R_\sigma(\Psi)\) as
\[
R_\sigma(\Psi) : C_\sigma \neq \Psi^{j(q_{\sigma1})}.
\]
\end{enumerate}
\end{remark}

\begin{definition}\label{def:Fsigma}
    For \(\sigma \in T_L\), we define~\(F_\sigma(U)\) recursively as follows,
    \begin{enumerate}
    \item \(F_\lambda(U) = \varnothing\),
    \item \(F_{\sigma0}(U) = F_\sigma(U) \cup \{\Gamma_\sigma \}\),
    \item \(F_{\sigma1}(U) = F_\sigma(U) \cup \{\Delta_\sigma \}\),
    \end{enumerate}
    where~\(\Gamma_\sigma\) stands for the requirement
    \(C_\sigma=\Gamma_\sigma^{E \oplus U}\), and~\(\Delta_\sigma\) stands for
    the requirement \(U = \Delta_\sigma^{j(q_{\sigma1})}\).
\end{definition}

We say~\(F_\sigma(U)\) is \emph{satisfied} if all functionals in
\(F_\sigma(U)\) are total and correct. Given two
functionals~\(\Lambda_\sigma\) and~\(\Lambda_\tau\) in~\(F_\eta(U)\), we
say~\(\Lambda_\sigma\) is \emph{higher than}~\(\Lambda_\tau\)
(and~\(\Lambda_\tau\) is \emph{lower than}~\(\Lambda_\sigma\)) if \(\sigma
\subset \tau\). (So~\(T_\cL\) is ``growing downward''.)

\begin{lemma}\label{lem:U-active}
\(L_\sigma\) is \(U\)\nbd active iff~\(F_\sigma(U)\) is satisfied.
\end{lemma}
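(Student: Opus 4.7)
I will prove the equivalence by induction on~$|\sigma|$. For the base case $\sigma = \lambda$, note that $F_\lambda(U) = \varnothing$ is vacuously satisfied, and $L_\lambda = L$ is $U$\nbd active for every d.c.e.\ set~$U$, since $j(0) = E \le_T E \oplus U \le_T K \equiv_T j(1)$ by the global requirement~$G$ together with the fact that every d.c.e.\ set is Turing below~$K$.

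For the inductive step, fix $\sigma \in T'_\cL$ for which the equivalence holds, and let $b \in \{0,1\}$. If $L_{\sigma b}$ is $U$\nbd active, then since $[p_{\sigma b}, q_{\sigma b}] \subseteq [p_\sigma, q_\sigma]$ the interval~$L_\sigma$ is also $U$\nbd active, so by the inductive hypothesis $F_\sigma(U)$ is satisfied, and by the unlabeled lemma immediately preceding Definition~\ref{def:Fsigma} the requirement~$S_\sigma(U)$ is active. Hence~$\Gamma_\sigma$ or~$\Delta_\sigma$ is total and correct; in either case, part~(2) of that lemma makes the corresponding child interval $U$\nbd active. The earlier splitting lemma $\bj[L_\sigma] = \bj[L_{\sigma 0}] \sqcup \bj[L_{\sigma 1}]$ forces that child to be~$L_{\sigma b}$, so $F_{\sigma b}(U)$ is satisfied. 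Conversely, if $F_{\sigma b}(U)$ is satisfied, then $F_\sigma(U) \subseteq F_{\sigma b}(U)$ is satisfied, so by the inductive hypothesis $L_\sigma$ is $U$\nbd active, and thus $S_\sigma(U)$ is active. The distinguished functional in $F_{\sigma b}(U) \setminus F_\sigma(U)$ (namely~$\Gamma_\sigma$ if $b=0$ and~$\Delta_\sigma$ if $b=1$) is then total and correct, so part~(2) of the same lemma yields that~$L_{\sigma b}$ is $U$\nbd active.

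The only point requiring care is the forward direction: the $S_\sigma(U)$\nbd requirement alone supplies only \emph{some} total functional, not specifically~$\Gamma_\sigma$ when $b=0$ or~$\Delta_\sigma$ when $b=1$. The disjointness of $\bj[L_{\sigma 0}]$ and $\bj[L_{\sigma 1}]$, previously obtained from the $R$\nbd requirements, resolves this automatically, and this is essentially the only place in the proof where the $R$\nbd requirements enter.
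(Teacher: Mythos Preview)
Your proof is correct and follows the same inductive approach as the paper. The paper's proof is terser: it simply asserts the chain of equivalences ``$L_{\sigma 0}$ is $U$-active iff $L_\sigma$ is $U$-active and $\Gamma_\sigma$ is total iff $F_\sigma(U)$ is satisfied and $\Gamma_\sigma$ is total iff $F_{\sigma 0}(U)$ is satisfied'' (and likewise for~$\sigma 1$ with~$\Delta_\sigma$), leaving the forward direction of the first iff implicit. You correctly observe and make explicit that this forward direction needs the disjointness $\bj[L_{\sigma 0}] \cap \bj[L_{\sigma 1}] = \varnothing$ coming from the $R$-requirements: knowing merely that $L_{\sigma 0}$ is $U$-active and that $S_\sigma(U)$ is met does not by itself single out~$\Gamma_\sigma$ over~$\Delta_\sigma$; the disjointness is what rules out the other disjunct. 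One minor point: the lemma you cite as ``immediately preceding Definition~\ref{def:Fsigma}'' is actually the unlabeled lemma right after the definition of $U$-active in Section~\ref{sub:req}, a couple of items earlier; the mathematics is unaffected.
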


\begin{proof}
We proceed by induction on \(\sigma \in T_\cL\).~\(L_\lambda\) is obviously
\(U\)\nbd active and \(F_\lambda(U)=\varnothing\).

(1) \(L_{\sigma0}\) is \(U\)\nbd active iff~\(L_\sigma\) is \(U\)\nbd active
and~\(\Gamma_\sigma\) is total iff~\(F_\sigma(U)\) is satisfied
and~\(\Gamma_\sigma\) is total iff~\(F_{\sigma0}(U)\) is satisfied.

(2) \(L_{\sigma1}\) is \(U\)\nbd active iff~\(L_\sigma\) is \(U\)\nbd active
and~\(\Delta_\sigma\) is total iff~\(F_\sigma(U)\) is satisfied
and~\(\Delta_\sigma\) is total iff~\(F_{\sigma1}(U)\) is satisfied.
\end{proof}

\begin{definition}
Let~\([T_\cL]\) be the set of all leaves of the finite tree~\(T_\cL\)
(Definition~\ref{def:T_L}). We lexicographically order~\([T_\cL]\), denoted
by~\(<\).
\end{definition}

Now we can write our \(S(U)\)\nbd requirements succinctly as follows:
\[
S(U): (\exists \eta\in [T_\cL])\; F_\eta(U)
\]
Note that we stated the \(S(U)\)\nbd requirement above in a more compact
form; it is equivalent to the previous list of requirements for various
\(\sigma \in T'_\cL\) by induction on~\(|\sigma|\).

\(j(q_{\sigma 1})\), as it appeared in~\(\Delta_\sigma\), will be simplified
in Section~\ref{sub:R-req} and Section~\ref{sub:conflicts}.

\subsection{\texorpdfstring{The \(R\)-requirements rephrased}%
{The R-requirements rephrased}}\label{sub:R-req}

We now want to identify~\(q_{\sigma1}\) more carefully for each \(\sigma \in
T'_\cL\).

\begin{lemma}\label{lem:*}
If \(a \le p\) and \(a \prec c\) and \(c \nleq p\), then \((c \vee p)*c =
p*a\)
\end{lemma}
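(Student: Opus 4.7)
The plan is to prove equality by establishing both inequalities $p * a \le (c \vee p) * c$ and $(c \vee p) * c \le p * a$ directly from the defining universal property of the relative pseudocomplement, using distributivity and one key structural observation that ties the hypotheses together.

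The central observation I would extract first is that $p \wedge c = a$. Indeed, $a \le p$ and $a \le c$ give $a \le p \wedge c$; and $p \wedge c \le c$ cannot be equality, because $p \wedge c = c$ would mean $c \le p$, contradicting $c \nleq p$. So $p \wedge c$ lies in the half-open interval $[a, c)$, and since $a \prec c$ there is no room strictly between, forcing $p \wedge c = a$. This is the only place where $c \nleq p$ and the covering relation $a \prec c$ actually get used, and it is the step I would highlight as the conceptual heart of the lemma.

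With this in hand, the first inequality $p * a \le (c \vee p) * c$ reduces by the definition of $(c\vee p)*c$ to verifying $(c \vee p) \wedge (p*a) \le c$. By distributivity,
\[
(c \vee p) \wedge (p*a) = \bigl(c \wedge (p*a)\bigr) \vee \bigl(p \wedge (p*a)\bigr),
\]
and the first joinand lies below $c$ trivially while the second lies below $a$ by Lemma~\ref{lem:useful}\ref{it:useful2}, hence below $c$ since $a < c$.

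The reverse inequality is where the observation $p \wedge c = a$ pays off. Setting $z = (c \vee p)*c$, we have $(c\vee p) \wedge z \le c$, and since $p \le c \vee p$ this gives $p \wedge z \le c$; combined with the trivial $p \wedge z \le p$ we obtain $p \wedge z \le p \wedge c = a$, which by definition of $p * a$ yields $z \le p * a$. I do not anticipate a serious obstacle: the only non-routine idea is spotting $p \wedge c = a$, after which everything collapses to two short distributive computations.
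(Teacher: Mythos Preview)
Your proof is correct and follows essentially the same approach as the paper's: both directions are obtained from the defining property of the relative pseudocomplement via a short distributive computation, with the covering relation $a \prec c$ and the hypothesis $c \nleq p$ entering through the fact that $p \wedge c = a$. The only organizational difference is that you isolate $p \wedge c = a$ explicitly at the outset, whereas the paper leaves this step implicit inside its argument for $(c \vee p)*c \le p*a$; your presentation is arguably cleaner for that reason.
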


\begin{proof}
Let \(x = (c \vee p)*c\) and \(y = p * a\). Note right away, by
Lemma~\ref{lem:useful}\eqref{it:useful1}, that \(x \ge c\) and \(y \ge a\).

(1) To show that \(x \le y\), it suffices to show that \(p \wedge x \le a\).
Since \(p \wedge x \neq c\) (since otherwise \(c \le p\)), it suffices to
show \(p \wedge x \le c\) (since then \(a \prec c\)). Now
\[
c = (c \vee p) \wedge x = (c \wedge x) \vee (p \wedge x) = c \vee (p \wedge
x),
\]
where the first equality follows from
Lemma~\ref{lem:useful}\eqref{it:useful2}.

Hence \(p \wedge x \le c\) as desired.

(2) To show that \(y \le x\), we need to show that \((c \vee p)\wedge y \le
c\). But
\[
(c \vee p)\wedge y = (c \wedge y) \vee (p \wedge y) = (c \wedge y)\vee a =
  (c \vee a) \wedge (y \vee a) = c \wedge y \le c,
\]
where the second equality follows from
Lemma~\ref{lem:useful}\eqref{it:useful2}.
\end{proof}

\begin{lemma}\label{lem:**}
If \(a \le p\), \(a \prec c\), \(p \prec p \vee c\), and~\(c\) is a
join-irreducible, then \((p \vee c)*p = c*a\).
\end{lemma}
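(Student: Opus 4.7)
The plan is to prove both inequalities $(p\vee c)*p\le c*a$ and $c*a\le (p\vee c)*p$ directly from the universal properties of the relative pseudocomplement, in the same style as Lemma~\ref{lem:*}. Write $x=(p\vee c)*p$ and $y=c*a$. Note that Lemma~\ref{lem:*} cannot be invoked by merely swapping the roles of $p$ and $c$, since that would require $a\prec p$, which is not among the hypotheses.

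For the direction $y\le x$, I will expand $(p\vee c)\wedge y$ by distributivity as $(p\wedge y)\vee(c\wedge y)$; the first joinand is $\le p$ trivially, and the second is $\le a\le p$ by the defining property of $y=c*a$ together with the hypothesis $a\le p$. This bounds $(p\vee c)\wedge y$ by $p$, and the universal property of $(p\vee c)*p$ then yields $y\le x$.

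For the direction $x\le y$, the crux is to first establish the auxiliary identity $c\wedge p=a$. The chain $a\le c\wedge p\le c$ holds because $a\le p$ and $a\le c$, while $c\wedge p=c$ would force $c\le p$ and hence $p\vee c=p$, contradicting $p\prec p\vee c$; since $a\prec c$ leaves no room strictly between $a$ and $c$, this pins down $c\wedge p=a$. Then $c\wedge x\le c\wedge((p\vee c)\wedge x)\le c\wedge p=a$ (using $(p\vee c)\wedge x\le p$ by definition of $x$), so $x\le c*a=y$.

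The only real content of the argument is the identity $c\wedge p=a$; once that is in hand, both inequalities collapse to short manipulations. It is worth flagging that the join-irreducibility of $c$ does not appear to enter this proof — only $a\prec c$ and $c\nleq p$ (the latter following from $p\prec p\vee c$) are actually used, so the hypothesis that $c$ is join-irreducible seems to be recorded chiefly for use elsewhere in the construction rather than for this lemma itself.
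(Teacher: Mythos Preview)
Your proof is correct, and it takes a genuinely different route from the paper's. The paper proceeds by picking a maximal chain \(a=a_0\prec a_1\prec\cdots\prec a_n=p\), setting \(b_i=a_i\vee c\), and using the \(\spec\)-representation (this is where join-irreducibility of~\(c\) enters, via \(\spec(b_i)=\spec(a_i)\cup\{c\}\)) to verify the covering relations \(a_i\prec b_i\) and \(b_i=a_i\vee b_{i-1}\); it then applies Lemma~\ref{lem:*} iteratively along this chain to telescope \((p\vee c)*p=b_n*a_n\) down to \(b_0*a_0=c*a\). Your argument bypasses all of this: you isolate the single identity \(c\wedge p=a\) (from \(a\prec c\) and \(c\nleq p\)) and then both inequalities drop out of the defining property of~\(*\) and one use of distributivity. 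In particular your observation is right that join-irreducibility of~\(c\) is not needed for the lemma as stated; the paper's chain argument happens to invoke it through the \(\spec\) machinery, but your direct proof shows it is dispensable. What the paper's approach buys is a visible structural link back to Lemma~\ref{lem:*}; what yours buys is brevity and a slightly stronger statement.

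One cosmetic point: in the line ``\(c\wedge x\le c\wedge((p\vee c)\wedge x)\)'' it would be clearer to write equality and justify it by \(c\le p\vee c\), since as written it momentarily looks like you are assuming \(x\le p\vee c\).
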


\begin{proof}
Pick \(a = a_0 \prec a_1 \prec \cdots \prec a_n = p\). Let \(b_i = a_i \vee
c\). Then we claim:
\begin{enumerate}
\item \(c = b_0 \prec b_1 \prec \cdots \prec b_n = p \vee c\),
\item \(a_i \prec b_i\),
\item \(b_i = a_i \vee b_{i-1}\) for \(i \ge 1\).
\end{enumerate}
By an easy induction argument, observe that \(\spec(b_i) = \spec(a_i) \cup
\{c\}\) for all \(i \le n\). This immediately gives~(1), (2) and~(3).

Now, applying the Lemma~\ref{lem:*} repeatedly, we obtain
\begin{align*}
(p \vee c)* p &= b_n*a_n\\
  &= (a_n \vee b_{n-1})*a_n\\
  &=b_{n-1}*a_{n-1} = \cdots  = c*a.
\end{align*}
\end{proof}

Now we can compute~\(q_{\sigma1}\) explicitly.

Now we have
\begin{align*}
q_{\sigma1} &= {(p_{\sigma0}^*)}^{L_\sigma}\\
  & = (p_{\sigma0}*p_\sigma)\wedge q_\sigma \\
  & = ((p_\sigma \vee c_\sigma)*p_\sigma) \wedge q_\sigma \\
  & = (c_\sigma*c_{\sigma,*})\wedge q_\sigma.
\end{align*}
Here, the last equality uses Lemma~\ref{lem:**} and the fact that
\(c_{\sigma,*} \le p_\sigma\).

\begin{definition}\label{def:special}
    For \(\sigma \in T_L\),~\(\sigma\) is \emph{special} if \(L_\sigma =
    [p_\sigma, 1]\) (i.e., if \(q_\sigma=1\)).
\end{definition}

Obviously,~\(\sigma\) is special iff~\(\sigma0\) is special (since being
special is just another way of saying that~\(\sigma\) is a string where every
bit is~\(0\)).

Note that if~\(\sigma\) is special, then \(q_{\sigma 1} =
c_\sigma*c_{\sigma,*}\), which is immediate from \(q_\sigma=1\) for
special~\(\sigma\), and which implies the following

\begin{lemma}
If \(c_\tau = c_\sigma\) where~\(\sigma\) is special, then \(q_{\tau 1}\le
q_{\sigma 1}\). \qed{}
\end{lemma}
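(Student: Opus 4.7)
The plan is to read off the inequality directly from the explicit formula for $q_{\tau 1}$ computed immediately before the lemma, combined with the observation that the operation $c \mapsto c_*$ is a function of $c$ alone (it is defined as the unique element covered by $c$ when $c$ is join-irreducible, independent of the tree node $\tau$ that produced $c$ as its $c_\tau$).

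First, I would recall that the computation preceding the lemma gives, for every $\tau \in T'_\cL$,
\[
q_{\tau 1} \;=\; (c_\tau * c_{\tau,*}) \wedge q_\tau.
\]
Next, from the hypothesis $c_\tau = c_\sigma$ and the fact that for any join-irreducible $c$ the element $c_*$ is uniquely determined (it is the unique element with $c_* \prec c$), we get $c_{\tau,*} = c_{\sigma,*}$. Hence
\[
q_{\tau 1} \;=\; (c_\sigma * c_{\sigma,*}) \wedge q_\tau \;\le\; c_\sigma * c_{\sigma,*}.
\]

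Finally, since $\sigma$ is special, $q_\sigma = 1$, and so the same formula collapses to $q_{\sigma 1} = c_\sigma * c_{\sigma,*}$ (as already noted just before the lemma). Substituting yields $q_{\tau 1} \le q_{\sigma 1}$, as desired. There is really no obstacle here: all the genuine content is already contained in Lemmas~\ref{lem:*} and~\ref{lem:**} and the explicit formula they produced. The only thing to be careful about is the trivial but important remark that $c_{\tau,*}$ depends only on the join-irreducible $c_\tau$, not on $\tau$ itself, which is exactly why merely matching the $c$-labels suffices to force the $q_1$-inequality.
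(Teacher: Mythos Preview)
Your proof is correct and is exactly the argument the paper intends: the lemma is stated with an immediate \qed{} precisely because the displayed formula $q_{\tau 1}=(c_\tau*c_{\tau,*})\wedge q_\tau$ together with $q_{\sigma 1}=c_\sigma*c_{\sigma,*}$ for special~$\sigma$ gives the inequality at once. Your only added remark, that $c_{\tau,*}$ depends solely on the join-irreducible $c_\tau$ and not on~$\tau$, is the one point worth making explicit, and you handle it correctly.
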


We next show the following

\begin{lemma}\label{lem:spec*}
\(\Ji(\cL) = \{c_\sigma \mid \sigma \in T'_\cL \text{ is special}\}\).
\end{lemma}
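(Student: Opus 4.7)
The plan is to prove the two inclusions separately, using the tree structure of special nodes together with the length-uniformity of maximal chains in a finite distributive lattice.

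First, the inclusion $\{c_\sigma \mid \sigma \in T'_\cL \text{ special}\} \subseteq \Ji(\cL)$ is immediate from Remark~\ref{rem:p_sigma}\eqref{it:psig,a0}: by construction $c_\sigma$ is a join-irreducible whenever it is defined, regardless of whether $\sigma$ is special.

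For the converse inclusion, I would first observe that, because $\sigma$ is special exactly when $\sigma$ is a string of all $0$'s (as noted right after Definition~\ref{def:special}), the special nodes of $T'_\cL$ form a single chain $\lambda, 0, 00, \dots, 0^{n-1}$, where $0^n$ is the unique leaf on this branch. Along this chain we have $p_\lambda = 0$ and $p_{0^{k+1}} = p_{(0^k)0}$ with $p_{0^k} \prec p_{0^{k+1}}$ and $q_{0^k} = 1$, so that $\{p_{0^k} \mid 0 \le k \le n\}$ is a maximal chain in $\cL$ from $0$ to $1$.

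Next, for each special $0^k$ with $k<n$, Remark~\ref{rem:p_sigma}\eqref{it:psig,a0} gives a unique $c_{0^k} \in \Ji(\cL)$ with $\spec(p_{0^{k+1}}) = \spec(p_{0^k}) \cup \{c_{0^k}\}$. Since $p_{0^k} \prec p_{0^{k+1}}$, the element $c_{0^k}$ is not already in $\spec(p_{0^k})$, so the join-irreducibles $c_\lambda, c_0, \dots, c_{0^{n-1}}$ are pairwise distinct and all belong to $\spec(1) = \Ji(\cL)$.

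Finally, I would invoke the second theorem quoted from~\cite{Gr11}, which states that every maximal chain in $\cL$ has length $|\Ji(\cL)|+1$; applied to the maximal chain $0 = p_\lambda \prec p_0 \prec \cdots \prec p_{0^n} = 1$ this forces $n = |\Ji(\cL)|$. Hence the set $\{c_{0^k} \mid 0 \le k < n\}$ has exactly $|\Ji(\cL)|$ elements and is contained in $\Ji(\cL)$, so it equals $\Ji(\cL)$. No real obstacle is anticipated; the only point requiring care is the identification of the special non-leaf nodes as the initial segment of the all-$0$ branch, which is the bookkeeping step that lets the chain-length theorem be applied.
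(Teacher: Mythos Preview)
Your proof is correct and follows the same route as the paper: walk along the special nodes $\lambda,0,00,\ldots,0^{n-1}$ and use the recursion $\spec(p_{0^{k+1}})=\spec(p_{0^k})\cup\{c_{0^k}\}$ from Remark~\ref{rem:p_sigma}\eqref{it:psig,a0} to identify $\spec(1)$ with $\{c_{0^k}\mid k<n\}$. The only difference is that the paper reads off $\Ji(\cL)=\spec(p_{0^n})$ directly at the leaf $0^n$ (where $p_{0^n}=q_{0^n}=1$), whereas you take a small detour through the maximal-chain-length theorem to count; that detour is harmless but unnecessary.
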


\begin{proof}
For \(\sigma \in T_\cL\), if~\(\sigma\) is special, then~\(\sigma 0\) is
special, and inductively we have
\[
p_{\sigma 0} = p_\sigma \vee c_\sigma = c_\lambda \vee \cdots \vee c_\sigma.
\]

Let~\(\sigma\) be the longest special~\(\sigma\) in~\(T'_\cL\), then
\[
p_{\sigma 0} = 1 = c_\lambda \vee \cdots \vee c_\sigma.
\]
Therefore \(\Ji(\cL) = \spec(1) = \{c_\sigma \mid \sigma \in T'_\cL \text{ is
special}\}\).
\end{proof}

Now consider the requirement \(R_\tau(\Psi) : C_\tau \neq \Psi^{j(q_{\tau
1})}\). By Lemma~\ref{lem:spec*}, there exists a special~\(\sigma\) such that
\(c_\tau = c_\sigma\). Also note that \(q_{\tau 1} \le q_{\sigma 1}\);
therefore we only need to keep the requirements
\[
R_\sigma(\Psi) : C_\sigma \neq \Psi^{j(q_{\sigma1})}
\]
for the special nodes \(\sigma \in T'_\cL\). But if~\(\sigma\) is special,
then we have \(q_{\sigma 1} = c_\sigma*c_{\sigma,*}\), so we can rewrite our
\(R\)\nbd requirements as follows:
\[
R_c(\Psi):C \neq \Psi^{j(c*c_*)}
\]
where \(c \in \Ji(\cL)\). (Note here that we are switching from the
notation~\(R_{c_\sigma}\) to the notation~\(R_c\) for brevity.)

The final version of our requirements is thus the following:
\begin{align*}
G&: K = \Theta^{j(1)}\\
S(U)&: (\exists \eta\in [T_\cL])\; F_\eta(U)\\
R_c(\Psi)&:C \neq \Psi^{j(c*c_*)} \text{ (for each \(c \in \Ji(\cL)\))}
\end{align*}
where~\(U\) ranges over all d.c.e.\ sets and~\(\Psi\) ranges over all Turing
functionals, and where~\(F_\eta(U)\) was defined in Section~\ref{sub:S-req}.

\subsection{Three examples}\label{sub:example}

We give three examples (see Figure~\ref{fig:lattices}) and their requirements
in this section. In each lattice~\(\cL\), join-irreducible elements are
marked by~\(\bullet\) and also labeled according to~\(T_\cL\), the other
elements are marked by~\(\circ\) with the least element labeled by~\(0\). The
\(S\)\nbd requirements for each lattice are generated as in
Figure~\ref{fig:lattices-S}.

\begin{figure}[ht]\centering
    \begin{subfigure}[b]{0.25\textwidth}
        {\hspace{1cm}\includegraphics{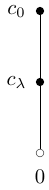}}
        \caption{3-element chain}\label{fig:lattice3}
    \end{subfigure}
    \begin{subfigure}[b]{0.3\textwidth}
        \includegraphics{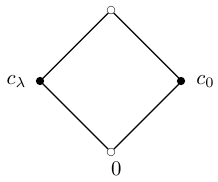}
        \caption{diamond lattice}\label{fig:lattice4}
    \end{subfigure}
    \begin{subfigure}[b]{0.4\textwidth}
        \includegraphics{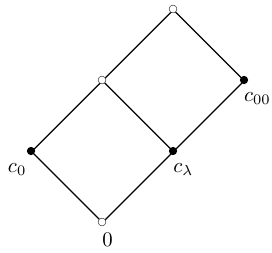}
        \caption{6-element lattice}\label{fig:lattice6}
    \end{subfigure}
    \caption{Pictures of lattices}\label{fig:lattices}
\end{figure}

We remark that in Figures~\ref{fig:lattice4} and~\ref{fig:lattice4-S}, we
write \(C_0 = \Gamma_1^{E \oplus U}\) instead of \(C_1 = \Gamma_1^{E \oplus
U}\) because \(c_1 = c_0\). In the same manner, in Figures~\ref{fig:lattice6}
and~\ref{fig:lattice6-S}, we replace~\(C_1\) and~\(C_{01}\) by~\(C_0\)
and~\(C_{00}\), respectively.
%STEFFEN24: I think it is very confusing to rewrite the C;
%I would stick with one notation!

\begin{figure}[ht]\centering
    \begin{subfigure}[b]{0.9\textwidth}
        \centering
        \includegraphics{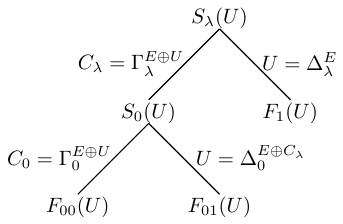}
        \caption{S-requirements for the 3-element
        chain}\label{fig:lattice3-S}
    \end{subfigure}

    \begin{subfigure}[b]{0.9\textwidth}
        \centering
        \includegraphics{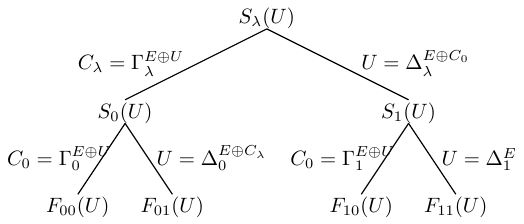}
        \caption{S-requirements for the diamond}\label{fig:lattice4-S}
    \end{subfigure}

    \begin{subfigure}[b]{0.9\textwidth}
        \centering
        \includegraphics[width=1\textwidth]{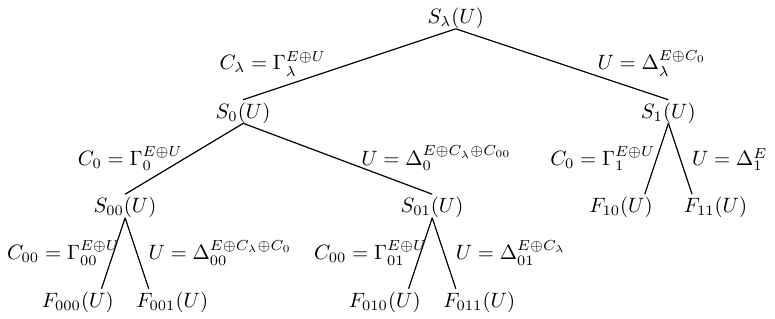}
        \caption{S-requirements for the 6-element
        lattice}\label{fig:lattice6-S}
    \end{subfigure}
    \caption{S-requirements for each lattice}\label{fig:lattices-S}
\end{figure}

Using Figure~\ref{fig:lattices-S}, we give the requirements for each lattice
in Figure~\ref{fig:lattices}.

\begin{enumerate}[label= (\alph*)]
    \item The requirements for the three-element chain
        (Figure~\ref{fig:lattice3}): \begin{align*}
        G&: K = \Theta^{E \oplus C_\lambda \oplus C_0},\\
        S(U)&: F_{00}(U) \lor F_{01}(U)\lor F_{1}(U),\\
        R_{c_\lambda}(\Psi)&: C_\lambda \neq \Psi^{E},\\
        R_{c_0}(\Psi)&: C_0 \neq \Psi^{E \oplus C_\lambda}.
    \end{align*}
    \item The requirements for the diamond (Figure~\ref{fig:lattice4}):
        \begin{align*}
        G&: K = \Theta^{E \oplus C_\lambda \oplus C_0},\\
        S(U)&: F_{00}(U) \lor F_{01}(U)\lor F_{10}(U)\lor F_{11}(U),\\
        R_{c_\lambda}(\Psi)&: C_\lambda \neq \Psi^{E \oplus C_0},\\
        R_{c_0}(\Psi)&: C_0 \neq \Psi^{E \oplus C_\lambda}.
    \end{align*}
    \item The requirements for the six-element lattice
        (Figure~\ref{fig:lattice6}): \begin{align*}
        G&: K = \Theta^{E \oplus C_\lambda \oplus C_0 \oplus C_{00}},\\
        S(U)&: F_{000}(U) \lor F_{001}(U)\lor \cdots \lor F_{11}(U),\\
        R_{c_\lambda}(\Psi)&: C_\lambda \neq \Psi^{E \oplus C_0},\\
        R_{c_0}(\Psi)&: C_0 \neq \Psi^{E \oplus C_\lambda \oplus C_{00}},\\
        R_{c_{00}}(\Psi)&: C_{00}\neq \Psi^{E \oplus C_\lambda \oplus C_0}.
    \end{align*}
\end{enumerate}

\subsection{A first discussion of the potential conflicts}%
\label{sub:conflicts}

Considering the three examples in Figure~\ref{fig:lattices} and their
\(S\)\nbd requirements in Figure~\ref{fig:lattices-S}, we observe the
following properties:
\begin{enumerate}
\item
If \(c_\sigma<c_\tau\), then~\(C_\sigma\) appears above~\(C_\tau\). For
example, in Figure~\ref{fig:lattice6-S} we have~\(C_\lambda\) appears
above~\(C_{00}\) but not necessarily above~\(C_{0}\). (See
Lemma~\ref{lem:spec1}.)
\item
The oracle of each~\(\Delta_\sigma\)
is the join of the set~\(E\) and the sets~\(C_\tau\) with $\tau0 \subseteq
\sigma$ or $\sigma1 \subseteq \tau$. (See
Lemmas~\ref{lem:tau0},~\ref{lem:sigma1}, and~\ref{lem:deltaoracle1}.)
\end{enumerate}
These properties are crucial, so we will formally prove them now.

\begin{lemma}\label{lem:spec0}
For \(\sigma \in T_L\), \(\spec(p_\sigma) = \{c_\tau \mid \tau0 \subseteq
\sigma \}\).
\end{lemma}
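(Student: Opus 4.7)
The plan is to prove this by straightforward induction on the length of $\sigma \in T_\cL$, using the recursive definition of $L_\sigma$ from Section~\ref{sub:req} together with the description of $c_\sigma$ from Remark~\ref{rem:p_sigma}\eqref{it:psig,a0}.

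For the base case $\sigma=\lambda$, we have $p_\lambda = 0$, so $\spec(p_\lambda) = \varnothing$; and since $\lambda$ is the empty string, no string of the form $\tau 0$ can be a (weak) prefix of $\lambda$, so the right-hand side is also empty.

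For the inductive step, fix $\sigma \in T'_\cL$ and assume $\spec(p_\sigma) = \{c_\tau \mid \tau 0 \subseteq \sigma\}$. For the child $\sigma 0$, recall that $p_{\sigma 0}$ was chosen so that $p_\sigma \prec p_{\sigma 0}$, and by Remark~\ref{rem:p_sigma}\eqref{it:psig,a0} the unique join-irreducible $c_\sigma$ satisfies $\spec(p_{\sigma 0}) = \spec(p_\sigma) \cup \{c_\sigma\}$. The strings $\tau$ with $\tau 0 \subseteq \sigma 0$ are precisely those with $\tau 0 \subseteq \sigma$ together with $\tau = \sigma$ itself, so the right-hand side becomes $\{c_\tau \mid \tau 0 \subseteq \sigma\} \cup \{c_\sigma\}$; by the inductive hypothesis, this equals $\spec(p_{\sigma 0})$. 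For the child $\sigma 1$, the construction sets $p_{\sigma 1} = p_\sigma$, so $\spec(p_{\sigma 1}) = \spec(p_\sigma)$. Since $\sigma 1$ ends in $1$, any $\tau 0$ that is a prefix of $\sigma 1$ is actually a prefix of $\sigma$, so $\{c_\tau \mid \tau 0 \subseteq \sigma 1\} = \{c_\tau \mid \tau 0 \subseteq \sigma\}$, which matches $\spec(p_{\sigma 1})$ by the inductive hypothesis.

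There is no real obstacle here; the lemma is essentially a bookkeeping translation between the recursive tree construction of the $L_\sigma$ and the recursive accumulation of join-irreducibles $c_\sigma$ introduced each time we take a left branch. The only point worth double-checking is the prefix analysis in the $\sigma 1$ case, which uses that a binary string ending in $0$ cannot equal one ending in $1$.
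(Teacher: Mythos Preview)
Your proof is correct and follows essentially the same inductive argument as the paper's own proof: the base case $p_\lambda=0$ and the two inductive cases for $\sigma0$ (using $\spec(p_{\sigma0})=\spec(p_\sigma)\cup\{c_\sigma\}$) and $\sigma1$ (using $p_{\sigma1}=p_\sigma$) match the paper exactly, with your version spelling out the prefix analysis in slightly more detail.
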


\begin{proof}
The lemma holds for~\(\lambda\) since \(p_\lambda=0\) and so
\(\spec(p_\lambda) = \varnothing\).

Suppose that it holds for~\(\sigma\); then it holds for~\(\sigma 0\)
and~\(\sigma 1\) because
\begin{align*}
\spec(p_{\sigma 0}) &=\spec(p_\sigma)\cup \{c_\sigma \} =
  \{c_\tau \mid \tau0 \subseteq \sigma 0 \}, \text{ and}\\
\spec(p_{\sigma 1}) &=\spec(p_\sigma) =
  \{c_\tau \mid \tau0 \subseteq \sigma \} = \{c_\tau \mid \tau0
   \subseteq \sigma 1 \}.  \qedhere
\end{align*}
\end{proof}

\begin{lemma}\label{lem:spec1}
If~\(c\) is join-irreducible in~\(\cL\) and \(c<c_\sigma\), then there is
some~\(\tau\) such that \(\tau0 \subseteq \sigma\) and \(c = c_\tau\).
\end{lemma}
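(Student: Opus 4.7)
The plan is to reduce this immediately to Lemma~\ref{lem:spec0} applied to~\(\sigma 0\). Since~\(c_\sigma\) exists, we have \(\sigma \in T'_\cL\), hence \(\sigma 0 \in T_\cL\), so Lemma~\ref{lem:spec0} yields
\[
\spec(p_{\sigma 0}) = \{c_\tau \mid \tau 0 \subseteq \sigma 0\}.
\]

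First I would observe that \(c \in \spec(p_{\sigma 0})\): by Remark~\ref{rem:p_sigma}\ref{it:psig,a0}, \(c_\sigma \le p_\sigma \vee c_\sigma = p_{\sigma 0}\), so the hypothesis \(c < c_\sigma\) combined with \(c \in \Ji(\cL)\) gives \(c \in \spec(p_{\sigma 0})\). Therefore \(c = c_\tau\) for some~\(\tau\) with \(\tau 0 \subseteq \sigma 0\).

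The only case to rule out is \(\tau 0 = \sigma 0\), i.e., \(\tau = \sigma\), since otherwise \(\tau 0 \subseteq \sigma\) already. But \(\tau = \sigma\) would give \(c = c_\sigma\), contradicting \(c < c_\sigma\). Hence \(\tau 0 \subseteq \sigma\), as required.

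There is no real obstacle here; the statement is essentially a corollary of Lemma~\ref{lem:spec0} together with the fact that \(c_\sigma\) lies above \(\spec(p_\sigma)\) only by itself. The whole proof should fit in a few lines.
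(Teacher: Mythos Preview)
Your proof is correct and follows essentially the same approach as the paper. The only cosmetic difference is that the paper applies Lemma~\ref{lem:spec0} directly to~\(\sigma\) after first observing \(c \in \spec(p_{\sigma0}) \setminus \{c_\sigma\} = \spec(p_\sigma)\), whereas you apply Lemma~\ref{lem:spec0} to~\(\sigma0\) and then rule out the case \(\tau = \sigma\) afterward; the content is identical.
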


\begin{proof}
Recall that \(\spec(p_{\sigma0}) = \spec(p_\sigma) \cup \{c_\sigma\}\). If
\(c<c_\sigma\), then \(c\in \spec(p_{\sigma 0})\setminus \{c_\sigma \} =
\spec(p_\sigma)\), so \(c \le p_\sigma\). Now apply Lemma~\ref{lem:spec0}.
\end{proof}

\begin{lemma}\label{lem:tau0}
Let \(\sigma, \tau \in T_\cL\). If \(\tau0 \subseteq \sigma\), then \(c_\tau
\le q_{\sigma 1}\).
\end{lemma}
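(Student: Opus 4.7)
The plan is to reduce this to Lemma~\ref{lem:spec0} via the observation that $q_{\sigma 1}$ lies in the interval $L_\sigma$, so it is bounded below by $p_\sigma$.

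First I would note that for $q_{\sigma 1}$ to be defined, $\sigma$ must be a non-leaf of $T_\cL$ (so $\sigma \in T'_\cL$), and then $q_{\sigma 1} = (p_{\sigma 0}^*)^{L_\sigma}$ is by construction an element of $L_\sigma = [p_\sigma, q_\sigma]$; in particular, $p_\sigma \le q_{\sigma 1}$.

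Next I would invoke Lemma~\ref{lem:spec0}: since $\tau 0 \subseteq \sigma$, we have $c_\tau \in \spec(p_\sigma)$, and hence $c_\tau \le p_\sigma$. Chaining with the previous inequality gives $c_\tau \le p_\sigma \le q_{\sigma 1}$, which is the desired conclusion.

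There is no real obstacle here: the content of the lemma is just the combination of (i) the fact that $q_{\sigma 1}$ is constructed as a meet inside $L_\sigma$ and therefore sits above $p_\sigma$, and (ii) the already-proved description of $\spec(p_\sigma)$ in terms of the $0$-extensions along $\sigma$. The whole argument should fit in two or three lines.
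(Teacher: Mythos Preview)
Your proposal is correct and is essentially identical to the paper's proof: the paper simply writes ``By Lemma~\ref{lem:spec0}, we have $c_\tau \le p_\sigma \le q_{\sigma 1}$,'' which is exactly the chain of inequalities you describe.
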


\begin{proof}
By Lemma~\ref{lem:spec0}, we have \(c_\tau \le p_\sigma \le q_{\sigma 1}\).
\end{proof}

\begin{lemma}\label{lem:sigma1}
Let \(\sigma,\tau \in T_\cL\). If \(\sigma1 \subseteq \tau\), then \(c_\tau
\le q_{\sigma 1}\).
\end{lemma}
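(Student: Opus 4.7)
The plan is to exploit the nested structure of the intervals $L_\eta$ along the tree $T_\cL$. Concretely, I would first establish the monotonicity $L_{\eta'} \subseteq L_\eta$ whenever $\eta \subseteq \eta'$. This follows immediately from the recursive definition in Section~\ref{sub:req}: at each splitting node, $L_{\sigma 0} = [p_{\sigma 0}, q_\sigma]$ with $p_\sigma \prec p_{\sigma 0} \le q_\sigma$ so $L_{\sigma 0} \subseteq L_\sigma$, and $L_{\sigma 1} = [p_\sigma, q_{\sigma 1}]$ with $q_{\sigma 1} = (p_{\sigma 0}^*)^{L_\sigma} \le q_\sigma$ so $L_{\sigma 1} \subseteq L_\sigma$. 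A trivial induction on $|\eta'| - |\eta|$ then gives the general monotonicity claim.

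Applying this with $\eta = \sigma 1$ and $\eta' = \tau$, one immediately obtains $L_\tau \subseteq L_{\sigma 1}$, and in particular $q_\tau \le q_{\sigma 1}$.

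Next, since $\tau \in T'_\cL$ (as $c_\tau$ must be defined), the recursive construction gives $p_{\tau 0} = p_\tau \vee c_\tau$, so $c_\tau \le p_{\tau 0}$. Combined with $p_{\tau 0} \le q_{\tau 0} = q_\tau$ and the inequality $q_\tau \le q_{\sigma 1}$ from the previous step, the chain
\[
c_\tau \le p_{\tau 0} \le q_\tau \le q_{\sigma 1}
\]
delivers the desired conclusion.

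There is no real obstacle here: the lemma is a routine unwinding of the definitions of $p_\sigma$, $q_\sigma$, and $c_\sigma$, coupled with the observation that nesting in the tree $T_\cL$ corresponds to nesting of the intervals $L_\sigma$ in the lattice. The only mild point to verify carefully is that $q_{\sigma 1} \le q_\sigma$, which follows from the fact that $q_{\sigma 1}$ is a pseudocomplement computed inside $L_\sigma$ and hence is bounded above by the top element $q_\sigma$ of $L_\sigma$.
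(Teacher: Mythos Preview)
Your proof is correct and follows essentially the same route as the paper: both use the chain $c_\tau \le p_{\tau 0} \le q_\tau \le q_{\sigma 1}$, where the last inequality comes from the nesting $L_\tau \subseteq L_{\sigma 1}$. The paper simply asserts that $L_\tau$ is a sublattice of $L_{\sigma 1}$ in one line, whereas you spell out the induction showing monotonicity of the intervals along the tree; this extra detail is harmless.
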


\begin{proof}
Since~\(L_\tau\) is a sublattice of~\(L_{\sigma1}\), we have \(c_\tau \le
p_{\tau 0} \le q_\tau \le q_{\sigma 1}\).
\end{proof}

\begin{lemma}\label{lem:deltaoracle1}
Let \(\sigma \in T_\cL\) and \(\eta = \sigma10 \cdots 0 \in [T_L]\). Then
\[
\spec(q_{\sigma 1}) = \{c_\tau \mid \tau 0 \subseteq \eta \} = \{c_\tau \mid
\tau0 \subseteq \sigma \} \cup \{c_\tau \mid \sigma1 \subseteq \tau\}.
\]
\end{lemma}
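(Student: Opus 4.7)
The plan is to reduce the first equality to Lemma~\ref{lem:spec0} applied at the leaf~\(\eta\), after which the second equality becomes a routine combinatorial identity on binary strings.

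The key observation is that \(q_{\sigma 1} = p_\eta\). By the construction of~\(T_\cL\), \(q_{\tau 0} = q_\tau\) whenever \(\tau 0 \in T_\cL\); iterating this along the chain \(\sigma 1 \subset \sigma 1 0 \subset \cdots \subset \eta\) yields \(q_\eta = q_{\sigma 1}\). Since \(\eta\) is a leaf of~\(T_\cL\), the construction halted there precisely because \(p_\eta = q_\eta\), giving \(q_{\sigma 1} = p_\eta\). Lemma~\ref{lem:spec0} then immediately produces
\[
\spec(q_{\sigma 1}) = \spec(p_\eta) = \{c_\tau \mid \tau 0 \subseteq \eta\},
\]
which is the first equality.

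For the second equality, I would fix the shape \(\eta = \sigma 1 0^k\) (for some \(k \ge 0\)) and argue by cases on~\(|\tau 0|\). If \(|\tau 0| \le |\sigma|\), then \(\tau 0\) is already a prefix of~\(\sigma\), giving \(\tau 0 \subseteq \sigma\). Otherwise \(\sigma\) is a proper prefix of~\(\tau 0\); the bit of~\(\eta\) in position \(|\sigma|+1\) is~\(1\) while the final bit of~\(\tau 0\) is~\(0\), so we cannot have \(|\tau 0| = |\sigma|+1\), and hence \(|\tau| \ge |\sigma|+1\) with \(\sigma 1 \subseteq \tau\). The converse inclusion is clear, and the two cases are incompatible on lengths (\(|\tau| < |\sigma|\) versus \(|\tau| > |\sigma|\)). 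I expect no genuine obstacle; the only flash of insight needed is to spot that the all-zero descent from~\(\sigma 1\) to the leaf~\(\eta\) preserves the upper endpoint~\(q\), after which Lemma~\ref{lem:spec0} handles the lattice side and elementary string manipulation handles the rest.
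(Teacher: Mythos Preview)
Your first-equality argument is correct and essentially matches the paper's: both observe \(q_{\sigma 1} = q_\eta = p_\eta\) (the all-zero descent preserves~\(q\), and a leaf has \(p = q\)) and then invoke Lemma~\ref{lem:spec0}.

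The gap is in the second equality. Your case analysis on \(|\tau 0|\) correctly gives the forward inclusion \(\{\tau \mid \tau 0 \subseteq \eta\} \subseteq \{\tau \mid \tau 0 \subseteq \sigma\} \cup \{\tau \mid \sigma 1 \subseteq \tau\}\) at the level of index strings, hence also for the \(c_\tau\)-sets. But the converse is \emph{not} ``clear'' as a string identity: if \(\sigma 1 \subseteq \tau \in T'_\cL\) and \(\tau\) contains a further~\(1\) (e.g.\ \(\tau = \sigma 1 1\)), then \(\tau 0 \not\subseteq \eta = \sigma 1 0^k\). What \emph{is} true is that \(c_\tau \le q_{\sigma 1}\), hence \(c_\tau \in \spec(q_{\sigma 1}) = \{c_\rho \mid \rho 0 \subseteq \eta\}\) --- but that is precisely Lemma~\ref{lem:sigma1}, a lattice fact, not string manipulation. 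The paper closes the circle exactly this way, proving
\[
\{c_\tau \mid \tau 0 \subseteq \eta\} \subseteq \{c_\tau \mid \tau 0 \subseteq \sigma\} \cup \{c_\tau \mid \sigma 1 \subseteq \tau\} \subseteq \spec(q_{\sigma 1})
\]
via Lemmas~\ref{lem:tau0} and~\ref{lem:sigma1} for the second inclusion, after which the first equality collapses the chain. Your proof is repaired simply by replacing ``the converse inclusion is clear'' with an appeal to Lemma~\ref{lem:sigma1} (combined with the first equality you already have).
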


\begin{proof}
Observe that
\[
    \{c_\tau \mid \tau 0 \subseteq \eta \} \subseteq \{c_\tau \mid \tau0
    \subseteq \sigma \} \cup \{c_\tau \mid \sigma1 \subseteq \tau\}
    \subseteq \spec(q_{\sigma 1}),
\]
where the second inclusion follows from Lemmas~\ref{lem:tau0}
and~\ref{lem:sigma1}. So we only need to show \(\spec(q_{\sigma 1}) \subseteq
\{c_\tau \mid \tau 0 \subseteq \eta \}\). Let \(L_{\sigma 1} = [p_{\sigma 1},
q_{\sigma 1}]\). Then \(L_\eta = [q_{\sigma 1}, q_{\sigma 1}]\). Hence,
\(p_\eta = q_{\sigma 1}\), and so \(\spec(q_{\sigma 1}) = \spec(p_\eta) =
\{c_\tau \mid \tau 0 \subseteq \eta \}\) by Lemma~\ref{lem:spec0}.
\end{proof}

From Lemma~\ref{lem:deltaoracle1}, we have the following
\begin{lemma}\label{lem:deltaoracle2}
Let \(\sigma,\tau \in T_\cL\). If \(\sigma 1 \subseteq \tau\), then
\(\spec(q_{\tau 1})\subseteq \spec(q_{\sigma 1})\). \qed
\end{lemma}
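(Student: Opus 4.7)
The plan is to use Lemma \ref{lem:deltaoracle1} to give an explicit description of both specs in terms of the corresponding tree nodes, and then verify the set inclusion by a short case analysis on binary strings. Since $\spec$ is an isomorphism between $\cL$ and $\DownJi(\cL)$, the inclusion of specs is exactly what we need (and is equivalent to $q_{\tau 1} \le q_{\sigma 1}$).

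Applying Lemma~\ref{lem:deltaoracle1} to both $\sigma$ and $\tau$, we get
\[
\spec(q_{\sigma 1}) = \{c_\rho \mid \rho 0 \subseteq \sigma\} \cup \{c_\rho \mid \sigma 1 \subseteq \rho\}
\]
and
\[
\spec(q_{\tau 1}) = \{c_\rho \mid \rho 0 \subseteq \tau\} \cup \{c_\rho \mid \tau 1 \subseteq \rho\}.
\]
Take an arbitrary $c_\rho \in \spec(q_{\tau 1})$. If $\tau 1 \subseteq \rho$, then from the hypothesis $\sigma 1 \subseteq \tau$ we immediately get $\sigma 1 \subseteq \rho$, placing $c_\rho$ in the second piece of $\spec(q_{\sigma 1})$. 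Otherwise $\rho 0 \subseteq \tau$, so both $\rho 0$ and $\sigma 1$ are prefixes of the common string $\tau$ and hence comparable. If $\rho 0 \subseteq \sigma 1$, then since $\rho 0$ ends in $0$ and $\sigma 1$ ends in $1$ the containment is strict, so $\rho 0 \subseteq \sigma$, putting $c_\rho$ into the first piece of $\spec(q_{\sigma 1})$. If instead $\sigma 1 \subseteq \rho 0$, the same end-bit consideration forces strict containment, giving $\sigma 1 \subseteq \rho$, so $c_\rho$ lands in the second piece.

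This handles all cases, establishing $\spec(q_{\tau 1}) \subseteq \spec(q_{\sigma 1})$. The only real subtlety is the routine observation that if $\rho 0$ and $\sigma 1$ are comparable as strings, then one must be a strict prefix of the other because their last bits differ; once this is noted the argument is purely combinatorial and there is no genuine obstacle.
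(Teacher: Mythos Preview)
Your proof is correct and follows the same approach the paper indicates: the paper simply writes ``From Lemma~\ref{lem:deltaoracle1}'' and leaves the verification implicit, while you have carefully spelled out the string-prefix case analysis that makes the inclusion work. The key observation you isolate---that $\rho 0$ and $\sigma 1$, being comparable prefixes of~$\tau$ with different final bits, must satisfy a strict containment---is exactly the small point one needs to check, and you handle it cleanly.
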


Next, we consider \(R\)\nbd requirements and analyze what happens when an
\(R\)\nbd requirement tries to diagonalize. In order to give some intuition,
we will have to talk about \emph{killing} or \emph{correcting} a functional,
the \emph{use} of a computation, and a \emph{witness} of a requirement in the
usual sense, but the formal definitions of these are postponed until
Section~\ref{sec:1U}.

We will first illustrate these using the example of the six-element lattice
in Figure~\ref{fig:lattice6}, the \(S\)\nbd requirements shown in
Figure~\ref{fig:lattice6-S}, and a particular \(R\)\nbd requirement.

Suppose that~\(R_{c_{00}}\) has a witness~\(x\) and a computation with
use~\(\psi(x)\).

Case~0: \(F_{000}(U) = \{\Gamma_\lambda, \Gamma_0, \Gamma_{00}\}\).
Then~\(R_{c_{00}}\) has no conflicts with~\(\Gamma_\lambda\) or~\(\Gamma_0\)
since~\(R_{c_{00}}\) wants to preserve~\(C_\lambda\) and~\(C_0\),
so~\(R_{c_{00}}\) will not trigger any \(\Gamma_\lambda\)- or
\(\Gamma_0\)\nbd correction. But~\(R_{c_{00}}\) has a conflict
with~\(\Gamma_{00}\) since when~\(R_{c_{00}}\) enumerates~\(x\)
into~\(C_{00}\), then \(\Gamma_{00}^{E \oplus U}(x)\), intending to
compute~\(C_{00}\), may require correction by a small number entering~\(E\),
possibly injuring the computation of~\(R_{c_{00}}\).

Case~1: \(F_{001}(U) = \{\Gamma_\lambda, \Gamma_0, \Delta_{00}\}\). Then, as
in Case~0,~\(R_{c_{00}}\) has no conflicts with~\(\Gamma_\lambda\)
or~\(\Gamma_0\). But~\(R_{c_{00}}\) has a conflict with~\(\Delta_{00}\) since
\(q_{001}\le c_{00}*c_{00,*}\) (i.e., the sets appearing in the oracle
of~\(\Delta_{00}\) appear also in the oracle of~\(R_{c_{00}}\)), and any
correction made by~\(\Delta_{00}\) via a number $\le \psi(x)$ will
injure~\(R_{c_{00}}\).

Case~2: \(F_{010}(U) = \{\Gamma_\lambda, \Delta_0, \Gamma_{01}\}\). Then, as
in Case~0,~\(R_{c_{00}}\) has no conflict with~\(\Gamma_\lambda\).
Also,~\(R_{c_{00}}\) has no conflict with~\(\Delta_0\) since~\(C_{00}\) can
be used to correct~\(\Delta_0\), and~\(R_{c_{00}}\) itself wants to
change~\(C_{00}\) as well. Finally,~\(R_{c_{00}}\) has a conflict
with~\(\Gamma_{01}\) because \(c_{01} = c_{00}\).

Case~3: \(F_{011}(U) = \{\Gamma_\lambda, \Delta_0,\Delta_{01} \}\). Then, as
in Cases~0 and~2, respectively,~\(R_{c_{00}}\) has no conflict
with~\(\Gamma_\lambda\) or~\(\Delta_0\). Analogously to Case~1
(with~\(\Delta_{00}\)),~\(R_{c_{00}}\) has a conflict with~\(\Delta_{01}\)
since \(c_{01} = c_{00}\) and \(q_{011} \le c_{01}*c_{01,*} =
c_{00}*c_{00,*}\), where the~\(\le\) follows from the calculation above
Definition~\ref{def:special}.

Case~4: \(F_{10}(U) = \{\Delta_\lambda, \Gamma_1 \}\). Then~\(R_{c_{00}}\)
has a conflict with~\(\Delta_\lambda\) since the oracle of~\(\Delta_\lambda\)
also appears in the oracle of~\(R_{c_{00}}\). (To be a little more general,
we can show that \(c_\lambda<c_{00}\) implies that \(q_1\le c_{00}*c_{00,*}\)
by Lemma~\ref{lem:useful}\eqref{it:useful-mon} with \(a=c_\lambda\) and the
calculation above Definition~\ref{def:special}.) But~\(R_{c_{00}}\) has no
conflict with~\(\Gamma_1\) since~\(R_{c_{00}}\) wants to preserve \(C_1 =
C_0\), so~\(R_{c_{00}}\) will not trigger any \(\Gamma_1\)\nbd correction.

Case~5: \(F_{11}(U) = \{\Delta_\lambda, \Delta_1\}\). As in
Case~4,~\(R_{c_{00}}\) has a conflict with~\(\Delta_\lambda\); note
that~\(R_{c_{00}}\) also has a conflict with~\(\Delta_1\) since~\(\Delta_1\)
is lower than~\(\Delta_\lambda\) and by Lemma~\ref{lem:deltaoracle2}. But in
this case,~\(R_{c_{00}}\) only takes care of~\(\Delta_\lambda\) since
if~\(R_{c_{00}}\) cannot ensure the correctness of~\(\Delta_\lambda\),
then~\(\Delta_1\) won't matter.

In summary,
\begin{itemize}
    \item \(R_{c_{00}}\) has a conflict with~\(\Gamma_\sigma\) iff \(c_\sigma
        = c_{00}\) iff \(\sigma = 00\) or \(\sigma = 01\).
    \item In Case~2, since~\(R_{c_{00}}\) has a conflict with~\(\Gamma_{01}
        \in F_{010}(U)\), it has no conflict with~\(\Delta_0\); this is
        because \(c_{00}=c_{01} \le q_{01}\) and~\(\Delta_0\) has
        oracle~\(j(q_{01})\).
    \item In Cases~1,~3,~4, and~5,~\(R_{c_{00}}\) has no conflict with
        any~\(\Gamma\), but~\(R_{c_{00}}\) has conflicts
        with~\(\Delta_{00}\), \(\Delta_{01}\), and both~\(\Delta_\lambda\)
        and~\(\Delta_1\), respectively. Note here also that \(c_{00},
        c_{01}, c_\lambda \le c_{00}\).
\end{itemize}

We now want to show how to generalize these properties to arbitrary finite
distributive lattices. We first formulate the first and third property as a
definition.

\begin{definition}[conflicts]\label{def:conflicts}
\hspace{1em}
\begin{enumerate}
    \item \(R_c\) has \emph{a conflict} with~\(\Gamma_\sigma\) iff \(c_\sigma
        = c\).
    \item \(R_c\) has \emph{a conflict} with~\(\Delta_\tau\) iff \(q_{\tau 1}
        \le c*c_*\).
\end{enumerate}
\end{definition}

By Lemma~\ref{lem:spec1}, for any join-irreducible element~\(c\) in~\(\cL\),
if there is some \(\Gamma_\tau \in F_\eta(U)\) such that \(c \le c_\tau\)
(and so~\(R_c\)'s diagonalization might be destroyed by \(\Gamma_\tau\)\nbd
correction), then there is some \(\Gamma_\sigma \in F_\eta(U)\) with
\(c_\sigma = c\). Therefore, if for any \(\Gamma \in F_\eta(U)\) computing a
set~\(C_\tau\) with \(c \le c_\tau\), then~\(R_c\) has a conflict with some
\(\Gamma_\sigma \in F_\eta(U)\) with \(\sigma \subseteq \tau\); so it
suffices to consider this~\(\Gamma_\sigma\) in the definition of conflict.

Given~\(F_\eta(U)\), suppose that~\(R_c\) has a conflict with \(\Delta_\sigma
\in F_\eta(U)\); then for all \(\Delta_\tau \in F_\eta(U)\) which are lower
than~\(\Delta_\sigma\), \(R_c\) also has a conflict with~\(\Delta_\tau\)
because \(\spec(q_{\tau 1}) \subseteq \spec(q_{\sigma 1})\) by
Lemma~\ref{lem:deltaoracle2}. Therefore, if~\(R_c\) has a conflict with any
\(\Delta \in F_\eta(U)\), we need to only consider the highest
such~\(\Delta_\sigma\).

The following lemma is crucial to our argument.

\begin{lemma}\label{lem:conflicts}
For \(c \in \Ji(\cL)\) and \(\eta \in [T_L]\), we have:
\begin{enumerate}
    \item\label{it:R-plus} If~\(R_c\) has a conflict with some (necessarily
        unique) \(\Gamma_\tau \in F_\eta(U)\), then for all \(\Delta_\sigma
        \in F_\eta(U)\), \(c \le q_{\sigma 1}\). (Therefore,
        such~\(\Delta_\sigma\) can be corrected via the set~\(C\).)
    \item\label{it:R-minus}
         Otherwise, there is some \(\Delta_\sigma \in F_\eta(U)\) with
         which~\(R_c\) has a conflict. For the highest \(\Delta_\sigma \in
         F_\eta(U)\) with which~\(R_c\) has a conflict, \(c_\sigma \le c\).
         For all \(\Delta_\tau \in F_\eta(U)\) with which~\(R_c\) has no
         conflict, we have \(c \le q_{\tau 1}\).
\end{enumerate}
\end{lemma}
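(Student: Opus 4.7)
The plan is to handle Parts~(1) and~(2) separately, exploiting the binary-tree structure of $T_\cL$ and the key algebraic facts in Lemma~\ref{lem:useful}.

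For Part~(1), uniqueness of $\Gamma_\tau$ is immediate: if $c_\tau = c_{\tau'} = c$ with $\Gamma_\tau, \Gamma_{\tau'} \in F_\eta(U)$ and WLOG $\tau 0 \subseteq \tau'$, then $c = c_\tau \in \spec(p_{\tau'})$ by Lemma~\ref{lem:spec0}, contradicting the fact that $c_{\tau'}$ is the new join-irreducible added when passing from $p_{\tau'}$ to $p_{\tau' 0}$. For the main claim, fix any $\Delta_\sigma \in F_\eta(U)$; then $\tau 0$ and $\sigma 1$ are both prefixes of $\eta$ with different terminal bits, so either $\tau 0 \subseteq \sigma$ or $\sigma 1 \subseteq \tau$. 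These are precisely the hypotheses of Lemmas~\ref{lem:tau0} and~\ref{lem:sigma1}, respectively, each of which yields $c = c_\tau \le q_{\sigma 1}$.

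For Part~(2), I first record the following equivalence, for $c, c_\sigma \in \Ji(\cL)$:
\[
c_\sigma \le c \iff c \not\le c_\sigma * c_{\sigma,*}.
\]
The forward direction is Lemma~\ref{lem:useful}\eqref{it:useful-mon}. For the converse, if $c_\sigma \not\le c$, then $c \wedge c_\sigma < c_\sigma$; since $c_{\sigma,*} \prec c_\sigma$, the interval $[c_{\sigma,*},c_\sigma]$ has only two elements, so $(c \wedge c_\sigma) \vee c_{\sigma,*}$ equals $c_{\sigma,*}$ or $c_\sigma$. The latter is impossible by join-irreducibility of $c_\sigma$ (it would force $c_\sigma = c \wedge c_\sigma$ or $c_\sigma = c_{\sigma,*}$). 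Hence $c \wedge c_\sigma \le c_{\sigma,*}$, and so $c \le c_\sigma * c_{\sigma,*}$ by definition of the relative pseudocomplement.

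With this tool, I walk along the path $\lambda = \sigma^{(0)} \subsetneq \sigma^{(1)} \subsetneq \cdots \subsetneq \sigma^{(n)} = \eta$ and track whether $c \le q_{\sigma^{(i)}}$. Initially $c \le q_\lambda = 1$. The hypothesis of Part~(2) means $c \ne c_\tau$ for every $\tau 0 \subseteq \eta$; combined with $\spec(p_\eta) = \{c_\tau \mid \tau 0 \subseteq \eta\}$ from Lemma~\ref{lem:spec0} and join-irreducibility of $c$, this gives $c \not\le p_\eta = q_\eta$. So $c$ must drop out of $q$ somewhere along the path; since $q_{\sigma 0} = q_\sigma$, the drop happens at a right turn. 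Letting $\sigma_0$ be the first such turn, I have $\sigma_0 1 \subseteq \eta$, $c \le q_{\sigma_0}$ and $c \not\le q_{\sigma_0 1} = (c_{\sigma_0} * c_{\sigma_0,*}) \wedge q_{\sigma_0}$, forcing $c \not\le c_{\sigma_0} * c_{\sigma_0,*}$ and hence $c_{\sigma_0} \le c$ by the equivalence above. Then Lemma~\ref{lem:useful}\eqref{it:useful-mon} gives $q_{\sigma_0 1} \le c_{\sigma_0} * c_{\sigma_0,*} \le c * c_*$, so $R_c$ has a conflict with $\Delta_{\sigma_0}$.

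To close Part~(2), I verify the remaining claims. For $\Delta_\tau \in F_\eta(U)$ with $\tau \subsetneq \sigma_0$ (strictly higher), minimality of $\sigma_0$ gives $c \le q_{\tau 1}$; since $c \wedge c = c \not\le c_*$ we have $c \not\le c * c_*$, so $q_{\tau 1} \not\le c * c_*$ and there is no conflict. Conversely, for $\Delta_\tau \in F_\eta(U)$ with $\sigma_0 1 \subseteq \tau$ (strictly lower), Lemma~\ref{lem:deltaoracle2} gives $q_{\tau 1} \le q_{\sigma_0 1} \le c * c_*$, so a conflict does arise. Hence $\sigma_0$ is the highest conflict, proving $c_\sigma \le c$ for the highest $\sigma$, and every conflict-free $\Delta_\tau$ lies strictly above $\sigma_0$ and thus satisfies $c \le q_{\tau 1}$. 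The main obstacle is establishing the equivalence $c_\sigma \le c \iff c \not\le c_\sigma * c_{\sigma,*}$; once it is in hand, the rest reduces to bookkeeping along the path from $\lambda$ to $\eta$.
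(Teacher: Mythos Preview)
Your proof is correct and follows essentially the same path-walking argument as the paper: both descend along the prefixes of~$\eta$, maintaining $c \le q_\sigma$ until the first right turn where this fails, and then identify that node as the highest conflicting~$\Delta_{\sigma}$. The only cosmetic differences are that you isolate the equivalence $c_\sigma \le c \iff c \not\le c_\sigma * c_{\sigma,*}$ up front (the paper proves its contrapositive inline in Case~(a)), and you phrase the induction as ``where does $c$ drop out of~$q$'' rather than the paper's three-case split; the underlying computations, and the appeals to Lemmas~\ref{lem:spec0}, \ref{lem:tau0}, \ref{lem:sigma1}, \ref{lem:deltaoracle2}, and~\ref{lem:useful}\eqref{it:useful-mon}, are identical.
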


\begin{proof}
    \begin{enumerate}
        \item Recall the definition of~\(F_\eta(U)\), we have
            \(\tau0 \subseteq \eta\) and \(\sigma1 \subseteq \eta\).
            Therefore, we either have \(\sigma1 \subseteq \tau 0\) or
            \(\tau0 \subseteq \sigma\). In either case, by
            Lemma~\ref{lem:deltaoracle1}, we have \(c = c_\tau \le
            q_{\sigma 1}\).
        \item We will proceed by induction on \(\sigma \subseteq \eta\) and
            show:
            \begin{enumerate}
                \item If \(\sigma 1 \subseteq \eta\) and \(q_{\sigma 1}\le
                    c*c_*\), then \(c_\sigma \le c\). We stop the
                    induction.
                \item If \(\sigma 1 \subseteq \eta\) and \(q_{\sigma 1}\nleq
                    c*c_*\), then \(c \le q_{\sigma 1}\) and continue
                    with~\(\sigma 1\).
                \item If \(\sigma 0 \subseteq \eta\), then \(c_{\sigma }\ne
                    c\), \(c \le q_{\sigma 0}\) and continue with~\(\sigma
                    0\).
             \end{enumerate}

        Case~(a). Suppose towards a contradiction that \(c_\sigma \nleq c\),
        so \(c_\sigma \land c \le c_{\sigma,*}\). Therefore \(c \le
        c_\sigma*c_{\sigma,*}\). Because of the induction hypothesis in
        Cases~(b) and~(c), we have \(c \le q_\sigma\). Together we have
        \[
            c \le (c_\sigma*c_{\sigma,*})\wedge
            q_\sigma = q_{\sigma 1} \le c*c_*,
        \]
        where the second equality uses the calculation above
        Definition~\ref{def:special}, and the last inequality is the
        assumption of Case~(a). Therefore, we obtain \(c \le c*c_*\),
        contradicting Lemma~\ref{lem:useful}\eqref{it:useful-mon}.

        Case~(b). Since \(q_{\sigma 1}\nleq c*c_*\), we have \(c \wedge
        q_{\sigma 1} \nleq c_*\). But \(c \wedge q_{\sigma 1} \le c\), so
        the only possibility is that \(c \wedge q_{\sigma 1} = c\), and
        hence \(c \le q_{\sigma 1}\).

        Case~(c). \(c \le q_{\sigma} = q_{\sigma 0}\) where the first
        inequality is the induction hypothesis in Cases~(b)
        and~(c) and the last equality is by
        the definition of \(q_{\sigma 0}\). \(c_\sigma \neq c\) is our
        assumption for~(2) that~\(R_c\) has no conflicts with any
        \(\Gamma\in F_\eta(U)\).

        Now suppose that we never reach Case~(a), and so we have \(c \le
        q_\eta = p_\eta\). By Lemma~\ref{lem:spec0}, we have \(c = c_\tau\)
        for some \(\tau 0 \subseteq \eta\), so~\(R_c\) would have a
        conflict with \(\Gamma_\tau \in F_\eta(U)\), a contradiction. Thus
        the induction will end with Case~(a), and thus~(2) is proved.
        \qedhere
    \end{enumerate}
\end{proof}

Suppose that~\(R_c\) has a witness~\(x\), a computation with use \(\psi(x)\),
and a conflict with \(\Gamma \in F_\eta(U)\). The strategy that~\(R_c\) takes
would be the following. It will actively kill \(\Gamma\) by enumerating
\(\gamma(x)\) (possibly \(\gamma(x) \le \psi(x)\)) into~\(E\) and request
that~\(\gamma(x)\) be redefined as fresh (so the new \(\gamma(x) > {}\) the
current~\(\psi(x)\) next time). What~\(R_c\) hopes for is a point \(z \le
\gamma(x)\) such that \(U(z)\) changes, and in this case, we are allowed to
take the old~\(\gamma(x)\) out so that the computation of~\(R_c\) is restored
and~\(\Gamma\) can be corrected using the latest \(\gamma(x)\), not injuring
\(\psi(x)\). Note that after~\(R_c\) kills~\(\Gamma_\sigma\), we will attempt
to make \(F_{\sigma100 \cdots 0}(U)\) satisfied.

Suppose that~\(R_c\) finds no conflict with any \(\Gamma \in F_\eta(U)\).
Then let~\(\Delta_\sigma\) be the highest one with which~\(R_c\) has a
conflict.~\(R_c\) would like to know who is responsible for building
this~\(\Delta_\sigma\). By the preceding paragraph,~\(R_{c_\sigma}\) must be
responsible for this, where we also have \(c_\sigma \le c\).

This motivates the following definition.

\begin{definition}\label{def:Rc}
Recall that for each \(c \in \Ji(\cL)\), we have \(R_c\)\nbd requirements.
For each \(R_c\)\nbd requirement, based on the conflicts explained above, we
define the nondecreasing map \(R_c: [T_\cL] \to [T_\cL]\)
(Definition~\ref{def:T_L}) by
\[
R_c(\eta) =
\begin{cases}
\sigma10 \ldots 0 & \text{if~\(R_c\) has a conflict with
                         \(\Gamma_\sigma \in F_\eta(U)\)},\\
\eta              & \text{otherwise.}
\end{cases}
\]
\end{definition}

Examples are given in Table~\ref{tab:R}. In this table, we list \([T_L]\)
lexicographically as \(\eta_0<\eta_1<\cdots<\eta_{\abs{\cL}}\). We have, for
example, \((R_{c_0}\circ R_{c_\lambda}\circ R_{c_{00}}\circ R_{c_0}\circ
R_{c_{00}})(\eta_0) = \eta_5\) for the six-element lattice.

\begin{table}[ht]\centering
    \begin{tabular}{l l*{6}{c}}
        \toprule
         & & \(\eta_0\) & \(\eta_1\) & \(\eta_2\) &\(\eta_3\) &\(\eta_4\)
           &\(\eta_5\) \\
        \midrule
        \multirow{2}{*}{3-element chain}
         &\(R_{c_\lambda}\) & \(\eta_2\) & \(\eta_2\) & \(\eta_2\) &  &  & \\
         &\(R_{c_0}\) & \(\eta_1\) & \(\eta_1\) & \(\eta_2\) &  &  &  \\
        \midrule
        \multirow{2}{*}{diamond}
         &\(R_{c_\lambda}\) & \(\eta_2\) & \(\eta_2\) & \(\eta_2\) &\(\eta_3\)
           &  &  \\
         &\(R_{c_0}\) & \(\eta_1\) & \(\eta_1\) & \(\eta_3\) &\(\eta_3\)
           &  &  \\
        \midrule
        \multirow{3}{*}{6-element lattice}
         &\(R_{c_\lambda}\) & \(\eta_4\) & \(\eta_4\)
           & \(\eta_4\) &\(\eta_4\) &\(\eta_4\) &\(\eta_5\) \\
         &\(R_{c_0}\) & \(\eta_2\) & \(\eta_2\) & \(\eta_2\) &\(\eta_3\)
           &\(\eta_5\) &\(\eta_5\) \\
         &\(R_{c_{00}}\) & \(\eta_1\) & \(\eta_1\) & \(\eta_3\) &\(\eta_3\)
           &\(\eta_4\) &\(\eta_5\) \\
        \bottomrule
      \end{tabular}
      \caption{The \(R\)\nbd maps for our three examples}\label{tab:R}
    \end{table}

This completes the discussion of the lattice-theoretic aspects of our
construction.

\section{\texorpdfstring{A Single \(U\)-set}{A Single U-set}}\label{sec:1U}

\subsection{Introduction}
Let~\(\cL\) be the finite distributive lattice with least element~\(0\) and
greatest element~\(1\). Recall that our requirements take the following form:
\begin{align*}
 G&: K=\Theta^{j(1)}\\
 S(U)&: \exists \eta\in [T_\cL], F_\eta(U) \\
 R_c(\Psi_e)&: C\neq \Psi_e^{j(c*c_*)}
\end{align*}
where~\(U\) ranges over all d.c.e.\ sets,~\(\Psi_e\) is the \(e\)\nbd th
Turing functional and~\(e\) ranges over~\(\omega\),~\(c\) ranges over
\(\Ji(\cL)\), and~\(G\) is a single global requirement.

The goal of this section is to present the conflicts of the \(G\)\nbd
requirement, one single \(S(U)\)\nbd requirement (for an arbitrary fixed
d.c.e.\ set~\(U\)), and all \(R\)\nbd requirements. We note one unusual
feature of our construction: Unlike in other constructions, we will have to
try to meet the same requirement repeatedly without any apparent gain until
we succeed, simply to have a sufficient number of strategies in the right
arrangement.

We will denote the number of potential tries by~\(m\) for now.
Note that the optimal value of~\(m\) depends on the lattice~\(\cL\) only. A
careful analysis into the lattice structure of~\(\cL\) may give us the
optimal value. \(m=1\) is optimal for Boolean algebras, \(m=2\) is optimal
for the \(3\)\nbd element chain (Figure~\ref{fig:lattice4}), but setting
\(m=\abs{\cL}+1\) is always sufficient for our construction.

This section will introduce the computability-theoretic aspects of our
construction (using a general finite distributive lattice instead of
examples) in the simplest combinatorial setting, preparing us for the more
challenging full setup with all sets~\(U_j\).

\subsection{The priority tree}\label{sec:1Utree}

For each \(c \in \Ji(\cL)\), recall the map \(R_c: [T_\cL] \to [T_\cL]\) from
Definition~\ref{def:Rc}; for now, the only property of~\(R_c\) that we need
is that it is a nondecreasing map. We also fix a computable list \(\{R^e\}_{e
\in \omega}\) of all \(R\)\nbd requirements \(\{R_c(\Psi_j)\}_{j\in\omega,
c\in\Ji(\cL)}\).

The functionals in~\(F_\xi(U)\) for each \(\xi\in [T_\cL]\) will be referred
to as \emph{\(U\)\nbd functionals}. A node~\(\alpha\) on the tree will be
called an \emph{\(R\)\nbd node} if it is assigned to an \(R\)\nbd
requirement; and an \emph{\(S\)\nbd node} if it is assigned to an \(S\)\nbd
requirement with pair \((a,\xi) \in \{0,\ldots , m-1\} \times [T_\cL]\). We
view an \(S\)\nbd node as the \(a\)\nbd th copy of one of the \(S_U\)\nbd
strategies. We order \(\{0,\ldots, m-1\} \times [T_\cL]\) lexicographically.
The intuition behind the notation \((a,\xi)\) is that we work our way through
all the necessary \(S(U)\)\nbd strategies until there is no longer a
\(\Gamma\)\nbd functional to attack (with \(a=0\), i.e., the first time).
Then we start this whole process again with \(a=1\), with \(a=2\), etc.,
until we reach \(a=m-1\). At this point, we will be sure that we have a
sufficient number of \(S_U\)\nbd strategies so that we can deal with any
possible \(U\)\nbd changes back and forth, as we will have to prove in the
end.

An \(S\)\nbd node assigned to \((a,\xi)\) has only one outcome,~0. An
\(R\)\nbd node~\(\alpha\) has a~\(w\)\nbd outcome, a~\(d\)\nbd outcome,
possibly one of two types of \(U\)\nbd outcomes, and finally a~\(\ctr\)\nbd
outcome. The intuition (which will become clear later) for the outcomes is
the following: Initially,~\(\alpha\) keeps visiting its \(w\)\nbd outcome
while~\(\alpha\) is looking for some computation to converge. Next, having
found a computation,~\(\alpha\) is now ready to visit the next outcome, the
\(U\)\nbd outcome, if any, meaning that~\(\alpha\) is dealing with
\(S_U\)\nbd functionals. In case that no \(U\)\nbd outcome is available
for~\(\alpha\), the next outcome that~\(\alpha\) visits is the \(\ctr\)\nbd
outcome, meaning that~\(\alpha\) has gathered enough information and becomes
a controller. If~\(\alpha\) is successful in its diagonalization, then the
\(d\)\nbd outcome is said to be active, and~\(\alpha\) visits the \(d\)\nbd
outcome. If the \(d\)\nbd outcome is inactive, then~\(\alpha\) will visit its
outcomes in sequence: It always starts with the \(w\)\nbd outcome, then
possibly a \(U\)\nbd outcome (if any), and then the \(\ctr\)\nbd outcome, at
which point the \(\ctr\)\nbd outcome becomes active.

\begin{definition}\label{def:1Utree}
We define the priority tree~\(\cT\) by recursion: We assign the root
node~\(\lambda\) to \((0,\iota)\) (\(\iota\) is the string \(00\cdots 0\) of
proper length) and call it an \(S\)\nbd node.

Suppose that~\(\alpha\) is an \(S\)\nbd node assigned to \((a,\xi)\). We
determine the least~\(e\) such that there is no \(R^e\)\nbd node \(\beta
\subset \alpha\) with \(\beta^\frown w \subseteq \alpha\) or \(\beta^\frown d
\subseteq \alpha\), and we assign \(\alpha^\frown 0\) to~\(R_e\).

Suppose that~\(\alpha\) is an \(R_c\)\nbd node for some \(c\in \Ji(\cL)\).
Let \(\beta \subset \alpha\) be the longest \(S\)\nbd node, which is assigned
to \((a,\xi)\), say. We add outcomes to~\(\alpha\) in the following sequence:
\begin{enumerate}
\item We add a \(U\)\nbd outcome as its \emph{first outcome}.
\begin{enumerate}
\item If \(\xi<R_c(\xi)\), then we say the \(U\)\nbd outcome is a
\emph{Type~I} outcome, and we assign \(\alpha^\frown U\) to \((a,
R_c(\xi))\). The \emph{next} outcome is placed just to the left of this
\(U\)\nbd outcome.
\item If \(\xi=R_c(\xi)\), then we say the \(U\)\nbd outcome is a
\emph{Type~II} outcome. If \(a<m-1\), then we say that this \(U\)\nbd
outcome is \emph{GREEN}, and we assign \(\alpha^\frown U\) to
\((a+1,\iota)\). If \(a = m-1\), then we say that this \(U\)\nbd outcome
is~\(\emph{RED}\), and we do not assign \(\alpha^\frown U\) to any
requirement (so it is a terminal  node on the tree). In either case, the
\emph{next} outcome is placed just to the right of this \(U\)\nbd
outcome. (We caution the reader here that the definition of GREEN and RED
is \emph{not} static. A GREEN outcome can become RED and vice versa. The
information below the RED outcome will be frozen for a while, waiting for
another piece of information to wake up, at which time the RED turns
GREEN again. We will never visit a RED outcome directly.)
\end{enumerate}
\item We add a \(\ctr\)\nbd outcome as the \emph{second outcome}. We do not
assign \(\alpha^\frown \ctr\) to any requirement (so it is a terminal node on
the tree).
\item Finally, we add~\(w\)- and \(d\)\nbd outcomes to the right of all
existing outcomes and assign both \(\alpha^\frown w\) and \(\alpha^\frown
d\) to \((a,\xi)\). (Note that this will introduce ``dummy'' \(S_U\)\nbd
strategies that are not strictly needed; however, this will no longer be
possible when we have infinitely many sets~\(U_j\)).
\end{enumerate}

The order of the outcomes is \(\ctr<U<w<d\) if the \(U\)\nbd outcome is
Type~I, and \(U<\ctr<w<d\) if the \(U\)\nbd outcome is Type~II\@. For
\(\alpha,\beta \in \cT\),~\(\alpha\) has \emph{higher priority}
than~\(\beta\) (denoted by \(\alpha <_P \beta\)) if~\(\alpha\) is to the left
of \(\beta\) or~\(\alpha\) is a proper initial segment of~\(\beta\).

This finishes the definition of the priority tree \(\cT\).
\end{definition}

We need some additional auxiliary notions:
\begin{itemize}
\item
The~\(\ctr\)- and \(d\)\nbd outcomes can be \emph{active} or
\emph{inactive}.
\item
\(\alpha\) is a \emph{controller} iff \(\alpha^\frown \ctr\) is active.
\item
Only the \(Type~II\) outcome can be GREEN or RED\@. A GREEN outcome can
become RED and vice versa.
\item
\(\alpha^-\) is the longest node such that \(\alpha^-\subsetneq \alpha\).
\item
For an \(S\)\nbd node \(\alpha\) assigned to \((a,\xi)\), we set
\(\seq(\alpha)= (a,\xi)\), \(\seq_0(\alpha)=a\), and
\(\seq_1(\alpha)=\xi\). (Note that if~\(\alpha\) is an \(R\)\nbd node,
then~\(\alpha^-\) will always be an \(S\)\nbd node in our priority tree.)
\item
For an \(R\)\nbd node \(\alpha\) with \(\seq_0(\alpha^-)=a\), we
say~\(\alpha\) is \emph{dealing with the~\(a\)\nbd th copy of the \(U\)\nbd
functionals}. (The \(b\)\nbd th copy of the \(U\)\nbd functionals is
irrelevant to~\(\alpha\) if \(b \neq a\).)
\item
For an \(R\)\nbd node, the \emph{next outcome} of a particular outcome is
well-defined (except for~\(\ctr\)-, \(w\)- and \(d\)\nbd outcomes): It is
the next outcome added to this \(R\)\nbd node after the particular outcome
is added.
\item
For each \(R\)\nbd node \(\beta\), we have a threshold point \(\tp(\beta)\)
associated to~\(\beta\), and the diagonalizing witness associated to the
\(w\)\nbd outcome of \(\beta\), denoted by \(\dw(\beta)\).
\end{itemize}

An example of the priority tree for the three-element lattice (see
Figures~\ref{fig:lattice3} and~\ref{fig:lattice3-S}) can be found in
Figure~\ref{fig:1Utree} with \(m=2\). \([T_\cL]\) is listed lexicographically
as \(\eta_0<\eta_1<\eta_2\). In Figure~\ref{fig:1Utree}, the \(S\)\nbd node
assigned to \((a,\eta_j)\) is denoted by~\(aj\) for short.

Some of the~\(w\)- and \(d\)\nbd outcomes are hidden, and so are the labels
of \(U\)\nbd outcomes. A Type~II \(U\)\nbd outcome is denoted by a thick
line. The \(U\)\nbd outcomes of~\(R^3\), \(R^4\), and~\(R^7\) are RED\@. A
terminal node is denoted by a~\(\bullet\). A \(\ctr\)\nbd outcome is denoted
by a slim line and a~\(\bullet\).

\begin{figure}[ht]\centering
    \includegraphics{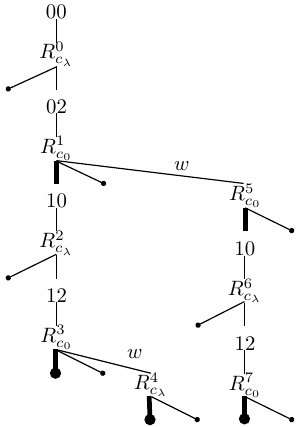}
    \caption{The priority tree for 3-element chain}\label{fig:1Utree}
\end{figure}

\begin{remark}
\begin{enumerate}
\item One may not want to add \(\ctr\)\nbd outcomes in the first place as we
do not even assign \(\alpha^\frown \ctr\) to any requirements. However,
adding \(\ctr\)\nbd outcomes reveals its priority clearly. Suppose that
\(\alpha^\frown \ctr\) becomes active. If \(\ctr<U\), then nodes below
\(\alpha^\frown U\) will be initialized. If \(U<\ctr\), nodes below
\(\alpha^\frown U\) \emph{must not} be initialized. If~\(\alpha\)
and~\(\beta\) are two controllers, then~\(\alpha\) has higher priority
than~\(\beta\) if \(\alpha^\frown \ctr <_P \beta^\frown \ctr\).
\item The reason why we add a RED \(U\)\nbd outcome to~\(\alpha\) even
though we do not even assign \(\alpha^\frown U\) to any requirement is to
make the construction more uniform. For example, in
Figure~\ref{fig:1Utree}, when the \(U\)\nbd outcome of~\(R^1\) becomes RED,
the strategy of~\(R^1\) will be the same as~\(R^3\).
\item For those who are familiar with the proof of D.C.E.\ Nondensity
Theorem~\cite{CHLLS91}, our notion of controller is dynamic and generalizes
theirs.
\item The letter~\(U\) in \emph{\(U\)\nbd outcome} really has the same
meaning as the letter~\(U\) in the \(S_U\)\nbd node just above~\(\alpha\).
In Section~\ref{sec:2U} when we have indexed sets~\(U_j\), we will have
\(U_j\)\nbd outcomes, or simply \(j\)\nbd outcomes.
\end{enumerate}
\end{remark}

The following lemma states that all requirements are represented by some node
along any infinite path through~\(\cT\).

\begin{lemma}\label{lem:1U full requirement}
Let~\(p\) be an infinite path through~\(\cT\). Then
\begin{enumerate}
\item there is an \(S\)\nbd node~\(\alpha\) such that for each
\(S\)\nbd node~\(\beta\) with \(\alpha \subset \beta \subset p\), we have
\(\seq(\alpha) = \seq(\beta)\), and
\item for each~\(e\), there is an \(R^e\)\nbd node~\(\alpha\) such that
either \(\alpha^\frown d \subset p\) or \(\alpha^\frown w \subset p\).
\end{enumerate}
\end{lemma}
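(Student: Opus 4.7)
The plan is to prove parts~(1) and~(2) separately, with part~(1) feeding into the argument for part~(2).

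For part~(1), I intend to show that the $\seq$-values of the successive $S$-nodes of $p$ form a nondecreasing sequence in the lexicographic order on the finite set $\{0,\ldots,m-1\}\times[T_\cL]$, and hence stabilize. The key step is to consider two consecutive $S$-nodes $\beta\subset\beta'$ on $p$ and to inspect which outcome of the $R_c$-node $\beta^\frown 0$ lies on $p$. According to Definition~\ref{def:1Utree}, the $w$- and $d$-outcomes preserve $\seq$; a Type~I $U$-outcome strictly increases the second coordinate, since by the definition of Type~I we have $\seq_1(\beta)<R_c(\seq_1(\beta))$; and a GREEN Type~II $U$-outcome strictly increases the first coordinate from $a$ to $a+1$ (resetting the second coordinate to $\iota$). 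The $\ctr$- and any RED Type~II $U$-outcomes are terminal and so cannot lie on the infinite path $p$. Since the codomain is finite, stabilization follows.

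For part~(2), I will argue by strong induction on $e$. Assuming that for each $e'<e$ some $R^{e'}$-node $\beta_{e'}\subset p$ has $\beta_{e'}^\frown w\subset p$ or $\beta_{e'}^\frown d\subset p$, I pick an $S$-node $\alpha\subset p$ that lies past the stabilization point from part~(1) and also properly extends every $\beta_{e'}^\frown w$ or $\beta_{e'}^\frown d$ with $e'<e$. If some $R^e$-node above $\alpha$ is already served, we are done. Otherwise, the assignment rule of Definition~\ref{def:1Utree} forces $\alpha^\frown 0$ to be an $R^e$-node, since every $e'<e$ is served above $\alpha$ while $e$ is not. Inspecting its outcomes: $\ctr$ and RED Type~II $U$ are terminal, and a Type~I or GREEN Type~II $U$-outcome would change the $\seq$-value at the next $S$-node on $p$, contradicting stabilization. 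Hence the outcome of $\alpha^\frown 0$ on $p$ must be $w$ or $d$, so $R^e$ is served.

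The only real subtlety is the coordination in part~(2): $\alpha$ must be chosen deep enough that both the finitely many $\beta_{e'}^\frown w$ or $\beta_{e'}^\frown d$ for $e'<e$ have been passed and the stabilization point from part~(1) has been crossed. Once this is arranged, the elimination of the non-$w/d$ outcomes is essentially a direct reading of Definition~\ref{def:1Utree}, so I do not anticipate any further obstacle.
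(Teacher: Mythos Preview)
Your proposal is correct and follows essentially the same approach as the paper: both parts rest on the observation that a $U$-outcome strictly increases $\seq$ in the finite set $m\times[T_\cL]$, while $w$- and $d$-outcomes preserve it and $\ctr$/RED outcomes are terminal. The only organizational difference is that for part~(2) the paper argues directly that infinitely many unserved $R^e$-nodes would force $\seq$ to increase infinitely often, whereas you first pass to the stabilization point from part~(1) and then eliminate the $U$-outcome at a single $R^e$-node; this is a cosmetic variation of the same finiteness argument.
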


\begin{proof}
(1) Consider \(\seq(\alpha)\) for all \(S\)\nbd nodes \(\alpha \subset p\),
which is nondecreasing for \(\alpha \subset p\); now note that \(m \times
[T_\cL]\) is finite.

(2) We proceed by induction on~\(e\). Suppose that~\(\alpha\) is assigned
to~\(R^e\). Suppose that for all \(R^e\)\nbd nodes~\(\beta\) with \(\alpha
\subseteq \beta \subset p\), we have \(\beta^\frown U \subset p\), then the
value of \(\seq(\gamma)\) for all \(S\)\nbd nodes~\(\gamma\) with \(\alpha
\subset \gamma \subset p\) would be strictly increasing. But \(m \times
[T_\cL]\) is finite.
\end{proof}

\subsubsection{\texorpdfstring{Functionals manipulated at \(S\)-nodes and at
\(R\)-nodes}{Functionals manipulated at S-node and R-node}}%
\label{sec:functionals at S and R}

An \(S\)\nbd node~\(\beta\) assigned to \((b,\xi)\) intends to ensure that
the \(b\)\nbd th copies of all functionals in~\(F_\xi(U)\) are correct and
total. However, not all functionals in~\(F_\xi(U)\) are maintained
at~\(\beta\). \(\beta\) only builds and maintains each \(\Gamma\)\nbd
functional in \(\mt(\beta,U)\), defined as follows.

\begin{definition}\label{def:maintain1U}
Let \(\beta\) be an \(S\)\nbd node with \(\seq(\beta)=(b,\xi)\) and \(\alpha
= (\beta^-)^-\), if it exists, with \(\seq(\alpha)=(a,\eta)\).
\begin{itemize}
\item
If \(\alpha\) does not exist, then \(\mt(\beta,U) = F_\xi(U)\). (In fact,
\(\xi=00\cdots 0\) and \(b=0\).)
\item
If \(a=b\) and \(\eta=\xi\), then \(\mt(\beta,U) =\varnothing\).
\item
If \(a=b\) and \(\eta<\xi\), then for some \(\sigma\) we have \(\sigma
0\subseteq \eta\) and \(\sigma 100\cdots 0 = \xi\), and we let
\(\mt(\beta,U)= \{\Gamma_\tau\mid \sigma 1\subseteq \tau\subseteq \xi\}\).
\item
If \(a+1=b\), then \(\mt(\beta,U) = F_\xi(U)\). (In fact, \(\xi=00\cdots
0\).)
\end{itemize}
\end{definition}

An \(R\)\nbd node \(\beta\) with \(\seq(\beta^-)=(a,\eta)\) ensures that some
of the functionals in \(F_\eta(U)\) are properly killed (see
Section~\ref{sec:functionals} for details), and that possibly one
\(\Delta\)\nbd functional is correct and total, depending on which outcome
\(\beta\) is visiting.

\begin{definition}\label{def:kill}
Let \(\beta\) be an \(R\)\nbd node with \(\seq(\beta^-)=(a,\eta)\), and
suppose, if \(\beta^\frown U\) is not a terminal node, that
\(\seq(\beta^\frown U)=(b,\xi)\).
\begin{itemize}
\item
If \(a=b\), then \(\kl(\beta,U)=F_\eta(U)\setminus F_\xi(U)\) and
\(\mt(\beta,U)=\{\Delta\}\) for the unique \(\Delta\in F_\xi(U)\setminus
F_\eta(U)\).
\item
If \(a<b\), then \(\kl(\beta,U)=F_\eta(U)\) and \(\mt(\beta,U)
=\varnothing\).
\end{itemize}
If \(\beta^\frown U\) is a terminal node, then
\(\kl(\beta,U)=\mt(\beta,U)=\varnothing\). We also define
\(\kl(\beta,d)=\mt(\beta,d)=\kl(\beta,w)=\mt(\beta,w)=\varnothing\).
\end{definition}

In summary, an \(R\)\nbd node \(\beta\) visiting its \(U\)\nbd outcome has to
kill each functional in \(\kl(\beta,U)\), and build and maintain the
\(\Delta\)\nbd functional, if any, in \(\mt(\beta,U)\).

\subsubsection{\texorpdfstring{\(\beta^*\) and \(\beta^\sharp\)}{beta star %
and beta sharp}}\label{sec:star and sharp}

Now we are ready to formulate the crucial definition for our construction.
Consider an \(R_c\)\nbd node~\(\beta\). Suppose that \(\seq(\beta^-) =
(a,\eta)\) and that \(\eta = R_c(\eta)\). By
Lemma~\ref{lem:conflicts}\eqref{it:R-minus}, we know that there is a highest
\(\Delta_\sigma \in F_\eta(U)\) with which~\(R_c\) has a conflict.
Let~\(\alpha\) be the \(R_{c_\sigma}\)\nbd node such that \(\alpha^\frown U
\subseteq \beta^-\) and~\(\alpha\) builds (the~\(a\)\nbd th copy of)
this~\(\Delta_\sigma\) (along this \(U\)\nbd outcome). We then define
\[
\beta^* = \alpha.
\]

Two properties will be used throughout the paper: The first one is that
\(c_\sigma \le c\); The second one is that the set~\(C\) does not appear in
the oracle of \(R_{c_{\sigma}}\)
(Lemma~\ref{lem:useful}~\ref{it:useful-mon}).

Suppose that \(\seq(\beta^-) = (a,\eta)\) with \(a \neq 0\). Let~\(\alpha\)
be the \(R_d\)\nbd node with \(\alpha \subset \alpha^\frown U \subseteq
\beta\), where the \(U\)\nbd outcome is a Type~II outcome and where
\(seq(\alpha^\frown U) = (a,\iota )\in m \times [T_\cL]\) (\(\iota = 00\cdots
0\)). We define
\[
\beta^\sharp = \alpha.
\]
If \(a = 0\) then we do not define~\(\beta^\sharp\).

\begin{remark}\label{rmk:starsharp}
\hspace{1em}
\begin{enumerate}
\item
In fact, we should have written~\(\beta^{*U}\) instead of~\(\beta^*\)
because we are discussing \(U\)\nbd functionals. Once we consider several
sets~\(U_j\), we will write~\(\beta^{*j}\) instead. If the index is clear
from the context, we simply write~\(\beta^*\). We proceed analogously
for~\(\beta^\sharp\).
\item
If~\(\beta^\sharp\) is defined, then so is~\({(\beta^\sharp)}^*\).
\item
The intuition is the following: Let~\(\beta\), an \(R_c\)\nbd node, be a
controller without a Type~I \(U\)\nbd outcome. Then~\(\beta^*\), an
\(R_d\)\nbd node, is defined and \(\beta^*\subsetneq \beta\). Suppose
that~\(\beta\) and~\(\beta^*\) are \(R_c\)\nbd strategies for the
same~\(c\). Informally, if there is a \(U\)\nbd change, we make
\({\beta^*}^\frown d\) active; otherwise, \(\beta^\frown d\) is active.
Suppose that they are \(R\)\nbd strategies with different~\(c\); then we
will wait for the stage such that \(\alpha = {(\beta^*)}^\sharp\) becomes a
controller and check if~\(\alpha\) and~\(\alpha^*\) are \(R_c\)\nbd
strategies for the same~\(c\). When~\({(\beta^*)}^\sharp\) is not defined
and the process cannot be continued, then some \(R\)\nbd node with a Type~I
\(U\)\nbd outcome is ready to become a controller (and this will be
discussed later).
\item
As an illustration, in Figure~\ref{fig:1Utree}, we have
\({(R_{c_\lambda}^4)}^* = {(R_{c_0}^3)}^* = R_{c_\lambda}^2\),
\({(R_{c_\lambda}^2)}^\sharp = R_{c_0}^1\), \({(R_{c_0}^1)}^*=
R_{c_\lambda}^0\), and~\({(R_{c_\lambda}^0)}^\sharp\) is not defined.
\end{enumerate}
\end{remark}

\subsection{Preliminaries}\label{sec:functionals}

We cover some standard notions and shorthand notations in this section. Given
a set~\(X\) of natural numbers, we usually think of it as an infinite binary
string. \(X\res l = \sigma\) if the length of \(\sigma\) is \(l\) and for
each \(n<l\), \(X(n)=\sigma(n)\). As usual, \(X_0 \oplus \cdots \oplus
X_{k-1} (kn+i)=X_i(n)\) for each \(i<k\). However, by a slight abuse of
notation, we let \((X_0 \oplus \cdots \oplus X_{k-1})\res l = X_0\res l
\oplus \cdots \oplus X_k\res l\), and the length of \((X_0 \oplus \cdots
\oplus X_{k-1})\res l\) is defined to be \(l\).

For a d.c.e.\ set \(U=W_i\setminus W_j\) for some c.e.\ sets~\(W_i\)
and~\(W_j\), we write~\(U_s\) for \(W_{i,s}\setminus W_{j,s}\).

We will use \(\Gamma, \Delta, \Phi, \Psi\) to denote Turing functionals. A
Turing functional~\(\Phi\) is a c.e.\ set consisting of ``axioms'' of the
form \((x,i,\sigma)\), where \(x \in \omega\) is the input, \(i \in \{0,1
\}\) the output, and \(\sigma \in 2^{<\omega}\) the oracle use; so
\((x,i,\sigma) \in \Phi\) denotes that \(\Phi^\sigma(x)=i\). Furthermore, if
\((x,i,\sigma),(x,j,\tau)\in \Phi\) for comparable~\(\sigma\) and~\(\tau\),
we require \(i = j\) and \(\tau = \sigma\). For \(X \subseteq \omega\), we
write \(\Phi^{X}(x)\downarrow = i\) if \((x,i,X \res l) \in \Phi\) for
some~\(l\) (the \emph{use function} \(\varphi(x)\) is defined to be the least
such~\(l\)); \(\Phi^{X}(x)\uparrow\) if for each~\(i\) and~\(l\) we have
\((x,i,X \res l) \notin \Phi\). \(x\) is a \emph{divergent point} of
\(\Phi^X\) if \(\Phi^{X}(x)\uparrow\). If \(\Phi_s\) is a c.e.\ enumeration
of \(\Phi\), where each \(\Phi_s\) is a finite subset of \(\Phi\), these
notions apply accordingly to~\(\Phi_s\).

For a Turing functional \(\Phi\) that is constructed by us stage by stage,
suppose that our intention is ensure \(\Phi^{X_0 \oplus \cdots \oplus
X_{k-1}}=Y\) where \(X_i\) or \(Y\) could be either a set with given fixed
enumeration or a set that is to be constructed by us. At stage \(s\), we say
that \(\Phi_s(n)\) (omitting the oracles and the set~\(Y\)) is \emph{correct}
if \(\Phi_s^{X_{0,s} \oplus \cdots \oplus X_{k-1,s}}(n)\downarrow=Y_s(n)\);
\emph{incorrect} if \(\Phi_s^{X_{0,s} \oplus \cdots \oplus X_{k-1,s}}(n)
\downarrow\neq Y_s(n)\); and \emph{undefined} if \(\Phi_s^{X_{0,s} \oplus
\cdots \oplus X_{k-1,s}}(n)\uparrow\). Suppose that \(\Phi_s^{X_{0,s} \oplus
\cdots \oplus X_{k-1,s}}(n)\) is undefined; then \emph{defining \(\Phi(n)\)
with use \(u\)} means that we enumerate \((n,Y_s(n),(X_{0,s} \oplus \cdots
\oplus X_{k-1,s}) \res u)\in \Phi_s\) so that \(\Phi_s^{X_{0,s} \oplus \cdots
\oplus X_{k-1,s}}(n) \) becomes correct. Note that whether \(\Phi_s(n)\) is
correct, incorrect, or undefined depends on a particular substage of
stage~\(s\), but this is usually clear from the context.

\subsection{Use blocks}\label{sec:use block}

Use blocks are the main source of both verbal and mathematical complexity of
our construction.

Consider a \(\Gamma\)\nbd functional that belongs to \(\mt(\alpha,U)\) for
some \(S\)\nbd node~\(\alpha\) (Definition~\ref{def:maintain1U}) and intends
to ensure \(\Gamma^{E\oplus U}=C\) for some \(c\in \Ji(\cL)\).
Suppose~\(\alpha\) is being visited at stage~\(s\), we define \(\Gamma(n)\)
\emph{with use block} \(\sB=[u-l,u)\)
means that we define \(\Gamma(n)\) with use~\(u\) and reserve the \emph{use
block}~\(\sB\) for future use. We also say that~\(\sB\) is \emph{defined for}
\(\Gamma(n)\) \emph{by}~\(\alpha\) at stage~\(s\); \(\sB\) \emph{belongs
to}~\(\Gamma\);~\(\sB\) is \emph{for} \(\Gamma(n)\);~\(\sB\) is
\emph{maintained by}~\(\alpha\). If the use block~\(\sB\) is a \emph{fresh}
use block, we define \(\bB_{\bday{s}}(\gamma,n)=\sB\) and
\(\created(\sB)=s\).

\begin{remark}
It will be seen (Section~\ref{sec:1U construction}: \(\visit(\alpha)\) for
\(S\)\nbd node) that \(\bB_{\bday{s}}(\gamma,n)\) is well defined because we
will not define \(\Gamma(n)\) twice at a single stage.
\end{remark}

The use block \(\sB=[u-l,u)\) is viewed as a \emph{potential subset}
of~\(E\). Enumerating (or extracting) a point~\(k\) with \(u-l\le k<u\)
in~\(\sB\) means letting \(E(k)=1\) (\(E(k)=0\), respectively). Similar to
the use function, the \emph{use block function} \(\bB_s(\gamma,n)\) is
defined to be the use block~\(\sB\) with \(\gamma_s(n)=\max \sB+1\) if
\(\gamma_s(n)\downarrow\); undefined if \(\gamma_s(n)\uparrow\). Different
from the notion \(\bB_{\bday{s}}(\gamma,n)\), the notion \(\bB_s(\gamma,n)\)
depends on a particular substage of stage~\(s\); conventionally, it is
usually evaluated when~\(\alpha\) is being visited and will be clear from the
context.

A use block~\(\sB\) defined at stage \(t<s\)
(\(\sB=\bB_{\bday{t}}(\gamma,n)\) for some~\(n\)) is \emph{available for
correcting~\(n\)} at stage~\(s\) if \(\sB=\bB_s(\gamma,n)\). The general idea
is the following: If \(\Gamma_s^{E\oplus U}(n)=j\neq C_s(n)\) and~\(\sB\) is
the use block that is available for correcting~\(n\), then we enumerate an
unused point into~\(\sB\) and so immediately \(\Gamma_s^ {E\oplus
U}(n)\uparrow\), then we can redefine \(\Gamma_s^{E\oplus U}(n)=1-j\) with
the \emph{same} use block~\(\sB\).

A use block~\(\sB\) for~\(\Gamma(n)\) can be \emph{killed}
(Section~\ref{sec:R and killing}) by some node~\(\beta\) at stage~\(s\), and
we define \(\killed(\sB)=s\) and write
\(\sB=\bB_{\dday{s}}^\beta(\gamma,n)\). A killed use block can still be
available for correcting~\(n\) in the future in which case we have to show
certain bad things will not happen to it. A use block which cannot be
available for correcting~\(n\) is good in the sense that it will not add any
complexity to our construction and such use block will be called
\emph{permanently killed}. Since a permanently killed use block will never
concern us, we are not using additional notation. For convenience, a
permanently killed use block is also said to be killed.

Let \(\sB_0=\bB_{\bday{s_0}}(\gamma,n)\) and
\(\sB_1=\bB_{\bday{s_1}}(\gamma,n)\) be two use blocks with \(s_0<s_1\) such
that at stage \(s_1\), \(\sB_0\) is killed and a point~\(x\) is in the use
block.
%ZTEFFEN: This does not make sense to me: If x is in B_0, then B_1 could have
%been used before x was enumerated into B_0, and so B_1 is still available
%unless you say it was not used before x entered B_0; please specify
Suppose that~\(x\) is extracted at \(s_2>s_1\), then~\(\sB_1\) will never be
available for correcting~\(n\) in the future as~\(x\) will never be
enumerated back into~\(\sB_0\). In such case,~\(\sB_1\) is called permanently
killed. This phenomenon to~\(\sB_1\) will be handled \emph{tacitly}. The
other situation when we permanently kill a use block is
in~(\ref{it:correcting strategy 2b}) in Section~\ref{sec:R and delta}.

\begin{remark}
To call a use block killed or permanently killed is to request a certain
functional, say,~\(\Gamma\), to (re)define \(\Gamma(n)\) with a \emph{fresh}
use block when necessary.
\end{remark}

\(\bB_s(\gamma,n)\) is an interval and \(\gamma_s(n)\) is a natural number.
However, they are closely related. In a slight abuse of notation, if \(\bB_s
(\gamma,n)=[u-l,u)\) and \(u=\gamma_s(n)\), we write \(y<\gamma_s(n)<z\) if
\(y<u-l\) and \(u \le z\). A \emph{fresh} use block \(\sB=[u-l,u)\) is one
with \(u-l\) fresh and \(l\) sufficiently large. As we are either defining a
functional with the same use block or a fresh one, it turns out that all
these use blocks are pairwise disjoint and we can also leave sufficiently
large spaces between adjacent use blocks for diagonalizing witnesses picked
by \(R\) \nbd nodes or points enumerated by \(G\)\nbd requirements.

\begin{remark}
We need the size of \(\sB\) to be sufficiently large so that there is always
an unused element whenever we need one. This phenomenon occurs also in the
construction of a maximal incomplete d.c.e.\ degree. See Lemma~\ref{lem:block
size}.
\end{remark}

Next, consider a \(\Delta\)\nbd functional that belongs to \(\mt(\beta,U)\)
for some \(R\)\nbd node~\(\beta\) and intends to ensure \(\Delta^{E\oplus
C_0\oplus \cdots C_{k-1}}=U\), possibly without any \(c_i\in \Ji(\cL)\). It
is defined in essentially the same way except that~\(\Delta\) has additional
oracle sets built by us besides~\(E\). The use block~\(\sB\) is therefore a
potential subset of \(E\oplus C_0\oplus \cdots C_{k-1}\). We say that~\(\sB\)
\emph{crosses over}~\(E\) and~\(C_i\) for each \(i<k\). To enumerate (or
extract) a point~\(k\) into (or from)~\(\sB\) via \(X\in
\{E,C_0,\ldots,C_{k-1}\}\) is to let \(X(k)=1\) (or \(X(k)=0\),
respectively); by default, \(X=E\) if it is not explicitly mentioned.
Likewise, we have \(\bB_s(\delta,n)\), \(\bB_{\bday{s}}(\delta,n)\), and
\(\bB_{\dday{s}}^\eta(\delta,n)\) defined.

Suppose that \(\sB\) crosses over \(X\). The use block~\(\sB\) is
\emph{\(X\)\nbd restrained} if \(X\res B\) is restrained, in which case we
are not allowed to enumerate a point into or extract a point from~\(\sB\)
via~\(X\). \(\sB\) is \emph{restrained} if~\(\sB\) is \(X\)\nbd restrained
for some~\(X\).~\(\sB\) is \emph{\(X\)\nbd free} if~\(\sB\) is not \(X\)\nbd
restrained. As we will see, if~\(\sB\) is \(E\)\nbd restrained, then there
will be a set~\(C\) for some \(c\in \Ji(\cL)\) such that~\(\sB\) is \(C\)\nbd
free.

If it is not available for correcting~\(n\) at stage~\(s\), the use block
does not come into play at stage~\(s\) and we therefore do not worry about
it. However, if it is available for correcting~\(n\) at stage~\(s\), the use
block can be killed or \(E\)\nbd restrained in which case we have to be
cautious.

As a summary of the notations and also as a preview of what can happen to a
use block in the construction, let us consider a use block~\(\sB\).
\begin{enumerate}
\item
If~\(\sB\) is permanently killed, it will never be available for
correcting.
\item
If~\(\sB\) is available for correcting, and not restrained, we can do
whatever we want to the use block~\(\sB\) to make the correction.
\item
If~\(\sB\) is available for correcting \(\Gamma(n)\) and \(E\)\nbd
restrained, we will show that \(\Gamma(n)\) is in fact correct and needs no
additional correction.
\item
If~\(\sB\) is available for correcting \(\Delta^{E\oplus
C_1\oplus\dots\oplus C_{k-1}}(n)\) and \(E\)\nbd restrained, we will show
that there will be some \(i<k\) such that~\(\sB\) is \(C_i\)\nbd free so
that we can use~\(C_i\) to correct \(\Delta(n)\).
\end{enumerate}

We remark that the set~\(C\) for each \(c\in \Ji(\cL)\) will be built as a
c.e.\ set. Therefore if we enumerate a point into a use block via~\(C\), we
will not extract it. In fact,~(\ref{it:correcting strategy 2b}) in
Section~\ref{sec:R and delta} takes advantage of this.

\subsection{\texorpdfstring{The \(S\)-strategy}{The S-strategy}}%
\label{sec:S strategy}

Let~\(\beta\) be an \(S\)\nbd node with \(\seq(\beta)=(b,\xi)\). The idea of
the \(S\)\nbd strategy is straightforward: It builds and keeps each
\(\Gamma\)\nbd functional that belongs to \(\mt(\beta,U)\)
(Definition~\ref{def:maintain1U}) correct and total. At stage~\(s\), for each
\(\Gamma^{E\oplus U}=C\) that belong to \(\mt(\beta,U)\) and for each \(x\le
s\), \(\beta\) keeps \(\Gamma^{E\oplus U}(x)\) defined and correct according
to the following

\paragraph*{\emph{Correcting Strategy:}}
\begin{enumerate}
\item
Suppose \(\Gamma_s^{E\oplus U}(x)\downarrow=C_s(x)\).~\(\beta\) does
nothing.
\item
Suppose \(\Gamma_s^{E\oplus U}(x)\downarrow\neq C_s(x)\) with use block
\(\sB=\bB_s(\gamma,x)\).
\begin{enumerate}
\item
If~\(\sB\) is killed and not \(E\)\nbd restrained, then~\(\beta\)
enumerates an unused point, referred to as a \emph{killing point},
into~\(\sB\) via~\(E\). Then we go to~(3) immediately.
\item
If~\(\sB\) is not killed and not \(E\)\nbd restrained, \(\beta\)
enumerates an unused point, referred to as a \emph{correcting point},
into \(\sB\) via \(E\). Then we redefine \(\Gamma_s^{E\oplus U}(x) =
C_s(x)\) with the \emph{same} use block \(\sB\).
\end{enumerate}
\item
Suppose \(\Gamma_s^{E\oplus U}(x)\uparrow\). If each
\(\bB_{\bday{t}}(\gamma,x)\) with \(t<s\) has been \emph{killed} (see
Section~\ref{sec:R and killing} below), then~\(\beta\) will pick a fresh use
block \(\sB'\) and define \(\Gamma_s^{E\oplus U}(x) = C_s(x)\) with use block
\(\sB'\) (hence \(\sB'=\bB_{\bday{s}}(\gamma,x)\)); if otherwise, we define
\(\Gamma_s^{E\oplus U}(x)=C_s(x)\) with the use block that is not killed
(there will be at most one such use block).
\end{enumerate}

\begin{remark}
\begin{itemize}
\item
We will show that if~\(\sB\) is \(E\)\nbd restrained, then we will not have
Case~(2) in the correcting strategy.
\item
A correcting strategy never extracts a point from a use block.
\end{itemize}
\end{remark}

\subsection{\texorpdfstring{The \(R\)-strategy and the \(\Delta\)-functional}%
{The R-strategy and the Delta-functional}}\label{sec:R and delta}

Let~\(\beta\) be an \(R\)\nbd node. If~\(\beta\) decides to visit its
\(U\)\nbd outcome, it needs to build and keep the \(\Delta\)\nbd functional,
if any, that belongs to \(\mt(\beta,U)\) correct in essentially the same way
as an \(S\)\nbd node.

Let~\(\Delta\), if any, belong to \(\mt(\beta,U)\). Without loss of
generality, we assume that the \(\Delta\)\nbd functional is \(\Delta^{E\oplus
C_0\oplus \cdots \oplus C_{k-1}}=U\) for some \(c_i\in \Ji(\cL)\). For each
\(x\le s\),~\(\beta\) keeps \(\Delta^{E\oplus C_0\oplus \cdots \oplus
C_{k-1}}(x)\) defined and correct according to the following

\paragraph*{\emph{Correcting Strategy:}}
\begin{enumerate}
\item
Suppose that \(\Delta_s^{E\oplus C_0\oplus \cdots \oplus
C_{k-1}}(x)\downarrow=U_s(x)\).~\(\beta\) does nothing.
\item
Suppose that \(\Delta_s^{E\oplus C_0\oplus \cdots \oplus
C_{k-1}}(x)\downarrow\neq U_s(x)\) with use block \(\sB=\bB_s(\delta,x)\).
\begin{enumerate}
\item
If~\(\sB\) is killed and not \(E\)\nbd restrained, then~\(\beta\)
enumerates an unused point, referred to as a \emph{killing point},
into~\(\sB\) via~\(E\). Then we go to~(3) immediately.
\item\label{it:correcting strategy 2b}
If~\(\sB\) is killed and \(E\)\nbd restrained, we let~\(C_i\) for some
\(i<k\) be a set such that~\(\sB\) is \(C_i\)\nbd free (we will show that
such~\(C_i\) exists) and then~\(\beta\) enumerates an unused point,
referred to as a \emph{killing point}, into~\(\sB\) via~\(C_i\).~\(\sB\)
is then \emph{permanently killed} (as~\(C_i\) will be a c.e.\ set). Then
we go to~(3) immediately.
\item
If~\(\sB\) is not killed and not restrained, then~\(\beta\) enumerates an
unused point, referred to as a \emph{correcting point}, into~\(\sB\)
via~\(E\). Then we define \(\Delta_s^{E\oplus C_0\oplus \cdots \oplus
C_{k-1}}(x)=U_s(x)\) with the same use block~\(\sB\).
\item
If~\(\sB\) is not killed but \(E\)\nbd restrained, we let~\(C_i\) for
some \(i<k\) be a set such that~\(\sB\) is \(C_i\)\nbd free (we will show
such~\(C_i\) exists), and then~\(\beta\) enumerates an unused point,
referred to as a \emph{correcting point}, into~\(\sB\) via~\(C_i\). Then
we define \(\Delta_s^{E\oplus C_0\oplus \cdots \oplus C_k}(x)=U_s(x)\)
with the same use block~\(\sB\).
\end{enumerate}
\item
Suppose that \(\Delta_s^{E\oplus C_0\oplus \cdots \oplus C_k}(x)\uparrow\).
If for each \(t<s\), \(\bB_{\bday{t}}(\delta,x)\) is killed, then~\(\beta\)
will choose a fresh use block~\(\sB'\) and define \(\Delta_s^{E\oplus
C_0\oplus \cdots \oplus C_k}(x) = U_s(x)\) with use block~\(\sB'\) (hence
\(\sB'=\bB_{\bday{s}}(\delta,n)\)); otherwise, we will define
\(\Delta_s^{E\oplus C_0\oplus \cdots \oplus C_k}(x) = U_s(x)\) with the use
block that is not killed (there will be at most one such use block).
\end{enumerate}

At stage~\(s\), if the \(U\)\nbd outcome is visited, then~\(\beta\) will
follow the above instructions for each \(x\le s\) and we have
\(\Delta_s^{E\oplus C_0\oplus \cdots \oplus C_k}(x)\) defined and correct.

Note that~\(\beta\) only enumerates correcting points into the use block
whenever a correction is needed. Thus, if \(U(x)\) changes twice in a row, we
will have two correcting points in the use block.

\subsection{\texorpdfstring{The \(R\)-strategy and the killing-strategy}%
{The R-strategy and killing}}\label{sec:R and killing}

Let~\(\beta\) be an \(R\)\nbd node.~\(\beta\) will first pick a
\emph{threshold point} denoted by~\(\tp(\beta)\). If~\(\beta\) decides to
visit its \(U\)\nbd outcome, it needs to \emph{kill} each functional that
belongs to \(\kl(\beta,U)\), if any. The intention is that the use of each of
these functionals at the point~\(\tp(\beta)\) should go to~\(\infty\)
as~\(s\) goes to~\(\infty\). To be precise:

Let \(\Gamma^{E\oplus U}=C\) (a \(\Delta\)\nbd functional is dealt with
similarly) belong to \(\kl(\beta,U)\) and \(v=\tp(\beta)\). We suppose that
\(\seq(\beta^-) = (b,\xi)\) and note that at stage~\(s\), when~\(\beta\) is
visited, (the \(b\)\nbd th copy of) each functional in \(F_\xi(U)\) is
defined and correct by the correcting strategies. Hence
\(\bB_s(\gamma,x)\downarrow\) for each \(x\le s\) after~\(\beta^-\) finishes
its job. Then~\(\beta\) executes the following

\paragraph*{\emph{Killing strategy}:}

For each~\(x\) with \(v\le x\le s\), let \(\sB_x = \bB_s(\gamma,x)\) be the
use block. We enumerate an unused point, referred to as a \emph{killing
point}, into~\(\sB_x\) and declare that~\(\sB_x\) is \emph{killed} (hence
\(\sB_x=\bB_{\dday{s}}^\beta(\gamma,x)\)).

One easily sees that if~\(\sB_x\) contains a killing point, then~\(\sB_x\)
will never be available for correcting~\(x\) (Section~\ref{sec:use block}).

\subsection{\texorpdfstring{The \(R\)-strategy and its computation with
slowdown condition}%
{The R-strategy and its computation with slowdown condition}}%
\label{sec:R and computation}

Let~\(\beta\) be an \(R_c\)\nbd node, where \(c\in \Ji(\cL)\) and
\(\spec(c*c_*)= \{c_1,\ldots, c_k \}\). Suppose that~\(\beta\) is assigned to
the requirement \(R_c(\Psi): C\neq \Psi^{E\oplus C_1\oplus \cdots\oplus
C_k}\). Let \(v=\tp(\beta)\) and \(x>v\) be a diagonalizing witness.

Suppose that at stage~\(s\), we have
\[
    \Psi_s^{E\oplus C_1\oplus \cdots C_k}(x)\downarrow = 0
\]
with use \(\psi_s(x)\). Let \(\sigma = (E \oplus C_1 \oplus \cdots \oplus
C_k)[s]\res \psi_s(x)\). (Again, we will tacitly actually define \(\sigma =
(E[s]\res \psi_s(x)) \oplus (C_1[s]\res \psi_s(x)) \oplus \cdots \oplus
(C_k[s]\res \psi_s(x))\).) Now, abusing notation, we will let
\(y=\psi_s(x)-1\) refer not only to the number \(\psi_s(x)-1\), but also to
the string~\(\sigma\). We say~\(y\) is \emph{the computation for~\(\beta\) at
stage~\(s\)}, while the diagonalizing witness~\(x\) is understood from
context. At stage \(t>s\),~\(y\) is \emph{restored} if \((E \oplus C_1 \oplus
\cdots \oplus C_k)[t]\res y+1 = \sigma\); \emph{injured} if otherwise.
Usually, we will focus on part of the whole computation. Given a use block
\(\sB\), \(y\res B\) is \emph{restored} if for each \(x\in B\), \((E \oplus
C_1 \oplus \cdots \oplus C_k)[t]\res B = \sigma\res B\); \emph{injured} if
otherwise. If~\(y\) is restored eventually and \(C(x)=1\), then
\(\Psi_s^{E\oplus C_1\oplus \cdots C_k}(x)\downarrow = 0\neq C(x)\) and the
requirement \(R_c(\Psi)\) is satisfied.

For a technical reason, we introduce a slowdown condition when finding a
computation. This delay feature is also used in the proof of the original
D.C.E.\ Nondensity Theorem.

\begin{definition}\label{def:relevant}
For any node~\(\alpha\), a set~\(X\) is \emph{relevant} to~\(\alpha\)
if~\(X\) is either \(K, E\), or~\(C\) for some \(c \in \Ji(\cL)\), or~\(X =
U\) if an \(S(U)\)\nbd requirement is assigned to some node \(\gamma
\subseteq \alpha\). (Of course, in the current section, we have only one
set~\(U\).)
\end{definition}

Clearly, there are only finitely many sets that are relevant to a fixed node.

\begin{definition}
Let~\(X\) be a set,~\(y\) be a number, and \(t\le s\) be two stages. We
define
\begin{itemize}
\item
\(\same(X,y,t,s)\) iff \(X_t \res (y+1) = X_s \res (y+1)\).
\item
\(\diff(X,y,t,s)\) iff \(\lnot\same(X,y,t,s)\).
\item
\(\bigsame(X,y,t,s)\) iff for all~\(s'\) with \(t \le s'\le s\),
we have \(\same(X,y,s',s)\).
\end{itemize}
\end{definition}

\(\bigsame()\) checks if a set is stable,~ \(\same()\) and~\(\diff()\) will
tell us which computations will be restored (as will be seen later).

\begin{definition}[computation with slowdown condition]\label{def:sd}
Let~\(\beta\) be an \(R\)\nbd node,~\(s\) the current stage, and \(s^*<s\) be
the last \(\beta\)\nbd stage. Let~\(y\) be the computation for~\(\beta\) at
stage~\(s\).~\(y\) is the computation \emph{with slowdown conditions} if the
following is satisfied:
\begin{enumerate}
\item\label{it:sd 1}
For each~\(X\) that is relevant to~\(\beta\), we have
\[
\bigsame(X,y,s^*,s).
\]
(If~\(s^*\) is not defined, then \(\bigsame(X,y,s^*,s)\) is defined to be
false.)
\item\label{it:sd 2}
If~\(n\) is a point enumerated into some use block \(\sB=[u-l,u)\) by a
node \(\alpha\subsetneq \beta\) at the same stage, then the
computation~\(y\) should also satisfy \(y<u-l\).
\item\label{it:sd 3}
\(y<s^*\).
\end{enumerate}
\end{definition}

If~\(\beta\) does not find a computation with slowdown condition,
then~\(\beta\) simply visits its \(w\)\nbd outcome. Clearly, imposing the
slowdown condition only delays~\(\beta\) for finitely many stages if the
computation actually converges.

In the rest of the paper, a computation always refers to a computation with
slowdown condition.

\subsection{\texorpdfstring{The \(R\)-strategy and the \(\varnothing\)-data}%
{The R-strategy and the emptyset-data}}\label{sec:R and data}

Let~\(\beta\) be an \(R\)\nbd node. If a computation is not found
by~\(\beta\), we should visit the \(w\)\nbd outcome of~\(\beta\). If a
computation~\(y\) is found, then we might be ready to visit the \(U\)\nbd
outcome and make some progress. As we will recursively collect a bunch of
computations found at various \(R\)\nbd nodes as our \emph{data}, we put the
single computation~\(y\) into the same package as our base step.

\begin{definition}[\(\varnothing\)-data]\label{def:0 data}
Let~\(\beta\) be an \(R\)\nbd node. Suppose that a computation~\(y\) is found
at stage~\(s\). Let the \emph{\(\varnothing\)\nbd data} of~\(\beta\), denoted
by \(\cE_s^\varnothing(\beta)\), consist of the following:
\begin{enumerate}
\item
a set of nodes \(\cE_s^\varnothing(\beta)=\{\beta\}\) (slightly abusing
notation),
\item
the computation~\(y\) for~\(\beta\).
\end{enumerate}
If there is no confusion, we might drop the subscript~\(s\) of
\(\cE_s^\varnothing(\beta)\).
\end{definition}

Before we have a long discussion on the \(U\)\nbd outcome, let us have a
quick overview of the strategy of an \(R\)\nbd node~\(\beta\). If the
\(d\)\nbd outcome is \emph{activated}, then~\(\beta\) visits this \(d\)\nbd
outcome, claiming that the \(R\)\nbd requirement is satisfied by doing
nothing. In all other cases, after~\(\tp(\beta)\) is defined and a
diagonalizing witness \(x>\tp(\beta)\) is picked,~\(\beta\) tries to obtain
the \(\varnothing\)\nbd data. If it fails to obtain the \(\varnothing\)\nbd
data, then~\(\beta\) visits the \(w\)\nbd outcome, claiming (eventually) that
a disagreement has been found and the \(R\)\nbd requirement is therefore
satisfied. If it obtains the \(\varnothing\)\nbd data, then~\(\beta\)
\emph{encounters each of the other outcomes in order}
(Definition~\ref{def:1Utree}) and decides what to do next and which one of
the outcomes to visit. In the current section~\ref{sec:1U}, the first outcome
is a \(U\)\nbd outcome, and the second outcome is a \(\ctr\)\nbd outcome.

\subsection{\texorpdfstring{The \(R\)-strategy and the \(U\)-outcome}%
{The R-strategy and the U-outcome}}\label{sec:R and U}

Suppose that the current stage is~\(s\) and~\(\beta\) is an \(R\)\nbd node
with \(\varnothing\)\nbd data \(\cE_s^\varnothing(\beta)\). Now we encounter
the first outcome of~\(\beta\), which is always a \(U\)\nbd outcome. The
action we take depends on whether this \(U\)\nbd outcome is Type~I, GREEN, or
RED\@. Let us assume that \(\seq(\beta^-)=(a,\eta)\in m \times [T_\cL]\) and
\(\seq(\beta^\frown U) = (b, \xi)\), if the latter is defined. Let
\(v=\tp(\beta)\).

If the \(U\)\nbd outcome is Type~I, then we visit it. By visiting this
outcome, we kill each functional that belongs to \(\kl(\beta,U)\) (see
Section~\ref{sec:R and killing} for the killing strategy), and we define and
keep the \(\Delta\)\nbd functional that belongs to \(\mt(\beta,U)\) correct
(see Section~\ref{sec:R and delta} for the correcting strategy).
\(\cE_s^\varnothing(\beta)\) is \emph{not} discarded.

If the \(U\)\nbd outcome is GREEN, then we visit it. We also kill each
functional that belongs to \(\kl(\beta,U)\) by the killing strategy. In this
case, there is no \(\Delta\)\nbd functional to define, and
\(\cE_s^\varnothing(\beta)\) \emph{is} discarded.

If the \(U\)\nbd outcome is RED, then we do \emph{not} visit it. Recall from
Section~\ref{sec:star and sharp} that~\(\beta^*\) is defined. Note also
that~\(s\) is a \(\beta^*\)\nbd stage visiting the \(U\)\nbd outcome
of~\(\beta^*\) with its \(\varnothing\)\nbd data
\(\cE_s^\varnothing(\beta^*)\). We now combine \(\cE_s^\varnothing(\beta^*)\)
and \(\cE_s^\varnothing(\beta)\) and add some information as follows.

\begin{definition}[\(U\)-data]\label{def:1U data}
At stage~\(s\), suppose that \(\cE_s^\varnothing(\beta)\) is obtained. If the
first \(U\)\nbd outcome for \(\beta\) is RED, we let~\(\beta^*\) be defined
as in Section~\ref{sec:star and sharp}. Let~\(y_{\beta^*}\) and~\(y_\beta\)
be the computations for~\(\beta^*\) and~\(\beta\), respectively. Let the
\emph{\(U\)\nbd data} \(\cE_s^U(\beta)\) consist of the following:
\begin{enumerate}
\item
a set of nodes
\(\cE_s^U(\beta)=\cE_s^\varnothing(\beta)\cup\cE_s^\varnothing(\beta^*)\),
\item
a \(U\)\nbd reference stage~\(s\) for each \(\xi\in \cE_s^U(\beta)\),
\item\label{it:1U data1 3}
for each \(\xi\in \cE_s^\varnothing(\beta)\), a~\emph{\(U\)\nbd condition}
\(\same(U,y_\xi,s,t)\) with \emph{\(U\)\nbd reference length} \(y_\xi\) and
variable~\(t\), and
\item\label{it:1U data1 4}
for each \(\xi\in \cE_s^\varnothing(\beta^*)\), a~\emph{\(U\)\nbd
condition} \(\diff(U,y_\beta,s,t)\) with \emph{\(U\)\nbd reference length}
\(y_\beta\) and variable~\(t\).
\end{enumerate}
We denote the \emph{\(U\)\nbd condition} for each \(\xi\in \cE_s^U(\beta)\)
by \(\Cond^U(\xi,t)\) where~\(t\) is a variable. If there is no confusion, we
might drop the subscript~\(s\) of~\(\cE_s^U(\beta)\).
\end{definition}
The reference stages for \(\cE_s^U(\beta)\), \(\cE_s^\varnothing(\beta)\),
and \(\cE_s^\varnothing(\beta^*)\) happen to be the same for now. We are very
careful about the reference length in the above definition to reflect the
dependence of each parameter: the reference length in~\eqref{it:1U data1 3}
will follow the same idea when we discuss multiple \(S(U)\)\nbd requirements
in Section~\ref{sec:2U}, while the reference length in~\eqref{it:1U data1 4}
still needs to be modified.

To demonstrate this situation, let us look at an example and see
how~\(\cE_s^U(\beta)\) can be helpful. This example reminds the reader of the
essential idea in the proof of the original D.C.E.\ Nondensity Theorem, where
we embed the \(2\)\nbd element chain, but in a more general setting.

\begin{example}\label{eg:1U good controller}
Let us consider the case in Figure~\ref{fig:1Utree}. Recall from Figure~\ref
{fig:lattice3-S} that \(F_{00}(U)\) consists of \(C_\lambda=\Gamma_\lambda^
{E\oplus U}\) and \(C_0=\Gamma_0^{E\oplus U}\), \(F_{01}(U)\) consists of
\(C_\lambda=\Gamma_\lambda^{E\oplus U}\) and \(U=\Delta_0^{E\oplus
C_\lambda}\), and \(F_{1}(U)\) consists of \(U=\Delta_\lambda^E\). For the
easy case, let us ignore~\(R_{c_0}^3\) and consider \(\beta=R_{c_\lambda}^4\)
and \(\beta^*=R_{c_\lambda}^2\). Let~\(y_2\) and~\(y_4\) denote the
computation for~\(R_{c_\lambda}^2\) and~\(R_{c_\lambda}^4\), respectively,
and \(\cE_s^U(R_{c_\lambda}^4)\) be the \(U\)\nbd data currently obtained
when~\(R_{c_\lambda}^4\) encounters the RED \(U\)\nbd outcome at stage~\(s\).
As we can assume that~\(y_2\) is larger than it actually is as long as our
construction allows restoring those extra digits, we assume that \(y_4<y_2\).
Note that~\(y_2\) is injured at the end of stage~\(s\) by the killing
strategy of \(R_{c_\lambda}^2\). It is also natural for us to enumerate the
diagonalizing witnesses~\(x_4\) and~\(x_2\) with \(x_4<x_2\) for
\(R_{c_\lambda}^4\) and \(R_{c_\lambda}^2\), respectively,
into~\(C_\lambda\). Then the \(S\)\nbd node assigned~\(10\) potentially has
to make corrections to \(\Gamma_\lambda^{E\oplus U}(x_4)\) and
\(\Gamma_\lambda^ {E\oplus U}(x_2)\) in the future, which potentially
injures~\(y_2\). Let \(s^*<s\) be the last stage when \(R_{c_\lambda}^4\) is
visited. With slowdown condition, we have \(\bigsame(U,y_4,s^*,s)\). Let
\(w=\tp(R_{c_\lambda}^2)\).

With the \(U\)\nbd data and the above observations, we now discuss under
which conditions we can restore~\(y_2\) or~\(y_4\).

Suppose that \(t>s\) is a stage when we have \(\diff(U,y_4,s,s_0)\) and are
visiting the \(S\)\nbd node assigned~\(10\). We intend to explain why~\(y_2\)
can be restored while this \(S\)\nbd node can keep~\(\Gamma_\lambda\)
and~\(\Gamma_0\) correct. We consider a use block \(\sB<y_2\) that looks
different before and after the moment when~\(y_2\) is found. The goal is to
show that \(\sB\) is either not available for correcting, or does not need
any correction. There are three kinds of such use blocks,
\(\bB_{\dday{s}}^{R^2}(\gamma_\lambda,n)\) for some \(n\ge w\),
\(\bB_{\dday{s}}^{R^2}(\gamma_0,n)\) for some \(n\ge w\), and
\(\bB_{\bday{s'}}(\delta_\lambda,n)\) for some~\(n\) and some \(s'\le s\),
where the first two are similar.

For a use block \(\sB=\bB_{\dday{s}}^{R^2}(\gamma_\lambda,n)\) for some
\(n\ge w\), we note that \(\created(\sB)>s^*\) because otherwise it would
have been killed at~\(s^*\). By \(\bigsame(U,y_4,s^*,s)\) and
\(\diff(U,y_4,s,t)\), we realize that even if we restore \(y_2\res \sB\) at
stage~\(t\),~\(\sB\) is not available for correcting~\(n\). Therefore
restoring~\(y_2\) and keeping~\(\Gamma_\lambda(n)\) correct at the \(S\)\nbd
node~\(10\) creates no conflicts. The same argument applies to a use block
\(\bB_{\dday{s}}^{R^2}(\gamma_0,n)\) for each \(n\ge w\). For the use block
\(\sB'=\bB_{\bday{s'}}(\delta_\lambda,n)\) for some \(n\) and some \(s'\le
s\), we simply restore \(y\res \sB'\) because if we have restored~\(y_2\), we
are going to claim satisfaction of~\(R_{c_\lambda}^2\), and we are not going
to visit the \(U\)\nbd outcome of~\(R_{c_\lambda}^2\) anymore and hence we do
not have to keep~\(\Delta_\lambda\) correct. By the way, if we have
\(\diff(U,y_4,s,t)\), each use block~\(<y_2\) will be \(E\)\nbd restrained at
stage~\(t\).

Suppose that at stage \(t>s\), we have \(\same(U,y_4,s,t)\) and are visiting
\({R_{c_\lambda}^2}^\frown U\). We intend to explain why~\(y_4\) can be
restored. We will only consider the use block \(\sB<y_4\) that looks
different before and after the moment when~\(y_4\) is found. First of all,
since \(y_4<B_{\dday{s}}^{R^2}(\gamma_\lambda,w)\) and
\(y_4<B_{\dday{s}}^{R^2}(\gamma_0,w)\) by the Slowdown Condition
(Definition~\ref{def:sd}~(\ref{it:sd 2})),
we will only consider a use block \(\sB=\bB_{\bday{s'}}(\delta,n)<y_4\) for
some \(s'\le s^*\) and some \(n<y_4\). \(\Delta_\lambda(n)\) is correct at
stage~\(s^*\) as otherwise we would enumerate a correcting point into~\(\sB\)
and hence \(y_4<\sB\) by the Slowdown Condition. As we have
\(\bigsame(U,y_4,s^*,s)\), no point will be enumerated into~\(\sB\) during
each stage~\(s''\) with \(s^*\le s''\le s\). For each \(t'>s\), if we have
\(\diff(U,y_4,s,t')\), we restore~\(y_2\) and do not maintain this use
block~\(\sB\); if \(\same(U,y_4,s,t')\), then \(\Delta_\lambda(n)\) is
correct as it was at stage~\(s^*\). In other words, we will never enumerate a
point into~\(\sB\) and~\(y_4\) will never be injured. Therefore, it is safe
for us to restore~\(y_4\) and activate the \(d\)\nbd outcome of
\(R_{c_\lambda}^4\), claiming that \(R_{c_\lambda}^4\) is satisfied. By the
way, if we have \(\same(U,y_4,s,t)\), this use block~\(\sB\) will be
\(E\)\nbd restrained at stage~\(t\).

For each \(t>s\), we have either \(\diff(U,y_4,s,t)\) or
\(\same(U,y_4,s,t)\). This gives us a \emph{decision map}
\(\cD_t(R_{c_\lambda}^4)=\xi\) when \(\Cond^U(\xi,t)\)
(Definition~\ref{def:1U data}) holds. According to the decision map, we
decide which computation is to be restored at each stage.
\end{example}

From this Example~\ref{eg:1U good controller}, we see that
\(\cE_s^U(R_{c_\lambda}^4)\) contains all information to decide
whether~\(y_2\) or~\(y_4\) will be restored at each stage \(t>s\), and they
can really be restored while functionals that belong to \(\mt(01,U)\) or
\(\mt(R_{c_\lambda}^2,U)\) can be kept correct depending on which computation
is restored. This motivates that we should stop collecting more data and get
ready to make some progress. This is exactly what the \(\ctr\)\nbd outcome
suggests: After encountering a RED outcome and obtaining \(\cE_s^U(\beta)\),
we encounter the second outcome of \(\beta\), which is a \(\ctr\)\nbd
outcome.~\(\beta\) is ready to become a \emph{controller}.

\subsection{\texorpdfstring{The \(R\)-strategy and the controller, part~1}%
{The R-strategy and the controller, part~1}}\label{sec:R and controller 1}

As a \(\ctr\)\nbd outcome is never the first outcome, it will be clear from
our construction that whenever an \(R\)\nbd node~\(\beta\) encounters the
\(\ctr\)\nbd outcome, it must have obtained the \(U\)\nbd data
\(\cE^U(\beta)\) (Definition~\ref{def:1U data}).

\begin{definition}[controller]\label{def:1U controller}
At stage~\(s\), suppose that~\(\beta\) is an \(R\)\nbd node encountering the
\(\ctr\)\nbd outcome with \(U\)\nbd data \(\cE^U(\beta) =
\cE^\varnothing(\beta)\cup\cE^\varnothing(\alpha) \) for some~\(\alpha\) (it
will be shown that~\(\beta\)~and \(\alpha\) are related in a certain way).
Let \(\cE^{\ctr}(\beta)=\cE^U(\beta)\) (a modification will be needed in
Section~\ref{sec:2U}). Suppose \(\seq(\beta^-)=(b,\xi)\). We say that
\(\beta\) becomes a \(U^b\)\nbd \emph{controller} (or \emph{controller} for
simplicity). The controller~\(\beta\) inherits the priority from the terminal
node \(\beta^\frown \ctr\) on the priority tree~\(\cT\). Suppose
that~\(\beta\) and~\(\alpha\) are~\(R_c\)- and \(R_d\)\nbd nodes,
respectively, for some \(c,d\in \Ji(\cL)\) with \(d\le c\) (we will not need
to consider the case when \(d>c\)).
\begin{enumerate}
\item\label{it:1U controller1}
If \(d=c\), then we say that the~\(\beta\) has \emph{no \(U^b\)\nbd
problem}.
\item\label{it:1U controller2}
If \(d<c\) (see Section~\ref{sec:star and sharp}), then we say
that~\(\alpha\) is the \emph{\(U^b\)\nbd problem} (or \(U\)\nbd problem for
short) for~\(\beta\). (See Example~\ref{eg:1U bad controller} below.)
\end{enumerate}
In both cases, we let \(\Cond_\beta^U(\xi,t)\) be \(\Cond^U(\xi,t)\) for each
\(\xi\in \cE^U(\beta)\). Let \(s_\beta^{\ctr}\) denote current stage~\(s\).

While~\(\beta\) is not initialized, \(\hat{C}\res s_\beta^{\ctr}\) is
restrained for each \(\hat{c}\in \Ji(\cL)\) with \(\hat{c}\neq c\).
\end{definition}

\begin{definition}[decision map]\label{def:1U decision}
Let~\(\beta\) become a controller with \(\cE^{\ctr}(\beta)\). For each \(s\ge
s_\beta^{\ctr}\), the \emph{decision map} is defined by setting
\(\cD_s(\beta)=\xi\) for the longest \(\xi\in \cE^{\ctr}(\beta)\) with
\(\Cond_\beta^U(\xi,t)\). The controller~\(\beta\) \emph{changes its
decision} (at stage~\(s\)) if \(\cD_s(\beta)\neq \cD_{s-1}(\beta)\).
When~\(s\) is clear from context, we write \(\cD(\beta)=\xi\) for short.
\end{definition}

If \(\cD_s(\beta)=\xi\), we would like to show that~\(y_\xi\) can be
restored, and we also put a restraint on \(E\res y_\xi\) at stage~\(s\).

\begin{definition}[noise]\label{def:noise}
Let~\(\beta\) be a controller with \(\cE^{\ctr}(\beta)\). At the beginning of
stage \(s> s_\beta^{\ctr}\), if there is some set~\(X\) that is relevant
to~\(\beta\) (Definition~\ref{def:relevant}) such that
\[
\diff(X,s_\beta^{\ctr},s-1,s),
\]
then~\(\beta\) \emph{sees some noise} at stage~\(s\).
\end{definition}

While it is not initialized, a controller~\(\beta\) can see at most finitely
much noise and hence changes its decision at most finitely many times.
Therefore, if~\(\beta\) sees some noise, we can safely initialize all nodes
to the right of \(\beta^\frown \ctr\). In this Section~\ref{sec:1U},
``longest'' in Definition~\ref{def:1U decision} does not matter as there will
be a unique choice; it will matter in Section~\ref{sec:2U}. From
\(\Cond^U(\beta,t)\) and \(\Cond^U(\beta^*,t)\) defined in
Definition~\ref{def:1U data} and from \(\cE^{\ctr}(\beta)=\cE_s^U(\beta)\),
we see that at each \(s\ge s_\beta^{\ctr}\), \(\cD_s(\beta)\) is always
defined.

If~\(\beta\) becomes a controller at stage~\(s\), we stop the current stage.
Example~\ref{eg:1U good controller} is Case~\eqref{it:1U controller1} in
Definition~\ref{def:1U controller}. We now discuss Case~\eqref{it:1U
controller2} in the next example.

\begin{example}\label{eg:1U bad controller}
Different than in Example~\ref{eg:1U good controller}, we now
consider~\(R_{c_0}^3\) instead of~\(R_{c_\lambda}^4\). Suppose
that~\(R_{c_0}^3\) is encountering its \(\ctr\)\nbd outcome with data
\(\cE^U(\beta) = \cE^\varnothing(R_{c_0}^3)\cup
\cE^\varnothing(R_{c_\lambda}^2)\) (we drop the subscript~\(s\) if there is
no confusion). We let \(\cE^{\ctr}(\beta)=\cE^U(\beta)\) but realizing that
\(R_{c_\lambda}^2\) is a \(U^1\)\nbd problem for \(R_{c_0}^3\)
(Definition~\ref{def:1U controller}\ref{it:1U controller2}). Let
\(s_*=s_{R_{c_0}^3}^{\ctr}\).

Why is~\(R_{c_\lambda}^2\) called a problem? Because its diagonalizing
witness~\(x_2\) should be enumerated into~\(C_\lambda\) while~\(C_\lambda\)
also belongs to the oracle of~\(R_{c_0}^3\). This is the potential conflict.
(A plausible attempt is to assume \(y_3<x_2\) by patiently waiting so that
enumerating~\(x_2\) does not injure~\(y_3\). However, this does not work if
we have multiple \(S\)\nbd requirements.) The solution we take here is that
we enumerate only~\(x_3\) into~\(C_0\) and keep~\(x_2\) out of~\(C_\lambda\).
Note that enumerating~\(x_3\) into~\(C_0\) does not injure~\(y_2\)
because~\(C_0\) does not belong to the oracle of~\(R_{c_\lambda}^2\) (this is
not a coincidence -- see Section~\ref{sec:star and sharp}).

To emphasize the fact to keep~\(x_2\) out of~\(C_\lambda\), the
controller~\(R_{c_0}^3\) puts a restraint on \(C_\lambda\res s_*\) (also for
the sake of the \(G\)\nbd requirement discussed later). The restraint is not
canceled unless the controller \(R_{c_0}^3\) is initialized.

Suppose \(\same(U,y_3,s_*,s)\), then we have \(\cD(R_{c_0}^3)=R_{c_0}^3\) and
\(E\res y_3\) is restrained. Similar to Example~\ref{eg:1U good controller},
we can restore~\(y_3\) and activate the \(d\)\nbd outcome of~\(R_{c_0}^3\) in
this situation.

Suppose \(\diff(U,y_3,s_*,s)\), we now have a different situation than in
Example~\ref{eg:1U good controller}. First of all,~\(x_2\) is not enumerated,
so we are not attempting to activate the \(d\)\nbd outcome
of~\(R_{c_\lambda}^2\). However, we still want to restore~\(y_2\). In fact,
this can still be done and it is done in a very crude way: we simply
restore~\(y_2\) and ignore the impact on each use block \(\sB<y_2\). Then we
turn the GREEN \(U\)\nbd outcome of \(R_{c_0}^1\) (\(=
(R_{c_\lambda}^2)^{\sharp}\)) into a RED outcome. By doing so, we will not
visit the \(S\)\nbd node assigned~\(10\) and hence we do not need to keep
those functionals correct and ignoring each use block \(\sB<y_2\) is
legitimate. In this case, we also declare that \(E\res y_2\) is restrained
although it is not necessary. If~\(R_{c_0}^3\) never changes its decision,
then our construction will run until~\(R_{c_0}^1\) becomes a controller at
\(s_{**}=s_{R_{c_0}^1}^{\ctr}\).

At each stage \(s>s_{**}\), we will encounter the following cases.
If~\(R_{c_0}^3\) sees some noise (Definition~\ref{def:noise}), \(R_{c_0}^1\)
is initialized (\(R_{c_0}^1\) has lower priority than~\(R_{c_0}^3\) as the
node \((R_{c_0}^1)^\frown \ctr\) is to the right of \((R_{c_0}^3)^\frown
\ctr\)). If \(\cD(R_{c_0}^1)=R_{c_0}^1\), we can have~\(R_{c_0}^1\) restored
and activate the \(d\)\nbd outcome of~\(R_{c_0}^1\). However, if
\(\cD(R_{c_0}^1)=R_{c_\lambda}^0\), we cannot mimic what~\(R_{c_0}^3\) did
because~\((R_{c_\lambda}^0)^{\sharp}\) is not defined this time. We will
discuss this situation in the next example.
\end{example}

Before leaving this section, we give some definitions that will be used
throughout the rest of the paper. Recall from Section~\ref{sec:R and
computation} that a computation~\(y\) is restored at (a substage of)
stage~\(t\) if \((E\oplus C_1\oplus \cdots C_k)[t]\res y+1=y\); and is
injured otherwise. By \(U\)\nbd restoring~\(y_\beta\) at the beginning of
stage~\(s\), we mean that we set \((E_s\oplus C_{1,s} \oplus \cdots \oplus
C_{k,s})\res \sB = y_\beta\res \sB\) (viewing~\(y_\beta\) as a string) for
each use block \(\sB<y_\beta\) that belongs to a \(U\)\nbd functional. In
this section, each use block belongs to some \(U\)\nbd functional, and
therefore \(U\)\nbd restoring~\(y_\beta\) is the same as
restoring~\(y_\beta\).

\begin{definition}[\(U\)-restorable]\label{def:U restorable}
Let~\(\beta\) be given and~\(y_\beta\) be its computation. We say
that~\(y_\beta\) is \emph{\((U,C)\)\nbd restorable} at stage~\(s\) if, after
we \(U\)\nbd restore~\(y_\beta\) and set \(E\res y_\beta\) to be restrained
at the beginning of stage~\(s\), for each \(\alpha\subseteq \beta\) and for
each use block \(\sB<y_\beta\) (hence~\(\sB\) is \(E\)\nbd restrained)
maintained by~\(\alpha\), we have the following:
\begin{enumerate}
\item\label{it:U restorable 1}
If~\(\sB\) is a use block for \(\Gamma^{E\oplus U}(n)=D\), where
\(n>\tp(\beta)\), \(\Gamma\in\mt(\alpha,U)\) (so~\(\alpha\) is an \(S\)\nbd
node), and~\(D\) is not necessarily different from~\(C\), then either
\begin{enumerate}
\item\label{it:U restorable 1a}
\(\sB\) is available for correcting~\(n\) and \(\Gamma^{E\oplus
U}(n)=D_s(n)\) (i.e., \(\Gamma(n)\) is correct and hence needs no
correction), or
\item\label{it:U restorable 1b}
\(\sB\) is not available for correcting~\(n\).
\end{enumerate}
\item\label{it:U restorable 2}
If~\(\sB\) is a use block for \(\Delta^{E\oplus \dots}(n)=U(n)\) for some
\(n>\tp(\beta)\) and \(\Delta\in\mt(\alpha,U)\) (so~\(\alpha\) is an
\(R\)\nbd node), then either
\begin{enumerate}
\item\label{it:U restorable 2a}
\(\sB\) is available for correcting~\(n\) and \(\Delta^{E\oplus
\dots}(n)=U_s(n)\) (i.e., \(\Delta(n)\) is correct and hence needs no
correction), or
\item\label{it:U restorable 2b}
\(\sB\) crosses over~\(C\).
\end{enumerate}
\end{enumerate}

Suppose that~\(y_\beta\) is \((U,C)\)\nbd restorable and~\(\beta\) is an
\(R_c\)\nbd node. Then~\(y_\beta\) is \emph{weakly \(U\)\nbd restorable} if
either
\begin{itemize}
\item
the witness~\(x_\beta\) is not enumerated into~\(C\), or
\item
for some use block~\(\sB\) that~(\ref{it:U restorable 2a}) fails (hence
(\ref{it:U restorable 2b}) holds), we have that~\(\sB\) is \(C\)\nbd
restrained.
\end{itemize}
(In fact, as we will see later, if~\(x_\beta\) is allowed to be enumerated
into~\(C\), then~\(\sB\) should be \(C\)\nbd free and vice versa.)
\(y_\beta\) is \emph{\(U\)\nbd restorable} in other cases.
\end{definition}

If~\(\beta\) is an \(R_c\)\nbd node and \(U\)\nbd restorable at stage~\(s\),
then enumerating a point into~\(C\) will not injure~\(y_\beta\). Therefore we
can enumerate its diagonalizing witness~\(x_\beta\) into~\(C\) and the use
block in Case~(\ref{it:U restorable 2b}) in Definition~\ref{def:U restorable}
can help correcting~\(\Delta(n)\). Hence,~\(y_\beta\) can be restored at the
beginning of~\(s\) and will not be injured by the end of stage~\(s\).

In Example~\ref{eg:1U good controller}, we showed that if
\(\diff(U,y_4,s,t)\), then~\(y_2\) is \(U\)\nbd restorable at~\(t\); if
\(\same(U,y_4,s,t)\), then~\( y_4\) is \(U\)\nbd restorable at~\(t\). In
Example~\ref{eg:1U bad controller}, if \(\same(U,y_3,s_*,s)\), then~\(y_3\)
is \(U\)\nbd restorable at~\(s\); if \(\diff(U,y_3,s_*,s)\), then~\(y_2\) is
\((U,C_\lambda)\)\nbd restorable (in fact, no use block \(\sB<y_2\)
satisfies~\eqref{it:U restorable 2} in Definition~\ref{def:U restorable}) and
weakly \(U\)\nbd restorable at~\(s\) as the witness~\(x_2\) is not
enumerated.

\subsection{\texorpdfstring{The \(R\)-strategy and the controller, %
part~2}{The R-strategy and the controller, part 2}}%
\label{sec:R and controller 2}

In Example~\ref{eg:1U bad controller}, it was shown that each
controller~\(\beta\) has some action to take unless it is a \(U^0\)\nbd
controller and~\(\cD(\beta)\) is a \(U^0\)\nbd problem
(Definition~\ref{def:1U controller}\eqref{it:1U controller2}). In this
section, we discuss this situation.

\begin{example}\label{eg:1U way out}
We continue with Example~\ref{eg:1U bad controller}. At
stage~\(s_{**}\),~\(R_{c_0}^1\) becomes a \(U^0\)\nbd controller
and~\(R_{c_\lambda}^0\) is a \(U^0\)\nbd problem (Definition~\ref{def:1U
controller}). Suppose that at \(s>s_{**}\) we have
\(\cD_s(R_{c_0}^1)=R_{c_\lambda}^0\). Notice that we also have
\(\cD_s(R_{c_0}^3)=R_{c_\lambda}^2\), a \(U^1\)\nbd problem for the
\(U^1\)\nbd controller~\(R_{c_0}^3\). We have \emph{two} \(U\)\nbd problems,
and they are both \(R_{c_\lambda}\)\nbd nodes. This is not a coincidence.

Recall that this example is about embedding the \(3\)\nbd element chain
(Figure~\ref{fig:lattice4}), and if~\(\alpha\) is a \(U\)\nbd problem,
then~\(\alpha\) must be an \(R_{c_\lambda}\)\nbd node. By setting \(m=2\)
(Figure~\ref{fig:1Utree}), if we run into a \(U^0\)\nbd problem, we must also
have a \(U^1\)\nbd problem, both of which are \(R_{c_\lambda}\)\nbd nodes.
Note that~\(y_2\) and~\(y_0\) are both weakly \(U\)\nbd restorable, meaning
that if the restraint on~\(C_\lambda\) is dropped then their witnesses can be
enumerated and therefore they should become \(U\)\nbd restorable under
certain conditions. This is our plan.

At stage~\(s\), when \(\cD(R_{c_0}^1)=R_{c_\lambda}^0\), we do the following
two things and stop stage~\(s\):
\begin{enumerate}
\item
We obtain the \(U\)\nbd data \(\cE_s^U(R_{c_\lambda}^2) =
\cE_{s_*}^\varnothing(R_{c_\lambda}^2)\cup
\cE_{s_{**}}^\varnothing(R_{c_\lambda}^0)\). We add the new \(U\)\nbd
conditions for both~\(R_{c_\lambda}^2\) and~\(R_{c_\lambda}^0\) as follows:
\begin{itemize}
\item \(\Cond^U(R_{c_\lambda}^0,t)\) is \(\diff(U,y_2,s,t)\), and
\item \(\Cond^U(R_{c_\lambda}^2,t)\) is \(\same(U,y_2,s,t)\).
\end{itemize}
We also add the \(U\)\nbd reference stage~\(s\) for each \(\xi\in
\cE^U(R_{c_\lambda}^2)\). (As the notation suggests, this will be the data
that~\(R_{c_\lambda}^2\) needs in order to encounter the second outcome
of~\(R_{c_\lambda}^2\).)
\item
We establish a \emph{link}, connecting the root of the tree and the
\(\ctr\)\nbd outcome of~\(R_{c_\lambda}^2\). That is, whenever the root is
visited, we \emph{directly} encounter the \(\ctr\)\nbd outcome
of~\(R_{c_\lambda}^2\) (and~\(R_{c_\lambda}^2\) will become a controller as
we will see).
\end{enumerate}
At the beginning of stage \(s+1\), the next stage, if one of the
controllers~\(R_{c_0}^3\) and~\(R_{c_0}^1\) sees some noise, we discard the
\(U\)\nbd data \(\cE^U(R_{c_\lambda}^2)\) obtained at stage~\(s\) and we also
destroy the link, and then we wait for another stage when
\(\cD(R_{c_0}^1)=R_{c_\lambda}^0\); otherwise, we \emph{immediately} travel
the link, and~\(R_{c_\lambda}^2\) encounters the \(\ctr\)\nbd outcome with
the data \(\cE_s^U(R_{c_\lambda}^2)\). Before we let~\(R_{c_\lambda}^2\)
obtain \(\cE^{\ctr}(R_{c_\lambda}^2)\), let us first analyze the data
\(\cE_s^U(R_{c_\lambda}^2)\) and see why~\(R_{c_\lambda}^2\) is ready to
become a controller.

Firstly, recall that \(\Cond_{R_{c_0}^3}^U(R_{c_\lambda}^2,t)\) is
\(\diff(U,y_3,s_*,t)\) and that \(\Cond_{R_{c_0}^1}^U(R_{c_\lambda}^0,t)\) is
\(\diff(U,y_1,s_{**},t)\), where~\(s_*\) denotes the \(U\)\nbd reference
stage for~\(R_{c_\lambda}^2\) stored in \(\cE_{s_*}^{\ctr}(R_{c_0}^3)\),
and~\(s_{**}\) is the \(U\)\nbd reference stage for~\(R_{c_\lambda}^0\)
stored in \(\cE_{s_{**}}^{\ctr}(R_{c_0}^1)\). Let \(s_0>s_*\) be the last
stage when~\(R_{c_0}^3\) sees some noise (Definition~\ref{def:noise}), then
we have \(\bigsame(U,s_*,s_0,s)\). Note that we can assume, without loss of
generality (by assuming that each~\(y_i\) is as large as possible), that
\[
y_3<y_2<s_*<s_0<y_1<y_0<s_{**}<s.
\]
Notice that we also have \(\cD_{s_0}(R_{c_0}^3) = R_{c_\lambda}^2\), or
equivalently, \(\diff(U,y_3,s_*,s_0)\).

For a stage \(t>s\), from \(\bigsame(U,s_*,s_0,s)\) and
\(\diff(U,y_3,s_*,s_0)\) and \(y_3<y_2<s_*\), we deduce that
\[
\same(U,y_2,s,t)\Rightarrow \diff(U,y_3,s_{*},t),
\]
which implies that~\(y_2\) is weakly \(U\)\nbd restorable at stage~\(t\);
from \(\bigsame(U,s_*,s_0,s)\) (particularly from \(\same(U,y_2,s_{**},s)\))
and \(y_2<y_1\), we deduce that
\[
\diff(U,y_2,s,t)\Rightarrow \diff(U,y_1,s_{**},t),
\]
which implies that~\(y_0\) is weakly \(U\)\nbd restorable at stage~\(t\).

Now, as we are encountering and then visiting the \(\ctr\)\nbd outcome, which
is to the left of both of the controllers~\(R_{c_0}^3\) and~\(R_{c_0}^1\), we
are safe to initialize both of them. More importantly, the restraint
on~\(C_\lambda\) is dropped. Therefore when~\(R_{c_\lambda}^2\) becomes a
controller at \(s_{R_{c_\lambda}^2}^{\ctr} = s+1\), we are allowed to
enumerate~\(x_2\) and~\(x_0\), the diagonalizing witnesses for~\(y_2\)
and~\(y_0\), respectively, into~\(C_\lambda\). Being weakly \(U\)\nbd
restorable is now being \(U\)\nbd restorable. By the way, we also put a
restraint on \(C_0\res s_{R_{c_\lambda}^2} \).
\end{example}

We remark that in the above example, visiting the second outcome
of~\(R_{c_\lambda}^2\) requires a lot of work --- we have to obtain
\(\cE^U(R_{c_\lambda}^2)\) in a very time-consuming way. In this example, the
next outcome is a \(\ctr\)\nbd outcome, and we are lucky
that~\(R_{c_\lambda}^2\) can immediately become a controller and the
\(U\)\nbd data \(\cE_s^U(R_{c_\lambda}^2)\) is not wasted. However, this will
not generally be true in Section~\ref{sec:2U}. The \(U\)\nbd data can be
wasted (in the same manner that \(\varnothing\)\nbd data can be wasted) and
both of the previous controllers are initialized. It seems that we have
gained nothing, but encountering the second outcome one more time is a bit of
progress.

Now we make the procedure of obtaining \(\cE^U(R_{c_\lambda}^2)\) as in
Example~\ref{eg:1U way out} formal. We set \(m=\abs{\Ji(\cL)}+1\) to keep the
argument simple.

\begin{definition}[strong \(U\)\nbd data]\label{def:1U strong data}
Suppose \(m=\abs{\Ji(\cL)}+1\). Suppose that each~\(\alpha_i\), \(i<m\), is a
\(U^i\)\nbd problem for the controller~\(\beta_i\), where \(\alpha_0
\subseteq \cdots \subseteq \alpha_{m-1}\). For each~\(\alpha_i\), its
computation is~\(y_{\alpha_i}\), and \(\Cond_{\beta_i}^U(\alpha_i,s)\) is
\(\diff(U,z_{\alpha_i},s_i,s)\), where~\(z_{\alpha_i}\) is the \(U\)\nbd
reference length (in fact, \(z_{\alpha_i}=y_{\beta_i}\) in this section)
and~\(s_i\) is the \(U\)\nbd reference stage (in fact,
\(s_i=s_{\beta_i}^{\ctr}\) in this section). Let~\(s\) be the current stage when
\(\cD_{s}(\beta_0)=\alpha_0\neq\cD_{s-1}(\beta_0)\). By the Pigeonhole
Principle, we have for some \(0\le i<j<m\) and some \(c\in \Ji(\cL)\) such
that both~\(\alpha_i\) and~\(\alpha_j\) are \(R_c\)\nbd nodes.

The \(U\)\nbd data \(\cE_s^U(\alpha_j)\) consists of the following:
\begin{enumerate}
\item
a set of nodes \(\cE_s^U(\alpha_j) = \cE^\varnothing(\alpha_j)\cup
\cE^\varnothing(\alpha_i)\), where \(\cE^\varnothing(\alpha_i)\) belongs to
\(\cE^{\ctr}(\beta_i)\) and \(\cE^\varnothing(\alpha_j)\) belongs to
\(\cE^{\ctr}(\beta_j)\) (the subscripts of \(\cE^\varnothing(\alpha_i)\)
and \(\cE^\varnothing(\alpha_j)\) can be deduced from
\(\cE^{\ctr}(\beta_i)\) and \(\cE^{\ctr}(\beta_j)\), respectively, and
hence can be omitted),
\item
for each \(\xi\in \cE^\varnothing(\alpha_j)\), a \(U\)\nbd condition
\(\Cond^U(\xi,t)=\same(U,y_\xi,s,t)\) with \emph{reference
length}~\(y_\xi\), \emph{reference stage}~\(s\), and variable~\(t\),
\item
for each \(\xi\in \cE^\varnothing(\alpha_i)\), a \(U\)\nbd condition
\(\Cond^U(\xi,t)=\diff(U,y_{\alpha_j},s,t)\) with \emph{reference
length}~\(y_{\alpha_j}\), \emph{reference stage}~\(s\), and variable~\(t\),
\end{enumerate}
This \(U\)\nbd data is \emph{strong \(U\)\nbd data}. \(U\)\nbd data that is
not strong \(U\)\nbd data (Definition~\ref{def:1U data}) is called \emph{weak
\(U\)\nbd data}.

The \emph{\(U\)\nbd link} connects the root of the priority tree and the
\(\ctr\)\nbd outcome of~\(\alpha_j\), in the sense that whenever the root is
visited, we skip the actions to \(U\)\nbd functionals and directly travel the
link and let~\(\alpha_j\) encounter the \(\ctr\)\nbd outcome with the
\(U\)\nbd data \(\cE_s^U(\alpha_j)\).
\end{definition}

Encountering the \(\ctr\)\nbd outcome will make~\(\alpha_j\) a controller
following Definition~\ref{def:1U controller}. Note that in
Definition~\ref{def:1U controller}, it does not matter whether the \(U\)\nbd
data \(\cE^U(\beta)\) is strong or not. However, if it is a strong \(U\)\nbd
data, it will not have a \(U\)\nbd problem.

As in Example~\ref{eg:1U way out}, we of course hope that
\(\Cond^U(\alpha_j,s)\) implies that~\(\alpha_j\) is restorable at
stage~\(s\) and that \(\Cond^U(\alpha_i,s)\) implies that~\(\alpha_i\) is
restorable at stage~\(s\). This will be proved in the verification section.

The following example continues Example~\ref{eg:1U way out} and demonstrates
the situation after~\(R_{c_\lambda}^2\) becomes a controller with the strong
\(U\)\nbd data
\(\cE^U(R_{c_\lambda}^2)=\{R_{c_\lambda}^2,R_{c_\lambda}^0\}\).

\begin{example}
Suppose~\(R_{c_\lambda}^2\) becomes a controller with strong \(U\)\nbd data
\(\cE^U(R_{c_\lambda}^2)=\{R_{c_\lambda}^2,R_{c_\lambda}^0\}\) and that
\(\cD(R_{c_\lambda}^2)=R_{c_\lambda}^2\). In this case, we are not going to
skip any nodes below~\(R_{c_\lambda}^0\) and directly visit
\((R_{c_\lambda}^2)^\frown d\). For example, we might have~\(R_{c_0}^1\)
visiting its \(w\)\nbd outcome for a very long time. Meanwhile,~\(R_{c_0}^7\)
becomes a controller with \(\cE^{\ctr}(R_{c_0}^7)=\{R_{c_0}^7,
R_{c_\lambda}^6\}\) in the same sense as~\(R_{c_0}^3\). Then
\(\cD(R_{c_0}^7)=R_{c_\lambda}^6\). We will turn the GREEN \(U\)\nbd outcome
of~\(R_{c_0}^5\) RED\@. Later~\(R_{c_0}^5\) becomes a controller with
\(\cE^{\ctr}(R_{c_0}^5)=\{R_{c_0}^5, R_{c_\lambda}^0\}\). Note that
this~\(R_{c_\lambda}^0\) has a new computation and a new diagonalizing
witness, which is different from the data stored in \(\cE^
{\ctr}(R_{c_0}^3)\). Then \(\cD(R_{c_0}^5)=R_{c_\lambda}^0\) and we obtain
strong \(U\)\nbd data \(\cE^U(R_{c_\lambda}^6) =
\{R_{c_\lambda}^6,R_{c_\lambda}^0\}\), and then~\(R_{c_\lambda}^6\) becomes a
controller. It can be the case that \(\cD (R_{c_\lambda}^2)=R_{c_\lambda}^2\)
and \(\cD(R_{c_\lambda}^6)=R_{c_\lambda}^0\). Whenever~\(R_{c_\lambda}^2\)
sees some noise, all nodes to the right of \((R_{c_\lambda}^2)^\frown
{\ctr}\) are initialized, including the controller~\(R_{c_\lambda}^6\).
\end{example}

One last remark is that~(\ref{it:U restorable 2b}) in Definition~\ref{def:U
restorable} can actually happen. However, we have to consider a more
complicated lattice, for example, the diamond lattice in
Figure~(\ref{fig:lattice4}) or the \(6\)\nbd element lattice in
Figure~(\ref{fig:lattice6}). For this reason, we let the readers sort out the
details, and we will see in the verification that if~(\ref{it:U restorable
2a}) fails, then~(\ref{it:U restorable 2b}) must hold.

\subsection{\texorpdfstring{The \(G\)-strategy}{The G-strategy}}%
\label{sec:G strategy}

Recall that we have a global requirement
\[
G: K = \Theta^{j(1)},
\]
where we assume \(j(1) = E \oplus C_0 \oplus C_1 \oplus \cdots
C_{\abs{\Ji(\cL)}-1}\) where \(\{c_0, c_1,\ldots,c_{\abs{\Ji(\cL)}-1}\} =
\Ji(\cL)\).~\(\Theta(x)\) is always defined with fresh large use
\(\theta(x)+1\) the first time, which never changes. (In order to prevent
coding by this requirement to interfere with witnesses and killing points, we
agree that no use block for a \(U\)\nbd functional will be allowed to contain
any \(\Theta\)\nbd uses.)

Now, when~\(x\) is enumerated into~\(K\) at stage \(s\), we choose~\(C_k\)
for some~\(k\) and simply enumerate \(a=\theta(x)\) into it. The correct
set~\(C_k\) will be denoted by~\(\chi_s(a)\), and the following describes how
we decide it:

List all controllers in decreasing order of priority as
\[
\beta_0, \beta_1, \dots, \beta_{n-1}.
\]
(Recall that~\(\beta_i\) has higher priority than~\(\beta_j\) if
\(\beta_i^\frown \ctr <_P \beta_j^\frown \ctr\).) We assume that
each~\(\beta_i\) is an \(R_{c_{\beta_i}}\)\nbd node for some \(c_{\beta_i}
\in \Ji(\cL)\), so it is restraining the set \(\hat{C}\res
s_{\beta_i}^{\ctr}\) for each \(\hat{c}\neq c_{\beta_i}\). If \(i<j\),
then~\(\beta_j\) becomes a controller after~\(\beta_i \), so
\(s_{\beta_i}^{\ctr} < s_{\beta_j}^{\ctr} \). Now, we let~\(\beta_i\) be the
controller of highest priority, if any, such that \(a < s_{\beta_i}^{\ctr}\).
If such \(\beta_i\) exists, we let \(\chi_s(a)=C_{\beta_i}\); otherwise, we
let \(\chi_s(a)=E\).

We remark that if \(\chi_s(\theta(x))=E\), then enumerating \(\theta(x)\)
into~\(E\) does not affect any controller. If
\(\chi_s(\theta(x))=C_{\beta_i}\), then enumerating \(\theta(x)\)
into~\(C_{\beta_i}\) does not affect \(\beta_0,\beta_1,\dots,\beta_i\) since
\(\theta(x)\) is relatively large for \(\beta_0,\dots,\beta_{i-1}\) and
\(\beta_i\) has no restraint on~\(C_{\beta_i}\). The controllers
\(\beta_{i+1},\dots,\beta_{n-1}\) will be simply initialized. In fact, as
\(x<\theta(x)<s_{\beta_i}^{\ctr}\), this controller~\(\beta_i\) sees some
noise at stage~\(s\) (Definition~\ref{def:noise}) and hence all nodes,
including~\(\beta_j\) with \(j>i\), to the right of \(\beta_i^\frown {\ctr}\)
are initialized in the first place.

\subsection{The threshold point and diagonalizing witness}%
\label{sec:threshold point and diagonalizing witness}

Let~\(\beta\) be an \(R_c\)\nbd node for some \(c\in \Ji(\cL)\). The
threshold point is denoted by~\(\tp(\beta)\), and the diagonalizing witness
is denoted by~\(\dw(\beta)\) (see Section~\ref{sec:1Utree}). As usual, we
should define \(\dw(\beta)>\tp(\beta)\). Note that~\(\tp(\beta)\) is
associated to~\(\beta\) and is undefined only when~\(\beta\) is
initialized.~\(\dw(\beta)\) is associated with the \(w\)\nbd outcome
of~\(\beta\) and will become undefined whenever a node/outcome (for example,
the \(U\)\nbd outcome of~\(\beta\)) to the left of it is visited. Of course,
the next time we visit~\(\beta\) and~\(\dw(\beta)\) becomes undefined, we
define it to be a fresh number. That being said, each time~\(\beta\) visits
its \(U\)\nbd outcome, it has a different computation with a different
witness. Suppose the \(w\)\nbd outcome is the \emph{true outcome} (the
leftmost outcome of~\(\beta\) that is visited infinitely often);
then~\(\dw(\beta)\) will become stable.

Let~\(\beta\) be a controller with~\(\beta^*\) as its \(U^a\)\nbd
problem~(where \(a>0\)). Note that the \(w\)\nbd outcome of an \(R\)\nbd node
\(\alpha\subseteq\beta\) is to the right of \(\beta^\frown \ctr\) and
therefore~\(\dw(\alpha)\) becomes undefined at \(s_\beta^{\ctr}\). In
particular, such~\(\alpha\) will pick their diagonalizing witnesses larger
than \(s_\beta^{\ctr}\) next time, and so they are free to enumerate them
without worrying about the restraint set by~\(\beta\).

Consider Figure~\ref{fig:1Utree} with the \(w\)\nbd outcome of~\(R_{c_0}^3\)
changed to the \(d\)\nbd outcome. Suppose that~\(R_{c_0}^3\) becomes a
controller at~\(s_*\) but at each \(s>s_*\), we have
\(\cD_s(R_{c_0}^3)=R_{c_0}^3\). The previous diagonalizing witness \(x_2 =
\dw_{s_*}(R_{c_\lambda}^2)\) is not enumerated yet but it is prepared
by~\(R_{c_\lambda}^2\) to become a controller (as in Example~\ref{eg:1U way
out}). We should avoid using~\(x_2\) in other places. By our
convention,~\(R_{c_\lambda}^2\) should pick a new diagonalizing witness
\(x_2'=\dw_s(R_{c_\lambda}^2)>s_*\) next time, and
perhaps~\(R_{c_\lambda}^4\) becomes a controller at~\(s\). In this situation,
we have both \(\cE_{s_*}^\varnothing(R_{c_\lambda}^2)\subseteq
\cE^{\ctr}(R_{c_0}^3)\) and \(\cE_{s}^\varnothing(R_{c_\lambda}^2)\subseteq
\cE^{\ctr}(R_{c_\lambda}^4)\), each of which has a distinct diagonalizing
witness with a corresponding computation.

We have the usual conflicts between a threshold point and a computation. In
our construction, all \(\varnothing\)\nbd data will be discarded by the end
of each stage unless there is a controller~\(\beta\) collecting them. Suppose
that \(\beta\) is a controller with \(\cE^{\ctr}(\beta)= \{\beta,\alpha\}\)
with computations~\(y_\alpha\) and~\(y_\beta\). Let \(k=\tp(\beta)\ge
\tp(\alpha)\) (we can assume \(\tp(\alpha)\le \tp(\beta)\) if
\(\alpha\subseteq \beta\)). Whenever there is a set \(X\) relevant to
\(\beta\) such that \(\diff(X,k,s-1,s)\), we initialize \(\beta^\frown
\ctr\), that is, we discard \(\cE^{\ctr}(\beta)\) and~\(\beta\) is no longer
a controller. Note that we do not directly initialize an \(R\)\nbd node, so
\(\tp(\beta)\) and \(\tp(\alpha)\) remain defined. Therefore such an
initialization to the \(\ctr\)\nbd outcome happens only finitely often to a
fixed controller.

\subsection{The construction}\label{sec:1U construction}

We can initialize not only a node but also an outcome. As we will always have
that the \(o\)\nbd outcome of~\(\alpha\) is initialized iff the node
\(\alpha^\frown o\) is initialized, we simply write \(\alpha^\frown o\) for
both events.

\begin{definition}[initialization]
An \(S\)\nbd node~\(\alpha\) is \emph{initialized} by canceling all
functionals are defined by~\(\alpha\). An \(R\)\nbd node~\(\alpha\) is
\emph{initialized} by canceling~\(\tp(\alpha)\), all parameters stored at
each outcome of~\(\alpha\), and all functionals (if any) that are defined
by~\(\alpha\). \(\alpha^\frown w\) is \emph{initialized} by canceling
\(\dw(\alpha)\). \(\alpha^\frown d\) is \emph{initialized} by making it
inactive. \(\alpha^\frown U\) is \emph{initialized} by canceling the
\(\Delta\)\nbd functional (if any) that belongs to \(\mt(\alpha,U)\).
\(\alpha^\frown \ctr\) is \emph{initialized} by discarding
\(\cE^{\ctr}(\alpha)\) and making~\(\alpha\) no longer a controller.

If~\(\alpha\) is initialized, then we also tacitly initialize all outcomes
and nodes to the right of~\(\alpha\).
\end{definition}

The following is a special case of Definition~\ref{def:noise}.

\begin{definition}[threats]\label{def:threats}
Let~\(\beta\) be a controller with \(\cE^{\ctr}(\beta)\). At stage
\(s>s_\beta^{\ctr}\) (the stage at which~\(\beta\) becomes a controller), if
there is some~\(X\) that is relevant (Definition~\ref{def:relevant}) to
\(\beta\) such that
\[
\diff(X,\tp(\beta),s-1,s),
\]
then~\(\beta\) \emph{sees some threats}. (We are assuming that
\(\alpha\subseteq \beta\) implies \(\tp(\alpha)\le \tp(\beta)\).)
\end{definition}

\subsubsection*{Construction}
At stage~\(s\), we first run the controller strategy (see below) and then the
\(G\)\nbd
strategy (see below). Then we perform \(\visit(\lambda)\) (see below),
where~\(\lambda\) is the root
of the priority tree~\(\cT\). We stop the current stage whenever we perform
\(\visit(\alpha)\) for some~\(\alpha\) with \(\abs{\alpha}=s\).

\subsubsection*{\(\visit(\alpha)\) for an \(S\)-node:}

Suppose that there is some \(U\)\nbd link connecting~\(\alpha\) and
\(\beta^\frown o\) for some \(o\)\nbd outcome of~\(\beta\), then we perform
\(\enc(\beta,o)\).

Suppose that there is no link. For each \(\Gamma^{E\oplus U}=C\) (for some
\(c\in \Ji(\cL)\)) that belongs to \(\mt(\alpha,U)\) and for each \(x\le s\),
\begin{enumerate}
\item
Suppose \(\Gamma_s^{E\oplus U}(x)\downarrow=C_s(x)\).~\(\beta\) does
nothing else.
\item
Suppose \(\Gamma_s^{E\oplus U}(x)\downarrow\neq C_s(x)\) with use block
\(\sB=\bB_s(\gamma,x)\).
\begin{enumerate}
\item
If~\(\sB\) is killed and not \(E\)\nbd restrained, then~\(\beta\)
enumerates an unused point, referred to as a \emph{killing point},
into~\(\sB\) via~\(E\). Then we go to~(3) immediately.
\item
If~\(\sB\) is not killed and not \(E\)\nbd restrained, then~\(\beta\)
enumerates an unused point, referred to as a \emph{correcting point},
into~\(\sB\) via~\(E\). Then we redefine \(\Gamma_s^{E\oplus U}(x) =
C_s(x)\) with the \emph{same} use block~\(\sB\).
\end{enumerate}
\item
Suppose \(\Gamma_s^{E\oplus U}(x)\uparrow\). If each
\(\bB_{\bday{t}}(\gamma,x)\) with \(t<s\) has been \emph{killed} (see
Section~\ref{sec:R and killing}), then~\(\beta\) picks a fresh use block
\(\sB'\) and defines \(\Gamma_s^{E\oplus U}(x) = C_s(x)\) with use block
\(\sB'\) (hence \(\sB'=\bB_{\bday{s}}(\gamma,x)\)); otherwise, we define
\(\Gamma_s^{E\oplus U}(x)=C_s(x)\) with the use block that is not killed
(there will be at most one such use block).
\end{enumerate}

Then we stop the current substage and perform \(\visit(\alpha^\frown 0)\) for
the \(R\)\nbd node \(\alpha^\frown 0\).

\subsubsection*{\(\visit(\alpha)\) for an \(R\)-node:}

If~\(\tp(\alpha)\) is not defined, we define it with a fresh number. Then we
perform \(\enc(\alpha,d)\).

Without loss of generality, we assume that~\(\alpha\) is assigned an
\(R_c(\Phi)\)\nbd requirement for some \(c\in \Ji(\cL)\) and~\(\Phi\).

\subsubsection*{\(\enc(\alpha,d)\):}

If~\(d\) is active, then we perform \(\visit(\alpha^\frown d)\). If~\(d\) is
inactive, we perform \(\enc(\alpha,w)\).

\subsubsection*{\(\enc(\alpha,w)\):}
If \(\dw(\alpha)\) is not defined, then we pick a fresh number
\(x>\tp(\alpha)\) and define \(\dw(\alpha)=x\).
\begin{enumerate}
\item
If a computation~\(y\) is found by~\(\alpha\) with slowdown condition
(Definition~\ref{def:sd}), we obtain \(\varnothing\)\nbd data
\(\cE_s^\varnothing(\alpha)\) (Definition~\ref{def:0 data}) and perform
\(\enc(\alpha,U)\), where~\(U\) is the first outcome (recall from
Definition~\ref{def:1Utree} that we add outcomes in order).
\item
If no computation is found, then we perform \(\visit(\alpha\frown w)\).
\end{enumerate}

\subsubsection*{\(\enc(\alpha,U)\):}

Notice that we must have obtained \(\cE^\varnothing(\alpha)\).

\begin{enumerate}
\item
If the \(U\)\nbd outcome is Type~I, then let \(v=\tp(\alpha)\). For each
functional~\(\Gamma\) (\(\Delta\) is dealt with similarly) that belongs to
\(\kl(\alpha,U)\) and for each~\(x\) with \(v\le x\le s\), let
\(\sB_x=\bB_s(\gamma,x)\) be the use block. We enumerate an unused point
(\emph{killing point}) into~\(\sB_x\) and say~\(\sB_x\) is \emph{killed}.

Let~\(\Delta\) belong to \(\mt(\alpha,U)\). Without loss of generality, we
assume that this functional is to ensure \(\Delta^{E\oplus C_0\oplus \cdots
\oplus C_{k-1}} = U\) (allowing for \(k=0\), i.e., that there are
no~\(C_i\)). For each \(x\le s\), we do the following:
\begin{enumerate}
\item
Suppose that \(\Delta_s^{E\oplus C_0\oplus \cdots \oplus
C_{k-1}}(x)\downarrow=U_s(x)\). Then~\(\beta\) does nothing else.
\item
Suppose we have \(\Delta_s^{E\oplus C_0\oplus \cdots \oplus
C_{k-1}}(x)\downarrow\neq U_s(x)\) with use block
\(\sB=\bB_s(\delta,x)\).
\begin{enumerate}
\item
If~\(\sB\) is killed and \(E\)\nbd free, then~\(\beta\) enumerates an
unused point, referred to as a \emph{killing point}, into~\(\sB\)
via~\(E\). Then we go to~(3) immediately.
\item
If~\(\sB\) is killed and \(E\)\nbd restrained, we let~\(C_i\) (\(i<k\))
be the set such that~\(\sB\) is \(C_i\)\nbd free (we will show such
a~\(C_i\) exists); then~\(\beta\) enumerates an unused point, referred
to as a \emph{killing point}, into~\(\sB\) via~\(C_i\).~\(\sB\) is then
\emph{permanently killed} (as~\(C_i\) will be a c.e.\ set). Then we go
to~(3) immediately.
\item
If~\(\sB\) is not killed and \(E\)\nbd free, then~\(\beta\) enumerates
an unused point, referred to as a \emph{correcting point}, into~\(\sB\)
via~\(E\). Then we define \(\Delta_s^{E\oplus C_0\oplus \cdots \oplus
C_{k-1}}(x)=U_s(x)\) with the same use block~\(\sB\).
\item
If~\(\sB\) is not killed and \(E\)\nbd restrained, we let~\(C_i\) for
some \(i<k\) be the set such that~\(\sB\) is \(C_i\)\nbd free (we will
show such a~\(C_i\) exists); then~\(\beta\) enumerates an unused point,
referred to as a \emph{correcting point}, into~\(\sB\) via~\(C_i\).
Then we define \(\Delta_s^{E\oplus C_0\oplus \cdots \oplus
C_{k-1}}(x)=U_s(x)\) with the same use block~\(\sB\).
\end{enumerate}
\item
Suppose that \(\Delta_s^{E\oplus C_0\oplus \cdots \oplus
C_{k-1}}(x)\uparrow\). If for each \(t<s\), \(\bB_{\bday{t}}(\delta,x)\)
is killed,~\(\beta\) chooses a fresh use block~\(\sB'\) and defines
\(\Delta_s^{E\oplus C_0\oplus \cdots \oplus C_{k-1}}(x) = U_s(x)\) with
use block~\(\sB'\) (hence \(\sB'=\bB_{\bday{s}}(\delta,n)\)); otherwise,
we will define \(\Delta_s^{E\oplus C_0\oplus \cdots \oplus C_{k-1}}(x) =
U_s(x)\) with the use block that is not killed (there will be at most one
such use block).
\end{enumerate}

Then we stop the current substage and perform \(\visit(\alpha^\frown U)\) for
the \(S\)\nbd node \(\alpha^\frown U\).

\item
If the \(U\)\nbd outcome is GREEN\@, then let \(v=\tp(\alpha)\). For each
functional~\(\Gamma\) (\(\Delta\) is dealt with similarly) that belongs to
\(\kl(\alpha,U)\) and for each~\(x\) with \(v\le x\le s\), let
\(\sB_x=\bB_s(\gamma,x)\) be the use block. We enumerate an unused point
(\emph{killing point}) into~\(\sB_x\) and say~\(\sB_x\) is \emph{killed}.
Then we stop the current substage and perform \(\visit(\alpha^\frown U)\) for
the \(S\)\nbd node \(\alpha^\frown U\).
\item
If the \(U\)\nbd outcome is RED\@, then we obtain the weak \(U\)\nbd data
\(\cE_s^U(\alpha)\) (Definition~\ref{def:1U data}) and perform
\(\enc(\alpha,\ctr)\).
\end{enumerate}

\subsubsection*{\(\enc(\alpha,\ctr)\):}

Notice that we must have obtained~\(\cE^U(\alpha)\). Suppose that~\(\alpha\)
is an \(R_c\)\nbd node for some \(c\in \Ji(\cL)\).
\begin{enumerate}
\item
Let \(\cE^{\ctr}(\alpha) = \cE^U(\alpha)\) (Definition~\ref{def:1U
controller}), and let~\(\alpha\) become a controller.
\item
We enumerate the diagonalizing witness for each \(\xi\in
\cE^{\ctr}(\alpha)\) into the set~\(C\) if~\(\xi\) is not a \(U\)\nbd
problem (see Definition~\ref{def:1U controller}).
\item
While~\(\alpha\) is a controller, we put a restraint on \(\hat{C}\res
s_\alpha^{\ctr}\) for each \(\hat{c}\neq c\).
\end{enumerate}
We then stop the current stage.

\subsubsection*{controller-strategy:}

Let~\(\beta\) (if any) be a controller of highest priority such
that~\(\beta\) sees some noise (Definition~\ref{def:noise}). We initialize
all nodes to the right of \(\beta^\frown \ctr\). Suppose that~\(\beta\) is an
\(R_c\)\nbd node, \(\seq_0(\beta)=b\), and
\(\cE^{\ctr}(\beta)=\cE^U(\beta)\).
\begin{enumerate}
\item
If~\(\beta\) sees also some threats (Definition~\ref{def:threats}), then we
also initialize \(\beta^\frown \ctr\).
\item
If~\(\beta\) does not changes its decision (Definition~\ref{def:1U
decision}), then we do nothing.
\item
If~\(\beta\) changes its decision and \(\cD_s(\beta)=\xi\), then we set
\(E\res y_\xi\) to be restrained (so each use block \(\sB<y_\xi\) is
\(E\)\nbd restrained) until the next time~\(\beta\) changes its decision.
Furthermore:
\begin{enumerate}
\item
If~\(\xi\) is not a \(U^b\)\nbd problem of~\(\beta\), then we
restore~\(y_\xi\) and let \(\xi^\frown d\) be active.
\item
If~\(\xi\) is a \(U^b\)\nbd problem of~\(\beta\) and \(b>0\), we
restore~\(y_\xi\) and turn the GREEN \(U\)\nbd outcome
of~\(\xi^{\sharp}\) into a RED \(U\)\nbd outcome (once~\(\beta\) changes
its decision or is initialized, it turns back to GREEN).
\item\label{it:1U controller 3c}
If~\(\xi\) is a \(U^0\)\nbd problem, then we restore~\(y_\xi\) and pick
(as per Lemma~\ref{lem:1U pair}) \(\alpha_i\subsetneq \alpha_j\) such
that they are \(U\)\nbd problems and are both \(R_d\)\nbd nodes for some
\(d\in \Ji(\cL)\). We obtain the strong \(U\)\nbd data
\(\cE^U(\alpha_j)\) (Definition~\ref{def:1U strong data}) and establish a
\(U\)\nbd link connecting the root of the priority tree and the
\(\ctr\)\nbd outcome of~\(\alpha_j\) (the \(U\)\nbd link will be
destroyed once traveled or~\(\beta\) changes its decision).
\end{enumerate}
\end{enumerate}

\subsubsection*{\(G\)-strategy:}

Suppose \(K=\Theta^{j(1)}=\Theta^{E\oplus C_0\oplus \cdots \oplus
C_{\abs{\cL}-1}}\) where \(\Ji(\cL)=\{c_0,\ldots, c_{\abs{\cL}-1}\}\). For
each \(x\le s\),
\begin{enumerate}
\item\label{it:G strategy 1}
if \(\Theta^{j(1)}(x)\) has never been defined (so \(x=s\)), we define
\(\Theta_s^{j(1)}(x)=K_s(x)\) with a fresh use \(\theta(x)+1\) (which never
changes).
\item\label{it:G strategy 2}
If \(\Theta^{j(1)}(x)\downarrow\neq K_s(x)\), we enumerate \(\theta(x)\)
into the set \(\chi_s(\theta(x))\) (see Section~\ref{sec:G strategy}). Then
we go to~(3) immediately.
\item\label{it:G strategy 3}
If \(\Theta^{j(1)}(x)\uparrow\), we define \(\Theta_s^{j(1)}(x)=K_s(x)\)
with the same use \(\theta(x)+1\).
\end{enumerate}

We remark that we did not explicitly mention yet how big a use block should
be to avoid being distracted by this technical issue (see
Lemma~\ref{lem:block size}).

\subsection{The verification}\label{sec:1U verification}

First of all, one has to show that the use block is sufficiently large and
also justify the controller strategy~(\ref{it:1U controller 3c}) so that the
construction will not terminate unexpectedly.

\begin{lemma}[Block size]\label{lem:block size}
Each use block can be chosen sufficiently large.
\end{lemma}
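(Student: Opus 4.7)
The plan is to establish, for each use block $\sB = [u-l, u)$ created at stage $s_0$ by some node $\alpha$ for a functional $\Gamma(n)$ or $\Delta(n)$, an a priori upper bound on the total number of points that can ever be enumerated into $\sB$ before it is either replaced by a fresh use block or permanently killed. Having such a bound, we may simply choose $l$ larger than it, guaranteeing that an unused point is always available whenever the construction demands a killing or correcting point.

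First I would bound the number of correcting points deposited into $\sB$. By the correcting strategies of Sections~\ref{sec:S strategy} and~\ref{sec:R and delta}, a correcting point is enumerated into $\sB$ only when the target value currently disagrees with the stored computation, and each such event is immediately followed by a redefinition with the same block $\sB$ so that correctness is restored. The target value is $C_s(n)$ for a $\Gamma$-block (where $C$ is constructed as a c.e.\ set, with enumerations coming only from $R$-nodes and the $G$-strategy and no extractions) and $U_s(n)$ for a $\Delta$-block (where $U$ is d.c.e.). Consequently, after $s_0$ the target value changes at most once for a $\Gamma$-block and at most twice for a $\Delta$-block, so the number of correcting points entering $\sB$ is bounded by a universal constant.

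Next I would bound the number of killing points. By Section~\ref{sec:R and killing}, a killing point can enter $\sB$ only when an $R$-node $\beta \subsetneq \alpha$ is visited and invokes its killing strategy on the functional that $\alpha$ maintains; once $\beta$ has killed $\sB$, it will not kill $\sB$ again unless $\beta$ is initialized and later returns to killing, but any such initialization event to $\beta$ lies to the left of or above $\alpha$ and therefore also initializes $\alpha$, at which point $\sB$ is discarded and any subsequent block is freshly created. Thus the total number of distinct killing points entering $\sB$ during its life is bounded by the number of $R$-ancestors of $\alpha$ in $\cT$, which in turn is bounded by $|\alpha|$.

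Combining the two counts, choosing $l > |\alpha| + O(1)$ at the moment of creation suffices; this quantity depends only on the (fixed) node $\alpha$ and on the finite lattice $\cL$, so it is computable at stage $s_0$. The main delicate point to verify is the interaction between the two pathways that can corrupt $\sB$: once $\sB$ is permanently killed by the alternate route of enumerating through some $C_i$ (the second sub-case of the $\Delta$-correcting strategy in Section~\ref{sec:R and delta}), no further enumeration into $\sB$ via $E$ is ever demanded, since the next visit of the maintaining node will respond to any later disagreement by choosing a fresh use block as dictated by Case~(3) of the correcting strategies. Given this, the block-size bound just described is not exceeded, and the lemma follows.
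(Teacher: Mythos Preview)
Your argument has a genuine gap: it ignores the restoration mechanism carried out by controllers, and that mechanism is precisely what drives the size of a use block.

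Recall that once an \(R\)-node~\(\beta\) becomes a controller with \(\cE^{\ctr}(\beta)=\{\beta,\alpha\}\), each time \(\cD(\beta)\) switches to~\(\alpha\) we \emph{restore}~\(y_\alpha\), which means we \emph{extract} from~\(E\) every point that had been enumerated into any block \(\sB<y_\alpha\) after~\(y_\alpha\) was first found. Since~\(E\) is d.c.e., an extracted point can never be re-enumerated; so the next time the maintaining node is visited with~\(\sB\) again available and the computation incorrect (for instance because a diagonalizing witness has meanwhile entered the target set~\(C\), or because \(U(n)\) has moved), a \emph{fresh} unused point must be placed in~\(\sB\). When \(\cD(\beta)\) flips back to~\(\beta\), the restraint on \(E\res y_\alpha\) is dropped (only \(E\res y_\beta\) is restrained, and \(y_\beta<\sB\)), so this enumeration actually takes place. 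Each back-and-forth of the decision map therefore consumes a new element of~\(\sB\). The number of such flips is governed by changes in \(U\res y_\beta\), hence by changes in \(U\res a\) where \(a=\min\sB\); for a d.c.e.\ set this can be as large as~\(2a\), far exceeding your bound of \(\lvert\alpha\rvert+O(1)\).

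Concretely, your ``at most one (resp.\ two) correcting points'' claim assumes that every correcting point, once in, stays in; and your ``at most one killing point per \(R\)-ancestor'' claim assumes that the \(S\)-node's Case~(2a) never fires repeatedly on the same block. Both assumptions fail once restoration is in play. The paper's proof handles this correctly: it observes that a block \(\sB=[a,b)\) can interact with at most one controller, and that the number of injure/restore cycles is bounded by \(\lvert\{s:\diff(U,a,s-1,s)\}\rvert\), which is computably bounded in~\(a\). Choosing \(l=p(a)\) for this computable bound is what makes the block large enough.
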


\begin{proof}
Let \(\sB=[a,b)\) be a use block. Such~\(\sB\) can interact with a
controller~\(\beta\) with \(\cE^{\ctr}(\beta)=\{\beta,\alpha\}\) in the
following way: If \(\cD(\beta)=\alpha\) with \(y_\alpha>\sB\), we will
possibly extract a point from~\(\sB\). In this case, we say that~\(\sB\) is
\emph{injured}. If \(\cD(\beta)=\beta\) with \(y_\beta<\sB\), we will
possibly enumerate a point into \(\sB\) when the node which maintains~\(\sB\)
is visited. In this case, we say that~\(\sB\) is \emph{restored}. We only
have to consider each use block~\(\sB\) with \(y_\beta<\sB\) (and
\(\sB<y_\alpha\)) as otherwise restoring either~\(y_\beta\) or~\(y_\alpha\)
makes no changes to~\(\sB\). Therefore, we need to count how many
times~\(\sB\) can potentially be injured and then restored.

According to Definition~\ref{def:1U decision}, \(\cD_s(\beta) = \xi\) for the
longest~\(\xi\) such that \(\Cond^U(\xi,s)\) and this is determined by
\(U_s\res y_\beta\) (where \(y_\beta<a\)). Therefore the number of times that
\(\sB=[a,b)\) can be injured and then restored depends on the number of
changes that \(U\res a\) can have, i.e., the size of
\[
S=\{s\mid \diff(U,a,s-1,s)\}.
\]
It is clear that the number of~\(S\) can be bounded by a computable
function~\(p(a)\) since~\(U\) is a d.c.e.\ set.

Therefore, when we define \(\Gamma(x)\) with a fresh use block, we pick a
fresh number~\(a\) and let the use block be \([a,a+p(a))\). Defined in this
way, a use block is sufficiently large.

It remains to show that a use block cannot interact with two controllers:
Suppose that~\(\sB\) interacts with~\(\beta\) with
\(\cE^{\ctr}(\beta)=\{\beta,\alpha\}\). Then a controller~\(\beta'\) of lower
priority believes that~\(B\) never changes again (if it changes it,
then~\(\beta\) sees some noise and~\(\beta'\) is initialized). For a
controller~\(\beta''\) of higher priority, we can assume that
\(s_{\beta''}^{\ctr}<y_\beta<\sB\), so~\(\sB\) does not interact
with~\(\beta''\).
\end{proof}

\begin{lemma}\label{lem:1U pair}
Let~\(\beta\) be a controller such that \(\cD_s(\beta)=\xi\) where~\(\xi\) is
a \(U^0\)\nbd problem, then there exists \(\alpha_i\subsetneq \alpha_j\) both
of which are \(R_d\)\nbd nodes for some \(d\in \Ji(\cL)\).
\end{lemma}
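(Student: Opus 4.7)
The plan is to construct an upward tower of currently-active controllers $\beta = \beta_0 \subsetneq \beta_1 \subsetneq \cdots \subsetneq \beta_{m-1}$, each being a $U^i$-controller whose current decision is its own $U^i$-problem $\alpha_i = \beta_i^*$. The defining relations $\alpha_i = \beta_i^*$ and (for $i \geq 1$) $\beta_{i-1} = \alpha_i^\sharp$ yield the interleaved inclusions $\alpha_{i-1} \subsetneq \beta_{i-1} \subsetneq \alpha_i$, producing a nested chain $\alpha_0 \subsetneq \alpha_1 \subsetneq \cdots \subsetneq \alpha_{m-1}$ of $m = \abs{\Ji(\cL)} + 1$ $R$-nodes, each labelled by some join-irreducible of $\cL$. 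Pigeonhole then delivers indices $i < j$ for which $\alpha_i$ and $\alpha_j$ are both $R_d$-nodes for a common $d \in \Ji(\cL)$, yielding the required pair.

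The tower is built by upward induction, seeded with $\beta_0 := \beta$ and $\alpha_0 := \xi$. The hypothesis that $\xi$ is a $U^0$-problem forces $\seq_0(\beta_0^-) = 0$ via Definition~\ref{def:1U controller} and $\alpha_0 \subsetneq \beta_0$ via the definition of $*$ in Section~\ref{sec:star and sharp}. For the inductive step, assume $\beta_i$ is a currently-active $U^i$-controller whose decision is $\alpha_i$, with $i < m - 1$. Because $\seq_0(\beta_i^-) = i < m-1$, Definition~\ref{def:1Utree} makes $\beta_i^\frown U$ Type~II GREEN by default, so the only way $\beta_i^\frown \ctr$ could ever have been activated is that case~(b) of the controller-strategy previously flipped $\beta_i^\frown U$ to RED. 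Inspecting case~(b), that flip was executed by some controller $\beta_{i+1}$ whose decision at the time was a $U^b$-problem $\alpha_{i+1} = \beta_{i+1}^*$ with $b > 0$ and $\alpha_{i+1}^\sharp = \beta_i$. Unpacking the definition of $\sharp$ in light of $\seq(\beta_i^\frown U) = (i+1, \iota)$ (the Type~II GREEN coordinate bump) pins $b = i + 1$, so $\beta_{i+1}$ is a $U^{i+1}$-controller, and the inclusions $\beta_i \subsetneq \alpha_{i+1} \subsetneq \beta_{i+1}$ follow at once.

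The main obstacle is verifying that $\beta_{i+1}$ is \emph{currently} active with its decision still equal to $\alpha_{i+1}$, so that the inductive hypothesis propagates. The argument will lean on the priority structure: since $\beta_i$'s $U$-outcome is Type~II, the child ordering at $\beta_i$ is $U < \ctr < w < d$, and $\beta_{i+1}$ sits in the subtree rooted at $\beta_i^\frown U$; hence $\beta_i^\frown \ctr$ lies strictly to the right of $\beta_{i+1}^\frown \ctr$ in the priority tree. Consequently any controller-strategy processing of $\beta_{i+1}$ (triggered by $\beta_{i+1}$ seeing noise) initializes every node to the right of $\beta_{i+1}^\frown \ctr$, in particular $\beta_i^\frown \ctr$; and any initialization of $\beta_{i+1}$ propagating from still further left would likewise initialize $\beta_i^\frown \ctr$. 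Since case~(b) stipulates that the RED flip reverts precisely when $\beta_{i+1}$ changes its decision or is initialized, and both of these require $\beta_{i+1}$ to be disturbed in one of the above ways, the fact that $\beta_i$ is still a controller forces $\beta_{i+1}$ to remain active with decision $\alpha_{i+1}$. The induction halts at $i = m-1$, where $\seq_0(\beta_{m-1}^-) = m-1$ makes $\beta_{m-1}^\frown U$ Type~II RED by default, so no further ancestor controller is needed, and the pigeonhole argument described at the outset completes the proof.
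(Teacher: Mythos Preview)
Your proposal is correct and follows essentially the same approach as the paper: both build an ascending chain $\xi=\alpha_0\subsetneq\alpha_1\subsetneq\cdots\subsetneq\alpha_{m-1}$ where each $\alpha_i$ is a $U^i$-problem for some controller $\beta_i$, by tracing back through the controllers that turned successive Type~II $U$-outcomes RED, and then apply pigeonhole on $\Ji(\cL)$. Your writeup is considerably more detailed than the paper's terse ``continuing in this fashion'', in particular supplying the priority argument (via the Type~II ordering $U<\ctr$ and the noise-initialization mechanism) that each $\beta_{i+1}$ is still active with decision $\alpha_{i+1}$ at stage~$s$, which the paper leaves implicit.
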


\begin{proof}
Let \(\beta_0=\beta\). This \(\cE^U(\beta)\) must be weak \(U\)\nbd data as
strong \(U\)\nbd data has no \(U\)\nbd problem. Therefore the \(U\)\nbd
outcome of \(\beta\) must be turned RED by another (unique)
controller~\(\beta_1\) with \(\alpha_1=\cD(\beta_1)\) as a \(U\)\nbd problem
of~\(\beta_1\). Continuing this fashion, we find
\[
\xi=\alpha_0\subsetneq \alpha_1\subsetneq \cdots \subsetneq \alpha_{m-1}
\]
where each~\(\alpha_i\) is a \(U^i\)\nbd problem of some
controller~\(\beta_i\). Note that we assume \(m=\abs{\Ji(\cL)}+1\). By the
Pigeonhole Principle, there are \(\alpha_i\), \(\alpha_j\), and some \(d\in
\Ji(\cL)\) such that~\(\alpha_i\) and~\(\alpha_j\) are both \(R_d\)\nbd
nodes.
\end{proof}

Considering weak \(U\)\nbd data, we are going to put each use block into one
of several categories.

\begin{definition}
Let \(\cE^U(\beta)=\{\beta,\beta^*\}\) be weak \(U\)\nbd data
(Definition~\ref{def:1U data}) obtained at stage~\(s\). Let
\(\cA_1=\{\eta\mid \eta\subsetneq \beta^*\}\), \(\cA_2=\{\eta \mid
\beta^*\subseteq \eta\subsetneq \beta\}\), and \(\cA_3=\{\eta\mid
\beta\subseteq \eta\}\). For a use block~\(\sB\) that is maintained by a node
in~\(\cA_i\) and killed by a node in~\(\cA_j\) at stage~\(s\), we define
\(\cQ_{\cE^U (\beta)}^U(\sB)=(i,j)\) (for \(i\le j\)); if~\(\sB\) is not
killed, then \(\cQ_{\cE^U(\beta)}^U(\sB)=(i,\infty)\).
\end{definition}

We write~\(\cQ\) for \(\cQ_{\cE^U(\beta)}^U\) if there is no confusion.

To tell whether a computation \(y\) is restorable or not, we only care about
those blocks~\(\sB\) with \(\sB<y\). The slowdown conditions
(Definition~\ref{def:sd}) allow us to exclude some of the blocks from
consideration:

\begin{lemma}\label{lem:relevant}
Let \(\cE^U(\beta)=\{\beta,\beta^*\}\) be weak \(U\)\nbd data
(Definition~\ref{def:1U data}) obtained at stage~\(s\). Suppose
that~\(\beta\) is an \(R_c\)\nbd node and~\(\beta^*\) is an \(R_d\)\nbd node
for some \(d\le c\in \Ji(\cL)\). Given a use block~\(\sB\), if
\(\cQ(\sB)=(1,1)\), then \(\sB>y_{\beta^*}\); if \(\cQ(\sB)=(i,j)\) with
\(i\le j\in \{1,2\}\), then \(\sB>y_\beta\). \qed
\end{lemma}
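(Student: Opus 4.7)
The plan is to read the definition of $\cQ$ literally: $\cQ(\sB)=(i,j)$ with $j\in\{1,2\}$ records that $\sB$ is killed by some node $\tau_2\in\cA_j$ at the current stage $s$ (the stage at which $\cE^U(\beta)$ is obtained). Under this reading, the lemma reduces to a direct application of slowdown condition~\eqref{it:sd 2} of Definition~\ref{def:sd}.

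First I would verify that the stage $s$ is simultaneously a $\beta$-stage and a $\beta^*$-stage. This follows from the definition of $\beta^*$ in Section~\ref{sec:star and sharp}: since $\beta^{*\frown}U\subseteq\beta^-$, whenever $\beta$ is visited at stage $s$, the node $\beta^*$ is also visited at $s$ via its $U$-outcome. Consequently the computations $y_{\beta^*}$ and $y_\beta$ appearing in the weak $U$-data (Definition~\ref{def:1U data}) are both found at stage $s$ and satisfy the slowdown conditions there.

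The two conclusions of the lemma then follow from a single observation. Because $\sB$ is killed at stage $s$ by the node $\tau_2\in\cA_j$, the killing strategy of Section~\ref{sec:R and killing} enumerates a killing point into $\sB=[u-l,u)$ at stage $s$ via $\tau_2$. When $\cQ(\sB)=(1,1)$ we have $\tau_2\subsetneq\beta^*$, so slowdown condition~\eqref{it:sd 2} applied to $y_{\beta^*}$ forces $y_{\beta^*}<u-l$, that is, $\sB>y_{\beta^*}$. When $\cQ(\sB)\in\{(1,2),(2,2)\}$ we instead have $\beta^*\subseteq\tau_2\subsetneq\beta$, and the same condition applied to $y_\beta$ gives $\sB>y_\beta$.

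The only subtlety worth double-checking is the parsing of ``killed by a node in $\cA_j$ at stage $s$'' in the definition of $\cQ$ as ``killed at the current stage $s$'', in line with the analogous phrasing used in Section~\ref{sec:use block} when saying ``$\sB_0$ is killed at stage $s_1$''. Once that is settled, the proof is a pair of essentially identical invocations of slowdown~\eqref{it:sd 2}, with no further combinatorial work required.
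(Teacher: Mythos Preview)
Your proposal is correct and matches the paper's own reasoning: the lemma carries only a \qed\ there, and the intended one-line justification is precisely the appeal to slowdown condition~\eqref{it:sd 2} of Definition~\ref{def:sd} that you give. One small omission: for the second assertion you only treat $(1,2)$ and $(2,2)$ explicitly, but the case $(1,1)$ also needs $\sB>y_\beta$; this follows either by the same slowdown argument (since $\tau_2\subsetneq\beta^*\subsetneq\beta$) or from $\sB>y_{\beta^*}>y_\beta$ using the paper's standing convention that shorter nodes carry larger computations.
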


Therefore, to tell whether \(y_{\beta^*}\) is \((U,D)\)\nbd restorable, we
consider only those blocks \(\sB\) with \(\cQ(\sB)=(1,j), j\in
\{2,3,\infty\}\) (and \(\sB<y_{\beta^*}\)); to tell whether \(y_\beta\) is
\((U,C)\)-restorable, we consider only those blocks \(\sB\) with
\(\cQ(\sB)=(i,j), i\in \{1,2\}, j\in \{3,\infty\}\) (and \(\sB<y_\beta\)).

\begin{lemma}\label{lem:structure of use block}
Let \(\cE^U(\beta)=\{\beta,\beta^*\}\) be weak \(U\)\nbd data obtained at
stage~\(s\). Suppose that~\(\beta\) is an \(R_c\)\nbd node and~\(\beta^*\) is
an \(R_d\)\nbd node for some \(d\le c\in \Ji(\cL)\). Suppose that~\(\sB\)
belongs to a \(\Gamma\)\nbd functional.
\begin{enumerate}
\item\label{it:use block 1}
If \(\cQ(\sB)=(1,j)\) with \(j\in \{3,\infty\}\), then this \(\Gamma\)\nbd
functional computes a set~\(\hat{C}\) with \(\hat{c}\ngeq d\) (hence
\(\hat{c}\ngeq c\) since \(d\le c\)).
\item\label{it:use block 2}
If \(\cQ(\sB)=(2,j)\) with \(j\in \{3,\infty\}\), then this \(\Gamma\)\nbd
functional computes a set~\(\hat{C}\) with \(\hat{c}\ngeq c\).
\end{enumerate}
Suppose that~\(\sB\) belongs to a \(\Delta\)\nbd functional.
\begin{enumerate}[resume]
\item\label{it:use block 3}
If \(\cQ(\sB)=(1,j)\) with \(j\in \{2,3,\infty\}\), then~\(\sB\) crosses
over~\(D\).
\item\label{it:use block 4}
If \(\cQ(\sB)=(1,j)\) with \(j\in \{3,\infty\}\), then~\(\sB\) crosses
over~\(C\).
\end{enumerate}
\end{lemma}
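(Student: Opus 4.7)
The plan is to prove each clause by tracking when the use block $\sB$, once created at a node in $\cA_i$, survives (remains un-killed) through each subsequent $R$-node along the path to $\beta$. The survival is controlled by Definition~\ref{def:kill} and the killing strategy of Section~\ref{sec:R and killing}: $\sB$ is not killed at an $R$-node $\gamma$ (visiting its $U$-outcome) precisely when the functional $\Gamma$ (or $\Delta$) to which $\sB$ belongs lies in $F_{\xi_\gamma}(U) \setminus \kl(\gamma, U)$, where $\xi_\gamma = \seq_1(\gamma^\frown U)$. Thus surviving past $\gamma$ forces the functional to appear in the specific $F_{\xi_\gamma}(U)$ carried along $\gamma$'s $U$-outcome. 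I would set up this ``survival lemma'' first, so that each of the four cases reduces to a structural statement about which $F_\xi(U)$ the functional lies in at the key nodes $\beta^*$ and $\beta$.

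For (1) and (2), I would exploit the defining property of $\beta^*$ (Section~\ref{sec:star and sharp}): it is the $R_{c_\sigma}$-node with $c_\sigma = d$ building the $\Delta_\sigma \in F_\eta(U)$ that is the highest conflict of $R_c$. If $\sB$ belongs to $\Gamma_\tau$ (computing $\hat{C} = C_{c_\tau}$) and survives past $\beta^*$, then $\Gamma_\tau \in F_{\xi'}(U)$ where $\xi' = \seq_1((\beta^*)^\frown U)$, i.e.\ $\tau 0 \subseteq \xi'$. Combining Lemmas~\ref{lem:spec0} and~\ref{lem:spec1} with Lemma~\ref{lem:conflicts}\ref{it:R-minus}, one sees that $d \le c_\tau$ would force $R_d$ to have a conflict with $\Gamma_\tau$ already within $F_\eta(U)$, which would have caused $\sB$ to be killed at $\beta^*$ -- contradicting $\cQ(\sB) = (1, j)$ with $j \ge 3$. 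Hence $\hat{c} = c_\tau \not\ge d$, giving (1). For (2), the identical argument with $\beta$ (an $R_c$-node) in place of $\beta^*$ yields $\hat{c} \not\ge c$; here $\sB$ surviving past $\beta$'s outcome that has already been taken forces $\Gamma_\tau$ into the $F_{\xi}(U)$ below $\beta$'s $U$-outcome, and $c \le c_\tau$ would have triggered a $\Gamma$-conflict contradicting Lemma~\ref{lem:conflicts}\ref{it:R-plus}.

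For (3) and (4), $\sB$ belongs to $\Delta_\tau$ maintained by $\alpha \in \cA_1$, so $\tau 1 \subseteq \eta_\alpha$ in the notation of $F$-sets. The oracle of $\Delta_\tau$ is $j(q_{\tau 1}) = E \oplus \bigoplus\{C_{c_{\tau'}} : c_{\tau'} \in \spec(q_{\tau 1})\}$, and $\sB$ crosses over exactly these sets. For (3), survival past $\beta^*$ together with $\beta^*$'s role (building $\Delta_\sigma$ along a path where $\sigma 1 \subseteq \xi'$) and Lemma~\ref{lem:deltaoracle1} (which characterizes $\spec(q_{\tau 1})$ as exactly $\{c_{\tau'} : \tau' 0 \subseteq \eta\}$ for appropriate $\eta$) imply $d = c_\sigma \in \spec(q_{\tau 1})$, so $\sB$ crosses $D$. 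For (4), the stronger hypothesis $j \in \{3, \infty\}$ means survival also past $\beta$; the same Lemma~\ref{lem:deltaoracle1} applied one step further (now using the path at $\beta$) yields $c \in \spec(q_{\tau 1})$, so $\sB$ crosses $C$. Lemma~\ref{lem:deltaoracle2} provides the needed monotonicity $\spec(q_{\beta 1}) \subseteq \spec(q_{\beta^* 1})$ to cleanly compare the two.

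The main obstacle will be rigorously formalizing ``$\sB$ survives past $\gamma$''. A use block $\sB = \bB_{\bday{t}}(\gamma,n)$ is identified by its birth stage $t$ and slot $(\Gamma, n)$, and one must rule out that the functional was silently re-killed and re-created with a new block between the maintainer $\alpha$ and the killing node. This is handled by the rule that $\Gamma(n)$ is defined with a fresh block only when all prior blocks for this slot are killed (so at most one un-killed block exists at any time), together with the invariant that $\tp(\gamma)$ values are picked freshly and are non-decreasing down the path, so $n \ge \tp(\alpha)$ implies $n \ge \tp(\gamma)$ for each later $\gamma$ whose killing strategy is triggered. The secondary bookkeeping worry -- that $n$ might be below $\tp(\gamma)$ -- is subsumed by these freshness conventions, so the survival analysis reduces cleanly to the $F$-set containments above.
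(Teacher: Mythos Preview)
Your general strategy---tracking which $F_\xi(U)$ the functional belongs to as the use block survives along the path---is the right one and matches the paper. However, there are concrete errors in your plan for clauses~(1) and~(2), and an unnecessary detour in (3)--(4).

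For~(2), your proposed contradiction is with Lemma~\ref{lem:conflicts}\ref{it:R-plus}, but that lemma is a positive statement that applies \emph{when} there is a $\Gamma$-conflict; it cannot be ``contradicted'' by the existence of one. The actual contradiction the paper uses is far simpler: since $\cE^U(\beta)$ is \emph{weak} $U$-data, the $U$-outcome of~$\beta$ is RED, hence Type~II, hence $R_c(\xi)=\xi$ where $\xi=\seq_1(\beta^-)$. This means $R_c$ has \emph{no} $\Gamma$-conflict in $F_\xi(U)$. But if $\hat c\ge c$ and $\Gamma_\tau\in F_\xi(U)$, then by Lemma~\ref{lem:spec1} there is some $\Gamma_\rho\in F_\xi(U)$ with $c_\rho=c$, giving a $\Gamma$-conflict---contradiction. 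No appeal to Lemma~\ref{lem:conflicts} is needed.

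For~(1), your sentence ``$d\le c_\tau$ would force $R_d$ to have a conflict with $\Gamma_\tau$'' is imprecise: by Definition~\ref{def:conflicts}, $R_d$ conflicts with $\Gamma_\tau$ only when $c_\tau=d$, not when $c_\tau\ge d$. The paper's argument instead shows that $\hat c\ge d$ forces the functional computing~$D$ to lie in $\kl(\beta^*,U)$ (since $\beta^*$ is an $R_d$-node with Type~I outcome), and then by the structure of $\kl(\beta^*,U)$ (Definition~\ref{def:kill} together with Lemma~\ref{lem:spec1}) the functional computing~$\hat C$, which sits below it in~$T_\cL$, is also killed---contradicting $j\ge 3$. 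Lemma~\ref{lem:conflicts}\ref{it:R-minus} is not used here.

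For~(3) and~(4), you propose to unfold Lemma~\ref{lem:deltaoracle1} and~\ref{lem:deltaoracle2} directly. This can be made to work, but the paper's route is one line each: for~(3), $\beta^*$ has a Type~I outcome, so $R_d$ has a $\Gamma$-conflict in $F_{\seq_1((\beta^*)^-)}(U)$, and Lemma~\ref{lem:conflicts}\ref{it:R-plus} immediately gives $d\le q_{\tau 1}$ for every $\Delta_\tau$ there (hence $\sB$ crosses~$D$). For~(4), since $\sB$ is maintained by $\alpha\subsetneq\beta^*$, the $\Delta_\tau$ it belongs to is \emph{higher} than the $\Delta_\sigma$ built by $\beta^*$, which by definition of $\beta^*$ is the highest $\Delta$ with which $R_c$ conflicts; so $R_c$ has no conflict with $\Delta_\tau$, and Lemma~\ref{lem:conflicts}\ref{it:R-minus} gives $c\le q_{\tau 1}$. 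Your elaborate survival bookkeeping is not needed once you invoke Lemma~\ref{lem:conflicts} directly.
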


\begin{proof}
\begin{enumerate}
\item
Suppose \(\hat{c}\ge d\), \(\seq((\beta^*)^-)=(b,\xi)\) and that
\(\Gamma^{E\oplus U}=\hat{C}\) and so also \(\Gamma^{E\oplus U}=D\) belongs
to \(F_\xi(U)\). Recall Definition~\ref{def:Fsigma} and
Lemma~\ref{lem:spec1}. Setting \(c_\sigma = \hat{c}\), there is some
\(\tau\) such that \(\tau 0 \subseteq \sigma 0\) and \(c_\tau = d\). Since
\(\Gamma^{E\oplus U}=D\) belongs to \(\kl(\beta^*,U)\), we have by
Definition~\ref{def:kill} that \(\Gamma^{E\oplus U}=\hat{C}\) also belongs
to \(\kl(\beta^*,U)\), but this implies \(\cQ(\sB)=(1,2)\), a
contradiction.
\item
Suppose \(\hat{c}\ge c\) and \(\seq((\beta)^-)=(b,\xi)\). Note that
\(\Gamma^{E\oplus U}=\hat{C}\) belongs to \(F_\xi(U)\), so
\(\Gamma^{E\oplus U}=C\) also belongs to \(F_\xi(U)\) by
Lemma~\ref{lem:spec1}. Hence the \(U\)\nbd outcome should be Type~I. But
weak \(U\)\nbd data \(\cE^U(\beta)\) can only be obtained when the
\(U\)\nbd outcome is RED\@, a contradiction.
\item
By Lemma~\ref{lem:conflicts}~(\ref{it:R-plus}).
\item
By Lemma~\ref{lem:conflicts}~(\ref{it:R-minus}).
\end{enumerate}
\end{proof}

\begin{lemma}\label{lem:U restorable}
Let \(\cE^U(\beta)=\{\beta,\beta^*\}\) be weak \(U\)\nbd data obtained at
stage~\(s\). Suppose that~\(\beta\) is an \(R_c\)\nbd node and~\(\beta^*\) is
an \(R_d\)\nbd node for some \(d\le c\in \Ji(\cL)\). At each stage \(t>s\)
(independent of whether \(\cE^U(\beta)\) is discarded or not),
\begin{enumerate}
\item\label{it:lem U restorable 1}
if \(\diff(U,y_\beta,s,t)\) and \(\bigsame(\hat{D},y_{\beta^*},s,t)\) for
each \(\hat{d}\ngeq d\), then~\(y_{\beta^*}\) is \((U,D)\)-restorable
(Definition~\ref{def:U restorable}) at stage~\(t\);
\item\label{it:lem U restorable 2}
if \(\same(U,y_\beta,s,t)\) and \(\bigsame(\hat{C},y_\beta,s,t)\) for each
\(\hat{c}\ngeq c\), then~\(y_\beta\) is \((U,C)\)-restorable at
stage~\(t\).
\end{enumerate}
\end{lemma}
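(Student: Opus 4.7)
The plan is to verify Definition~\ref{def:U restorable} directly via case analysis on $\cQ(\sB)$ for each relevant use block~$\sB$. By Lemma~\ref{lem:relevant}, the blocks $\sB < y_{\beta^*}$ that require attention in part~(1) are those with $\cQ(\sB) = (1, j)$ for $j \in \{2, 3, \infty\}$, while the blocks $\sB < y_\beta$ relevant in part~(2) are those with $\cQ(\sB) = (i, j)$ for $i \in \{1, 2\}$ and $j \in \{3, \infty\}$; all other blocks lie above the computation we are trying to restore and so do not affect the definition. For each remaining~$\sB$, Lemma~\ref{lem:structure of use block} will pin down its structural content: which set $\hat{C}$ a $\Gamma$-functional computes, or that a $\Delta$-functional's block crosses over the desired set.

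For part~(1), fix $\sB < y_{\beta^*}$ with $\cQ(\sB) = (1, j)$. If $\sB$ belongs to a $\Delta$-functional, Lemma~\ref{lem:structure of use block}\eqref{it:use block 3} gives that $\sB$ crosses over~$D$, which immediately delivers clause~\eqref{it:U restorable 2b} of Definition~\ref{def:U restorable}. If $\sB$ belongs to a $\Gamma$-functional computing some $\hat{C}$ (with $j \in \{3, \infty\}$), Lemma~\ref{lem:structure of use block}\eqref{it:use block 1} yields $\hat{c} \ngeq d$, so the hypothesis gives $\bigsame(\hat{C}, y_{\beta^*}, s, t)$; that is, the $\hat{C}$-portion of the oracle on $\sB$ is untouched. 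The remaining task is then to combine the slowdown condition $\bigsame(U, y_{\beta^*}, s^*, s)$ from the stage~$s^*$ at which the computation was originally found with the hypothesis $\diff(U, y_\beta, s, t)$ to conclude that, after $U$-restoring $y_{\beta^*} \res \sB$ at stage~$t$, either $\sB$ is no longer available for correcting~$n$ or the current value $\Gamma^{E\oplus U}(n)$ already equals $\hat{C}(n)$.

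Part~(2) will proceed analogously: $\Delta$-blocks are handled by Lemma~\ref{lem:structure of use block}\eqref{it:use block 4} (giving that $\sB$ crosses over~$C$, satisfying clause~\eqref{it:U restorable 2b}), and $\Gamma$-blocks computing some~$\hat{C}$ are handled via item~\eqref{it:use block 1} or~\eqref{it:use block 2} (according to whether $i = 1$ or $i = 2$) to obtain $\hat{c} \ngeq c$ and hence $\bigsame(\hat{C}, y_\beta, s, t)$; this is then combined with the easier hypothesis $\same(U, y_\beta, s, t)$, since the $U$-restriction needed is already part of the assumption.

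The hardest part will be the $\Gamma$-bookkeeping in the remaining subcase, namely arguing precisely that such a~$\sB$ either remains correct under the restored oracle or has been superseded by a later, not-yet-killed use block. The model argument is the treatment of $\bB_{\dday{s}}^{R^2}(\gamma_\lambda, n)$ in Example~\ref{eg:1U good controller}: from $\created(\sB) > s^*$ (which must hold, since $\sB$ would otherwise have been killed at~$s^*$ together with all earlier blocks of this $\Gamma$), together with $\bigsame(U, y, s^*, s)$ and the present $U$-hypothesis, one reads off whether the oracle prefix used when $\Gamma(n)$ was defined on~$\sB$ matches the oracle after restoration. The cases split along $j \in \{2, 3, \infty\}$ (respectively $j \in \{3, \infty\}$) according to when~$\sB$ was killed and whether a fresh use block has since been defined, and carrying out this chronology uniformly across the subcases is the principal content of the proof.
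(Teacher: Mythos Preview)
Your overall plan matches the paper's proof: restrict via Lemma~\ref{lem:relevant}, then case-split on $\cQ(\sB)$ and invoke Lemma~\ref{lem:structure of use block}. The $\Gamma$-block analysis and the model argument from Example~\ref{eg:1U good controller} are exactly what the paper does.

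There is, however, one genuine gap in your treatment of part~(2). You write that ``$\Delta$-blocks are handled by Lemma~\ref{lem:structure of use block}\eqref{it:use block 4}'', but that item applies only when $\cQ(\sB)=(1,j)$. A $\Delta$-block with $\cQ(\sB)=(2,j)$, $j\in\{3,\infty\}$, is maintained by some node in $\cA_2$ (for instance by $\beta^*$ itself), and such a $\Delta$ need \emph{not} cross over~$C$: indeed the $\Delta_\sigma$ built by~$\beta^*$ has oracle $j(q_{\sigma 1})$ with $q_{\sigma 1}\le c*c_*$, so $c\notin\spec(q_{\sigma 1})$. For these blocks one must instead use the hypothesis $\same(U,y_\beta,s,t)$ directly: after $U$-restoring, the oracle on~$\sB$ is back to its stage-$s$ value, and since $U\res(y_\beta+1)$ is unchanged, $\Delta(n)=U_s(n)=U_t(n)$, giving clause~\eqref{it:U restorable 2a} rather than~\eqref{it:U restorable 2b}. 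This is the one subcase your outline does not cover.

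A smaller point: in part~(1) the relevant slowdown condition is $\bigsame(U,y_\beta,s^*,s)$, where $s^*$ is the last $\beta$-stage (Definition~\ref{def:sd} applied to~$\beta$), not $\bigsame(U,y_{\beta^*},s^*,s)$; the argument that $\created(\sB)>s^*$ for the $(1,2)$ case hinges on $s^*$ being a stage at which $\beta^*$ already ran its killing strategy on the way to~$\beta$.
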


\begin{proof}
Let \(t>s\). We assume that we \(U\)\nbd restore~\(y_{\beta^*}\)
or~\(y_\beta\) at the beginning of the stage~\(t\).
\begin{enumerate}
\item
For~\(y_{\beta^*}\), we consider each~\(\sB\) with \(\cQ(\sB)=(1,j)\) where
\(j \in \{2,3,\infty\}\) by Lemma~\ref{lem:relevant}. If~\(\sB\) belongs to
a \(\Delta\)\nbd functional, then~\(\sB\) crosses over~\(D\) by
Lemma~\ref{lem:structure of use block}(\ref{it:use block 3}). Hence
Definition~\ref{def:U restorable}(\ref{it:U restorable 2b}) holds for this
use block~\(\sB\).

If~\(\sB\) belongs to a \(\Gamma\)\nbd functional and \(\cQ(\sB)=(1,j)\)
with \(j\in \{3,\infty\}\), then by Lemma~\ref{lem:structure of use
block}(\ref{it:use block 1}) we have that \(\Gamma^{E\oplus U}=\hat{D}\)
for some \(\hat{d}\ngeq d\). At stage~\(s\) when \(\beta^*\) found its
computation \(y_{\beta^*}\), we have \(\Gamma^{E\oplus
U}_s(x)=\hat{D}_s(x)\) if the former is defined. Then
\(\bigsame(\hat{D},y_{\beta^*},s,t)\) tells us that in particular
\(\hat{D}_s(x)=\hat{D}_t(x)\). If~\(\sB\) is available for correcting~\(x\)
at stage~\(t\) (Section~\ref{sec:use block}), as \(\Gamma(x)\) is correct,
we conclude that Definition~\ref{def:U restorable}(\ref{it:U restorable
1a}) holds for this use block~\(\sB\).

If~\(\sB\) belongs to a \(\Gamma\)\nbd functional and \(\cQ(\sB)=(1,2)\),
we let~\(s^*\) be the last stage when we visit~\(\beta\). Therefore, by the
slowdown condition of~\(\beta\), we have \(\bigsame(U,y_\beta,s^*,s)\).
Note that we also have \(s^*<\created(\sB)\le s\). Therefore, if we have
\(\diff(U,y_\beta,s,t)\), then~\(\sB\) is not available for correcting even
if we restore \(E\res \sB\) to \(y_{\beta^*}\res \sB\) at the beginning of
stage~\(t\). We conclude that Definition~\ref{def:U restorable}(\ref{it:U
restorable 1b}) holds for this use block~\(\sB\).

Hence~\(y_{\beta^*}\) is \((U,D)\)\nbd restorable.

\item
For~\(y_{\beta}\), we consider each~\(\sB\) with \(\cQ(\sB)=(i,j)\) where
\(i\in \{1,2\}, j\in \{3,\infty\}\) by Lemma~\ref{lem:relevant}.

If~\(\sB\) belongs to a \(\Gamma\)\nbd functional, we have by
Lemma~\ref{lem:structure of use block}(\ref{it:use block 1})(\ref{it:use
block 2}) and by \(\bigsame(\hat{C},y_{\beta},s,t)\) for each
\(\hat{c}\ngeq c\) that \(\Gamma^{E\oplus U}\) is correct and therefore
Definition~\ref{def:U restorable}(\ref{it:U restorable 1a}) holds for this
use block \(\sB\).
%there is no need to enumerate a correcting into \(\sB\).

If~\(\sB\) belongs to a \(\Delta\)\nbd functional and \(\cQ(\sB)=(1,j)\)
with \(j\in \{3,\infty\}\), by Lemma~\ref{lem:structure of use
block}(\ref{it:use block 4}),~\(\sB\) crosses over~\(C\). Hence
Definition~\ref{def:U restorable}(\ref{it:U restorable 2b}) holds for this
use block.

If~\(\sB\) belongs to a \(\Delta\)\nbd functional and \(\cQ(\sB)=(2,j)\)
with \(j\in \{3,\infty\}\), then \(\same(U,y_\beta,s,t)\) says
\(\Delta_t(x)=U_s(x)=U_t(x)\), that is, \(\Delta(x)\) is correct at
stage~\(t\). Therefore Definition~\ref{def:U restorable}(\ref{it:U
restorable 2a}) holds for this use block.

Hence~\(y_\beta\) is \((U,C)\)\nbd restorable.
\end{enumerate}
\end{proof}

If~\(\beta\) becomes a controller at stage~\(s_\beta^{\ctr}\) with
\(\cE^{\ctr}(\beta)=\cE^U(\beta)=\{\beta,\beta^*\}\) and~\(\beta\) is an
\(R_c\)\nbd node for some \(c\in \Ji(\cL)\) then for each~\(\hat{C}\) with
\(\hat{c}\ngeq c\), we have \(\bigsame(U,s_\beta^{\ctr},s_\beta^{\ctr},t)\)
at each \(t>s_\beta^{\ctr}\) (while~\(\beta\) is not initialized) since we
have a restraint on~\(\hat{C}\).~\(\beta\) is \(U\)\nbd restorable
while~\(\beta^*\) might be weakly \(U\)\nbd restorable if it is an
\(R_d\)\nbd node for some \(d<c\).

\begin{lemma}\label{lem:U restorable strong}
Let \(\cE_s^U(\alpha_j)=\cE_{s_*}^\varnothing(\alpha_j)\cup
\cE_{s_{**}}^\varnothing(\alpha_i)\) be strong \(U\)\nbd data
(Definition~\ref{def:1U strong data}), where \(\alpha_i\subsetneq \alpha_j\)
are \(U\)\nbd problems of controller~\(\beta_j\) and~\(\beta_i\),
respectively. Suppose that~\(\beta_j\) and~\(\beta_i\)
are~\(R_{c_{\beta_j}}\)- and \(R_{c_{\beta_i}}\)\nbd nodes and~\(\alpha_j\)
and~\(\alpha_i\) are both \(R_d\)\nbd nodes with \(d<c_{\beta_i}\) and
\(d<c_{\beta_j}\). We let \(s_*=s_{\beta_j}^{\ctr}\),
\(s_{**}=s_{\beta_i}^{\ctr}\), \(s_0>s_*\) be the stage when
\(\cD_{s_0}(\beta_j)=\alpha_j\) (or equivalently,
\(\diff(U,y_{\beta_j},s_*,s_0)\)) and \(\bigsame(U,s_*,s_0,s)\), and
\(s_{00}\) be the stage when \(\cD_{s_{00}}(\beta_i)=\alpha_i\) (or
equivalently, \(\diff(U,y_{\beta_i},s_{**},s_{00})\)) and
\(\bigsame(U,s_{**},s_{00},s)\). Without loss of generality, we may assume
\[
y_{\beta_j}<y_{\alpha_j}<s_*<s_0<y_{\beta_i}<y_{\alpha_i}<s_{**}<s_{00}<s.
\]
Recall that \(\Cond^U(\alpha_j,t)\) is \(\same(U,y_{\alpha_j},s,t)\) and
\(\Cond^U(\alpha_i,t)\) is \(\diff(U,y_{\alpha_j},s,t)\).
\begin{enumerate}
\item\label{it:U restorable strong 1}
If \(\same(U,y_{\alpha_j},s,t)\) and \(\bigsame(\hat{D},y_{\alpha_j},s,t)\)
for each \(\hat{d}\ngeq d\), then~\(y_{\alpha_j}\) is \((U,D)\)\nbd
restorable at stage~\(t\).
\item\label{it:U restorable strong 2}
If \(\diff(U,y_{\alpha_j},s,t)\) and \(\bigsame(\hat{D},y_{\alpha_j},s,t)\)
for each \(\hat{d}\ngeq d\), then~\(y_{\alpha_i}\) is \((U,D)\)-restorable
at stage~\(t\).
\end{enumerate}
\end{lemma}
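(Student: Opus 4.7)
The plan is to reduce both parts of the lemma to Lemma~\ref{lem:U restorable} applied to the two constituent weak $U$-data pieces $\cE_{s_*}^U(\beta_j) = \{\beta_j, \alpha_j\}$ and $\cE_{s_{**}}^U(\beta_i) = \{\beta_i, \alpha_i\}$ that were formed when $\beta_j$ and $\beta_i$ became controllers, using the stability packaged into the strong $U$-data to bridge the relevant intermediate stages. The key arithmetic observations that make this work are the inclusions $y_{\beta_j} < y_{\alpha_j} < s_*$ and $s_* < s_0 < s_{**} < s_{00} < s$, which let us chain $\same$ and $\diff$ facts cleanly.

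For part~(\ref{it:U restorable strong 1}), I would verify the two hypotheses of Lemma~\ref{lem:U restorable}(\ref{it:lem U restorable 1}) for the weak data $\cE_{s_*}^U(\beta_j)$. First, $\diff(U, y_{\beta_j}, s_*, s_0)$ together with $\bigsame(U, s_*, s_0, s)$ (which specializes to $\same(U, y_{\beta_j}, s_0, s)$ since $y_{\beta_j} < s_*$) yields $\diff(U, y_{\beta_j}, s_*, s)$, and then $\same(U, y_{\alpha_j}, s, t)$ propagates to $\same(U, y_{\beta_j}, s, t)$ because $y_{\beta_j} < y_{\alpha_j}$, giving $\diff(U, y_{\beta_j}, s_*, t)$. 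Second, for any $\hat{d} \ngeq d$, the assumption $c_{\beta_j} \ge d$ forces $\hat{d} \neq c_{\beta_j}$, so while $\beta_j$ is a controller its restraint on $\hat{C} \res s_*$ delivers $\bigsame(\hat{D}, y_{\alpha_j}, s_*, s)$; concatenating with the hypothesis $\bigsame(\hat{D}, y_{\alpha_j}, s, t)$ gives $\bigsame(\hat{D}, y_{\alpha_j}, s_*, t)$. An appeal to Lemma~\ref{lem:U restorable}(\ref{it:lem U restorable 1}) then yields $(U, D)$-restorability of $y_{\alpha_j}$ at stage~$t$.

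For part~(\ref{it:U restorable strong 2}), the $U$-condition is verified in parallel: $\diff(U, y_{\beta_i}, s_{**}, s_{00})$ plus $\bigsame(U, s_{**}, s_{00}, s)$ gives $\diff(U, y_{\beta_i}, s_{**}, s)$, and from $\bigsame(U, s_*, s_0, s)$ specialized at $s_{**} \in [s_0, s]$ I extract $\same(U, y_{\alpha_j}, s_{**}, s)$; the hypothesis $\diff(U, y_{\alpha_j}, s, t)$ then chains with this to produce $\diff(U, y_{\alpha_j}, s_{**}, t)$, hence $\diff(U, y_{\beta_i}, s_{**}, t)$ since $y_{\alpha_j} < y_{\beta_i}$. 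This uses crucially that $d$-ary changes in $U$ compose when a $\same$ link is available.

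The main obstacle will be verifying $\bigsame(\hat{D}, y_{\alpha_i}, s_{**}, t)$ for $\hat{d} \ngeq d$ in part~(\ref{it:U restorable strong 2}), since the hypothesis only guarantees stability at the shorter length $y_{\alpha_j} < y_{\alpha_i}$. The interval $[s_{**}, s]$ is covered by $\beta_i$'s restraint on $\hat{C} \res s_{**}$ (which applies because $c_{\beta_i} \ge d$ rules out $\hat{d} = c_{\beta_i}$), but for $[s, t]$ one must invoke the restraint imposed on $\hat{C} \res s_{\alpha_j}^{\ctr}$ by $\alpha_j$ the moment it becomes a controller via the $U$-link at stage $s+1$, noting that $y_{\alpha_i} < s < s_{\alpha_j}^{\ctr}$ and that $\hat{d} \neq d$ because $\hat{d} \ngeq d$. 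One then argues that any $\hat{D}$-change on $y_{\alpha_i}$ during $[s, t]$ surviving these restraints would either initialize $\alpha_j$ (so there is no stable configuration at $t$ to apply the conclusion to) or would originate from an enumeration performed below the new controller's threshold, which the construction explicitly precludes via freshness of use blocks together with the analysis of the $G$-strategy in Section~\ref{sec:G strategy}.
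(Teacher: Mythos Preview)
Your overall strategy---reduce both parts to Lemma~\ref{lem:U restorable}\eqref{it:lem U restorable 1} applied to the weak data $\cE^U(\beta_j)$ and $\cE^U(\beta_i)$, chaining $\same$/$\diff$ facts through the intermediate stages---is exactly the paper's approach, and your part~(1) is correct and in fact slightly more explicit than the paper's (you separate the interval $[s_*,s]$, covered by $\beta_j$'s restraint, from $[s,t]$, covered by the hypothesis).

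For part~(2), your deduction of $\diff(U,y_{\beta_i},s_{**},t)$ from $\diff(U,y_{\alpha_j},s,t)$ via $\same(U,y_{\alpha_j},s_{**},s)$ (extracted from $\bigsame(U,s_*,s_0,s)$ at $s_{**}\in[s_0,s]$) is clean and correct; the paper writes this step more tersely and lists the facts $\diff(U,y_{\beta_i},s_{**},s_{00})$ and $\bigsame(U,s_{**},s_{00},s)$, but your route is at least as direct.

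You are also right to flag the length mismatch in the $\bigsame(\hat D,\cdot)$ condition: Lemma~\ref{lem:U restorable}\eqref{it:lem U restorable 1} genuinely needs stability up to $y_{\alpha_i}$, not just $y_{\alpha_j}$, and the stated hypothesis does not give this. The paper glosses over this with ``a similar argument'', implicitly relying on the fact that in the only context where the lemma is ever applied, some controller restrains $\hat D\res y_{\alpha_i}$ throughout $[s_{**},t]$: first $\beta_i$ (whose restraint covers $\hat D\res s_{**}\supseteq \hat D\res y_{\alpha_i}$ since $d<c_{\beta_i}$ forces $\hat d\ne c_{\beta_i}$ whenever $\hat d\ngeq d$), and then, once the link is traveled and $\beta_i$ is initialized, $\alpha_j$ itself (whose restraint covers $\hat D\res s_{\alpha_j}^{\ctr}\supseteq \hat D\res y_{\alpha_i}$ since $\hat d\ngeq d$ forces $\hat d\ne d$). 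This handoff is seamless, so $\bigsame(\hat D,y_{\alpha_i},s_{**},t)$ holds outright.

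Where you go astray is the final paragraph: once you have invoked $\alpha_j$'s restraint, there is nothing left to argue. A restraint on $\hat D\res s_{\alpha_j}^{\ctr}$ \emph{means} no change occurs below that length; there is no scenario of a ``$\hat D$-change surviving these restraints'' to analyze, and the $G$-strategy and freshness of use blocks are irrelevant here. Drop that paragraph and simply record the two-controller handoff as above; that is all the paper's ``similar argument'' is doing.
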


\begin{proof}
\begin{enumerate}
\item
From \(\diff(U,y_{\beta_j},s_*,s_0)\) and \(\bigsame(U,s_*,s_0,s)\), we deduce
\[
\same(U,y_{\alpha_j},s,t) \Rightarrow \diff(U,y_{\beta_j},s_*,t).
\]
Notice that from~\(s_*\), the set \(\hat{D}\res s_*\) for each
\(\hat{d}\neq c_{\beta_j}\) is restrained by the controller~\(\beta_j\).
Since \(d< c_{\beta_j}\), \(\hat{D}\res s_*\) is restrained for each
\(\hat{d}\ngeq d\). Therefore we have
\(\bigsame(\hat{D},y_{\alpha_j},s_*,t)\) for each \(\hat{d}\ngeq d\). By
Lemma~\ref{lem:U restorable}(\ref{it:lem U restorable 1}) we conclude
that~\(y_{\alpha_j}\) is \((U,D)\)\nbd restorable.

\item
From \(\diff(U,y_{\beta_i},s_{**},s_{00})\) and
\(\bigsame(U,s_{**},s_{00},s)\), we deduce
\[
\diff(U,y_{\alpha_j},s,t) \Rightarrow \diff(U,y_{\beta_i},s_{**},t).
\]
A similar argument as above shows that
\(\bigsame({\hat{D},y_{\beta_i},s_{**},t})\) holds for each \(\hat{d}\ngeq
d\). By Lemma~\ref{lem:U restorable}(\ref{it:lem U restorable 1}) we
conclude that \(y_{\alpha_i}\) is \((U,D)\)\nbd restorable.
\end{enumerate}
\end{proof}

As the sets are properly restrained by the controllers, we have the following

\begin{lemma}
Let~ \(\beta\) be a controller with
\(\cE^{\ctr}(\beta)=\cE^U(\beta)=\{\beta,\alpha\}\) where \(\cE^U(\beta)\) is
either strong or weak. Suppose that~\(\beta\) is an \(R_c\)\nbd node
and~\(\alpha\) is an \(R_d\)\nbd node with \(d\le c\). At each stage
\(s>s_\beta^{\ctr}\), if \(\cD_s(\beta)=\beta\), then~\(y_\beta\) is
\(U\)\nbd restorable; if \(\cD_s(\beta)=\alpha\), then~\(y_\alpha\) is
\(U\)\nbd restorable if \(d=c\) and weakly \(U\)\nbd restorable if \(d<c\).
\qed
\end{lemma}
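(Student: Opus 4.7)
The plan is to combine three ingredients: (a) the restraint on $\hat C\!\res s_\beta^{\ctr}$ for each $\hat c\ne c$ imposed by the controller $\beta$ (Definition~\ref{def:1U controller}), which delivers $\bigsame(\hat C,y,s_\beta^{\ctr},s)$ for every relevant reference length $y\le s_\beta^{\ctr}$ for as long as $\beta$ is not initialized; (b) the $U$\nbd condition read off from $\cD_s(\beta)$ via Definition~\ref{def:1U decision}, which is $\same(U,y_\beta,s_\beta^{\ctr},s)$ when $\cD_s(\beta)=\beta$ and $\diff(U,\cdot,\cdot,s)$ when $\cD_s(\beta)=\alpha$; and (c) the two restorability lemmas already proved, namely Lemma~\ref{lem:U restorable} in the weak case and Lemma~\ref{lem:U restorable strong} in the strong case.

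First I would handle $\cD_s(\beta)=\beta$. Feeding the $\same$\nbd condition on~$U$ together with $\bigsame(\hat C,y_\beta,s_\beta^{\ctr},s)$ for every $\hat c\ne c$ (and in particular for every $\hat c\ngeq c$) into Lemma~\ref{lem:U restorable}\eqref{it:lem U restorable 2}, or into Lemma~\ref{lem:U restorable strong}\eqref{it:U restorable strong 1} when $\cE^U(\beta)$ is strong, yields that $y_\beta$ is $(U,C)$\nbd restorable. Because the controller $\beta$ does not restrain~$C$ itself, its diagonalizing witness $x_\beta$ is free to enter~$C$; neither clause of the ``weakly $U$\nbd restorable'' definition is triggered, so $y_\beta$ is in fact $U$\nbd restorable.

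Next I would handle $\cD_s(\beta)=\alpha$. From $d\le c$ I first observe that any $\hat d\ngeq d$ must satisfy $\hat d\ne c$ (otherwise $\hat d=c\ge d$, a contradiction), so for every such $\hat d$ the set~$\hat D$ is restrained by~$\beta$ and we obtain $\bigsame(\hat D,y_\alpha,s_\beta^{\ctr},s)$. Combining with the $\diff$ part of the $U$\nbd condition, Lemma~\ref{lem:U restorable}\eqref{it:lem U restorable 1}, respectively Lemma~\ref{lem:U restorable strong}\eqref{it:U restorable strong 2}, gives the $(U,D)$\nbd restorability of~$y_\alpha$. When $d=c$ (the only possibility in the strong case, and one possibility in the weak case) the set $D=C$ is not restrained by~$\beta$, so $x_\alpha$ may be enumerated into~$D$ and $y_\alpha$ is $U$\nbd restorable. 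When $d<c$, by the previous paragraph $\hat d = d$ gives $d\ne c$, so $D$ is restrained by~$\beta$ and $x_\alpha$ is forbidden from entering~$D$; the first bullet of Definition~\ref{def:U restorable} then tells us that $y_\alpha$ is only weakly $U$\nbd restorable.

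The main obstacle is purely the bookkeeping needed to justify ingredient~(a): verifying that the restraint really does deliver $\bigsame(\hat C,y,s_\beta^{\ctr},s)$ throughout the stages we care about, despite the $G$\nbd strategy's enumerations via $\chi_s(\theta(x))$ and any lower-priority $R$\nbd activity. This is handled by the noise-detection mechanism of Definition~\ref{def:noise}: any change to a restrained $\hat C\!\res s_\beta^{\ctr}$ is seen as noise by $\beta$ and initializes all nodes to the right of $\beta^\frown\ctr$, so in particular no subsequent computation using the stale value of~$\hat C$ is ever consulted. Once this invariant is in place the three subcases reduce cleanly to the cited restorability lemmas.
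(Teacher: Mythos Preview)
Your proposal is correct and follows essentially the same approach as the paper: the paper states this lemma with \qed, deriving it from the immediately preceding paragraph (the restraint on $\hat C\res s_\beta^{\ctr}$ for $\hat c\ne c$ gives the needed $\bigsame$ hypotheses) together with Lemma~\ref{lem:U restorable} in the weak case and Lemma~\ref{lem:U restorable strong} in the strong case. Your sketch spells out in more detail the case split on $d=c$ versus $d<c$ and the reason the resulting $(U,C)$- or $(U,D)$-restorability upgrades (or fails to upgrade) to full $U$-restorability, which is exactly what the paper leaves implicit.
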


Recall that if \(\cE^U(\beta)\) is strong \(U\)\nbd data, then \(d=c\). We
summarize what we have proved in the following

\begin{lemma}\label{lem:1U main lemma}
Let~\(\beta\) be a controller with \(\cE^{\ctr}(\beta)=\cE^U(\beta)\). If
\(\cD_s(\beta)=\xi\) and~\(\xi\) is \(U\)\nbd restorable, then we can
restore~\(y_\xi\) at the beginning of stage~\(s\) and activate the \(d\)\nbd
outcome of \(\xi\);~\(y_\xi\) remains restored at all substages of
stage~\(s\). \qed
\end{lemma}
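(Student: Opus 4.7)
The plan is to combine the concrete meaning of ``restore $y_\xi$'' given in Section~\ref{sec:R and controller 1} with the $U$-restorability conditions of Definition~\ref{def:U restorable}, which are exactly what Lemma~\ref{lem:U restorable} (for weak $U$-data) and Lemma~\ref{lem:U restorable strong} (for strong $U$-data) have already verified for the controller situation. The only content that remains is to check that, once the restoration bit-overwrite is performed and $E \res y_\xi$ is restrained, no substage of stage~$s$ can undo it.

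First I carry out the restoration: at the start of stage~$s$, overwrite $(E \oplus C_1 \oplus \cdots \oplus C_k) \res \sB$ with $y_\xi \res \sB$ for every $U$-functional use block $\sB < y_\xi$, then declare $E \res y_\xi$ to be $E$-restrained, and finally activate $\xi^\frown d$. These bit-flips are legal because each of $E$ and the $C_i$ is d.c.e., and after them the computation that witnessed $y_\xi$ is witnessed once more.

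Second, I verify persistence through the substages. The only ancestors $\alpha \subseteq \beta$ visited at stage~$s$ that could touch the restored bits are the $S$- and $R$-nodes that run their correcting strategies. For each use block $\sB < y_\xi$ maintained by such an $\alpha$ there are two cases. If $\sB$ belongs to a $\Gamma^{E \oplus U}$-functional (so $\alpha$ is an $S$-node), Definition~\ref{def:U restorable}(\ref{it:U restorable 1}) forces either case~(1) of the $S$-correcting strategy (already correct, no action) or case~(3) (not available, so the current block for $n$ is a fresh block above $y_\xi$, and $\sB$ is left untouched). If $\sB$ belongs to a $\Delta$-functional, Definition~\ref{def:U restorable}(\ref{it:U restorable 2}) forces either case~(1) of the $R$-correcting strategy or its case~(2d), where the correction is enumerated into $\sB$ via some $C_i$-column rather than via $E$; by the structural analysis of Lemma~\ref{lem:structure of use block}(\ref{it:use block 3})--(\ref{it:use block 4}) together with Section~\ref{sec:star and sharp} and Lemma~\ref{lem:useful}(\ref{it:useful-mon}), this $C_i$ does not appear in the oracle of $y_\xi$'s computation, so $y_\xi$ is not injured. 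The $G$- and controller-strategies only act with uses above $s_\beta^{\ctr} > y_\xi$, so they cannot touch $E \res y_\xi$ either.

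The main delicacy is verifying that case~(2d) of the $R$-correcting strategy genuinely applies, i.e., that whenever Definition~\ref{def:U restorable}(\ref{it:U restorable 2a}) fails there is some $C_i$ over which $\sB$ is free and into which a correcting point can be enumerated. This is exactly what is built into clause~(\ref{it:U restorable 2b}) of Definition~\ref{def:U restorable} and justified by Lemmas~\ref{lem:structure of use block} and~\ref{lem:U restorable}; once that is in hand, the lemma follows by routine case-checking against the four clauses of the correcting strategy together with the block-size bound of Lemma~\ref{lem:block size}, which guarantees that an unused enumeration point is always available in $\sB$.
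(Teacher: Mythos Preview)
The paper gives no proof here: the lemma carries a \qed\ and is introduced with ``We summarize what we have proved in the following'', so it is meant to follow immediately from Definition~\ref{def:U restorable} together with the preceding Lemmas~\ref{lem:U restorable} and~\ref{lem:U restorable strong} and the unnamed lemma just before. Your write-up is a correct elaboration of exactly that implicit argument, and the case analysis against the correcting strategies is the right way to unpack ``$U$-restorable''.

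Two small imprecisions worth tightening. First, the sets $C_i$ are built as c.e.\ sets, not d.c.e., so you cannot justify the restoration bit-flips by appealing to d.c.e.-ness of the $C_i$; rather, the controller's restraint on $\hat C\res s_\beta^{\ctr}$ for $\hat c\neq c$ guarantees those columns have not moved since $y_\xi$ was found, so only $E$-bits need flipping. Second, for $\Delta$-blocks you should also allow case~(2b) (killed and $E$-restrained), not only~(2d); the argument is identical since in both cases the enumeration goes into the $C$-column, which by Lemma~\ref{lem:useful}(\ref{it:useful-mon}) is absent from the oracle of an $R_c$-computation. With these adjustments your proof stands and matches the paper's intended reasoning.
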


\begin{lemma}[decision]\label{lem:decision}
Let~\(\beta\) be a controller with \(\cE^{\ctr}(\beta)=\{\beta,\alpha\}\)
(with \(\alpha\subsetneq \beta\)). Then for each \(s>s_\beta^{\ctr}\),
\(\cD_s(\beta)\) is defined.
\end{lemma}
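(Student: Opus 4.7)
The plan is to observe that the $U$-conditions assigned to the two nodes in $\cE^{\ctr}(\beta)$ are arranged as exact negations of each other, with identical reference length and reference stage, so that at every $s > s_\beta^{\ctr}$ exactly one of them holds. Consequently the ``longest $\xi$ with $\Cond_\beta^U(\xi,s)$'' in Definition~\ref{def:1U decision} is always a well-defined element of $\cE^{\ctr}(\beta)$.

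More concretely, I would split into the two cases corresponding to how $\cE^{\ctr}(\beta)$ was produced. If $\cE^{\ctr}(\beta) = \cE^U(\beta)$ is weak $U$-data with $\alpha = \beta^*$, then by Definition~\ref{def:1U data} we have
\[
\Cond_\beta^U(\beta,t) = \same(U,y_\beta,s_\beta^{\ctr},t), \qquad
\Cond_\beta^U(\beta^*,t) = \diff(U,y_\beta,s_\beta^{\ctr},t),
\]
which are logical negations of each other (both with reference length $y_\beta$ and reference stage $s_\beta^{\ctr}$). If instead $\cE^{\ctr}(\beta)$ is strong $U$-data (coming from the construction in Definition~\ref{def:1U strong data} applied to some pair $\alpha_i \subsetneq \alpha_j = \beta$), then by that definition
\[
\Cond_\beta^U(\beta,t) = \same(U,y_\beta,s_\beta^{\ctr},t), \qquad
\Cond_\beta^U(\alpha,t) = \diff(U,y_\beta,s_\beta^{\ctr},t),
\]
and again one is the negation of the other.

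In either case, by the very definitions of $\same$ and $\diff$ (as a statement and its negation about the single finite string $U_t\res(y_\beta+1)$), for each stage $t > s_\beta^{\ctr}$ exactly one of $\Cond_\beta^U(\beta,t)$ and $\Cond_\beta^U(\alpha,t)$ holds. Hence there is a (unique) longest $\xi \in \cE^{\ctr}(\beta) = \{\beta,\alpha\}$ for which $\Cond_\beta^U(\xi,s)$ holds, so $\cD_s(\beta)$ is defined, completing the proof.

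Since the argument is essentially bookkeeping on the complementary form of the two $U$-conditions, there is no real obstacle here; the only point to double-check is that the reference length and reference stage really are the \emph{same} for the two conditions in each of the two cases (weak and strong), which I verify by direct inspection of Definitions~\ref{def:1U data} and~\ref{def:1U strong data}.
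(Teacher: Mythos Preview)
Your proposal is correct and follows essentially the same approach as the paper: the paper's proof simply observes that, whether via Definition~\ref{def:1U data} or Definition~\ref{def:1U strong data}, one has $\Cond^U(\beta,s) \Leftrightarrow \lnot\Cond^U(\alpha,s)$, and you spell out exactly this complementarity in the two cases. One minor imprecision: in the strong case the common reference stage is the stage~$s$ at which the strong data was assembled (Definition~\ref{def:1U strong data}), which may differ from $s_\beta^{\ctr}$ by one; but since your argument only needs that the two conditions share the \emph{same} reference stage and length (which they do), this does not affect correctness.
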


\begin{proof}
Either via Definition~\ref{def:1U data} or Definition~\ref{def:1U strong
data}, we have that \(\Cond^U(\beta,s)\) iff \(\lnot\Cond^U(\alpha,s)\). Thus
\(\cD_s(\beta)\) is always defined.
\end{proof}

Given the construction, we define the \emph{true path} \(p \in [\cT]\) by
induction: We first specify \(\lambda \subseteq p\) for the root~\(\lambda\)
of~\(\cT\). Suppose \(\sigma \subseteq p\) is specified; then we say that the
\(o\)\nbd outcome of~\(\sigma\) is the \emph{true outcome} of~\(\sigma\) if
it is the leftmost outcome, if any, that is visited infinitely often, and we
specify \(\sigma^\frown o \subseteq p\). This completes the definition
of~\(p\). That~\(p\) is infinite follows from the next

\begin{lemma}[Finite Initialization Lemma]
\label{lem:1U finite initialization lemma}
Let~\(p\) be the true path. Each node \(\alpha\in p\) is initialized finitely
often, and~\(p\) is infinite.
\end{lemma}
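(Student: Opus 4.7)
The plan is to prove the lemma by simultaneous induction on $|\alpha|$ for $\alpha \subseteq p$, establishing three statements together: (a) $\alpha$ is initialized only finitely often, (b) $\alpha$ is visited infinitely often in the construction, and (c) $\alpha$ has a well-defined true outcome $o$ with $\alpha^\frown o \subseteq p$. The base case $\alpha = \lambda$ is immediate: the root is never initialized and is visited at every stage, and outcome $0$ is the only option for an $S$-node.

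The main engine for the inductive step is a finiteness statement about controllers: for any controller $\beta$ with parameters $s_\beta^{\ctr}$ and $\tp(\beta)$, there are only finitely many sets relevant to $\beta$ in the sense of Definition~\ref{def:relevant}, and each such set is either $K$, $E$, $U$, or some $C$, all of which are d.c.e. Hence by definition of d.c.e.\ sets, the events $\diff(X,\tp(\beta),s-1,s)$ and $\diff(X,s_\beta^{\ctr},s-1,s)$ each occur at only finitely many stages $s$. Therefore, after it becomes a controller, any fixed $\beta$ sees noise and threats only finitely often, changes its decision only finitely often (by Lemma~\ref{lem:decision}), and correspondingly creates $U$-links, turns outcomes RED/GREEN, and activates $d$-outcomes only finitely often.

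For the inductive step, fix $\alpha \subseteq p$ and let $s_0$ be a stage after which $\alpha$ is no longer initialized and is visited cofinally (by the induction hypotheses). If $\alpha$ is an $S$-node, then its single outcome $0$ is visited every time $\alpha$ is visited, except possibly at stages where a $U$-link redirects the visit below $\alpha$. Since there are only finitely many controllers $\beta \subsetneq \alpha$ (there are finitely many nodes in $\alpha$) and each establishes $U$-links only finitely often, the redirections affect only finitely many stages, so $\alpha^\frown 0$ is visited infinitely often and initialized only finitely often. If $\alpha$ is an $R$-node, pick the leftmost outcome $o$ of $\alpha$ that is visited infinitely often after $s_0$; this exists because $\alpha$ has only finitely many outcomes (at most four, namely $\ctr$, $U$, $w$, $d$). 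Any outcome strictly to the left of $o$ is visited only finitely often, so $\alpha^\frown o$ is initialized only finitely often by construction, and it is the true outcome.

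The main obstacle is verifying that in the $R$-node case $\alpha$ is actually visited infinitely often \emph{through some outcome} after $s_0$, rather than merely being visited as the terminal node of a stage (stage $s$ stops at any node of length $s$). This is resolved by noting that stages of length $> |\alpha|$ must, upon visiting $\alpha$, either run $\enc(\alpha,o)$ for some outcome $o$, or be redirected via a $U$-link attached to a node $\subseteq \alpha$; since $|\alpha|$ is fixed and $U$-links affect only finitely many stages (as above), cofinitely many visits to $\alpha$ select one of its outcomes. Infinity of $p$ is then immediate from (c). A subtle secondary issue is the interaction between $U$-links and the outcome ordering when a controller below $\alpha$ forces a RED outcome at $\alpha$ itself: this is handled by observing that the RED/GREEN status of the $U$-outcome of $\alpha$ can change only when some controller $\beta \subsetneq \alpha$ changes its decision, which by the second paragraph happens only finitely often, so the status of $\alpha$'s $U$-outcome stabilizes.
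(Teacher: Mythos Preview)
Your proof has a genuine gap in the $S$-node case. You claim that the $U$-links attached at an $S$-node $\alpha$ are established by controllers $\beta\subsetneq\alpha$, and hence that there are only finitely many of them because $\alpha$ has only finitely many proper initial segments. This misreads the construction: in Section~\ref{sec:1U} every $U$-link starts at the root $\lambda$ and is established by a controller $\beta$ whose $\ctr$-outcome lies strictly to the \emph{right} of the link's target $(\alpha_j)^\frown\ctr$ (see case~(\ref{it:1U controller 3c}) of the controller strategy and Definition~\ref{def:1U strong data}); in particular $\beta$ is never a proper initial segment of the $S$-node at which the link is attached. For $\alpha=\lambda$ your count gives zero controllers and hence no links, which is false. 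The same reversal appears in your final paragraph: the controller $\beta$ that toggles $\alpha$'s $U$-outcome between GREEN and RED satisfies $\alpha=(\beta^*)^{\sharp}\subsetneq\beta^*\subsetneq\beta$, so $\beta$ properly extends $\alpha$, not the other way around, and there is no finite bound coming from initial segments of $\alpha$.

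The paper's argument for the $S$-node case does not try to bound the set of possible controllers by a fixed finite set. It observes instead that when a $U$-link is traveled at an $\alpha$-stage, the controller $\beta$ that established it is initialized (the link target has higher priority than $\beta^\frown\ctr$), and any $U$-link present at the \emph{next} $\alpha$-stage must be established by a surviving controller strictly to the left of $\beta$. Combined with the slowdown condition (an $R$-node cannot become a controller the first time it is visited) and the fact that traveling links does not visit new $R$-nodes in the ordinary way, this forces the chain of consecutive link-traversals to terminate, so infinitely many $\alpha$-stages reach $\alpha^\frown 0$. Your observation that a single fixed controller instance establishes only finitely many links is correct but insufficient on its own: controllers are destroyed and recreated throughout the construction, and the needed finiteness comes from the leftward drift in priority, not from counting initial segments of $\alpha$.
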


\begin{proof}
The root of the priority is never initialized. Using induction, we consider
\(\alpha\subseteq p\) and suppose that for each \(\beta\subseteq
\alpha\),~\(\beta\) is initialized finitely often. We first show that
\(\alpha\) has a true outcome, say \(o\)\nbd outcome, then we show that
\(\alpha^\frown o\) is initialized finitely often.

Suppose that \(\alpha\) is an \(S\)\nbd node.
Note that if some \(U\)\nbd link, established by a controller~\(\beta\), is
traveled at some \(\alpha\)\nbd stage~\(s\), the \(U\)\nbd link is destroyed
and the controller \(\beta\) is also initialized (since \(\beta\) is to the
right of the \(U\)\nbd link). Establishing another \(U\)\nbd link at the
beginning of the next \(\alpha\)\nbd stage requires a controller to the left of
\(\beta\). However, there are potentially only finitely many of them by
slowdown condition (an~\(R\)\nbd node which is visited for the first time
cannot become a controller at the same stage). Therefore, there will be
infinitely many stages when \(0\)\nbd outcome of~\(\alpha\) is visited.

Suppose that \(\alpha\) is an \(R\)\nbd node and \(s_0\) is the stage after
which \(\alpha\) is not initialized. If there exists some stage \(s_1>s_0\)
when \(\alpha\) becomes a controller, then for each \(\alpha\)\nbd stage
\(s>s_1\), we must have \(\cD_s(\alpha)=\alpha\) and \(\alpha^\frown d\) is
visited. That is, the \(d\)\nbd outcome is the true outcome. If such \(s_1\)
does not exist, then \(\alpha\) will visit either the~\(d\)-, the~\(w\)-,
or the \(U\)\nbd outcome of~\(\alpha\). The true outcome of \(\alpha\) is
therefore well-defined for~\(\alpha\).

Let \(o\)\nbd outcome be the true outcome of \(\alpha\) and \(s_0\) be the
stage after which we do not visit any node to the left of the \(o\)\nbd
outcome. There are at most finitely many controllers to the left of
\(\alpha^\frown o\), each of which sees at most finitely much
noise. Therefore, there is some \(s_1>s_0\) after which \(\alpha^\frown o\)
is not initialized.

Hence, \(p\) is infinite and each \(\alpha\subseteq p\) is initialized
finitely often.
\end{proof}

The following lemmas argue along the true path.

\begin{lemma}\label{lem:1U R is satisfied}
Each \(R^e\)\nbd requirement is satisfied for each \(e\in \omega\).
\end{lemma}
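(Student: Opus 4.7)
The plan is to invoke Lemma~\ref{lem:1U full requirement}(2) to fix, for each~\(e\), an \(R^e\)\nbd node~\(\alpha\) along the true path~\(p\) with either \(\alpha^\frown w \subset p\) or \(\alpha^\frown d \subset p\). Say \(\alpha\) is assigned \(R_c(\Psi)\). By the Finite Initialization Lemma (Lemma~\ref{lem:1U finite initialization lemma}), \(\alpha\) and its true outcome are initialized only finitely often, so \(\tp(\alpha)\) is eventually stable and, in the case that \(\alpha^\frown w\) or \(\alpha^\frown d\) is the true outcome, so is the witness \(x = \dw(\alpha)\). The argument then splits into the two cases and uses the machinery already set up.

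For the \(w\)\nbd outcome case, I would show that \(\Psi^{j(c*c_*)}(x)\uparrow\); then since \(C(x) \in \{0,1\}\) the requirement is met. The idea is to use the slowdown condition (Definition~\ref{def:sd}) in the contrapositive: if \(\Psi^{j(c*c_*)}(x)\downarrow\) with use \(y\), then for all sufficiently late \(\alpha\)\nbd stages~\(s\) with previous \(\alpha\)\nbd stage~\(s^*\), each of~(\ref{it:sd 1})--(\ref{it:sd 3}) holds: each set~\(X\) relevant to \(\alpha\) (namely \(K\), \(E\), \(U\), and each \(C\) for \(c\in\Ji(\cL)\)) is d.c.e., so eventually \(\bigsame(X,y,s^*,s)\) holds, fresh use blocks opened at stage~\(s\) have \(u - l > y\), and \(y < s^*\) for large \(s^*\). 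So at every sufficiently late \(\alpha\)\nbd stage, \(\enc(\alpha,w)\) finds a computation and \(\alpha\) proceeds to \(\enc(\alpha,U)\), contradicting \(\alpha^\frown w \subset p\).

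For the \(d\)\nbd outcome case, I would show that eventually \(\Psi^{j(c*c_*)}(x)\downarrow = 0\) while \(C(x) = 1\). Since \(\alpha^\frown d\) is visited infinitely often and initialized only finitely often, it becomes \emph{permanently} active from some stage~\(s_0\) onward. Looking at the construction, this requires that some controller~\(\beta\) with \(\alpha \in \cE^{\ctr}(\beta)\) has stable decision \(\cD(\beta) = \alpha\) (where \(\alpha\) is not a \(U\)\nbd problem for~\(\beta\), after any strong \(U\)\nbd data resolution) and is never initialized again. In activating \(\alpha^\frown d\), the controller strategy enumerates \(\dw(\alpha)\) into~\(C\) (so \(C(x) = 1\) permanently, since~\(C\) is c.e.) and then at every subsequent stage restores \(y_\alpha\) at the beginning of the stage and restrains \(E\restriction y_\alpha\). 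Lemma~\ref{lem:1U main lemma} (together with the restraints on each \(\hat C\) for \(\hat c \ne c\) imposed by~\(\beta\) and the fact that \(C\)\nbd enumerations above \(y_\alpha\) happen only at fresh large numbers) implies that \(y_\alpha\) remains restored through every substage past~\(s_0\). Hence \(\Psi^{j(c*c_*)}(x)\downarrow = 0 \ne 1 = C(x)\).

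The main obstacle is the bookkeeping in the \(d\)\nbd case: one must verify that the precise controller that activates \(\alpha^\frown d\) persists in its stable decision forever, that the potential toggling of \(U\)\nbd outcomes between GREEN and RED on ancestors does not re-initialize \(\beta^\frown\ctr\), and that the restraints imposed by \(\beta\) are sufficient to protect every oracle bit below \(y_\alpha\)---in particular, that no higher-priority controller ever introduces a change below \(y_\alpha\) (which would show up as ``noise'' or ``threats'' to~\(\beta\), forcing \(\beta\) to change decision and contradicting permanent activity of \(\alpha^\frown d\)). Once this bookkeeping is in place, the disagreement at~\(x\) is immediate from Lemma~\ref{lem:1U main lemma}.
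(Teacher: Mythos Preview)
Your approach is essentially the same as the paper's: fix an \(R^e\)\nbd node on the true path via Lemma~\ref{lem:1U full requirement}, stabilize parameters via the Finite Initialization Lemma, and split on whether the \(w\)- or \(d\)\nbd outcome is true, invoking Lemma~\ref{lem:1U main lemma} in the latter case. One minor slip: in the \(w\)\nbd case you claim \(\Psi^{j(c*c_*)}(x)\uparrow\), but the construction only searches for a computation witnessing \(\Psi(x)\downarrow = 0\) (see Section~\ref{sec:R and computation}), so your contrapositive should start from \(\Psi(x)\downarrow = 0\) and conclude only \(\Psi(x)\neq 0\); since \(C(x)=0\) here (the witness is never enumerated when \(w\) is the true outcome), this still yields \(\Psi(x)\neq C(x)\), exactly as the paper argues.
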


\begin{proof}
Let~\(p\) be the true path. By Lemma~\ref{lem:1U full requirement}, let
\(\alpha\subset p\) be the longest \(R\)\nbd node assigned an \(R^e\)\nbd
requirement, say, it is an \(R_c(\Psi)\)\nbd requirement. Suppose the
\(w\)\nbd outcome is the true outcome. By Lemma~\ref{lem:1U finite
initialization lemma}, there are~\(s_0\) and~\(x\) such that for each
\(s>s_0\), the only outcome of~\(\alpha\) that we visit is the \(w\)\nbd
outcome and \(\dw(\alpha)=x\). Then we claim that \(\Psi^{E\oplus U}(x)\neq
C(x)\): Otherwise, we will find a computation~\(y\) and hence obtain
\(\cE^\varnothing(\alpha)\), and so we will perform \(\enc(\alpha,U)\) and
then visit an outcome to the left of \(w\)\nbd outcome, contradicting the
choice of~\(s_0\).

Suppose the \(d\)\nbd outcome is the true outcome. Then there is~\(s_0\) such
that for each \(s>s_0\), the only outcome of~\(\alpha\) that will be visited
is the \(d\)\nbd outcome. By Lemma~\ref{lem:1U main lemma},~\(y_\alpha\) is
restored for each~\(s\). Thus, \(\Psi^{E\oplus U}(x)= 0\neq C(x) = 1\).
\end{proof}

\begin{lemma}\label{lem:1U S is satisfied}
The \(S(U)\)\nbd requirement is satisfied.
\end{lemma}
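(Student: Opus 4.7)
The plan is to apply Lemma~\ref{lem:1U full requirement}(1) to obtain an $S$-node $\alpha \subset p$ on the true path such that every $S$-node $\beta$ with $\alpha \subseteq \beta \subset p$ has $\seq(\beta) = \seq(\alpha) = (a,\eta)$ for some fixed pair; in particular $\eta \in [T_\cL]$ is a leaf, and I claim that $F_\eta(U)$ is satisfied.

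First I would verify a bookkeeping step: every functional in $F_\eta(U) = \{\Gamma_\tau : \tau 0 \subseteq \eta\} \cup \{\Delta_\tau : \tau 1 \subseteq \eta\}$ is in fact being built and maintained by some node on $p$. Tracing the sequence of $\seq$-values from the root $(0,\iota)$ down to $\alpha$, each change of $\seq$ occurs when the $U$-outcome of some $R_c$-node $\gamma \subset p$ is traversed. By Definition~\ref{def:kill}, a Type~I transition at $\gamma$, with $\seq_1(\gamma^\frown U) = R_c(\seq_1(\gamma^-)) = \sigma 1 0 \cdots 0$, builds the new $\Delta_\sigma$ at $\gamma$ and kills the functionals that are in $F_{\seq_1(\gamma^-)}(U)$ but not in $F_{\seq_1(\gamma^\frown U)}(U)$; by Definition~\ref{def:maintain1U}, the $S$-node $\gamma^\frown U$ then rebuilds the $\Gamma_\tau$ for $\sigma 1 \subseteq \tau \subseteq \sigma 10\cdots 0$. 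A Type~II GREEN transition, on the other hand, causes $\gamma^\frown U$ to rebuild $F_\iota(U)$ in its entirety. A straightforward induction on the length of the prefix of $p$ then shows that every $\Gamma_\tau \in F_\eta(U)$ is maintained by some $S$-node in $\{\beta : \alpha \subseteq \beta \subset p\}$ and every $\Delta_\tau \in F_\eta(U)$ is maintained by some $R$-node with $U$-outcome on $p$ (above or equal to $\alpha$).

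Next, by Lemma~\ref{lem:1U finite initialization lemma}, each such maintaining node is eventually not re-initialized, so it acts at cofinitely many stages. Totality of every functional in $F_\eta(U)$ is then immediate from the correcting strategies of Sections~\ref{sec:S strategy} and~\ref{sec:R and delta}, which at every visit extend definitions to all $x \leq s$. Correctness reduces to showing that whenever a use block $\sB$ is available for correcting a wrongly computed value, the correction actually goes through; in the absence of restraints this is immediate from the correcting strategies.

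The main obstacle, and the bulk of the verification, is the case in which $\sB$ is $E$-restrained by an active controller $\beta'$. The argument I would give mirrors Lemmas~\ref{lem:structure of use block} and~\ref{lem:U restorable}: $\beta'$ simultaneously restrains $\hat{C} \res s_{\beta'}^{\ctr}$ for every $\hat{c} \neq c_{\beta'}$ in $\Ji(\cL)$, and one uses Lemma~\ref{lem:conflicts} together with the definitions of $\mt$ and $\kl$ to show that, if $\sB$ belongs to a $\Gamma$-functional computing $\hat{C}$ and is $E$-restrained while available for correcting $\Gamma(x)$, then $\hat{C}$ is also restrained at the relevant position, so $\Gamma(x)$ is already correct and needs no correction; and if $\sB$ belongs to a $\Delta$-functional with oracle $E \oplus C_1 \oplus \cdots \oplus C_k$ and is $E$-restrained, then Lemma~\ref{lem:conflicts}(1) guarantees some $C_i$ in its oracle that is not restrained on $\sB$, so the correcting strategy can enumerate into $\sB$ via that $C_i$, permanently killing $\sB$ if needed. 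Combined with Lemma~\ref{lem:decision} and the fact that each controller changes its decision only finitely often (Definition~\ref{def:noise}), this yields the required correctness, completing the proof that $F_\eta(U)$ is satisfied.
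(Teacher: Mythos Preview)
Your overall plan---locate the stable $\seq$-value $(a,\eta)$ on the true path and argue that $F_\eta(U)$ is satisfied---matches the paper's, but there are two problems.

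First, a slip: you claim the $\Gamma$-functionals in $F_\eta(U)$ are maintained by $S$-nodes in $\{\beta:\alpha\subseteq\beta\subset p\}$, i.e., \emph{below} $\alpha$. Your own tracing argument shows the opposite: each $\Gamma_\tau$ is built at the $S$-node $\gamma^\frown U$ where the transition to the relevant $\sigma 1 0\cdots 0$ occurred, and those nodes lie on $p$ \emph{above} (or at) $\alpha$. For $S$-nodes $\beta\supsetneq\alpha$ on $p$, since $\seq$ does not change, Definition~\ref{def:maintain1U} gives $\mt(\beta,U)=\varnothing$.

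Second, and more substantively, your totality argument has a gap. You say totality is ``immediate'' because the correcting strategies extend definitions to all $x\le s$ at each visit. But this only ensures $\Gamma_s^{E_s\oplus U_s}(x)\downarrow$ at each stage; it does not ensure $\Gamma^{E\oplus U}(x)\downarrow$ in the limit. If $R$-nodes below $\alpha$ kept killing the use blocks of $\Gamma(x)$ infinitely often, every axiom would eventually become inapplicable and $\gamma(x)\to\infty$. You never argue that killing stops. The paper closes this gap with the observation you omitted: because $\seq$ is constant along $p$ below $\alpha$, every $R$-node $\gamma$ on $p$ below $\alpha$ has true outcome $w$ or $d$ (a $U$-outcome would change $\seq$), and $\kl(\gamma,w)=\kl(\gamma,d)=\varnothing$ by Definition~\ref{def:kill}. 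Hence after some stage the $a$-th copy of each functional in $F_\eta(U)$ is never killed again, and its use block stabilizes. The paper's proof then simply appeals to the fact that the correcting strategies are never blocked (the $E$-restraint issues you re-derive in your last paragraph are already handled by Lemma~\ref{lem:U restorable} and the remark after the $S$-strategy). Your extra discussion of $E$-restraints is correct in spirit but unnecessary here; what is missing is the ``no more killing below $\alpha$'' step.
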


\begin{proof}
By Lemma~\ref{lem:1U full requirement}, let~\(\alpha\) be the \(S(U)\)\nbd
node such that for each~\(\beta\) with \(\alpha\subsetneq \beta\subset p\) we
have \(\seq(\alpha)=\seq(\beta)=(b,\xi)\). Note that for such~\(\beta\), we
have \(\mt(\beta,U)=\varnothing\) (Definition~\ref{def:maintain1U}).
Therefore, each~\(b\)\nbd th copy of any functional in~\(F_\xi(U)\) is not
killed by an \(R\)\nbd node below~\(\alpha\). Let~\(s_0\) be the stage such
that for each \(s>s_0\),~\(\alpha\) is not initialized. Since we never stop
an \(S\)\nbd node from correcting its functional, (the~\(b\)\nbd th copy of)
the functional in \(F_\xi(U)\) is correct and total. Hence, the \(S(U)\)\nbd
requirement is satisfied.
\end{proof}

\begin{lemma}\label{lem:1U G is satisfied}
The \(G\)\nbd requirement is satisfied.
\end{lemma}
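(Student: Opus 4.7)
The plan is to fix an arbitrary \(x\) and show that \(\Theta^{j(1)}(x)\) converges to \(K(x)\), using that the single use \(\theta(x)+1\) is picked once and for all, and that \(j(1)\res(\theta(x)+1)\) eventually stabilizes.

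First I would verify the robustness of the use. Clause~(\ref{it:G strategy 1}) picks \(\theta(x)+1\) fresh when \(\Theta(x)\) is first defined, and the explicit convention that no use block of a \(U\)\nbd functional contains any \(\Theta\)\nbd use guarantees that \(\theta(x)\) lies in no use block, is neither a threshold point nor a diagonalizing witness, and is therefore manipulated only by the \(G\)\nbd strategy itself. Since \(K\) is c.e., \(K(x)\) flips at most once, so clause~(\ref{it:G strategy 2}) enumerates \(\theta(x)\) into \(j(1)\) at most once over the whole construction.

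Next I would establish that \(j(1)\res(\theta(x)+1)\) eventually stabilizes. Each \(C\) for \(c\in\Ji(\cL)\) is c.e.\ (the construction only ever enumerates into the \(C\)'s, as emphasized in Section~\ref{sec:R and delta}), while the changes to \(E\) at any fixed position are bounded by the use-block bookkeeping of Lemma~\ref{lem:block size} combined with the finite-initialization Lemma~\ref{lem:1U finite initialization lemma}. Hence every bit below \(\theta(x)+1\) changes only finitely often, and there is an \(s_0\) past which \(j(1)\res(\theta(x)+1)\) is frozen. I would also briefly note that the one potential \(G\)\nbd enumeration of \(\theta(x)\) into \(\chi_s(\theta(x))\in\{E,C_{\beta_i}\}\) is compatible with this analysis: any higher-priority controller \(\beta_j\) has \(s_{\beta_j}^{\ctr}\le\theta(x)\) and so is not disturbed by this enumeration, while lower-priority controllers are merely initialized, which can only shorten the list of future \(j(1)\)\nbd changes.

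Given stabilization, the rest is a direct case split on the three clauses of the \(G\)\nbd strategy. For any \(s\ge s_0\): if \(\Theta^{j(1)}(x)\downarrow=K_s(x)\), nothing happens; if \(\Theta^{j(1)}(x)\downarrow\neq K_s(x)\), clause~(\ref{it:G strategy 2}) forces \(\Theta^{j(1)}(x)\uparrow\) and clause~(\ref{it:G strategy 3}) then redefines \(\Theta^{j(1)}(x)=K_s(x)\) with the same use \(\theta(x)+1\); and if \(\Theta^{j(1)}(x)\uparrow\) already, clause~(\ref{it:G strategy 3}) applies at once. Since \(K(x)\) is stable past \(s_0\) and \(j(1)\res(\theta(x)+1)\) no longer changes, the final axiom \((x,K(x),j(1)\res(\theta(x)+1))\) becomes permanent and \(\Theta^{j(1)}(x)=K(x)\). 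The main obstacle is the \(E\)\nbd stability claim in the second paragraph --- one must rule out that an infinite sequence of ever-lower-priority controllers collectively cause arbitrarily many changes to a single \(E(p)\) with \(p\le\theta(x)\) --- and this is precisely what Lemmas~\ref{lem:block size} and~\ref{lem:1U finite initialization lemma} are set up to give us; the rest of the verification is bookkeeping.
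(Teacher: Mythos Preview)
Your proof is correct and follows essentially the same approach as the paper, though with considerably more detail. The paper's own proof is two sentences: clauses~(\ref{it:G strategy 1}) and~(\ref{it:G strategy 3}) keep \(\Theta^{j(1)}(x)\) defined, and clause~(\ref{it:G strategy 2}) can always act to keep it correct. Your version makes explicit the stabilization of \(j(1)\res(\theta(x)+1)\) that the paper leaves implicit; this is a reasonable expansion and your appeal to Lemmas~\ref{lem:block size} and~\ref{lem:1U finite initialization lemma} for the \(E\)-part is the right place to look, though the cleanest justification is simply that the construction is designed to output d.c.e.\ sets, so every bit of \(j(1)\) settles.
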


\begin{proof}
By the \(G\)\nbd strategy~(\ref{it:G strategy 1}) and~(\ref{it:G strategy
3}), \(\Theta^{j(1)}(x)\) is eventually defined for each \(x\). Since the
\(G\)\nbd strategy~(\ref{it:G strategy 2}) can always act,
\(\Theta^{j(1)}(x)\) is correct.
\end{proof}

This completes the proof if we only have a single \(S(U)\)\nbd requirement to
satisfy. One can think of this construction as a sub-construction dealing
with a single \(U\)\nbd set. When we have \(S(U_i)\)\nbd requirements for
\(i\le k\), we have multiple sub-constructions organized in a nested way,
each giving a \(U_i\)\nbd condition that tells us whether a computation~\(y\)
is \(U_i\)\nbd restorable. Now we can simply take the conjunction of
\(U_i\)\nbd conditions for each \(i\le k\) to have a condition implying
that~\(y\) is restorable. Organizing multiple sub-constructions now only
requires (a lot of) patience.

\section{The full construction}\label{sec:2U}

We will make general definitions for the priority tree, some of the basic
strategies, etc., and give examples to demonstrate some of the combinatorics.
The three-element chain in Figure~\ref{fig:lattice3} will be used for all
examples in this section, and we will often restrict ourselves to considering
only~\(S_{U_0}\)- and \(S_{U_1}\)\nbd requirements (Figure~\ref{fig:2Utree}).
\emph{Lemma~\ref{lem:U restorable} and Lemma~\ref{lem:U restorable strong}
are essential to the validity of our construction, and will be tacitly
applied in all examples in this section.} The complexity of the construction
increases with the number of join-irreducible elements and therefore the
three-element chain will allow us to illustrate concretely the combinatorial
ideas used in the general case.

%SELWYN
In sections~\ref{sec:2U construction} and~\ref{sec:2U verification}, we
present the general construction and verification. In the verification
section we try to strike a delicate balance between readability and being
formal.
%We will now reap the rewards of our careful treatment of the examples
%presented thus far.
Our formal proofs in the example sections are representative enough so that
in these final sections, we will appeal to those examples when there is no
loss in generality.

\subsection{The priority tree}\label{sec:2Utree}

Recall from Section~\ref{sec:1Utree} that we are considering a space
\(\{0,1,\ldots, m-1 \}\times [T_\cL] = m \times [T_\cL]\) and nondecreasing
maps \(R_c:[T_\cL] \to [T_\cL]\) for each \(c \in \Ji(\cL)\), where
\(m=\abs{\Ji(\cL)}+1\).
An \(S\)\nbd node~\(\alpha\) working for \(S_{U_0}, \ldots, S_{U_{k-1}}\) is
assigned to an element \((f_0,\ldots, f_{k-1}) \in {(m \times [T_\cL])}^k\),
where each \(f_i = (a_i, \xi_i) \in m \times [T_\cL]\). We let
\(\seq(\alpha)(i) = f_i\), \(\seq_0(\alpha)(i) = a_i\), and
\(\seq_1(\alpha)(i) = \xi_i\).

\begin{definition}\label{def:2Utree}
We define the priority tree~\(\cT\) by recursion: We assign \(((0,\iota))\in
{(m \times [T_\cL])}^1\) to the root node~\(\lambda\) and call it an
\(S\)\nbd node (\(\iota\) is the finite string \(00\cdots 0\) of the proper
length depending on \([T_\cL]\)).

Suppose that~\(\alpha\) is an \(S\)\nbd node. We determine the least~\(e\)
such that there is no \(R^e\)\nbd node \(\beta \subset \alpha\) with
\(\beta^\frown w \subseteq \alpha\) or \(\beta^\frown d \subseteq \alpha\),
and assign \(\alpha^\frown 0\) to~\(R^e\).

Suppose~\(\alpha\) is an \(R_c\)\nbd node for some \(c\in\Ji(\cL)\)
and~\(\alpha^-\) is assigned to \((f_0,\ldots, f_{k-1})\in {(m \times
[T_\cL])}^k\) where each \(f_i = (a_i,\xi_i)\).
We sequentially add \(U_i\)\nbd outcomes from \(i=k-1\) to \(i = 0\), where
each \(U_i\)\nbd outcome could be of Type~I or Type~II (and in that case
GREEN or RED)\@. Then we add a single \(\ctr\)\nbd outcome, and finally we
add the~\(w\)- and the \(d\)\nbd outcome. The priority order of each outcome,
however, varies and is described as follows.

Proceeding from \(i=k-1\) down to \(i=0\), we add the \(U_i\)\nbd outcomes:
\begin{enumerate}
\item
If \(\xi_i<R_c(\xi_i)\), then this \(U_i\)\nbd outcome is a \emph{Type~I}
outcome, and we assign \(\alpha^\frown U_i\) to
\[
(f_0,\ldots, f_{i-1}, (a_i, R_c(\xi_i)),(0,\iota),\ldots,
  (0,\iota) )\in {(m \times [T_\cL])}^k.
\]
The \emph{next} outcome to be added is placed just to the left of
this outcome.
\item
If \(\xi_i = R_c(\xi_i)\), then this \(U_i\)\nbd outcome is a
\emph{Type~II} outcome. If \(a_i<m-1\), then this outcome is GREEN and we
assign \(\alpha^\frown U_i\) to
\[
(f_0,\ldots, f_{i-1}, (a_i+1, \iota),
   (0,\iota),\ldots, (0,\iota) )\in {(m \times [T_\cL])}^k.
\]
If \(a_i = m-1\), then this outcome is RED\@, and we do not assign
\(\alpha^\frown U_i\) to any requirement; it is a terminal node. In either
case, the \emph{next} outcome to be added is placed just to the right of
this outcome.
\end{enumerate}
After the \(U_0 \)\nbd outcome has been added, the next outcome we add is the
\(\ctr\)\nbd outcome. We add it immediately to the left or right of the
\(U_0\)\nbd outcome depending on whether the \(U_0 \)\nbd outcome is a Type~I
outcome or a Type~II outcome. We do not assign the node \(\alpha^\frown
\ctr\) to any requirement; it is a terminal node.

Finally, we add the~\(w\)- and \(d\)\nbd outcomes to the right of all
existing outcomes with \(d\) the rightmost outcome and assign both
\(\alpha^\frown w\) and \(\alpha^\frown d\) to
\[
    (f_0,\ldots, f_{k-1}, (0,\iota))\in {(m \times [T_\cL])}^{k+1}.
\]
\end{definition}
By the same argument as in Lemma~\ref{lem:1U full requirement}, it is clear
that along any infinite path through~\(\cT\), all requirements are
represented by some node.

\begin{lemma}\label{lem:2Utree}
Let~\(p\) be an infinite path through~\(\cT\).
\begin{enumerate}
\item For each \(S_{U_i}\)\nbd requirement, there is an
\(S\)\nbd node~\(\alpha\) such that for all~\(\beta\) with \(\alpha
\subseteq \beta \subset p\), we have \(\seq(\alpha)(i) = \seq(\beta)(i)\).
\item For each~\(e\), there is an \(R^e\)\nbd node~\(\alpha\) such
that either \(\alpha^\frown d \subset p\) or \(\alpha^\frown w \subset p\).
\qed
\end{enumerate}
\end{lemma}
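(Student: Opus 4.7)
The plan is to follow the argument of Lemma~\ref{lem:1U full requirement}, but accounting for the fact that $\seq$ now takes values in a space of tuples of variable length. The key observation I would isolate first is how each outcome in Definition~\ref{def:2Utree} affects $\seq$: a $w$- or $d$-outcome appends $(0,\iota)$ (preserving every existing coordinate), while a $U_j$-outcome preserves coordinates $0,\ldots,j-1$, strictly increases coordinate~$j$ in the lexicographic order on $m \times [T_\cL]$ (Type~I bumps $\xi_j$; Type~II GREEN bumps $a_j$ and resets $\xi_j$ to~$\iota$), and resets coordinates $j+1,\ldots,k-1$ to $(0,\iota)$. In particular, the tuple length is nondecreasing along~$p$.

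For part~(1), I would prove by induction on~$i$ that the $i$-th coordinate of $\seq(\alpha)$ stabilizes along the $S$-nodes $\alpha \subset p$ of tuple length at least $i+1$. For the base case $i=0$, the $0$-th coordinate is never reset, so it is nondecreasing in the finite set $m \times [T_\cL]$ and must stabilize. For the inductive step, let $\alpha^\star \subset p$ be an $S$-node past which coordinates $0,\ldots,i-1$ are all fixed; below $\alpha^\star$, no $U_j$-outcome with $j<i$ can be taken on~$p$, since such an outcome would strictly increase coordinate~$j$. Then coordinate~$i$ can only be preserved or, via a $U_i$-outcome, strictly increased; it is never reset, so it stabilizes too. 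This supplies the desired $S$-node for the requirement $S_{U_i}$.

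For part~(2), I would proceed by induction on~$e$. Existence of an $R^e$-node on~$p$ follows from the assignment rule together with the inductive hypothesis for $e'<e$: each such $e'$ eventually contributes an $R^{e'}$-node with $w$- or $d$-outcome on~$p$, after which the least unused index becomes~$e$. Taking the first $R^e$-node $\alpha \subset p$, suppose toward a contradiction that $\alpha^\frown U_j \subset p$ and that no $R^e$-node on~$p$ ever takes its $w$- or $d$-outcome. Then~$e$ remains the least unused index at every $R$-node on~$p$ below~$\alpha$, so each such $R$-node is again an $R^e$-node taking some $U_{j'}$-outcome. Since $U$-outcomes preserve the tuple length, it is constantly some~$k$ below~$\alpha$, and hence each $j' \in \{0,\ldots,k-1\}$. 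By part~(1), each of these finitely many coordinates stabilizes, so only finitely many $U_{j'}$-outcomes can occur for any fixed $j'$; the pigeonhole principle then contradicts the existence of infinitely many $R^e$-nodes on $p$ below~$\alpha$.

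The main obstacle is the bookkeeping for part~(1): a coordinate can be reset to $(0,\iota)$ by a lower-index $U$-outcome, so stability is not immediate from the argument in the single-$U$ case. The layered induction, using that lower coordinates are already stable before studying coordinate~$i$, is exactly what rules out the reset behavior below that point.
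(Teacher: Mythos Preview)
Your proof is correct and matches what the paper intends: the paper gives no separate proof here, merely noting before the statement that the argument of Lemma~\ref{lem:1U full requirement} carries over. Your coordinate-wise induction for part~(1), which handles the reset behavior you correctly flag as the new obstacle, together with the boundedness argument for part~(2), is exactly the natural elaboration of that earlier proof in the multi-\(U\) setting.
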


Suppose that we consider only two \(U\)\nbd sets. Then we assign both
\(\alpha^\frown w\) and \(\alpha^\frown d\) to \((f_0,f_1) \in{ (m \times
[T_\cL])}^2\) (where \((f_0, f_1) = \seq(\alpha^-)\)) instead of assigning
them to an element in \({(m \times [T_\cL])}^3\) as in
Definition~\ref{def:2Utree}. An example for the three element-lattice (see
Figure~\ref{fig:lattice3} and Figure~\ref{fig:lattice3-S}) is given in
Figure~\ref{fig:2Utree}. We hide some of the \(R\)\nbd nodes with~\(w\)- or
\(d\)\nbd outcomes from the tree. A \(U_i\)\nbd outcome has label~\(i\) for
short. A Type~II outcome is denoted by a thick line. A terminal node is
represented by a~\(\bullet\). Therefore, we also hide the label of the
\(\ctr\)\nbd outcome (i.e., any outcome not labeled but shown in
Figure~\ref{fig:2Utree} is a \(\ctr\)\nbd outcome). An \(S\)\nbd node
assigned to \(((1,1),(0,2))\), for example, is abbreviated as \(11,02\). The
only outcome of an \(S\)\nbd node is also hidden. Sometimes, to avoid having
repeated scenarios and monstrous priority, we replace~\(f_1\) by~\(*\) so
that no \(U_1\)\nbd functionals will be built, i.e., we do not attempt to
satisfy an \(S_{U_1}\)\nbd requirement at this node. Depending on
whether~\(R^{15}\) is an~\(R_{c_0}\)- or an \(R_{c_\lambda}\)\nbd strategy,
the priority tree grows in different ways, both of which are worth mentioning
and shown in Figure~\ref{fig:2Utree}, at the left and right bottom,
respectively.

\begin{figure}[ht]\centering
    \includegraphics[width=1\textwidth]{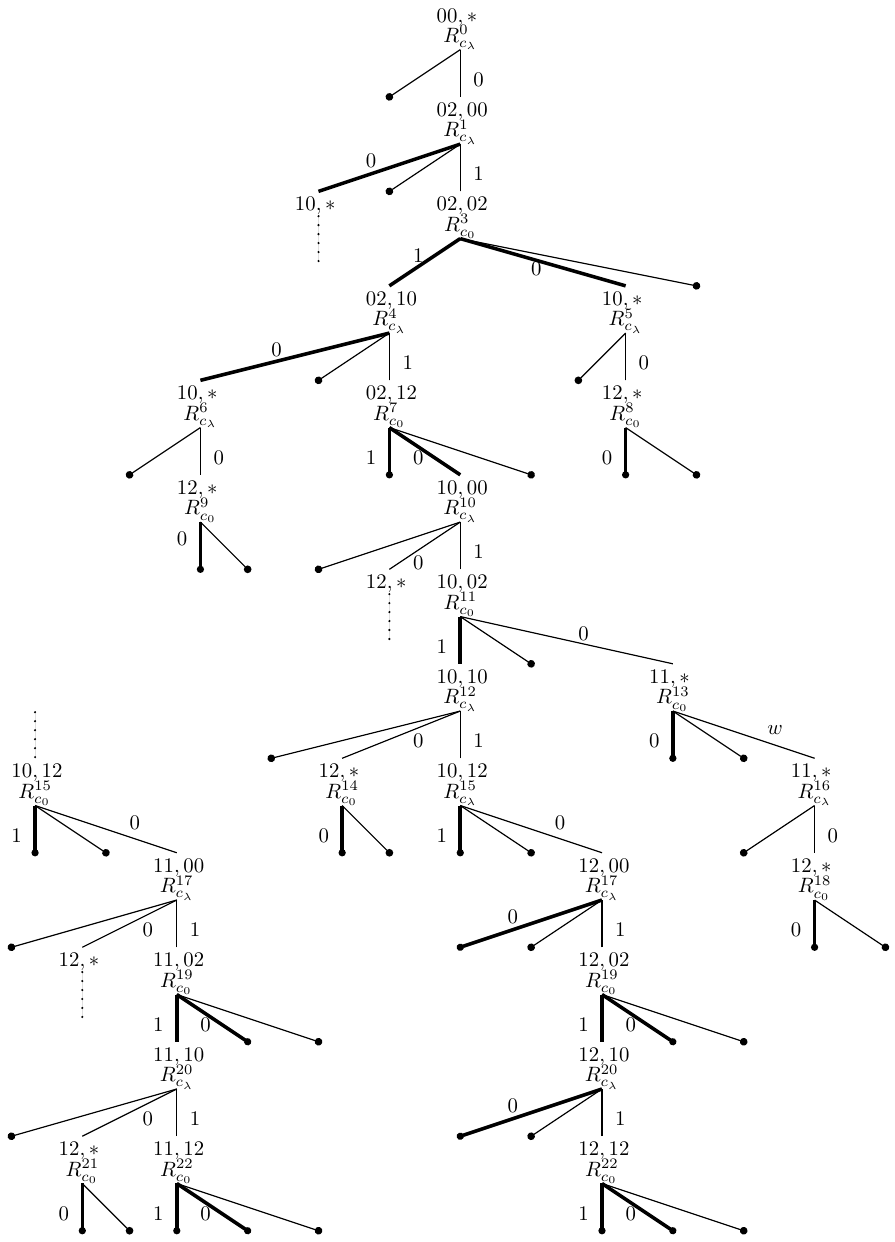}
    \caption{The priority tree for the 3-element chain}\label{fig:2Utree}
\end{figure}

Before we delve into Figure~\ref{fig:2Utree} and discuss the combinatorics,
we still have to explain the basic strategy for each node in order to have
some basic idea of how the construction works. (With a little extra effort,
we could discuss the strategies in general instead of the special case of
considering only~\(S_{U_0}\)- and \(S_{U_1}\)\nbd requirements, but we hope
the reader will be able to extrapolate from the case of two $U$-sets in
his/her mind.)

\subsection{\texorpdfstring{Functionals manipulated at \(S\)-nodes and
\(R\)-nodes}{Functionals manipulated at S-nodes and R-nodes}}%
\label{sec:2U functionals at S and R}

An \(S\)\nbd node~\(\beta\) builds and maintains each \(\Gamma\)\nbd
functional in \(\mt(\beta)\) defined as follow.
\begin{definition}\label{def:maintain2U}
Let~\(\beta\) be an \(S\)\nbd node with \(\seq(\beta)=(f_0,\ldots,f_{k-1})\in
(m\times [T_\cL])^k\) and \(\iota\) be the string \(00\cdots 0\) of the
proper length depending on~\([T_\cL]\).
\begin{enumerate}
\item
Suppose~\(\beta\) is the root of the priority tree, then in fact
\(\seq(\beta)=((0,\iota))\) and we define \(\mt(\beta)=F_\iota(U_0)\).
\item
Suppose otherwise, we let \(\alpha=(\beta^-)^-\). If
\(\seq(\alpha)=(g_0,\ldots,g_{k-2})\in (m\times [T_\cL])^{k-1}\), then in
fact we have \(g_i=f_i\) for \(i<k-1\) and \(f_{k-1}=(0,\iota)\). We define
\(\mt(\beta)=F_\iota(U_{k-1})\).

If \(\seq(\alpha)=(g_0,\ldots,g_{k-1})\in (m\times [T_\cL])^{k}\), we
let~\(i\) be the least such that \(g_i\neq f_i\). Then for \(j>i\) we have
\(f_j=(0,\iota)\). Suppose \(g_i=(a,\eta)\) and \(f_i=(b,\xi)\). Recall
from Definition~\ref{def:maintain1U} the definitions of \(\mt(\beta,U_i)\).
Then we define \(\mt(\beta)=\mt(\beta,U_i)\cup F_\iota(U_{i+1})\cup \cdots
\cup F_\iota(U_{k-1})\).
\end{enumerate}
Let \(\mt(\beta,U_{< i})\) be the subset of \(\mt(\beta)\) consisting of all
\(U_j\)\nbd functionals for \(j<i\).
\end{definition}

An \(R_c\)\nbd node~\(\beta\) will have to kill some functionals and build
and maintain at most one \(\Delta\)\nbd functional along each \(U_i\)\nbd
outcome.

\begin{definition}\label{def:kill2U}
Let~\(\beta\) be an \(R_c\)\nbd node for some \(c\in \Ji(\cL)\). Suppose
\(\seq(\beta^-)=(f_0,\ldots, f_{k-1})\in (m\times [T_\cL])^k\) and
\(f_i=(a_i,\eta_i)\) for each \(i<k\). Applying Definition~\ref{def:kill} to
each~\(f_i\), we have \(\kl(\beta,U_i)\) and \(\mt(\beta,U_i)\) defined
properly. For each \(i<k\), we define \(\kl(\beta,U_{\ge i})\) be the union
of \(\kl(\beta,U_i)\) and \(F_{\eta_j}(U_j)\) for each \(j>i\).
\end{definition}

By visiting the \(U_i\)\nbd outcome, the \(R_c\)\nbd node~\(\beta\) kills
each functional in \(\kl(\beta,U_{\ge i})\) and builds and maintains the
\(\Delta\)\nbd functional (if any) in \(\mt(\beta,U_i)\).

\subsection{\texorpdfstring{\(\beta^{*i}\) and \(\beta^{\sharp i}\)}%
{beta star and beta sharp}}\label{sec:2U star and sharp}

Let~\(\beta\) an \(R_c\)\nbd node for some \(c\in \Ji(\cL)\). Suppose
\(\seq(\beta^-)=(f_0,\ldots,f_{k-1})\in (m\times [T_\cL])^k\) with each
\(f_i=(a_i,\eta_i)\). If~\(U_i\) (\(i<k\)) is a Type~II outcome of~\(\beta\),
we define~\(\beta^{*i}\) by adapting the definition in Section~\ref{sec:star
and sharp}. Similarly for each \(i<k\) with \(a_i\neq 0\), we define
\(\beta^{\sharp i}\) by adapting the definition in Section~\ref{sec:star and
sharp}.

\subsection{Basic strategies}\label{sec:2U basic strategies}

An \(S\)\nbd node~\(\beta\) builds and maintains each \(\Gamma\)\nbd
functional that belongs to~\(\mt(\beta)\) using exactly the same correcting
strategy described in Section~\ref{sec:S strategy}. An \(R\)\nbd
node~\(\beta\) visiting its \(U_i\)\nbd outcome kills each functional that
belongs to \(\kl(\beta,U_{\ge i})\) using exactly the same killing strategy
described in Section~\ref{sec:R and killing}, and builds and maintains the
\(\Delta\)\nbd functional (if any) that belongs to \(\mt(\beta,U_i)\) using
exactly the same correcting strategy described in Section~\ref{sec:R and
delta}. Collecting \(\varnothing\)\nbd data when performing \(\enc(\beta,w)\)
is the same as in Section~\ref{sec:R and data}. After the \(\varnothing\)\nbd
data is obtained, we begin by performing \(\enc(\beta,U_{k-1})\),
where~\(U_{k-1}\) is the first outcome added to~\(\beta\). Analogously,
performing \(\enc(\beta,U_i)\) for \(i<k-1\) now requires \(U_{i+1}\)\nbd
data \(\cE^{U_{i+1}}(\beta)\), and performing \(\enc(\beta,\ctr)\) requires
\(U_0\)\nbd data \(\cE^{U_0}(\beta)\). We remark that a more suggestive
notation for \(\cE^{U_i}(\beta)\) would be \(\cE^{U_{\ge i}}(\beta)\), but we
choose to keep the notation short.

\subsection{\texorpdfstring{Weak \(U_i\)-data}{Weak U\_i-data}}%
\label{sec:2U examples}

As we are dealing with multiple \(S(U)\)\nbd requirements, it will be
convenient for us to reformulate \(\varnothing\)\nbd data and \(U\)\nbd data
that were used in Section~\ref{sec:1U} to a more general format. For each
\(\cE^{U_i}(\beta)\), we will also define a \emph{\(U_i\)\nbd data tree}
\(\cS^{U_i}(\beta)\) to reveal the combinatorics.

For the rest of the paper, we adopt the following:
\begin{center}
\emph{Convention 1: \(\cE^{U_i}(\beta)\) is abbreviated as~\(\cE^i(\beta)\).}

\emph{Convention 2: \(\cS^{U_i}(\beta)\) is abbreviated as~\(\cS^i(\beta)\).}

\emph{Convention 3: If \(k=\abs{\seq(\beta^-)}\), then
\(\cE^k(\beta)\coloneqq\cE^\varnothing(\beta)\).}
\end{center}

\begin{definition}[data]\label{def:2U data}
\(\cE=(Z,y,\cC)\) is \emph{data} if
\begin{enumerate}
\item
\(Z\) is a finite set of \(R\)\nbd nodes,
\item
\(y:\xi\mapsto y_\xi\) for each \(\xi\in Z\) such that~\(y_\xi\) is the
computation for~\(\xi\), and
\item
\(\cC\) is a finite set of functions of the form \(\Cond^i(\beta):\omega\to
\{0,1\}\) where \(\beta\in Z\) and \(i\in\omega\).
\end{enumerate}
\end{definition}

The symbol~\(y\), now viewed as a function defined on~\(Z\), is a bit
overused but it should cause no confusion. We also confuse~\(\cE\)
with~\(Z\). We will write \(\cE=\{\beta_1,\ldots ,\beta_k\}\) for
\(Z=\{\beta_1,\ldots ,\beta_k\}\) and \(\xi\in \cE\) for \(\xi\in Z\).

\begin{definition}[data tree]\label{def:2U data tree}
\(\cS=(S,f,g,h)\) is a \emph{data tree} if
\begin{enumerate}
\item
\(S\) is a finite binary tree,
\item
\(f:S\to \Ji(\cL)\),
\item\label{it:2U data tree 3}
\(g:S\to \{0,1\}\) such that \(g(\sigma)=1\) implies \(f(\sigma 0)=f(\sigma
1)\) and that \(g(\sigma)=0\) implies \(f(\sigma 0) \le f(\sigma 1)\), and
\item \(h:S\to \cT\).
\end{enumerate}
For \(\sigma\in S\), we say \(\sigma\) has \emph{type} \(c\) if
\(f(\sigma)=c\),~\(\sigma\) is \emph{strong} if \(g(\sigma)=1\),~\(\sigma\)
is \emph{weak} if \(g(\sigma)=0\),~\(\lambda\) denotes the empty string as
usual.

If \(\cS_0=(S_0,f_0,g_0,h_0)\) and \(\cS_1=(S_1,f_1,g_1,h_1)\) are two data
trees, and \(l\in \{0,1\}\), then \(\cS_0\otimes_l \cS_1 = (S,f,g,h)\) where
\(S(i\sigma)=S_i(\sigma)\), \(f(i\sigma)=f_i(\sigma)\),
\(g(i\sigma)=g_i(\sigma)\), and \(h(i\sigma)=h_i(\sigma)\) for \(i\in
\{0,1\}\), \(f(\lambda) = f_1(\lambda)\), \(g(\lambda) = l\),
\(h(\lambda)=h_1(\lambda)\).
\end{definition}
Here, \(f(\sigma) = c\) denotes that \(h(\sigma)\) is an \(R_c\)\nbd node.

\begin{definition}[\(\varnothing\)\nbd data]\label{def:2U 0 data}
Let~\(\beta\) be an \(R_c\)\nbd node for some \(c\in \Ji(\cL)\). Suppose that
the current stage is~\(s\). The \(\varnothing\)\nbd data is
\(\cE_s^\varnothing(\beta)=(Z,y,\cC)\) consisting of
\begin{enumerate}
\item
\(Z=\{\beta\}\),
\item
a computation~\(y_\beta\) for~\(\beta\), and
\item
\(\cC = \varnothing\).
\end{enumerate}
We also define the \(\varnothing\)\nbd data tree
\(\cS_s^\varnothing(\beta)=(S,f,g,h)\) where \(S=\{\lambda\}\), \(f(\lambda)
= c\), \(g(\lambda)=1\), and \(h(\lambda)=\beta\). As usual, the
subscript~\(s\) is often omitted.
\end{definition}

We adopt the same notation \(\cE^\varnothing(\beta)\) as in
Definition~\ref{def:0 data} even though the definition here is slightly
reformulated.

\begin{definition}[operation on data]\label{def:2U oplus data}

For an \(R_c\)\nbd node~$\beta$ such that
\(\seq(\beta^-)=(f_0,\ldots,f_{k-1})\in (m\times [T_\cL])^k\), we let the
\(U_k\)\nbd data of~\(\beta\) simply be \(\cE_s^\varnothing(\beta)\).

Given two \(U_{i+1}\)\nbd data (defined inductively from \(\varnothing\)\nbd
data) \(\cE^{i+1}(\alpha)=(Z_0,y_0,\cC_0)\) and
\(\cE^{i+1}(\beta)=(Z_1,y_1,\cC_1)\) (where \(Z_0\cap Z_1=\varnothing\)) and
a stage~\(s\), we define
\[
\cE^{i}(\beta)=\cE^{i+1}(\alpha)\otimes_s \cE^{i+1}(\beta)=(Z,y,\cC)
\]
as follows:
\begin{enumerate}
\item
\(Z=Z_0\cup Z_1\),
\item
\(y=y_0\cup y_1\) (so for \(\xi\in Z_i\), we have \(y_\xi=y_{i,\xi}\)), and
\item
\(\cC\) consists of
\begin{enumerate}
\item
\(\cC_0\) and~\(\cC_1\),
\item
for each \(\xi\in Z_1\), \(\Cond^i(\xi)(t)=\same(U_i,y_\xi,s,t)\), (In
this case,~\(y_\xi\) is the \(U_i\)\nbd reference length and~\(s\) the
\(U_i\)\nbd reference stage. \(\Cond^i(\xi)\) has \emph{type}~\(\same\).)
\item
for each \(\xi\in Z_0\), \(\Cond^i(\xi)(t)= \diff(U_i,y_\gamma,s,t)\),
where~\(\gamma\) is the shortest node in~\(Z_1\) (By our conventions
\(y_\gamma\ge y_{\gamma'}\) for other \(\gamma'\in Z_1\)). (In this
case,~\(y_\gamma\) is the \(U_i\)\nbd reference length and~\(s\) the
\(U_i\)\nbd reference stage. \(\Cond^i(\xi)\) has \emph{type}~\(\diff\).)
\end{enumerate}
\end{enumerate}
(Here we are identifying the predicate \(\same(U_i,y_\xi,s,t)\) with its
characteristic function, and we will keep doing so for other predicates.) We
say that \(\cE^i(\beta)\) \emph{extends} \(\cE^{i+1}(\alpha)\) and
\(\cE^{i+1}(\beta)\), and that \(\cE^{i+1}(\alpha)\) and \(\cE^{i+1}(\beta)\)
\emph{belongs to} \(\cE^i(\beta)\).
\end{definition}

In the above definition,~\(y_\gamma\) is simply the least~\(z\) such that if
\(\lnot \same(U,y_\xi,s,t)\) for each \(\xi\in \cE^{i+1}(\beta)\), then we
have \(\diff(U_i,z,s,t)\). Therefore, for a fixed \(t\), if
\(\Cond^i(\xi)(t)=0\) for each \(\xi\in \cE^{i+1}(\beta)\), then
\(\Cond^i(\xi)(t)=1\) for each \(\xi\in \cE^{i+1}(\alpha)\). For a
stage~\(t\) (usually clear from context), \(\Cond^i(\xi)\) \emph{holds} if
\(\Cond^i(\xi)(t)=1\).

The following is analogous to Definition~\ref{def:1U data}.

\begin{definition}[weak \(U_i\)\nbd data]\label{def:2U weak data}
Let~\(\beta\) be an \(R_c\)\nbd node for some \(c\in \Ji(\cL)\). Suppose
\(\seq(\beta^-)=(f_0,\ldots,f_{k-1})\) with \(f_i=(b_i,\xi_i)\). At
stage~\(s\), when we perform \(\enc(\beta,U_i)\) and~\(U_i\) is a RED
outcome,~\(\beta^{*i}\) is well defined (which is an \(R_d\)\nbd node for
some \(d\le c\)) and~\(\beta^{*i}\) is visiting its \(U_i\)\nbd outcome with
(weak or strong) \(U_{i+1}\)\nbd data
\(\cE^{i+1}(\beta^{*i})=(Z_0,y_0,\cC_0)\) and a \(U_{i+1}\)\nbd data tree
\(\cS^{i+1}(\beta^{*i})\).~\(\beta\) itself also has (weak or strong)
\(U_{i+1}\)\nbd data \(\cE^{i+1}(\beta)=(Z_1,y_1,\cC_1)\) and a \(U_i\)\nbd
data tree \(\cS^{i+1}(\beta)\). We define
\begin{align*}
\cE_s^i(\beta)&=\cE^{i+1}(\beta^{*i}) \otimes_s \cE^{i+1}(\beta)\\
\cS_s^i(\beta) &=\cS^{i+1}(\beta^{*i})\otimes_0 \cS^{i+1}(\beta)
\end{align*}
If there is no confusion, we might drop the subscript~\(s\)
of~\(\cE_s^i(\beta)\) and~\(\cS_s^i(\beta)\).
\end{definition}

Note that~\(\cS_s^i(\beta)\) defined as above satisfies
Definition~\ref{def:2U data tree}\eqref{it:2U data tree 3} and therefore is a
valid data tree. It is inductively clear from Lemma~\ref{lem:U restorable}
(and Lemma~\ref{lem:U restorable strong} for later discussion) that for each
\(\xi\in \cE^{i+1}(\beta^{*i})\), that \(\Cond^i(\xi)(t)=1\) holds implies
that \(\xi\) is \((U_i,D)\)\nbd restorable (Definition~\ref{def:U
restorable}); and for each \(\xi\in \cE^{i+1}(\beta)\), that
\(\Cond^i(\xi)(t)=1\) holds implies that~\(\xi\) is \((U_i,C)\)\nbd
restorable. \(\cE^i(\beta)\) focuses on the recursive aspects of the data
and~\(\cS^i(\beta)\) focuses on the combinatorial aspects of the data. They
are closed related.

As there will be many stages for the examples, we have the following
\begin{center}
\emph{Convention:~\(s_\beta^o\) denotes the stage when we perform
\(\enc(\beta,o)\) or \(\visit(\beta,o)\) for an \(o\)\nbd outcome.}
\end{center}
Note that~\(s_\beta^{\ctr}\) defined in Definition~\ref{def:1U controller}
conforms with this convention.

The next example is for \(\cE^0(R_{c_0}^{22})\) in Figure~\ref{fig:2Utree}.

\begin{example}[\(U_0\)\nbd data and \(U_0\)\nbd data tree]\label{eg:R22}
In the case of~\(R_{c_\lambda}^{15}\) (the other case~\(R_{c_0}^{15}\) can be
handled similarly), we will implement the basic strategies on the priority
tree in Figure~\ref{fig:2Utree}. We may assume that each node collects its
\(\varnothing\)\nbd data without delay. At stage~\(s\), the first node
along~\(R_{c_0}^{22}\) that is encountering a RED outcome is~\(R_{c_0}^7\),
encountering its \(U_1\)\nbd outcome (\(s_7^1 = s\)). We let
\begin{align*}
\cE^1(R_{c_0}^7)&= \cE^\varnothing(R_{c_\lambda}^4)\otimes_s
   \cE^\varnothing(R_{c_0}^7) = \{R_{c_\lambda}^4, R_{c_0}^7\},\\
\cS^1(R_{c_0}^7)&= \cS^\varnothing(R_{c_\lambda}^4)\otimes_0
   \cS^\varnothing(R_{c_0}^7),
\end{align*}
and~\(R_{c_0}^7\) immediately visits its \(U_0\)\nbd outcome, which
is~GREEN\@. So the \(U_1\)\nbd data \(\cE^1(R_{c_0}^7)\) is discarded. The
next node that is encountering a RED outcome is~\(R_{c_\lambda}^{15}\)
(\(s_{15}^1 = s\)). We let
\begin{align*}
\cE^1(R_{c_\lambda}^{15})&= \cE^\varnothing(R_{c_\lambda}^{12})\otimes_s
   \cE^\varnothing(R_{c_\lambda}^{15}) =
   \{R_{c_\lambda}^{12}, R_{c_\lambda}^{15}\},\\
\cS^1(R_{c_\lambda}^{15})&= \cS^\varnothing(R_{c_\lambda}^{12})\otimes_0
   \cS^\varnothing(R_{c_\lambda}^{15}),
\end{align*}
with
\begin{align*}
\Cond^1(R_{c_\lambda}^{15})(t)&=\same(U_1, y_{15}, s_{15}^1, t),\\
\Cond^1(R_{c_\lambda}^{12})(t)&=\diff(U_1, y_{15}, s_{15}^1, t).
\end{align*}
Then~\(R_{c_\lambda}^{15}\) visits its \(U_0\)\nbd outcome (\(s_{15}^0 =
s\)). At the same stage, we will have~\(R_{c_0}^{22}\) encounter its RED
\(U_1\)\nbd outcome (\(s_{22}^1 = s\)). Again, we let
\begin{align*}
\cE^{1}(R_{c_0}^{22})&=\cE^\varnothing(R_{c_\lambda}^{20})\otimes_s
   \cE^\varnothing(R_{c_0}^{22}) = \{R_{c_\lambda}^{20}, R_{c_0}^{22}\},\\
\cS^{1}(R_{c_0}^{22})&=\cS^\varnothing(R_{c_\lambda}^{20})\otimes_0
   \cS^\varnothing(R_{c_0}^{22}),
\end{align*}
with
\begin{align*}
\Cond^1(R_{c_0}^{22})(t)&=\same(U_1, y_{22}, s_{22}^1, t),\\
\Cond^1(R_{c_\lambda}^{20})(t)&=\diff(U_1, y_{22}, s_{22}^1, t).
\end{align*}
Then~\(R_{c_0}^{22}\) encounters its RED \(U_0\)\nbd outcome (\(s_{22}^0 =
s\)). Note that \({(R_{c_0}^{22})}^{*0} = R_{c_\lambda}^{15}\), so we now let
\begin{align*}
\cE^0(R_{c_0}^{22})&= \cE^1(R_{c_\lambda}^{15})\otimes_s
   \cE^1(R_{c_0}^{22})=\{R_{c_\lambda}^{12},
   R_{c_\lambda}^{15},R_{c_\lambda}^{20},R_{c_0}^{22}\},\\
\cS^0(R_{c_0}^{22})&= \cS^1(R_{c_\lambda}^{15})\otimes_0
   \cS^1(R_{c_0}^{22}),
\end{align*}
with
\begin{align*}
\Cond^0(R_{c_0}^{22})(t)&=\same(U_0, y_{22}, s_{22}^0, t),\\
\Cond^0(R_{c_\lambda}^{20})(t)&=\same(U_0, y_{20}, s_{22}^0, t),\\
\Cond^0(R_{c_\lambda}^{15})(t)&=\diff(U_0, y_{20}, s_{22}^0, t),\\
\Cond^0(R_{c_\lambda}^{12})(t)&=\diff(U_0, y_{20}, s_{22}^0, t).
\end{align*}
(Recall that the choice of~\(\gamma\) in Definition~\ref{def:2U oplus data}
is~\(R_{c_\lambda}^{20}\) here.)

As in Section~\ref{sec:1U}, we can introduce \emph{temporarily} (a formal
definition will be given later) a decision map \(\cD_t(R_{c_0}^{22})=\xi\in
\cE^0(R_{c_0}^{22})\) for the longest~\(\xi\) with \(\Cond^0(\xi)(t) =
\Cond^1(\xi)(t)=1\). For the decision map we temporarily define here, we note
that
\begin{itemize}
\item
\(\cD_t(R_{c_0}^{22})\) is defined for all \(t>s\),
\item
if \(\cD_t(R_{c_0}^{22}) = R_{c_0}^{22}\), then~\(R_{c_0}^{22}\) is
\((U_0,C_0)\)\nbd restorable and \((U_1,C_0)\)\nbd restorable,
\item
if \(\cD_t(R_{c_0}^{22}) = R_{c_\lambda}^{20}\),
then~\(R_{c_\lambda}^{20}\) is \((U_0,C_\lambda)\)\nbd restorable and
\((U_1,C_\lambda)\)\nbd restorable,
\item
if \(\cD_t(R_{c_0}^{22}) = R_{c_\lambda}^{15}\),
then~\(R_{c_\lambda}^{15}\) is \((U_0,C_\lambda)\)\nbd restorable and
\((U_1,C_\lambda)\)\nbd restorable, and
\item
if \(\cD_t(R_{c_0}^{22}) = R_{c_\lambda}^{12}\),
then~\(R_{c_\lambda}^{12}\) is \((U_0,C_\lambda)\)\nbd restorable and
\((U_1, C_\lambda)\)\nbd restorable.
\end{itemize}
Here we are applying Lemma~\ref{lem:U restorable} independently to~\(U_0\)
and~\(U_1\). To be precise, we have to state the \(\bigsame()\) condition as
in Lemma~\ref{lem:U restorable}~(\ref{it:lem U restorable 1})
and~(\ref{it:lem U restorable 2}); \emph{we choose to keep it tacit in all
remaining examples in this section}.

We note that \(f(\sigma 0)\le f(\sigma 1)=f(\sigma)\) for each \(\sigma\in
\cS^0(R_{c_0}^{22})\). This follows inductively from the properties
of~\(\beta^{*i}\) and~\(\beta\).
\end{example}

\begin{lemma}\label{lem:order}
Let~\(\beta\) be an \(R_c\)\nbd node with~\(\cE^0(\beta)\) and
\(\cS^0(\beta)=(S,f,g,h)\). For each \(\sigma\in S\), we have \(f(\sigma
0)\le f(\sigma 1) = f(\sigma)\); for two leaves of \(S\), if
\(\sigma<_{lex}\tau\), then \(h(\sigma)\subsetneq h(\tau)\).
\end{lemma}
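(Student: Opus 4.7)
The plan is to prove both assertions simultaneously by induction on the recursive construction from Definitions~\ref{def:2U oplus data} and~\ref{def:2U weak data}, establishing them not only for~$\cS^0(\beta)$ but for every data tree~$\cS^i(\gamma)$ produced along the way. It is convenient to strengthen the hypothesis by tracking two auxiliary invariants: (I) the root of~$\cS^i(\gamma)$ carries $h$-value equal to~$\gamma$ and $f$-value equal to the type of~$\gamma$; and (II) the rightmost leaf (the all-$1$'s string) of~$\cS^i(\gamma)$ also carries $h$-value~$\gamma$. The base case $\cS^\varnothing(\gamma)$ is immediate from Definition~\ref{def:2U 0 data}.

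For the inductive step, unpack $\cS^i(\gamma) = \cS^{i+1}(\gamma^{*i}) \otimes_0 \cS^{i+1}(\gamma)$ using Definition~\ref{def:2U data tree}. Invariants~(I) and~(II) transfer directly, since $h(\lambda) = h_1(\lambda) = \gamma$ by induction and the rightmost leaf of the combined tree lies in the right subtree. The type inequality $f(\sigma 0) \le f(\sigma 1) = f(\sigma)$ at the root reduces, via~(I), to the statement that the type of~$\gamma^{*i}$ is at most the type of~$\gamma$; this is precisely the first of the ``two properties'' recorded immediately after the definition of~$\beta^*$ in Section~\ref{sec:star and sharp}. At internal nodes below the root, the inequality follows from the induction hypothesis applied to the appropriate subtree. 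For the lex-ordering of leaves, the cases in which~$\sigma$ and~$\tau$ lie entirely inside the left or entirely inside the right subtree follow directly by induction, so the only nontrivial case is $\sigma = 0\sigma'$ a leaf of the left subtree and $\tau = 1\tau'$ a leaf of the right subtree. Here invariant~(II) applied inductively to~$\cS^{i+1}(\gamma^{*i})$ yields $h(\sigma) \subseteq \gamma^{*i}$, and what remains is to prove $h(\tau) \supsetneq \gamma^{*i}$.

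The principal obstacle is therefore the structural claim that every leaf of~$\cS^{i+1}(\gamma)$ is mapped by~$h$ to an $R$-node that \emph{properly} extends~$\gamma^{*i}$ on the priority tree~$\cT$. The key feature of Definition~\ref{def:2Utree} driving this is that whenever an $R$-node takes a $U_j$-outcome, every coordinate of the assigned $\seq$-vector of index strictly greater than~$j$ is reset to~$(0,\iota)$. Consequently, any iterated starred ancestor of~$\gamma$ at levels $\ge i+1$ (which, by Definitions~\ref{def:2U 0 data},~\ref{def:2U oplus data}, and~\ref{def:2U weak data}, are exactly the $R$-nodes appearing as $h$-values of leaves of~$\cS^{i+1}(\gamma)$) must sit strictly below $(\gamma^{*i})^\frown U_i$ on~$\cT$: otherwise the $\seq$-coordinates at levels $\ge i+1$ along the path to~$\gamma$ would have been reset between that node and~$\gamma$, precluding the starred-ancestor relation at those levels. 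This delivers $h(\sigma) \subseteq \gamma^{*i} \subsetneq h(\tau)$ and completes the induction.
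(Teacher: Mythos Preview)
Your proof is correct and follows the same inductive approach the paper gestures at with its one-line ``easy induction on~$S$''; you simply supply the details, in particular the nontrivial point that passing through a $U_i$-outcome resets all coordinates of index~$>i$, which forces every iterated starred ancestor at levels~$\ge i+1$ to lie strictly below~$(\gamma^{*i})^\frown U_i$. One small remark: your inductive step only unpacks the weak case $\cS^i(\gamma)=\cS^{i+1}(\gamma^{*i})\otimes_0\cS^{i+1}(\gamma)$, which is all that has been defined at this point in the paper; once strong data (Definition~\ref{def:2U strong data}) is introduced, the $\otimes_1$ case needs the same treatment, but there the cross-subtree inequality is easier since $\alpha_a\subsetneq\alpha_b$ is given directly and the same coordinate-reset argument applies below~$\alpha_a^\frown U_i$.
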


\begin{proof}
An easy induction on~\(S\) (using Definition~\ref{def:2U data
tree}(\ref{it:2U data tree 3})).
\end{proof}

\subsection{Controllers}\label{sec:2U controller}

In Example~\ref{eg:R22}, is~\(R_{c_0}^{22}\) ready to be a controller based
on \(\cE^{0}(R_{c_0}^{22})\)? Recall from Definition~\ref{def:1U controller}
that we have the notion of a \(U^b\)\nbd problem. Looking
at~\(\cE^0(R_{c_0}^{22})\), we see that~\(R_{c_0}^{22}\) is an
\(R_{c_0}\)\nbd node and the others are \(R_{c_\lambda}\)\nbd nodes. This
will surely cause problems.

\begin{definition}[\(U_i^b\)\nbd problem]\label{def:U_problem}
Let~\(\beta\) be an \(R_c\)\nbd node with~\(\cE^0(\beta)\) and
\(\cS^0(\beta)=(S,f,g,h)\). For a \(\sigma\in S\), if \(f(\sigma 0)<c\), then
we let \(\alpha=h(\sigma 0)\in \cE^0(\beta)\), \(i=\abs{\sigma}\), and
\(b=\seq_0(\alpha^-)(i)\) (defined in the first paragraph of
Section~\ref{sec:2Utree}). We say that~\(\alpha\) is a \emph{\(U_i^b\)\nbd
problem} of \(\beta\), or simply a \(U_i\)\nbd problem. If in addition
\(f(\sigma)=c\), then~\(\alpha\) is the \emph{critical} \(U_i^b\)\nbd
problem, or simply a critical \(U_i\)\nbd problem.
\end{definition}

If \(\alpha=h(\tau)\) is a critical \(U_j\)\nbd problem for some~\(j\), then
there exists some~\(\sigma\) such that \(\tau=\sigma 0\) and in fact
\(j=\abs{\sigma}\).

\begin{example}[critical problems]
Continuing Example~\ref{eg:R22}, we have, in the case of
\(R^{15}=R_{c_\lambda}^{15}\), that~\(R_{c_\lambda}^{20}\) is a critical
\(U_1^1\)\nbd problem,~\(R_{c_\lambda}^{15}\) is a critical \(U_0^1\)\nbd
problem, and~\(R_{c_\lambda}^{12}\) is a (noncritical) \(U_1^1\)\nbd problem.

On the other hand, in the case of \(R^{15}=R_{c_0}^{15}\), we see
that~\(R_{c_\lambda}^{20}\) is a critical \(U_1^1\)\nbd
problem,~\(R_{c_\lambda}^{12}\) is a critical \(U_1^1\)\nbd problem,
and~\(R_{c_0}^{15}\) is not a \(U_0\)\nbd problem.
\end{example}

The next lemma follows from Definition~\ref{def:2U data tree}(\ref{it:2U data
tree 3}).
\begin{lemma}\label{lem:critical}
Let~\(\beta\) be an \(R_c\)\nbd node with~\(\cE^0(\beta)\) and
\(\cS^0(\beta)=(S,f,g,h)\). Suppose that \(\alpha=h(\sigma 0)\) is a critical
\(U_i^b\)\nbd problem (\(i=\abs{\sigma}\) and \(b=\seq_0(\alpha^-)(i)\)),
then \(g(\sigma)=0\). \qed
\end{lemma}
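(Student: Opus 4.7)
The plan is to derive the conclusion directly from the definition of ``critical'' combined with the structural constraints built into a data tree, with no induction or construction-level reasoning required.

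First I would unpack the hypothesis that $\alpha = h(\sigma 0)$ is \emph{critical}. By Definition~\ref{def:U_problem}, ``critical'' bundles two facts: $\alpha$ is a $U_i^b$-problem, so $f(\sigma 0) < c$; and additionally $f(\sigma) = c$. In particular, $f(\sigma 0) \ne f(\sigma)$.

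Next, I would invoke Lemma~\ref{lem:order}, which says that in any data tree arising from our construction we have $f(\sigma 0) \le f(\sigma 1) = f(\sigma)$. Combined with the previous step, this gives $f(\sigma 1) = c$ while $f(\sigma 0) < c$, and therefore $f(\sigma 0) \ne f(\sigma 1)$.

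Finally, I would appeal to Definition~\ref{def:2U data tree}\eqref{it:2U data tree 3}, which stipulates that $g(\sigma) = 1$ implies $f(\sigma 0) = f(\sigma 1)$. The contrapositive forces $g(\sigma) \ne 1$, and since $g$ takes values in $\{0, 1\}$ we conclude $g(\sigma) = 0$, as required. There is no genuine obstacle here; the lemma is essentially a bookkeeping observation translating the definition of a critical problem into the tree-labelling language, and it will be used later to ensure that critical problems always sit at ``weak'' branching points of $\cS^0(\beta)$.
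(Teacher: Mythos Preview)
Your proof is correct and follows essentially the same route as the paper, which simply remarks that the lemma follows from Definition~\ref{def:2U data tree}\eqref{it:2U data tree 3} and omits the details. You have merely spelled out the one-line argument: criticality gives $f(\sigma 0) < c = f(\sigma) = f(\sigma 1)$ (the last equality via Lemma~\ref{lem:order}), and then the contrapositive of clause~\eqref{it:2U data tree 3} forces $g(\sigma)=0$.
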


If \(\alpha=h(\sigma 0)\) is a critical \(U_i^b\)\nbd problem, then
\(g(\sigma)=0\) allows us to consider the node \(\hat{\alpha}=h(\sigma)\)
with the weak \(U_i\)\nbd data \(\cE^i(\hat{\alpha})=\cE^{i+1}(\alpha)\otimes
\cE^{i+1}(\hat{\alpha})\) where \(\alpha = \hat{\alpha}^{*i}\) (as we will
see later, we have \(g(\sigma)=0\) if and only if \(\cE^i(h(\sigma))\) is
weak \(U_i\)\nbd data defined in Definition~\ref{def:2U weak data}). If
\(\alpha=h(\sigma 0)\) is a noncritical \(U_i^b\)\nbd problem, we might have
that \(g(\sigma)=1\) (see Example~\ref{eg:monster}).

In Example~\ref{eg:R22}, having identified the critical problems
for~\(R_{c_0}^{22}\), we would like to group~\(R_{c_\lambda}^{15}\)
and~\(R_{c_\lambda}^{12}\) together and ignore the \(U_1\)\nbd condition for
now and only check if \(\Cond^0(R_{c_\lambda}^{15})\)
(\(=\Cond^0(R_{c_\lambda}^{12})\)) holds or not. In case it holds (and both
\(\Cond^0(R_{c_0}^{22})\) and \(\Cond^0(R_{c_\lambda}^{20})\) fail), we can
turn the GREEN \(U_0\)\nbd outcome of~\(R^7\) RED and proceed as usual. For
this purpose we have to make some modifications based
on~\(\cE^0(R_{c_0}^{22})\) and \(\cS^0(R_{c_0}^{22})\) to
get~\(\cE^{\ctr}(R_{c_0}^{22})\) and~\(\cS^{\ctr}(R_{c_0}^{22})\).

\begin{definition}[modified data and decision map]
\label{def:modified data and decision map}

Let~\(\beta\) be an \(R_c\)\nbd node with \(\cE^0(\beta)=(Z,y,\cC)\) and
\(\cS^0(\beta)=(S,f,g,h)\). When we perform \(\enc(\beta,\ctr)\) at~\(s\), we
do the following:
\begin{enumerate}
\item
Let \(\cS^{\ctr}(\beta)=(S',f',g',h')\) be a data tree defined recursively
as follows: we enumerate~\(\lambda\) into~\(S'\). Suppose that we have
enumerated~\(\sigma\) into~\(S'\), if~\(h(\sigma)\) is a critical
\(U_{\abs{\sigma}-1}\)\nbd problem, we stop; otherwise we
enumerate~\(\sigma 0\) and~\(\sigma 1\) into~\(S'\) and continue. Then for
each \(\sigma\in S'\), we define \(f'(\sigma)=f(\sigma)\),
\(g'(\sigma)=g(\sigma)\), and \(h'(\sigma)=h(\sigma)\)
\item
Let \(\cE^{\ctr}(\beta)=(Z',y',\cC')\) consist of the following:
\begin{enumerate}
\item
\(Z'=\{h(\sigma)\mid \sigma\in \cS^{\ctr}(\beta)\}\).
\item\label{it:modified data 2b}
If \(\alpha=h(\sigma 0)\in Z'\) is a critical \(U_{\abs{\sigma}}\)\nbd
problem, we let \(y'_\alpha = \max\{y_{h(\tau)} \mid \sigma0\subseteq
\tau\} = y_{h(\sigma 00\cdots 0)}\) (the last equality follows from our
assumption that if \(\alpha_0\subsetneq \alpha_1\), then
\(y_{\alpha_0}>y_{\alpha_1}\)). If \(\alpha\in Z'\) is not a critical
problem, we let \(y'_\alpha=y_\alpha\).
\item\label{it:modified data 2c}
To obtain~\(\cC'\), for each \(\alpha\in Z'\), we do the following:
\begin{enumerate}
\item\label{it:modified data 2c1}
If~\(\alpha\) is not a critical problem, then we enumerate
\(\Cond^j(\alpha)\in\cC\) into~\(\cC'\).
\item\label{it:modified data 2c2}
If~\(\alpha\) is a critical \(U_i\)\nbd problem, then for each \(j\le
i\) such that \(\Cond^j(\alpha)\in\cC\) has type~\(\diff\), we
enumerate \(\Cond^j(\alpha)\) into~\(\cC'\).
\item\label{it:modified data 2c3}
If~\(\alpha\) is a critical \(U_i\)\nbd problem, then for each \(j\le
i\) such that \(\Cond^j(\alpha)(t) = \same(U_j,y_\alpha,s_*,t)\in\cC\)
where~\(s_*\) is the \(U_j\)\nbd reference stage, we enumerate
$\Cond^j(\alpha)(t)=\same(U_j,y'_\alpha,s_*,t)$ into~\(\cC'\).
\end{enumerate}
For each \(\xi\in Z'\), we sometimes write \(\Cond_\beta^i(\xi)\) for
\(\Cond^i(\xi)\) to emphasize which node is the controller.
\end{enumerate}
\end{enumerate}
Based on~\(\cE^{\ctr}(\beta)\) and~\(\cS^{\ctr}(\beta)\), the \emph{decision
map}~\(\cD_s(\beta)\) is defined to be the longest \(\xi\in
\cE^{\ctr}(\beta)\) such that \(\Cond_\beta^i(\xi)(s)=1\) for each \(i\).

Note that~\(\cE^0(\beta)\) and~\(\cS^0(\beta)\) are not discarded yet.
\end{definition}

\begin{example}\label{eg:R22 controller}
Continuing Example~\ref{eg:R22}, \(\cE^{\ctr}(R_{c_0}^{22}) =
\{R_{c_\lambda}^{15},R_{c_\lambda}^{20},R_{c_0}^{22}\}\) with
\begin{align*}
\Cond_{22}^0(R_{c_\lambda}^{15})(t)&=\diff(U_0,y_{20},s_{22}^0,t),\\
\Cond_{22}^0(R_{c_\lambda}^{20})(t)&=\same(U_0,y_{20},s_{22}^0,t),\\
\Cond_{22}^0(R_{c_0}^{22})(t)&=\same(U_0,y_{22},s_{22}^0,t),\\
\Cond_{22}^1(R_{c_\lambda}^{20})(t)&=\diff(U_1,y_{22},s_{22}^1,t),
   \text{ and} \\
\Cond_{22}^1(R_{c_0}^{22})(t)&=\same(U_1,y_{22},s_{22}^1,t).
\end{align*}
Here \(s_{22}^1=s_{22}^0\) as in Example~\ref{eg:R22}. Note that
\(\cD_s(R_{c_0}^{22})\) is defined at each stage \(t>s_{22}^{\ctr}\).
\(\cE^1(R_{c_\lambda}^{15})\), which belongs to \(\cE^0(R_{c_0}^{22})\), is
not discarded; it will be used later.
\end{example}

Let us summarize what we have now. For an \(R_c\)\nbd node~\(\beta\) (where
\(c\in \Ji(\cL)\)), encountering its \(\ctr\)\nbd outcome
with~\(\cE^0(\beta)\) and~\(\cS^0(\beta)\), we first
obtain~\(\cE^{\ctr}(\beta)\) and~\(\cS^{\ctr}(\beta)\) as in
Definition~\ref{def:modified data and decision map} and~\(\beta\) becomes a
controller. While~\(\beta\) is a controller, as in Section~\ref{sec:1U},
we will put a restraint on \(\hat{C}\res s_\beta^{\ctr}\) for each
\(\hat{c}\neq c\). For a stage \(t>s_\beta^{\ctr}(\beta)\), if
\(\cD_t(\beta)=\xi\) is not a problem, then we simply restore~\(y_\xi\) and
activate the \(d\)\nbd outcome of~\(\xi\); if \(\cD_t(\beta)=\xi\) is a
critical \(U_i^b\)\nbd problem where \(b>0\), we restore~\(y_\xi\) (this is
the~\(y_\xi'\) in Definition~\ref{def:modified data and decision map}) and
turn the GREEN \(U_i\)\nbd outcome of~\(\xi^{\sharp i}\) into a RED outcome;
if \(\cD_t(\beta)=\xi\) is a critical \(U_i^b\)\nbd problem where \(b=0\), we
restore~\(y_\xi\) and search for two critical \(U_i\)\nbd problems that are
both \(R_d\)\nbd nodes for the same \(d\in \Ji(\cL)\) in the history and
obtain strong \(U_i\)\nbd data as in Section~\ref{sec:1U}. We will elaborate
on this in the next section.

\subsection{\texorpdfstring{Strong \(U_i\)-data and \(U_i\)-link}%
{Strong U\_i-data and U\_i-link}}\label{sec:2U strong data and link}

We begin with an example that requires strong \(U_1\)\nbd data.

\begin{example}[strong \(U_1\)-data and \(U_1\)-link]%
\label{eg:strong U1-data and U1-link}
Continuing Example~\ref{eg:R22 controller}, we suppose that at stage~\(s_*\)
we have \(\cD(R_{c_0}^{22})=R_{c_\lambda}^{20}\), the critical \(U_1^1\)\nbd
problem for~\(R_{c_0}^{22}\). Then we restore~\(y_{20}\) and turn the GREEN
\(U_1\)\nbd outcome of \(R_{c_0}^{19}=(R_{c_\lambda}^{20})^{\sharp 1}\)
RED\@. We also assume that for each \(t>s_*\), \(R_{c_0}^{22}\) does not see
any noise. Hence we have \(\bigsame(U_i,s_{22}^{\ctr},s_*,t)\) for \(i=0,1\)
and
\begin{align*}
\same(U_0,y_{20},s_{22}^{\ctr},s_*),\\
\diff(U_1,y_{22},s_{22}^{\ctr},s_*).
\end{align*}

Let us suppose that at \(s_{19}^{\ctr}>s_*\) \(R_{c_0}^{19}\) becomes a
controller with \(\cE^{\ctr}(R_{c_0}^{19})=\{R_{c_\lambda}^{15},
R_{c_\lambda}^{17}, R_{c_0}^{19}\}\) with
\begin{align*}
\Cond_{19}^0(R_{c_\lambda}^{15})(t)&=\diff(U_0,y_{17},s_{19}^{\ctr},t)\\
\Cond_{19}^0(R_{c_\lambda}^{17})(t)&=\same(U_0,y_{17},s_{19}^{\ctr},t)\\
\Cond_{19}^0(R_{c_0}^{19})(t)&=\same(U_0,y_{19},s_{19}^{\ctr},t)\\
\Cond_{19}^1(R_{c_\lambda}^{17})(t)&=\diff(U_1,y_{19},s_{19}^{\ctr},t)\\
\Cond_{19}^1(R_{c_0}^{19})(t)&=\same(U_1,y_{19},s_{19}^{\ctr},t)
\end{align*}
Suppose that at \(s_{**}>s_{19}^{\ctr}\), we have \(\cD_{s_
{**}}(R_{c_0}^{19})=R_{c_\lambda}^{17}\), a critical \(U_1^0\)\nbd problem
for~\(R_{c_0}^{19}\), so we are in the same situation as in
Example~\ref{eg:1U way out}. We restore~\(y_{17}\) and collect
\[
\cE^1(R_{c_\lambda}^{20}) = \cE^\varnothing(R_{c_\lambda}^{17})
   \otimes_{s_{**}}\cE^\varnothing(R_{c_\lambda}^{20}),
\]
where \(\cE^\varnothing(R_{c_\lambda}^{20})\) belongs to
\(\cE^{\ctr}(R_{c_0}^{22})\) and \(\cE^\varnothing(R_{c_\lambda}^{17})\) from
\(\cE^{\ctr}(R_{c_0}^{19})\). From Definition~\ref{def:2U oplus data}, the
\(U_1\)\nbd Conditions are
\begin{align*}
\Cond^1(R_{c_\lambda}^{20})(t)&=\same(U_1,y_{20},s_{**},t),\\
\Cond^1(R_{c_\lambda}^{17})(t)&=\diff(U_1,y_{20},s_{**},t).
\end{align*}
Then we establish a \(U_1\)\nbd link starting from the \(S\)\nbd
node~\(12,00\) and ending with the \(U_0\)\nbd outcome
of~\(R_{c_\lambda}^{20}\).

From Lemma~\ref{lem:U restorable strong}, we know that \(\Cond^1(R_
{c_\lambda}^{20})(t)=\same(U_1,y_{20},s_{**},t)\) implies that \(\diff(U_1,
y_{22},s_{22}^{\ctr},t)\) and hence~\(y_{20}\) is \((U_1,C_\lambda)\)\nbd
restorable at stage~\(t\), and
\(\Cond^1(R_{c_\lambda}^{17})(t)=\diff(U_1,y_{20},s_{**},t) \) implies that
\(\diff(U_1,y_{19},s_{19}^{\ctr},t)\) and hence~\(y_{17}\) is
\((U_1,C_\lambda)\)\nbd restorable at stage~\(t\).
\end{example}

We now give the formal definitions of \(U_i\)\nbd link and strong \(U_i\)\nbd
data.

\begin{definition}[environment]\label{def:env}
Let~\(\alpha\) be an \(R\)\nbd node. Define \(\env^{<i}(\alpha)\) to be the
shortest \(S\)\nbd node~\(\eta\) such that for each \(j<i\),
\(\seq(\eta)(j)=\seq(\alpha^-)(j)\) (if \(i=0\), then~\(\eta\) is defined to
be the root of~\(\cT\)). (This node~\(\eta\) will be the starting point of
the \(U_i\)\nbd link in the next definition.)

We say that~\(\alpha\) and~\(\beta\) \emph{have the same \(<i\)\nbd
environment} if \(\env^{<i}(\alpha) = \env^{<i} (\beta)\).
\end{definition}

A useful observation is the following: If \(\beta^{*i}=\alpha\) or
\(\beta^{\sharp i}=\alpha\), then \(\env^{<i}(\alpha) = \env^{<i} (\beta)\).

The following is analogous to Definition~\ref{def:1U strong data}.

\begin{definition}[strong \(U_i\)\nbd data]\label{def:2U strong data}
Suppose \(m=\abs{\cL}+1\). Suppose that for each \(k<m\), \(\alpha_a\) is a
\(U_i^a\)\nbd problem for the controller~\(\beta_a\) with \(\alpha_0
\subsetneq \cdots \subsetneq \alpha_{m-1}\) having the same
\emph{strong}\footnote{This is a subtle point and can be ignored for now. We
refer the reader to Definition~\ref{def:strong env} and to Example~\ref{eg:R3
decides R0} for the intuition. The existence of such a sequence is proved in
Lemma~\ref{lem:2U strong env}.} \(<i\)\nbd environment. For each \(a<m\), let
\(\cE^{i+1}(\alpha_a)\) belong to \(\cE^{\ctr}(\beta_a)\). Let~\(s\) be the
stage when \(\cD_s(\beta_0)=\alpha_0\). By the Pigeonhole Principle, we have
for some \(0\le a<b<m\) and some \(c\in \Ji(\cL)\) such that
both~\(\alpha_a\) and~\(\alpha_b\) are \(R_c\)\nbd nodes. We then define
\begin{align*}
\cS^{i}(\alpha_b)&=\cS^{i+1}(\alpha_a)\otimes_1 \cS^{i+1}(\alpha_b),\\
\cE^{i}(\alpha_b)&=\cE^{i+1}(\alpha_a)\otimes_s \cE^{i+1}(\alpha_b)
\end{align*}

We also establish a \emph{\(U_i\)\nbd link} starting from
\(\env^{<i}(\alpha_a)\) and ending with \((\alpha_b)^\frown U_i\). It will be
destroyed immediately after it is traveled.
\end{definition}

\emph{We will tacitly assume without further proof that Lemma~\ref{lem:U
restorable strong} applies to the strong data in the remaining examples in
this section.}
%STEFFEN24: I rephrased the above example

When we visit an \(S\)\nbd node~\(\alpha\) with a \(U_i\)\nbd link, we only
maintain functionals that belong to \(\mt(\beta,U_{<i})\)
(Definition~\ref{def:maintain2U}) and travel immediately along the
\(U_i\)\nbd link. Note that in \(\cS^i(\alpha_b)\) defined in the above
definition, we have \(g(\lambda)=1\) and also \(f(0)=f(1)=f(\lambda)\).
Therefore \(\cS^i(\alpha_b)\) still satisfies Definition~\ref{def:2U data
tree}\eqref{it:2U data tree 3}.

The key ingredients of the construction have been covered. Controllers follow
the same strategies as in Section~\ref{sec:1U}. Continuing
Example~\ref{eg:strong U1-data and U1-link}, in the next example we quickly
have a controller without problems.

\begin{example}[Controller \(R_{c_\lambda}^{20}\)]
In Example~\ref{eg:strong U1-data and U1-link}, we obtained at
stage~\(s_{**}\) our first strong \(U_1\)\nbd data
\(\cE^1(R_{c_\lambda}^{20})\) and established a \(U_1\)\nbd link starting
from the \(S\)\nbd node \(12,00\) and ending with
\((R_{c_\lambda}^{20})^\frown U_0\), where the \(U_0\)\nbd outcome is a RED
outcome.

Suppose that at stage \(s>s_{**}\) the \(U_1\)\nbd link is traveled and we
are encountering the RED \(U_0\)\nbd outcome of~\(R_{c_\lambda}^{20}\). We
are then obtain
\begin{align*}
\cE^0(R_{c_\lambda}^{20}) &= \cE^1(R_{c_\lambda}^{15})\otimes_s
   \cE^1(R_{c_\lambda}^{20}) = \{R_{c_\lambda}^{12}, R_{c_\lambda}^{15},
   R_{c_\lambda}^{17},R_{c_\lambda}^{20}\},\\
\cS^0(R_{c_\lambda}^{20}) &= \cS^1(R_{c_\lambda}^{15})\otimes_0
   \cS^1(R_{c_\lambda}^{20}).
\end{align*}
Then we should encounter the next outcome, the \(\ctr\)\nbd outcome
of~\(R_{c_\lambda}^{20}\). As~\(R_{c_\lambda}^{12}\),
\(R_{c_\lambda}^{15}\),~\(R_{c_\lambda}^{17}\), and~\(R_{c_\lambda}^{20}\)
are all \(R_{c_\lambda}\)\nbd nodes, we have
\(\cS^{\ctr}(R_{c_\lambda}^{20})=\cS^0(R_{c_\lambda}^{20})\) and
\(\cE^{\ctr}(R_{c_\lambda}^{20}) = \cE^0(R_{c_\lambda}^{20})\)
(Definition~\ref{def:modified data and decision map}). (Note that since
\(\cS^{\ctr}(R_{c_\lambda}^{20})=\cS^0(R_{c_\lambda}^{20})\), we also have
\(\cE^{\ctr}(R_{c_\lambda}^{20})=\cE^0(R_{c_\lambda}^{20})\).) Then
\(R_{c_\lambda}^{20}\) becomes a controller at this stage
\(s=s_{R_{c_\lambda}^{20}}^{\ctr}\), so we enumerate each diagonalizing
witness into~\(C_\lambda\) and put a restraint on \(C_0\res s\). As in
Section~\ref{sec:1U}, if \(\cD_t(R_{c_\lambda}^{20}) = \xi\), then~\(y_\xi\)
is both~\(U_0\)- and \(U_1\)\nbd restorable. Therefore we can safely
restore~\(y_\xi\) if \(\cD_t(R_{c_\lambda}^{20}) = \xi\) and activate the
\(d\)\nbd outcome of~\(\xi\).
\end{example}

\subsection{A monstrous example}

The next example is a monstrous example in which we see how the combinatorics
grows complicated.

\begin{example}\label{eg:monster}
Suppose that we have~\(R_{c_0}^{15}\) in Figure~\ref{fig:2Utree}. The first
\(U_0\)\nbd data would be
\begin{align*}
\cE^0(R_{c_0}^{22})& =
   \cE^1(R_{c_0}^{15}) \otimes_s \cE^1(R_{c_\lambda}^{22})\\
& = (\cE^\varnothing(R_{c_\lambda}^{12})\otimes_s \cE^\varnothing
   (R_{c_0}^{15})) \otimes_s (\cE^\varnothing
   (R_{c_\lambda}^{20})\otimes_s \cE^\varnothing(R_{c_0}^{22})),
\end{align*}
obtained at some stage~\(s\). Then we encounter the \(\ctr\)\nbd outcome
of~\(R_{c_0}^{22}\) and have \(\cS^{\ctr}(R_{c_0}^{22})=\cS^0(R_{c_0}^{22})\)
and hence \(\cE^{\ctr}(R_{c_0}^{22}) = \cE^0(R_{c_0}^{22})\).
\(R_{c_0}^{22}\) becomes a controller at
\(s_{22}=s_{R_{c_0}^{22}}^{\ctr}=s\). Let \(s_{22}'>s_{22}\) be the stage
when \(\cD_{s_{22}'}(R_{c_0}^{22})=R_{c_\lambda}^{12}\) and~\(R_{c_0}^{22}\)
sees no more noise from then on. Since~\(y_{12}\), the computation
for~\(R_{c_\lambda}^{12}\), is only weakly restorable, we restore it and turn
the GREEN \(U_1\)\nbd outcome of \(R_{c_0}^{11}\) RED\@.

Next we might reach~\(R_{c_0}^{18}\) at some stage \(s>s_{22}'>s_{22}\) (we
recycle the symbol~\(s\)) through \({R_{c_0}^{13}}^\frown w\) and have
\begin{align*}
\cE^0(R_{c_0}^{18})& = \cE^1(R_{c_0}^{11}) \otimes_s
   \cE^\varnothing(R_{c_0}^{18})\\
& = (\cE^\varnothing(R_{c_\lambda}^{10})\otimes_s
   \cE^\varnothing(R_{c_0}^{11})) \otimes_s \cE^\varnothing(R_{c_0}^{18}),
\end{align*}
Again since \(\cS^{\ctr}(R_{c_0}^{18})=\cS^0(R_{c_0}^{18})\), we have
\(\cE^{\ctr}(R_{c_0}^{18})=\cE^0(R_{c_0}^{18})\) and~\(R_{c_0}^{18}\) becomes
a controller at \(s_{18}=s_{R_{c_0}^{18}}^{\ctr}=s\). Let~\(s_{18}'\) be the
stage when \(\cD_{s_{18}'}(R_{c_0}^{18})=R_{c_\lambda}^{10}\), we
restore~\(R_{c_\lambda}^{10}\) and notice that~\(R_{c_\lambda}^{10}\)
and~\(R_{c_\lambda}^{12}\), both of which are \(R_{c_\lambda}\)\nbd nodes and
have the same \(<1\)\nbd environment (Definition~\ref{def:env}), are a
critical \(U_1^0\)\nbd problem and a critical \(U_1^1\)\nbd problem for the
controller~\(R_{c_0}^{22}\) and the controller~\(R_{c_0}^{18}\),
respectively. Thus we obtain the following strong \(U_1\)\nbd data and
\(U_1\)\nbd data tree
\begin{align*}
\cE^1(R_{c_\lambda}^{12}) &= \cE^\varnothing(R_{c_\lambda}^{10})
   \otimes_{s_{18}'} \cE^\varnothing(R_{c_\lambda}^{12}),\\
\cS^1(R_{c_\lambda}^{12}) &= \cS^\varnothing(R_{c_\lambda}^{10})
   \otimes_1 \cS^\varnothing(R_{c_\lambda}^{12}),
\end{align*}
and establish a \(U_1\)\nbd link starting from the \(S\)\nbd node~\(10,00\)
and ending at the \(U_0\)\nbd outcome of~\(R_{c_\lambda}^{12}\).

Let \(s>s_{18}'>s_{18}>s_{22}'>s_{22}\) (we recycle the symbol~\(s\) again)
be the stage when we travel the \(U_1\)\nbd link and visit~\(R_{c_0}^{14}\).
If we fail to obtain \(\cE^\varnothing(R_{c_0}^{14})\) (which is always the
case by the slowdown condition if this is the first time we
visit~\(R_{c_0}^{14}\)), we have to initialize the
controllers~\(R_{c_0}^{22}\) and~\(R_{c_0}^{18}\) anyway. Wasting a lot of
work does not matter as long as we make progress towards some nodes to the
left of the controllers~\(R_{c_0}^{22}\) and~\(R_{c_0}^{18}\). If we obtain
\(\cE^\varnothing(R_{c_0}^{14})\), then we \(\enc(R_{c_0}^{14},U_0)\) and
obtain
\begin{align*}
\cE^0(R_{c_0}^{14}) &= \cE^1(R_{c_\lambda}^{12})
   \otimes_s \cE^\varnothing(R_{c_0}^{14}),\\
\cS^0(R_{c_0}^{14})&= \cS^1(R_{c_\lambda}^{12})
   \otimes_0 \cS^\varnothing(R_{c_0}^{14}),
\end{align*}
where \(R_{c_\lambda}^{12}=(R_{c_0}^{14})^{*0}\) and
\(\cE^1(R_{c_\lambda}^{14})\) is strong \(U_1\)\nbd data. Another important
observation is the following:
\begin{enumerate}[resume=observations]
\item
If~\(s^*\) is the last stage when we visit~\(R_{c_0}^{14}\), then we have
\[
s^*<s_{22}<s_{22}'<s_{18}<s_{18}'<s,
\]
and by slowdown condition
\[
\bigsame(U_0,y_{14},s^*,s).
\]
\end{enumerate}

Then we perform \(\enc(R_{c_0}^{14},\ctr)\). Notice that we have
\(\cS^{\ctr}(R_{c_0}^{14})\neq \cS^0(R_{c_0}^{14})\)
(Definition~\ref{def:modified data and decision map}) and hence
\(\cE^{\ctr}(R_{c_0}^{14}) = \{R_{c_\lambda}^{12},R_{c_0}^{14}\}\).
\emph{From this, we can see the necessity to introduce the notion of a
critical problem:} If we allow \(\cD(R_{c_0}^{14})=R_{c_\lambda}^{10}\), we
have that~\(R_{c_\lambda}^{10}\) is a \(U_1^0\)\nbd problem
for~\(R_{c_0}^{14}\) but~\((R_{c_\lambda}^{10})^{\sharp 1}\) is not defined.
We cannot obtain any new strong \(U_1\)\nbd data at this point. In this case,
we should instead look at the critical \(U_0\)\nbd
problem~\(R_{c_\lambda}^{12}\).

Let \(s_{14}=s_{R_{c_0}^{14}}^{\ctr}\) be the stage when~\(R_{c_0}^{14}\)
becomes a controller and \(s_{14}'>s_{14}\) be the stage when
\(\cD_{s_{14}'}(R_{c_0}^{14})=R_{c_\lambda}^{12}\). We will verify first that
\(R_{c_\lambda}^{12}\) and \(R_{c_\lambda}^{10}\) are \((U_0,C_\lambda)\)\nbd
restorable under
\(\Cond_{R_{c_0}^{14}}^0(R_{c_\lambda}^{12})(t)=\diff(U_0,y_{14},s_{14},t)\).
In fact, this follows directly from Lemma~\ref{lem:U restorable}. However, a
cautious reader might be worried about the use block that was killed by the
end of \(s_{22}\). Therefore we will show that
\[
\Cond_{R_{c_0}^{14}}^0(R_{c_\lambda}^{12})(t) \Rightarrow
   \Cond_{R_{c_0}^{22}}^0(R_{c_\lambda}^{12})(t) \land
   \Cond_{R_{c_0}^{18}}^0(R_{c_\lambda}^{10})(t).
\]
That is,
\[
\diff(U_0,y_{14},s_{14},t) \Rightarrow \diff(U_0,y_{20},s_{22},t)
   \land \diff(U_0,y_{18},s_{18},t).
\]
This follows clearly from
\[
y_{14}<y_{22}<y_{20}<y_{18},
\]
and
\[
\bigsame(U_0,y_{14},s^*,s_{14}).
\]

Let us continue. At stage~\(s_{14}'\), we have
\(\cD_{s_{14}'}(R_{c_0}^{14})=R_{c_\lambda}^{12}\) and therefore we turn the
GREEN \(U_0\)\nbd outcome of \(R_{c_0}^{7}=(R_{c_\lambda}^{12})^{\sharp 0}\)
RED\@.

Let \(s_7>s_{14}'\) be the stage when~\(R_{c_0}^{7}\) becomes a controller
with
\begin{align*}
\cE^{\ctr}(R_{c_0}^{7})&= \cE^0(R_{c_0}^{7})\\
&= \cE^\varnothing(R_{c_\lambda}^{0}) \otimes_{s_7}\cE^1(R_{c_0}^{7})\\
&= \cE^\varnothing(R_{c_\lambda}^{0}) \otimes_{s_7} (\cE^\varnothing
   (R_{c_\lambda}^{4})\otimes_{s_7} \cE^\varnothing(R_{c_0}^{7}))
\end{align*}
We now distinguish the two cases: If \(\cD_{s_7'}(R_{c_0}^{7}) =
R_{c_\lambda}^0\) for some \(s_7'>s_7\), then we continue in
Example~\ref{eg:R7 decides R0}; if \(\cD_{s_7'}(R_{c_0}^{7}) =
R_{c_\lambda}^4\) for some \(s_7'>s_7\), then we continue in
Example~\ref{eg:R7 decides R4}.
\end{example}

\begin{example}[\(\cD(R_{c_0}^{7})=R_{c_\lambda}^{0}\)]%
\label{eg:R7 decides R0}
Continuing Example~\ref{eg:monster}, we suppose \(\cD_{s_7'}(R_{c_0}^{7}) =
R_{c_\lambda}^{0}\). \(R_{c_\lambda}^{0}\) is a critical \(U_0^0\)\nbd
problem for~\(R_{c_0}^{7}\), and~\(R_{c_\lambda}^{12}\) is a critical
\(U_0^1\)\nbd problem for~\(R_{c_0}^{14}\), both have the same \(<0\)\nbd
environment. We will now obtain the following strong \(U_0\)\nbd data and
\(U_0\)\nbd data tree:
\begin{align*}
\cE^0(R_{c_\lambda}^{12})&=\cE^\varnothing(R_{c_\lambda}^{0})
   \otimes_{s_7'} \cE^1(R_{c_\lambda}^{12}),\\
\cS^0(R_{c_\lambda}^{12})&=\cS^\varnothing(R_{c_\lambda}^{0})
   \otimes_1 \cS^1(R_{c_\lambda}^{12}),
\end{align*}
where \(\cE^1(R_{c_\lambda}^{12})\) is the strong \(U_1\)\nbd data belonging
to~\(\cE^{\ctr}(R_{c_0}^{14})\).
Thus for each \(\sigma\in \cS^0(R_{c_\lambda}^{12})\), we have
\(g(\sigma)=1\) and hence \(f(\sigma)=c_\lambda\) by Definition~\ref{def:2U
data tree}\eqref{it:2U data tree 3}. We also establish a \(U_0\)\nbd link
starting from the root of the tree and ending at the \(\ctr\)\nbd outcome
of~\(R_{c_\lambda}^{12}\).

At \(s_{12}>s_{7}'\), the \(U_0\)\nbd link is traveled
and~\(R_{c_\lambda}^{12}\) becomes a controller with data
\(\cE^{\ctr}(R_{c_\lambda}^{12}) = \cE^0(R_{c_\lambda}^{12})\), which has no
more problems.
\end{example}

\begin{example}[\(\cD(R_{c_0}^{7})=R_{c_\lambda}^{4}\)]%
\label{eg:R7 decides R4}
Continuing Example~\ref{eg:monster}, we suppose \(\cD_{s_7'}(R_{c_0}^{7}) =
R_{c_\lambda}^{4}\). We will turn the GREEN \(U_1\)\nbd outcome
of~\(R_{c_0}^{3}\) RED\@. Note that the \(U_0\)\nbd outcome
of~\(R_{c_0}^{3}\) is still GREEN\@. Let \(s_8>s_7'\) be the stage
when~\(R_{c_0}^{8}\) becomes a controller with data
\[
\cE^{\ctr}(R_{c_0}^{8}) = \cE^0(R_{c_0}^{8}) = \cE^\varnothing
   (R_{c_\lambda}^{5})\otimes_{s_8} \cE^\varnothing(R_{c_0}^{8}).
\]
Let \(s_8'>s_8\) be the stage when
\(\cD_{s_8}(R_{c_0}^{8})=R_{c_\lambda}^{5}\). We will turn the GREEN
\(U_0\)\nbd outcome of~\(R_{c_0}^{3}\) RED\@.

Let \(s_3>s_8'\) be the stage when~\(R_{c_0}^{3}\) becomes a controller with
data
\begin{align*}
\cE^{\ctr}(R_{c_0}^{3})&= \cE^0(R_{c_0}^{3})\\
&= \cE^\varnothing(R_{c_\lambda}^{0}) \otimes_{s_3}\cE^1(R_{c_0}^{3})\\
&= \cE^\varnothing(R_{c_\lambda}^{0}) \otimes_{s_3} (\cE^\varnothing
   (R_{c_\lambda}^{1})\otimes_{s_3} \cE^\varnothing(R_{c_0}^{3}))
\end{align*}

Depending on the decision of~\(R_{c_0}^{3}\), we again have to split cases
and consider the following two examples separately.
\end{example}

\begin{example}[\(\cD(R_{c_0}^{3})=R_{c_\lambda}^{1}\)]\label{eg:R3 decide R1}
Continuing Example~\ref{eg:R7 decides R4}, we let~\(s_3'\) be the stage when
we have \(\cD_{s_3'}(R_{c_0}^{3}) = R_{c_\lambda}^{1}\). Then we will obtain
strong \(U_1\)\nbd data
\begin{align*}
\cE^1(R_{c_\lambda}^{4})&= \cE^\varnothing(R_{c_\lambda}^{1})\otimes_{s_3'}
   \cE^\varnothing(R_{c_\lambda}^{4}),\\
\cS^1(R_{c_\lambda}^{4})&= \cS^\varnothing(R_{c_\lambda}^{1})\otimes_{1}
   \cS^\varnothing(R_{c_\lambda}^{4}),
\end{align*}
where \(\cE^\varnothing(R_{c_\lambda}^{4})\) belongs to
\(\cE^{\ctr}(R_{c_0}^{7})\). We also establish a \(U_1\)\nbd link starting
from the \(S\)\nbd node~\(02,00\) and ending at the \(U_0\)\nbd outcome
of~\(R_{c_\lambda}^{4}\).

Since the \(U_0\)\nbd outcome of~\(R_{c_\lambda}^{4}\) is GREEN, our strong
\(U_1\)\nbd data is discarded when we visit this outcome at stage~\(s\).
(Here, discarding the data is acceptable as now we are able to visit the nodes
below the \(U_0\)\nbd outcome, which is a progress.)
Let us assume that~\(R_{c_0}^{9}\) becomes a controller at \(s_9=s\) with data
\(\cE^{\ctr}(R_{c_0}^9)=\{R_{c_\lambda}^6,R_{c_0}^9\}\). Suppose
\(\cD(R_{c_0}^{9})=R_{c_\lambda}^{6}\) at \(s_9'>s_9\) and therefore the
\(U_0\)\nbd outcome of~\(R_{c_\lambda}^{4}\) becomes RED\@. Then, we go over
the procedures once again starting from Example~\ref{eg:monster} and
Example~\ref{eg:R7 decides R4} to the point when we obtain the strong
\(U_1\)\nbd data \(\cE^1(R_{c_\lambda}^{4})\) and establish the \(U_1\)\nbd
link starting from the \(S\)\nbd node~\(02,00\) and ending at the \(U_0\)\nbd
outcome of~\(R_{c_\lambda}^{4}\).

Let \(s>s_3'\) be the stage when we travel the link. This time, as the
\(U_0\)\nbd outcome of~\(R_{c_\lambda}^{4}\) is RED, we obtain
\begin{align*}
\cE^0(R_{c_\lambda}^{4})&=\cE^\varnothing(R_{c_\lambda}^{0})
   \otimes_s \cE^1(R_{c_\lambda}^{4})\\
\cS^0(R_{c_\lambda}^{4})&=\cS^\varnothing(R_{c_\lambda}^{0})
   \otimes_0 \cS^1(R_{c_\lambda}^{4})&
\end{align*}
and then we perform \(\enc(R_{c_\lambda}^{4},\ctr)\).~\(R_{c_\lambda}^{4}\)
is then a controller without any more problems. If~\(R_{c_0}^{9}\) never sees
any noise again, then the \(U_0\)\nbd outcome of~\(R_{c_\lambda}^{4}\)
remains RED forever and we never go back to~\(R_{c_\lambda}^{6}\)
and~\(R_{c_0}^{9}\); if~\(R_{c_0}^{9}\) sees some noise, then
\({(R_{c_\lambda}^4)}^\frown \ctr\) is initialized.
\end{example}

\begin{example}[\(\cD(R_{c_0}^{3})=R_{c_\lambda}^{0}\)]%
\label{eg:R3 decides R0}
Continuing Example~\ref{eg:R7 decides R4}, we let~\(s_3'\) be the stage when
we have \(\cD_{s_3'}(R_{c_0}^{3}) = R_{c_\lambda}^{0}\)
and~\(R_{c_\lambda}^{0}\) is a critical \(U_0^0\)\nbd problem
for~\(R_{c_0}^{3}\).

Recall that for each \(t\ge s_{14}'\) we assume that
\(\cD_{t}(R_{c_0}^{14})=R_{c_\lambda}^{12}\), which is a critical
\(U_0^1\)\nbd problem for~\(R_{c_0}^{14}\), and also that for each \(t\ge
s_8'\) we assume \(\cD_t(R_{c_0}^{8})=R_{c_\lambda}^{5}\), which is also a
critical \(U_0^1\)\nbd problem for~\(R_{c_0}^{8}\). Now we have a choice:
which \(U_0^1\)\nbd problem do we combine with~\(R_{c_\lambda}^0\) to get a
strong \(U_0\)\nbd data? We refer the reader to Example~\ref{eg:R7 decides
R0} in which we do obtain the strong \(U_0\)\nbd data by
combining~\(R_{c_\lambda}^{0}\) and~\(R_{c_\lambda}^{12}\). However, after a
moment of thought, we may prefer~\(R_{c_\lambda}^{5}\)
over~\(R_{c_\lambda}^{12}\) in the current situation; we say that
\(R_{c_\lambda}^{5}\) and \(R_{c_\lambda}^{0}\) has the same \emph{strong}
\(<0\)\nbd environment (see Definition~\ref{def:strong env}). (To have a
better intuition for our choice, we should \emph{imagine} that the
\(U_0\)\nbd outcome of~\(R_{c_0}^{3}\) is a Type~I outcome.
Then~\(R_{c_0}^{3} \) itself could be a critical \(U_0^0\)\nbd problem for
some controller below the \((R_{c_0}^{3})^\frown U_0\). We will naturally
search for a \(U_0^1\)\nbd problem also below the \((R_{c_0}^{3})^\frown
U_0\) instead of searching for one below the RED \(U_1\)\nbd outcome
of~\(R_{c_0}^{3}\).) Therefore, we obtain
\begin{align*}
    \cE^0(R_{c_\lambda}^{5})&=\cE^\varnothing(R_{c_\lambda}^{0})
       \otimes_s \cE^1(R_{c_\lambda}^{5})\\
    \cS^0(R_{c_\lambda}^{5})&=\cS^\varnothing(R_{c_\lambda}^{0})
       \otimes_1 \cS^1(R_{c_\lambda}^{5})&
\end{align*}
and establish a \(U_0\)\nbd link starting from the root of the tree and ending
at the \(\ctr\)\nbd outcome of \(R_{c_\lambda}^5\).

At \(s_5\), the \(U_0\)\nbd link is traveled and \(R_{c_\lambda}^5\) becomes
a controller with data
\(\cE^{\ctr}(R_{c_\lambda}^{5})=\cE^0(R_{c_\lambda}^{5}) \), which has no
more problems.

This finishes our monstrous example.
\end{example}

Our final remark is the following: Our priority tree is defined in a uniform
way and the examples above strictly follow this definition even though we
might have used a shortcut in certain cases; but optimizing the priority tree
is not our concern.

\begin{definition}\label{def:strong env}
Let \(\alpha\subseteq \beta\) be two nodes. We say that~\(\alpha\)
and~\(\beta\) have the same \emph{strong} \(<i\)\nbd environment
at stage~\(s\) if
\begin{enumerate}
\item\label{it:strong env 1}
\(\env^{<i}(\alpha) = \env^{<i}(\beta) =\eta\) for some \(S\)\nbd
node~\(\eta\). (This depends only on the priority tree.)
\item\label{it:strong env 2}
For each~\(\xi\) with \(\alpha\subseteq \xi^\frown U_j\subseteq \beta\) for
some \(j>i\),
if~\(U_j\) is Type~II, then~\(U_j\) is GREEN\@.
\end{enumerate}
\end{definition}

If \(\cD(\beta)=\alpha\) where~\(\alpha\) is a critical \(U_i^0\)\nbd
problem, then we should have \(\alpha=\alpha_0\subsetneq \alpha_1 \subsetneq
\cdots \subsetneq \alpha_{m-1}\) where each~\(\alpha_j\) is a \(U_i^j\)\nbd
problem and has the same strong \(<i\)\nbd environment. Then we
obtain strong \(U_i\)\nbd data according to Definition~\ref{def:2U strong
data}. See Lemma~\ref{lem:2U strong env} for a proof.

\subsection{The construction}\label{sec:2U construction}

At stage~\(s\), we first run the controller strategy (see below) and then the
\(G\)\nbd
strategy (see Section~\ref{sec:1U construction}). Then we perform \(\visit
(\lambda)\) (see below), where~\(\lambda\) is the root
of the priority tree~\(\cT\). We stop the current stage whenever we
perform
\(\visit(\alpha)\) for some~\(\alpha\) with \(\abs{\alpha}=s\).

\subsubsection*{\(\visit(\alpha)\) for an \(S\)-node:}

Suppose that there is some \(U_i\)\nbd link connecting~\(\alpha\) and
\(\beta^\frown o\) for some \(o\)\nbd outcome of~\(\beta\) (in fact,
\(o=U_{i-1}\) if \(i>0\); \(o=\ctr\) if \(i=0\)). Then we maintain each
\(\Gamma\)\nbd functional in \(\mt(\alpha,U_{<i})\)
(Definition~\ref{def:maintain2U}) and perform \(\enc(\beta,o)\).

Suppose that there is no link. Then we maintain each \(\Gamma\)\nbd
functional in \(\mt(\alpha)\) (Definition~\ref{def:maintain2U}) and stop
the current substage and perform
\(\visit(\alpha^\frown 0)\) for the \(R\)\nbd node \(\alpha^\frown 0\).

To build and maintain a \(\Gamma^{E\oplus U}=C\) (for some \(c\in \Ji(\cL)\))
that belongs to \(\mt(\alpha,U_{<i})\) or \(\mt(\alpha)\), depending on which
case we have, we do the following: For each \(x\le s\):
\begin{enumerate}
\item
Suppose \(\Gamma_s^{E\oplus U}(x)\downarrow=C_s(x)\). Then~\(\beta\) does
nothing else.
\item
Suppose \(\Gamma_s^{E\oplus U}(x)\downarrow\neq C_s(x)\) with use block
\(\sB=\bB_s(\gamma,x)\). \begin{enumerate}
\item
If~\(\sB\) is killed and not \(E\)\nbd restrained, then~\(\beta\)
enumerates an unused point, referred to as a \emph{killing point},
into~\(\sB\) via~\(E\). Then we go to~(3) immediately.
\item
If~\(\sB\) is not killed and not \(E\)\nbd restrained, then~\(\beta\)
enumerates an unused point, referred to as a \emph{correcting point},
into~\(\sB\) via~\(E\). Then we redefine \(\Gamma_s^{E\oplus U}(x) =
C_s(x)\) with the \emph{same} use block~\(\sB\).
\end{enumerate}
\item
Suppose \(\Gamma_s^{E\oplus U}(x)\uparrow\). If each
\(\bB_{\bday{t}}(\gamma,x)\) with \(t<s\) has been \emph{killed},
then~\(\beta\) will pick a fresh use block~\(\sB'\) and define
\(\Gamma_s^{E\oplus U}(x) = C_s(x)\) with use block~\(\sB'\) (hence
\(\sB'=\bB_{\bday{s}}(\gamma,x)\)); otherwise, we define \(\Gamma_s^{E\oplus
U}(x)=C_s(x)\) with the use block that is not killed (there will be at most
one such use block).
\end{enumerate}

\subsubsection*{\(\visit(\alpha)\) for an \(R\)-node:}

If \(\tp(\alpha)\) is not defined, then we define it to be a fresh number.
Then we perform \(\enc(\alpha,d)\).

Without loss of generality, we assume that~\(\alpha\) is assigned an
\(R_c(\Phi)\)\nbd requirement for some \(c\in \Ji(\cL)\) and \(\Phi\); let
\(\abs{\seq(\alpha^{-})}=k\).

\subsubsection*{\(\enc(\alpha,d)\):}

If~\(d\) is active, then we perform \(\visit(\alpha^\frown d)\). If~\(d\) is
inactive, we perform \(\enc(\alpha,w)\).

\subsubsection*{\(\enc(\alpha,w)\):}

If \(\dw(\alpha)\) is not defined, then we pick a fresh number
\(x>\tp(\alpha)\) and define \(\dw(\alpha)=x\). If a computation~\(y\) is
found by~\(\alpha\), then we obtain \(\cE_s^\varnothing(\alpha)\) and
\(\cS_s^\varnothing(\alpha)\) (Definition~\ref{def:2U 0 data}). Then we
perform \(\enc(\alpha,U_{k-1})\), where~\(U_{k-1}\) is the first outcome
(recall from Definition~\ref{def:2Utree} that we add outcomes in order).

\subsubsection*{\(\enc(\alpha,U_i)\):}

Inductively we must have obtained \(\cE^{i+1}(\alpha)\) (or
\(\cE^\varnothing(\alpha)\) if \(i=k-1\)).
\begin{enumerate}
\item
If the \(U_i\)\nbd outcome is Type~I, then let \(v=\tp(\alpha)\). For each
functional~\(\Gamma\) (where~\(\Delta\) is dealt with similarly) that
belongs to \(\kl(\alpha,U_{\ge i})\) (Definition~\ref{def:kill2U}) and for
each~\(x\) with \(v\le x\le s\), let \(\sB_x=\bB_s(\gamma,x)\) be the use
block. We enumerate an unused point (\emph{killing point}) into~\(\sB_x\)
and declare that~\(\sB_x\) is \emph{killed}.

Let~\(\Delta\) belong to \(\mt(\alpha,U_i)\). Without loss of generality,
we assume this functional is to ensure \(\Delta^{E\oplus C_0\oplus \cdots
\oplus C_{r-1}} = U_i\) for some \(c_0, \ldots, c_{r-1}\in \Ji(\cL)\).

\begin{enumerate}
\item
Suppose that \(\Delta_s^{E\oplus C_0\oplus \cdots \oplus
C_{r-1}}(x)\downarrow=U_s(x)\). Then~\(\beta\) does nothing else.
\item
Suppose we have \(\Delta_s^{E\oplus C_0\oplus \cdots \oplus
C_{r-1}}(x)\downarrow\neq U_s(x)\) with use block
\(\sB=\bB_s(\delta,x)\).
\begin{enumerate}
\item
If~\(\sB\) is killed and \(E\)\nbd free, then~\(\beta\) enumerates an
unused point, referred to as a \emph{killing point}, into~\(\sB\)
via~\(E\). Then we go to~(3) immediately.
\item
If~\(\sB\) is killed and \(E\)\nbd restrained, we let~\(C_i\) (\(i<k\))
be a set such that~\(\sB\) is \(C_i\)\nbd free (we will show
such~\(C_i\) exists); then~\(\beta\) enumerates an unused point,
referred to as a \emph{killing point}, into~\(\sB\)
via~\(C_i\).~\(\sB\) is then \emph{permanently killed} (as~\(C_i\) will
be a c.e.\ set). Then we go to~(3) immediately.
\item
If~\(\sB\) is not killed and \(E\)\nbd free, then~\(\beta\) enumerates
an unused point, referred to as a \emph{correcting point}, into~\(\sB\)
via~\(E\). Then we define \(\Delta_s^{E\oplus C_0\oplus \cdots \oplus
C_{r-1}}(x)=U_s(x)\) with the same use block~\(\sB\).
\item
If~\(\sB\) is not killed and \(E\)\nbd restrained, we let~\(C_i\) for
some \(i<k\) be a set such that~\(\sB\) is \(C_i\)\nbd free (we will
show such~\(C_i\) exists); then~\(\beta\) enumerates an unused point,
referred to as a \emph{correcting point}, into~\(\sB\) via~\(C_i\).
Then we define \(\Delta_s^{E\oplus C_0\oplus \cdots \oplus
C_{r-1}}(x)=U_s(x)\) with the same use block~\(\sB\).
\end{enumerate}
\item
Suppose that \(\Delta_s^{E\oplus C_0\oplus \cdots \oplus
C_{r-1}}(x)\uparrow\). If for each \(t<s\), \(\bB_{\bday{t}}(\delta,x)\)
is killed, then~\(\beta\) will choose a fresh use block~\(\sB'\) and
define
\[
\Delta_s^{E\oplus C_0\oplus \cdots \oplus C_{r-1}}(x) = U_s(x)
\]
with use block~\(\sB'\) (hence \(\sB'=\bB_{\bday{s}}(\delta,n)\));
otherwise, we will define \(\Delta_s^{E\oplus C_0\oplus \cdots \oplus
C_{r-1}}(x) = U_s(x)\) with the use block that is not killed (there will
be at most one such use block).
\end{enumerate}
Then we stop the current substage and perform \(\visit(\alpha^\frown U_i)\)
for the \(S\)\nbd node \(\alpha^\frown U_i\).

\item
If the \(U_i\)\nbd outcome is GREEN\@, then let \(v=\tp(\alpha)\). For each
functional~\(\Gamma\) (where~\(\Delta\) is dealt with similarly) that
belongs to \(\kl(\alpha,U_{\ge i})\) and for each~\(x\) with \(v\le x\le
s\), let \(\sB_x=\sB_s(\gamma,x)\) be the use block. We enumerate an unused
point (\emph{killing point}) into~\(\sB_x\) and say~\(\sB_x\) is
\emph{killed}. Then we stop the current substage and perform \(\visit
(\alpha^\frown U_i)\) for the \(S\)\nbd node \(\alpha^\frown U_i\).
\item
If the \(U_i\)\nbd outcome is RED\@, then we obtain \(\cE_s^i(\alpha)\) and
\(\cS_s^i(\alpha)\) by Definition~\ref{def:2U weak data}. Then we perform
\(\enc(\alpha,U_{i-1})\) if \(i>0\); or \(\enc(\alpha,\ctr)\) if \(i=0\).
\end{enumerate}

\subsubsection*{\(\enc(\alpha,\ctr)\):}

Notice that we must have obtained~\(\cE^0(\alpha)\) and~\(\cS^0(\alpha)\).
Suppose that~\(\alpha\) is an \(R_c\)\nbd node for some \(c\in \Ji(\cL)\).
\begin{enumerate}
\item
Let \(\cE^{\ctr}(\alpha)\) and \(\cS^{\ctr}(\alpha)\) be obtained by
Definition~\ref{def:modified data and decision map}.
\item
Let~\(\alpha\) become a controller.
\item
We enumerate, for each \(\xi\in \cE^{\ctr}(\alpha)\) and each~\(i\), the
diagonalizing witness into the set~\(C\) if~\(\xi\) is not a (critical)
\(U_i\)\nbd problem (i.e.\@, \(\xi\) is also an \(R_c\)\nbd node).
\item
While~\(\alpha\) is a controller, we put a restraint on \(\hat{C}\res
s_\alpha^{\ctr}\) for each \(\hat{c}\neq c\), where \(s_\alpha^{\ctr}\) is
the current stage~\(s\).
\end{enumerate}
Then we stop the current stage.

\subsubsection*{controller-strategy:}

Let~\(\beta\) (if any) be a controller of highest priority such
that~\(\beta\) sees some noise (Definition~\ref{def:noise}). We initialize
all nodes to the right of \(\beta^\frown \ctr\). Suppose~\(\beta\) is an
\(R_c\)\nbd node.
\begin{enumerate}
\item
If~\(\beta\) sees also some threats (Definition~\ref{def:threats}), then we
also initialize \(\beta^\frown \ctr\) (i.e.\@, we
discard~\(\cE^{\ctr}(\beta)\) and~\(\cS^{\ctr}(\beta)\)).
\item
If~\(\beta\) does not change its decision, then we do nothing.
\item\label{it:2U controller 3}
If~\(\beta\) changes its decision and (by Lemma~\ref{lem:decision}) \(\cD_s
(\beta)=\xi\), then we
restore~\(y_\xi\) and set a restraint on \(E\res y_\xi\) (each use block
\(\sB<y_\xi\) becomes \(E\)\nbd restrained) until the next time~\(\beta\)
changes its decision. Furthermore,
\begin{enumerate}
\item\label{it:2U controller a}
Suppose that \(\xi\in \cE^{\ctr}(\beta)\) is not a critical \(U_i\)\nbd
problem for~\(\beta\). Then we activate the \(d\)\nbd outcome of~\(\xi\).
\item\label{it:2U controller b}
Suppose that~\(\xi\) is a critical \(U_i^b\)\nbd problem for~\(\beta\)
and \(b>0\). Then we turn the GREEN \(U_i\)\nbd outcome of~\(\xi^{\sharp
i}\) RED (once~\(\beta\) changes its decision or is initialized, it turns
back to GREEN).
\item\label{it:2U controller c}
Suppose that~\(\xi\) is a critical \(U_i^0\)\nbd problem for~\(\beta\).
Let \(\xi=\alpha_0\subsetneq \cdots \subsetneq \alpha_{m-1}\) be a
sequence of nodes where each~\(\alpha_j\) is a critical \(U_i^j\)\nbd
problem for some controller and all nodes have the same strong \(<i\)\nbd
environment (see Lemma~\ref{lem:2U strong env}). We
pick two nodes \(\alpha_j\subsetneq \alpha_k\) which are both \(R_d\)\nbd
nodes for some \(d\in \Ji(\cL)\). We obtain the strong \(U_i\)\nbd data
\(\cE^i(\alpha_k)\) and~\(\cS^i(\alpha_k)\) (Definition~\ref{def:2U
strong data}) and establish a \(U_i\)\nbd link starting from the
\(S\)\nbd node \(\env^{<i}(\alpha_k)\) and ending at the \(U_{i-1}\)\nbd
outcome of~\(\alpha_k\) if \(i>0\); or the \(\ctr\)\nbd outcome if
\(i=0\). The \(U_i\)\nbd link will be destroyed once traveled or
\(\beta\) changes decision.
\end{enumerate}
\end{enumerate}

% \subsubsection{Construction}\label{subsec:2Uconstruction}

% At stage~\(s\), we first run the controller strategy and the \(G\)\nbd
% strategy. Then we perform \(\visit(\lambda)\), where~\(\lambda\) is the root
% of the priority tree~\(\cT\). We stop the current stage whenever we perform
% \(\visit(\alpha)\) for some~\(\alpha\) with \(\abs{\alpha}=s\).

We remark that we did not explicitly mention how big a use block should be
just to avoid getting distracted by this technical issue. See
Lemma~\ref{lem:2U block size}.

\subsection{The verification}\label{sec:2U verification}

We have to show that the size of each use block can actually be chosen
sufficiently large so that this construction does not terminate unexpectedly.

\begin{lemma}[Block size]\label{lem:2U block size}
Each use block can be chosen sufficiently large.
\end{lemma}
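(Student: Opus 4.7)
The plan is to adapt the argument from Lemma~\ref{lem:block size} to the multi-$U$ setting. Fix a fresh use block $\sB = [a, a+l)$ maintained by some node; the goal is to exhibit a computable function bounding the number of times $\sB$ can be injured and then restored, so that choosing $l$ at least this large guarantees an unused element is available whenever one is needed.

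First I would generalize the single-$U$ counting argument. If $\sB$ interacts with a controller $\beta$ having $\cE^{\ctr}(\beta)$, then enumerations into or extractions from $\sB$ occur only when $\cD_s(\beta)$ changes value. By Definition~\ref{def:modified data and decision map}, the value of $\cD_s(\beta)$ is determined by the conditions $\Cond_\beta^i(\xi)(s)$ for $\xi \in \cE^{\ctr}(\beta)$ and each relevant level $i$, and each such condition inspects $U_i$ only on an initial segment of length at most $y_\beta \le a$. Since every $U_i$ is d.c.e., the number of stages at which $U_i \res a$ can change is bounded by a computable function $p_i(a)$. Only finitely many indices $i$ are relevant to $\beta$, so the total number of decision changes of $\beta$ is bounded by a computable function $q(a)$.

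Next I would show that the set of controllers which can act on $\sB$ is tightly constrained. As in the 1U case, a controller $\beta'$ of lower priority than the controller $\beta$ currently interacting with $\sB$ is initialized the moment $\sB$ is genuinely affected (since $\beta$ sees noise), so $\beta'$ cannot start a fresh round of interactions with $\sB$ without $\beta$ being blamed. A controller $\beta''$ of higher priority that became a controller before $\sB$ was created satisfies $s_{\beta''}^{\ctr} < y_\beta < a$, so it views $\sB$ as a large fresh object to which none of its reference lengths reach. In the 2U setting, the nested hierarchy of $U_i$-links and strong $U_i$-data obtained through Definition~\ref{def:2U strong data} can chain several controllers that all reference $\sB$, but the depth of this chain is bounded by the finite number $k$ of $U$-sets relevant at the node maintaining $\sB$, which is fixed by the priority tree. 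An induction on this nesting depth, multiplied by the bound $q(a)$ at each level, yields the required computable bound $p(a)$.

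With $p(a)$ in hand, every fresh use block is declared to be the interval $[a, a+p(a))$, where $a$ is a fresh number large enough to leave room between adjacent use blocks for witnesses and coding points from the $G$-strategy. The main obstacle will be making the nesting induction precise: one must verify that the controller strategy in item~\ref{it:2U controller c}, which establishes new $U_i$-links and strong $U_i$-data, never creates an unbounded chain of controllers affecting $\sB$. This should follow because each new $U_i$-link is created only when a critical $U_i^0$-problem arises and uses a strictly smaller level index than the $U_{i-1}, \dots, U_0$ to which subsequent extensions descend, so the chain of links associated to a single fixed use block has depth at most the finite number of $U$-indices present in the ambient $\seq(\beta^-)$.
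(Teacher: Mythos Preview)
Your argument has a real gap at the point where you write that ``only finitely many indices $i$ are relevant to $\beta$, so the total number of decision changes of $\beta$ is bounded by a computable function $q(a)$,'' and later that the nesting depth is ``the finite number $k$ of $U$-sets relevant at the node maintaining $\sB$, which is fixed by the priority tree.'' The problem is that the node~$\alpha$ which \emph{creates} the use block $\sB$ is not the controller~$\beta$ which later \emph{interacts} with it; $\beta$ can be arbitrarily deep below~$\alpha$ on the tree, so $k=|\seq(\beta^-)|$ is not known to~$\alpha$ at the moment the block size must be chosen. Your function $q(a)$ silently depends on~$k$, and without a uniform bound on~$k$ in terms of~$a$ you do not have a computable block size. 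The paper addresses exactly this: it bounds the number of relevant changes by a function $p(a,k)$ of two arguments, and then observes that by convention one may take $y_\beta\ge k+1$, so $k<y_\beta<a$, and hence declaring the block to be $[a,a+p(a,a))$ suffices.

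A secondary point: your ``nesting induction'' over chains of controllers is more machinery than needed. The one-controller argument from Lemma~\ref{lem:block size} still applies verbatim in the multi-$U$ setting: a lower-priority controller is initialized as soon as~$\beta$ sees noise, and a higher-priority controller~$\beta''$ has $s_{\beta''}^{\ctr}<y_\beta<\sB$, so its reference lengths do not reach~$\sB$. Thus only one controller ever interacts with a given use block, and the sole new ingredient in the 2U case is replacing the single-$U$ change count by the union over $i<k$ of $U_i$-changes, together with the bookkeeping trick $p(a,a)$ just described.
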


\begin{proof}
We refer the reader to Lemma~\ref{lem:block size}. Consider a use
block~\(\sB\) interacting with a controller~\(\beta\) with \(y_\beta<\sB\).
Recall that the number of times that \(B=[a,b)\) is injured and then restored
depends on the number of changes that \(U\res a\) can have. Since we have
multiple \(U\)\nbd sets now, we need to be more careful. Suppose that~\(B\)
is interacting with a controller~\(\beta\) such that \(U_0,\ldots, U_{k-1}\)
are relevant to~\(\beta\). According to the decision function, the number of
times that \(B=[a,b)\) is injured and then restored is bounded by the size of
the following set
\[
S=\{s\mid \diff(U_i,a,s-1,s) \text{for some \(i<k\)}\},
\]
which can be bounded by a computable function \(p(a,k)\).

We caution the reader that when we define the use block~\(\sB\)
at~\(\alpha\), we do not know where the controller~\(\beta\) is located and
which \(U\)\nbd sets are relevant to~\(\beta\). In particular, the number
\(k=\abs{\seq(\beta^-)}\) is unknown to~\(\alpha\). Therefore, we have to
prepare ahead of time. We simply define~\(B\) to be \([a,a+p(a,a))\) for some
fresh number~\(a\) and~\(B\) will be sufficiently large. More precisely, we
assume that \(y_\beta>k\) by taking \(\max\{k+1,y_\beta\}\) as the
value~\(y_\beta\). Therefore \(k<y_\beta<a<b\) and \(\sB\) is sufficiently
large.
\end{proof}

The following lemma justifies the first line of the controller
strategy~(\ref{it:2U controller 3}), which requires that \(\cD_s(\beta)\) is
always defined.

\begin{lemma}[decision]\label{lem:2U decision}
Let~\(\beta\) be a controller. For each
\(s>s_\beta^{\ctr}(\beta)\),~\(\cD_s(\beta)\) is defined.
\end{lemma}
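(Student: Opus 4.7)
The plan is to reduce the lemma, via Definition~\ref{def:modified data and decision map}, to exhibiting at each stage $s > s_\beta^{\ctr}$ some $\xi \in \cE^{\ctr}(\beta)$ such that every condition $\Cond_\beta^i(\xi)$ evaluates to $1$ at $s$. I will construct such a $\xi$ by traversing the data tree $\cS^{\ctr}(\beta) = (S', f', g', h')$ from the root down to a leaf, at each step descending into whichever subtree has its newly added $U$-condition satisfied at $s$, and then showing that the $R$-node labeling the leaf is the required $\xi$.

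The first step will be to record a simple dichotomy for every $\otimes_{s_0}$ operation (Definition~\ref{def:2U oplus data}) at level $U_i$ joining summands $Z_0$ and $Z_1$: at any stage $t \ge s_0$, either $\same(U_i, y_\gamma, s_0, t)$ or $\diff(U_i, y_\gamma, s_0, t)$ holds, where $y_\gamma$ is the reference length used on the \emph{diff} side (the largest of the $y_\xi$ for $\xi \in Z_1$). In the $\same$ case, the condition $\Cond^i(\xi) = \same(U_i, y_\xi, s_0, \cdot)$ attached to each $\xi \in Z_1$ also holds because $y_\xi \le y_\gamma$; in the $\diff$ case, the condition $\Cond^i(\xi) = \diff(U_i, y_\gamma, s_0, \cdot)$ attached to each $\xi \in Z_0$ holds verbatim. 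Hence exactly one of the two subtrees has all its newly added $U_i$-conditions satisfied at $t$, and the traversal can always proceed.

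Applying this dichotomy recursively descends through $\cS^{\ctr}(\beta)$ until reaching a leaf $\tau$. If $\tau$ is an original leaf of $\cS^0(\beta)$, then the conditions attached to $h(\tau)$ in $\cE^{\ctr}(\beta)$ are exactly the $U$-conditions added along the traversed path (Definition~\ref{def:modified data and decision map}(c)(i)), all of which are satisfied at $s$ by construction; so $\cD_s(\beta) = h(\tau)$ works. Otherwise $\tau = \sigma 0$ with $\alpha = h(\tau)$ a critical $U_{|\sigma|}$-problem, and only the conditions $\Cond^j(\alpha)$ for $j \le |\sigma|$ survive in $\cE^{\ctr}(\beta)$, with each $\same$-type condition replaced by one using the enlarged reference length $y'_\alpha = y_{h(\sigma 00\cdots 0)}$ (Definition~\ref{def:modified data and decision map}(c)(ii),(iii)). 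The $\diff$-type conditions are kept verbatim and are satisfied by the traversal as in the previous case.

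The main obstacle I anticipate is the bookkeeping for the modified $\same$-type conditions at a critical-problem leaf. For each such condition at level $j$, the traversal selected the right-hand subtree at the ancestor that introduced it, so $\same(U_j, y_\gamma, s_*, s)$ holds for the ancestor's reference length $y_\gamma$. I need to upgrade this to $\same(U_j, y'_\alpha, s_*, s)$, which reduces to the inequality $y'_\alpha \le y_\gamma$. This in turn follows because the right summand at that ancestor in $\cS^0$ contains all $\cS^0$-descendants of $\sigma 0$, and in particular $h(\sigma 00\cdots 0)$, so by Lemma~\ref{lem:order} together with the convention that shorter $R$-nodes carry larger computations, $y_\gamma$ is the maximum $y$ over that summand and therefore dominates $y'_\alpha$. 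With this inequality in hand, the modified $\same$-conditions hold, the traversal delivers the required $\xi$, and $\cD_s(\beta)$ is well-defined.
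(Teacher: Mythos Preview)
Your proposal is correct and follows essentially the same approach as the paper's proof: both traverse the data tree from the root, using at each internal node the dichotomy between $\same$ and $\diff$ on the reference length $y_\gamma$ to select a subtree in which all level-$i$ conditions hold, and then argue that the leaf reached witnesses that $\cD_s(\beta)$ is defined. The paper compresses this into a one-line inductive claim (with $y_* = \max\{y_\tau \mid \sigma 1 \subseteq \tau \in \cS^{\ctr}(\beta)\}$ playing the role of your $y_\gamma$) and defers the details to Definitions~\ref{def:2U oplus data} and~\ref{def:modified data and decision map}; you are simply more explicit, in particular in verifying the inequality $y'_\alpha \le y_\gamma$ needed for the modified $\same$-type conditions at a critical-problem leaf.
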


\begin{proof}
We refer the reader to Definition~\ref{def:modified data and decision map}
for \(\cE^{\ctr}(\beta)\).
Our goal is to show that for each \(s>s_\beta^{\ctr}(\beta)\),
\[
    \{\xi\in \cE^{\ctr}(\beta)\mid \Cond^i(\xi)(s)=1
\text{ for each~}i\}\neq \varnothing,
\]
which then implies that \(\cD_s(\beta)\) is defined.

The proof proceeds by a straightforward induction with the following claim:

\emph{Claim. Given \(\sigma\in \cS^{\ctr}(\beta)\) with \(\abs{\sigma}=i\).
Let \(y_*=\max\{y_\tau\mid \sigma 1\subseteq \tau\in \cS^{\ctr}(\beta)\}\)
and \(s_*\) be the stage when \(\cE^{i}(h(\sigma))=\cE^{i+1}(h(\sigma
0))\otimes_ {s_*} \cE^{i+1}(h(\sigma 1))\) is obtained. If
\(\same(U_i,y_*,s_*,s)\) holds, then for each \(\tau\) with \(\sigma
1\subseteq \tau\), \(\Cond^i(h(\tau))(s) =1\); if \(\diff(U_i,y_*,s_*,s)\)
holds, then for each \(\tau\) with \(\sigma 0\subseteq \tau\),
\(\Cond^i(h(\tau))(s)=1\).}

\emph{Proof of the claim.}
The proof follows directly from Definition~\ref{def:2U oplus data} and
Definition~\ref{def:modified data and decision map}.
\end{proof}

The following lemma with \(v=0\) justifies the controller
strategy~\eqref{it:2U controller c}.

\begin{lemma}\label{lem:2U strong env}
Suppose that at stage~\(s\) we have \(\cD_s(\beta)=\xi\) for a controller
strategy~\(\beta\) and that~\(\xi\) is a critical \(U_i^v\)\nbd problem
for~\(\beta\) (where \(v<m\)). Then there exists a sequence of nodes
\[
\xi=\alpha_v\subsetneq \alpha_{v+1} \subsetneq \cdots \subsetneq \alpha_{m-1},
\]
where each~\(\alpha_k\) is a critical \(U_i^k\)\nbd problem for some
controller~\(\beta_k\) with \(\cD_s(\beta_k)=\alpha_k\), and each pair of
them has the same strong \(<i\)\nbd environment
(Definition~\ref{def:strong env}).
\end{lemma}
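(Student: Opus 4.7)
The plan is to proceed by reverse induction on $v$, starting from $v = m-1$ (where the singleton $\{\xi\}$ trivially works) and building the chain downward. For the inductive step with $v < m-1$, I will exhibit a critical $U_i^{v+1}$-problem $\alpha_{v+1}$ for some controller $\beta_{v+1}$ with $\cD_s(\beta_{v+1}) = \alpha_{v+1}$, such that $\alpha_v \subsetneq \alpha_{v+1}$ and the two share the strong $<i$-environment; then applying induction to $\alpha_{v+1}$ completes the chain.

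To locate $\alpha_{v+1}$, I will use a history traceback. Since $\alpha_v$ is a critical problem, Lemma~\ref{lem:critical} tells us the associated $\sigma \in \cS^{\ctr}(\beta)$ with $h(\sigma 0) = \alpha_v$ has $g(\sigma) = 0$, so $\cE^i(h(\sigma))$ was formed as weak $U_i$-data (Definition~\ref{def:2U weak data}); this required $h(\sigma)$'s $U_i$-outcome to be RED at the time of formation, and this RED status persists to stage $s$ since $\beta$ has not been initialized and the weak data has not been invalidated. Because $v < m-1$, the RED status is not by default, so it must have been induced by controller strategy~\eqref{it:2U controller b} invoked by some controller $\beta_{v+1}$ whose decision $\alpha_{v+1}$ is a critical $U_i^{v+1}$-problem, with $\alpha_{v+1}^{\sharp i}$ being the node whose $U_i$-outcome was turned RED. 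The decision of $\beta_{v+1}$ must still be $\alpha_{v+1}$ at stage $s$ (otherwise the outcome would have reverted to GREEN), and $\alpha_v \subsetneq \alpha_{v+1}$ follows from the nesting $\alpha_v \subseteq \alpha_{v+1}^{\sharp i} \subsetneq \alpha_{v+1}$.

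For the strong $<i$-environment, I will check both clauses of Definition~\ref{def:strong env}. The equality $\env^{<i}(\alpha_v) = \env^{<i}(\alpha_{v+1})$ holds because all $U$-outcomes encountered in passing from $\alpha_v$ to $\alpha_{v+1}$ have index $\ge i$, leaving lower-index components of $\seq$ unchanged. The GREEN condition on Type~II $U_j$-outcomes for $j > i$ along this path follows from priority considerations: any such outcome turned RED would be maintained by a controller of lower priority than $\beta_{v+1}$, which would be initialized whenever $\beta_{v+1}$ acts, restoring GREEN. The pairwise strong environment condition across the full chain then follows by transitivity along the nested path.

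The hard part will be the careful bookkeeping when the copy of $h(\sigma)$ in the $i$-th component strictly exceeds $v+1$ (due to intermediate Type~II GREEN transitions between $\alpha_v^\frown U_i$ and $h(\sigma)$), since then a naive traceback would yield an $\alpha_{v+1}$ with copy greater than $v+1$, and an iterative refinement is needed to produce the tight chain with consecutive copy numbers. Formalizing the priority argument for the strong environment also requires a detailed analysis of the layout of $\cT$ and the evolving controller hierarchy throughout the construction; this is where the combinatorial complexity of the lemma is concentrated.
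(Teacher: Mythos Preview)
Your approach is essentially the paper's: reverse induction on~$v$, traceback through the RED $U_i$-outcome of $\eta=h(\sigma)$ to locate the controller~$\beta_{v+1}$ that set it RED, then invoke the induction hypothesis on~$\alpha_{v+1}$.

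Two corrections. First, the ``hard part'' you flag does not arise. Since $\alpha_v=h(\sigma0)=\eta^{*i}$, the node~$\eta$ has the \emph{same} $i$-th copy number~$v$ as~$\alpha_v$ (by the definition of~$*i$, both deal with the same copy of the $U_i$-functionals). Then $\alpha_{v+1}^{\sharp i}=\eta$ forces $\alpha_{v+1}$ to have $i$-th copy exactly $v+1$, because by definition $\beta^{\sharp i}$ is the node whose Type~II $U_i$-outcome increments the copy number to that of~$\beta$. So the traceback directly yields a critical $U_i^{v+1}$-problem; no iterative refinement is needed.

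Second, your priority argument for Definition~\ref{def:strong env}\eqref{it:strong env 2} is too coarse. It is not automatic that the controller~$\chi$ maintaining a hypothetical RED $U_j$-outcome ($j>i$) between~$\alpha_v$ and~$\alpha_{v+1}$ has lower priority than~$\beta_{v+1}$. The paper splits according to whether the offending $\rho^\frown U_j$ lies in $[\alpha_v^\frown U_i,\eta]$ or in $[\eta^\frown U_i,\alpha_{v+1}]$, and in each case compares~$\chi$ against~$\beta$ (respectively~$\beta_{v+1}$), handling separately the subcases $\chi^\frown\ctr$ to the left and to the right. In the ``left'' subcase the relevant controller would have been initialized or never visited; in the ``right'' subcase~$\chi$ is initialized and the outcome reverts to GREEN. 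You should make this two-by-two case analysis explicit.
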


\begin{proof}
For \(v=m-1\), the lemma holds vacuously. So assume \(v<m-1\) and suppose
that the lemma holds for \(v+1\), we will show that it holds for~\(v\).

Let \(\beta\) be the controller as in the hypothesis of the lemma, and
\(\cS^{\ctr}(\beta)\) be the data tree of it. Since \(\xi=\alpha_v\) is a
critical \(U_i^v\)\nbd problem of \(\beta\), there exists some \(\sigma\in
\cS^{\ctr}(\beta)\) such that \(\alpha_v=h(\sigma 0)\) (by the remark after
Definition~\ref{def:U_problem}). Let \(\eta=h(\sigma 1)=h(\sigma)\),
then~\(\eta^{*i} =\alpha_v\). Moreover, the data
\(\cE^{i}(\eta)=\cE^{i+1}(\eta)\otimes \cE^{i+1} (\alpha_v)\) belongs to
\(\cE^{\ctr}(\beta)\). As a consequence, we have that
\begin{enumerate}
\item\label{it:2Uit1}
\({\alpha_v}^\frown U_i \subseteq \eta \subseteq \beta\);
\item\label{it:2Uit2}
the \(U_i\)\nbd outcome of \(\eta\) is a RED outcome; and
\item\label{it:2Uit3}
\(\eta^\frown U_i\) is to the left of \(\beta^\frown {\ctr}\) (allowing
\(\eta=\beta\)).
\end{enumerate}
From Item~(\ref{it:2Uit2}), there exists some controller \(\beta_{v+1}\) with
\(\cD(\beta_{v+1})=\alpha_{v+1}\) such that \(\alpha_{v+1}^{\sharp i}=\eta\)
and \(\alpha_{v+1}\) is a critical \(U_i^{v+1}\)\nbd problem for \(\beta_{v+1}
\). By induction hypothesis, we can find a sequence of nodes
\[
\alpha_{v+1}\subsetneq \alpha_{v+2}\subsetneq \cdots \subsetneq \alpha_{m-1}
\]
such that the lemma holds. By the remark below Definition~\ref{def:env}, we
have that \(\env^{<i}(\alpha_v)=\env^ {<i}(\alpha_{v+1})\). It remains to
show that~\(\alpha_v\) and~\(\alpha_{v+1}\) satisfy Item~(\ref{it:strong env
2}) in Definition~\ref {def:strong env}.

Suppose towards a contradiction that there exists some \(\rho\) such that
\(\alpha_v^\frown U_i\subseteq \rho^\frown U_j \subseteq \alpha_{v+1}\),
where \(j>i\) and the \(U_j\)\nbd outcome is RED. Let \(\chi\) be the
controller which is responsible for turning this \(U_j\)\nbd outcome RED;
i.e., \(\cD(\chi)\) is a critical \(U_j\)\nbd problem and
\((\cD(\chi))^{\sharp j}=\rho\).

By Items~(\ref{it:2Uit1}) and~(\ref{it:2Uit3}), either \(\alpha_v^\frown
U_i\subseteq \rho^\frown U_j \subseteq \eta\subseteq \beta\) or \(\eta^\frown
U_i\subseteq \rho^\frown U_j\subseteq \alpha_{v+1}\). In the first case, if
\(\chi^\frown \ctr\) is to the \emph{left} of \(\beta^\frown \ctr\), then the
moment~\(\chi\) turns the \(U_j\)\nbd outcome of~\(\rho\) RED,
%STEFFEN24: Rephrased line above and below
we will not visit any node below \(\rho^\frown U_j\), and so in particular
not~\(\beta\). This contradicts that \(\beta\) is a working controller at the
current stage; if \(\chi^\frown \ctr\) is to the \emph{right} of
\(\beta^\frown \ctr\), then as soon as \(\cD(\beta)=\alpha_v\), \(\chi^\frown
\ctr\) is initialized and the \(U_j\)\nbd outcome of \(\rho\) is reset to
GREEN, a contradiction. In the second case, we have both the~\(U_i\)- and the
\(U_j\)\nbd outcomes RED. If \(\chi^\frown \ctr\) is to the \emph{left} of
\({\beta_{v+1}}^\frown \ctr\), the moment \(\chi^\frown \ctr\) turns the
\(U_j\) \nbd outcome of \(\rho\) RED, \(\beta_{v+1}\) is initialized and we
will never visit nodes below \(\rho^\frown U_j\), in particular
never~\(\beta_{v+1}\), contradicting that \(\beta_{v+1}\) is a working
controller; if \(\chi^\frown \ctr\) is to the \emph{right} of
\({\beta_{v+1}}^\frown \ctr\), then as soon as \(\cD
(\beta_{v+1})=\alpha_{v+1}\), we have that~\(\chi\) is initialized and the
\(U_j\)\nbd outcome of \(\rho\) is reset to GREEN, a contradiction.
%STEFFEN24: As per my email, also check Def 5.26(1) here!

This completes the proof.
\end{proof}

The following three lemmas justify the restoration part of the controller
strategy~(\ref{it:2U controller 3}):

\begin{lemma}\label{lem:induction step}
Suppose \(\cE^i(\beta)=\cE^{i+1}(\alpha)\otimes_s \cE^{i+1}(\beta)\)
(Definition~\ref{def:2U oplus data}), where \(\alpha\subsetneq \beta\) and
\(\alpha,\beta\) are \(R_d, R_c\)\nbd nodes for some \(d\le c\in \Ji(\cL)\),
respectively.
\begin{enumerate}
\item
For each \(\xi\in \cE^{i+1}(\alpha)\),
\(\Cond^i(\xi)(t)=\diff(U_i,y_\gamma,s,t)\) (where~\(y_\gamma\) is the
\(U_i\)\nbd reference length for \(\xi\)) and
\(\bigsame(\hat{D},y_{\beta^*},s,t)\) for each \(\hat{d}\ngeq d\) implies
that~\(\xi\) is \((U,D)\)\nbd restorable (Definition~\ref{def:U
restorable}) at \(t>s\).
\item
For each \(\xi\in \cE^{i+1}(\beta)\), if we have
\(\Cond^i(\xi)(t)=\same(U_i,y_\xi,s,t)\) and
\(\bigsame(\hat{C},y_\beta,s,t)\) for each \(\hat{c}\ngeq c\), then~\(\xi\)
is \((U,C)\)\nbd restorable.
\end{enumerate}
\end{lemma}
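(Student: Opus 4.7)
The plan is to prove Lemma~\ref{lem:induction step} by induction on the depth of the data tree $\cS^i(\beta)$, extending the argument of Lemma~\ref{lem:U restorable} to the multi-$U$ setting. Fix $\xi \in \cE^i(\beta)$ and a stage $t > s$; the task is to verify, for every use block $\sB < y_\xi$, that $\sB$ satisfies one of the clauses in Definition~\ref{def:U restorable} with respect to the relevant set ($D$ if $\xi \in \cE^{i+1}(\alpha)$, $C$ if $\xi \in \cE^{i+1}(\beta)$). I would classify $\sB$ by which $U_j$-functional it belongs to, and handle the three regimes $j = i$, $j > i$, $j < i$ separately.

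For $j = i$, the argument is a direct transcription of the proof of Lemma~\ref{lem:U restorable}. I would define a classification $\cQ(\sB)$ tracking where $\sB$ was maintained and killed within the hierarchy separated by $\alpha$ and $\beta$, and prove multi-$U$ analogues of Lemmas~\ref{lem:relevant} and~\ref{lem:structure of use block}: namely, that at this level every $\Gamma$-block computes a set $\hat{D}$ with $\hat{d} \ngeq d$ (resp.\ $\hat{c} \ngeq c$), and every $\Delta$-block crosses over $D$ (resp.\ over $C$). Running through the cases of $\cQ(\sB)$ exactly as in Lemma~\ref{lem:U restorable}, using the hypothesis $\Cond^i(\xi)(t)$ together with $\bigsame(\hat{D}, y_{\beta^{*i}}, s, t)$ (resp.\ $\bigsame(\hat{C}, y_\beta, s, t)$), yields the required clause of Definition~\ref{def:U restorable}.

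For $j > i$, the block $\sB$ was already relevant when $\cE^{i+1}(\alpha)$ or $\cE^{i+1}(\beta)$ was obtained, and by the $\otimes_s$ construction (Definition~\ref{def:2U oplus data}) every condition $\Cond^j(\xi)$ that belonged to the earlier data is carried unchanged into $\cC$. Hence the inductive hypothesis applied to the sub-data $\cE^{i+1}(\alpha)$ or $\cE^{i+1}(\beta)$ yields restorability against these blocks, \emph{provided} that the level-$j$ $\bigsame$ hypothesis is available. This last point follows from the ambient level-$i$ hypothesis together with Lemma~\ref{lem:order}: every node $h(\sigma 0)$ in $\cS^i(\beta)$ is an $R_e$-node with $e \le c$, so any $\hat{D}'$ with $\hat{d}' \ngeq d'$ arising at a lower level satisfies $\hat{d}' \ngeq d$ (resp.\ $\hat{c}' \ngeq c$), and is therefore already restrained up to the relevant length. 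For $j < i$, the slowdown condition (Definition~\ref{def:sd}, item~(\ref{it:sd 2})) guarantees that any use block created after $\xi$ found its computation already lies above $y_\xi$, so such blocks cannot lie below $y_\xi$ and are irrelevant.

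The main obstacle will be the bookkeeping of reference stages across the levels. Specifically, rigorously handling the $j > i$ case requires an auxiliary observation combining (i)~the monotonicity of reference stages as the data tree grows under $\otimes_s$, (ii)~the slowdown condition ensuring $y_\xi$ lies below every later reference stage, and (iii)~the inheritance of $\hat{C}$-restraints down the tree guaranteed by Lemma~\ref{lem:order}. Once this bookkeeping lemma is isolated and proved, the induction closes cleanly, and the proof reduces in each case to invoking either Lemma~\ref{lem:U restorable} (for $j = i$) or the inductive hypothesis (for $j > i$).
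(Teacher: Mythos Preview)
The paper's proof is a two-line case split: if $\cE^i(\beta)$ is weak $U_i$-data it cites Lemma~\ref{lem:U restorable}, and if strong it cites Lemma~\ref{lem:U restorable strong}. Your plan covers only the first branch.

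Your $j = i$ argument --- a $\cQ$-classification separated by $\alpha$ and $\beta$, together with analogues of Lemmas~\ref{lem:relevant} and~\ref{lem:structure of use block} --- is precisely the weak-data argument, and it tacitly presupposes $\alpha = \beta^{*i}$. All of its ingredients depend on that relation: Lemma~\ref{lem:relevant} uses the slowdown condition between $\beta$ and $\beta^{*i}$ to bound block-creation times; Lemma~\ref{lem:structure of use block} uses the shape of $\kl(\beta^{*i},U_i)$; and the key step for $\Gamma$-blocks with $\cQ(\sB)=(1,2)$ needs $\created(\sB) > s^*$ for the previous $\beta$-stage $s^*$. When $\cE^i(\beta)$ is \emph{strong} $U_i$-data (Definition~\ref{def:2U strong data}), $\alpha$ and $\beta$ are critical $U_i$-problems drawn from two \emph{different} controllers, with no $*$-relation between them and no common slowdown stage, so none of this goes through. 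The paper's route here is reductive (Lemma~\ref{lem:U restorable strong}): one shows that the new $\Cond^i(\xi)$ implies the \emph{old} controller's $\diff$-condition, and then invokes the weak-data restorability already established for that controller at its earlier stage. Your proposal supplies no such reduction.

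A smaller point on scope: the conclusion is $(U_i,\cdot)$-restorability only --- Definition~\ref{def:U restorable} is parameterized by a single set $U$ and concerns only blocks of $U_i$-functionals --- so your $j>i$ and $j<i$ regimes, and the induction on the depth of $\cS^i(\beta)$, are not needed for this lemma. That induction is exactly the content of the \emph{next} lemma in the paper, which layers the single-level Lemma~\ref{lem:induction step} over $\cS^{\ctr}(\beta)$ to obtain restorability at every level.
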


\begin{proof}
If~\(\cE^i(\beta)\) is weak \(U_i\)\nbd data (Definition~\ref{def:2U weak
data}), then the lemma follows as in
%STEFFEN24: Here and elsewhere, when we generalize from one to several U,
%we should not just say "from" but "as in"
Lemma~\ref{lem:U restorable}. If~\(\cE^i(\beta)\) is strong \(U_i\)\nbd data
(Definition~\ref{def:2U strong data}), then the lemma follows as in
Lemma~\ref{lem:U restorable strong}.
\end{proof}

\begin{lemma}
Let~\(\beta\), an \(R_c\)\nbd node for some \(c\in \Ji(\cL)\), be a
controller with \(\cE^{\ctr}(\beta)\) and \(\cS^{\ctr}(\beta)\). Suppose
\(\cD_s(\beta)=\xi\) where \(\xi=h(\sigma)\) for some leaf \(\sigma\in
\cS^{\ctr}(\beta)\). If~\(\xi\) is an \(R_c\)\nbd node, then~\(\xi\) is
\((U_i,C)\)\nbd restorable for each \(i<\abs{\sigma}\) (in this case, we say
that~\(\xi\) is \emph{restorable}). If~\(\xi\) is an \(R_d\)\nbd node for
some \(d<c\), then~\(\xi\) is \((U_i,D)\)\nbd restorable for each
\(i<\abs{\sigma}\) (in this case, we say that~\(\xi\) is \emph{weakly
restorable}).
\end{lemma}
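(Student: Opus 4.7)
The plan is to proceed by induction on $i$, decreasing from $|\sigma|-1$ down to $0$, applying Lemma~\ref{lem:induction step} at each depth of the data tree $\cS^{\ctr}(\beta)$. First I would fix the initial segments $\tau_0 = \lambda \subsetneq \tau_1 \subsetneq \cdots \subsetneq \tau_{|\sigma|} = \sigma$ with $\tau_{i+1} = \tau_i b_i$ where $b_i \in \{0,1\}$. By Lemma~\ref{lem:order} the types along this path are non-increasing, so $c = f(\tau_0) \ge \cdots \ge f(\tau_{|\sigma|}) = f(\xi)$, and writing $X = f(\xi)$ this is the type that the restorability conclusion must use at each level.

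For the inductive step at level $i$, I would invoke Lemma~\ref{lem:induction step} on the combination $\cE^i(h(\tau_i)) = \cE^{i+1}(h(\tau_i 0)) \otimes_{s_i} \cE^{i+1}(h(\tau_i 1))$, selecting part~(2) when $b_i = 1$ (so $\xi \in \cE^{i+1}(h(\tau_i 1))$) or part~(1) when $b_i = 0$. The hypothesis $\Cond^i(\xi)(s) = 1$ is supplied directly by $\cD_s(\beta) = \xi$ through Definition~\ref{def:modified data and decision map}. The auxiliary $\bigsame$-hypothesis concerns each $\hat{C}$ with $\hat{c} \ngeq f(\tau_i b_i)$: since $f(\tau_i b_i) \le c$, the relation $\hat{c} \ngeq f(\tau_i b_i)$ forces $\hat{c} \ne c$ (otherwise $\hat{c} = c \ge f(\tau_i b_i)$, a contradiction), so the controller's restraint on $\hat{C} \upharpoonright s_\beta^{\ctr}$ for each $\hat{c} \ne c$ supplies the required stability; here I rely on the slowdown condition to guarantee that all relevant computations and reference stages lie below $s_\beta^{\ctr}$. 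The conclusion then yields $(U_i, X)$-restorability of $y_\xi$.

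Iterating over all $i < |\sigma|$, the final distinction follows by reading off the controller strategy in $\enc(\alpha, \ctr)$: if $\xi$ is an $R_c$-node then $\xi$ is not a critical problem, so the controller enumerates the witness $x_\xi$ into $C$ and $\xi$ is genuinely restorable; if $\xi$ is an $R_d$-node with $d < c$, then (because $\sigma$ is a leaf of the truncated $\cS^{\ctr}(\beta)$) $\xi$ must be a critical $U_j$-problem for some $j$, and the controller refrains from enumerating $x_\xi$ into $D$, which by Definition~\ref{def:U restorable} leaves $\xi$ only weakly $U_i$-restorable.

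The main obstacle will be the careful bookkeeping around the reference-length adjustment in Definition~\ref{def:modified data and decision map}(2c3), where $\same$-typed conditions for a critical $\xi$ are rewritten with the enlarged reference length $y'_\xi$ in place of $y_\xi$. I would verify that this substitution only strengthens the condition, so that $\Cond^j_\beta(\xi)(s)=1$ in the modified form still delivers the original $\same(U_j, y_\xi, s_i, s) = 1$ needed to invoke Lemma~\ref{lem:induction step}. A secondary bookkeeping check is that the truncation from $\cS^0(\beta)$ to $\cS^{\ctr}(\beta)$ preserves the $\otimes_0$/$\otimes_1$ structure at every intermediate node on the path to $\sigma$, so that Lemma~\ref{lem:induction step} genuinely applies at each level.
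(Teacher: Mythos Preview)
Your proposal is correct and follows exactly the approach the paper takes: the paper's entire proof reads ``Induction on $\cS^{\ctr}(\beta)$, using Lemma~\ref{lem:induction step},'' and your plan is precisely this induction along the root-to-leaf path in the data tree, invoking Lemma~\ref{lem:induction step} at each level and supplying the $\bigsame$ hypotheses via the controller's restraint on $\hat{C}\res s_\beta^{\ctr}$ for $\hat{c}\neq c$. Your additional bookkeeping remarks (on the reference-length adjustment in Definition~\ref{def:modified data and decision map}(2)(c)(iii) and on the truncation from $\cS^0$ to $\cS^{\ctr}$) are reasonable expansions of what the paper leaves implicit.
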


\begin{proof}
Induction on \(\cS^{\ctr}(\beta)\), using
Lemma~\ref{lem:induction step}.
\end{proof}

Stated another way, we have the following

\begin{lemma}\label{lem:2U main lemma}
Let~\(\beta\) be a controller where~\(\beta\) is an \(R_c\)\nbd node for some
\(c\in \Ji(\cL)\). Let \(\xi\in \cE^{\ctr}(\beta)\).
\begin{enumerate}
\item
Suppose that~\(\xi\) is an \(R_c\)\nbd node. If \(\cD_s(\beta)=\xi\), then
we can restore~\(y_\xi\) at the beginning of stage~\(s\) and activate the
\(d\)\nbd outcome of~\(\xi\) and~\(y_\xi\) remains restored at each
substage of stage~\(s\).
\item
Suppose that~\(\xi\) is an \(R_d\)\nbd node with \(d<c\). So~\(\xi\) is a
critical \(U_i^b\)\nbd problem for some~\(i\) and some \(b>0\). If
\(\cD_s(\beta)=\xi\), then we can restore~\(y_\xi\) at the beginning of
stage~\(s\) and turn the GREEN \(U_i\)\nbd outcome of \(\xi^{\sharp i}\)
RED. Furthermore,~\(y_\xi\) remains restored at each substage of
stage~\(s\). \qed
\end{enumerate}
\end{lemma}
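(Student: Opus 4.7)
The proof reduces to combining the restorability established in the preceding lemma with the preservation of the restored oracle segment throughout all substages of stage~\(s\). The plan is as follows. First, read off from \(\cS^{\ctr}(\beta)\) the leaf~\(\sigma\) with \(h(\sigma)=\xi\); the conditions \(\Cond_\beta^i(\xi)(s)=1\) for each \(i<\abs{\sigma}\) hold precisely because \(\cD_s(\beta)=\xi\), and the controller's restraint on \(\hat{C}\res s_\beta^{\ctr}\) for \(\hat{c}\neq c\) guarantees the \(\bigsame\) side-conditions needed to invoke Lemma~\ref{lem:induction step} inductively along the branch from the root of \(\cS^{\ctr}(\beta)\) down to~\(\sigma\).

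Using that induction, one obtains \((U_i,C)\)\nbd restorability of~\(\xi\) at stage~\(s\) in case~(1), or \((U_i,D)\)\nbd restorability in case~(2), for every \(i<\abs{\sigma}\). The actual restoration at the beginning of stage~\(s\) is then the simultaneous re-setting of \((E\oplus C_0\oplus\cdots)[s]\res \sB = y_\xi\res \sB\) for every use block \(\sB<y_\xi\); this is well defined because Definition~\ref{def:U restorable} classifies each such~\(\sB\) and prescribes either that no change is needed (clauses~\eqref{it:U restorable 1a} and~\eqref{it:U restorable 2a}) or that a harmless enumeration via the correct component~\(C\) or~\(D\) restores correctness (clauses~\eqref{it:U restorable 1b} and~\eqref{it:U restorable 2b}).

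In case~(1), \(\xi\) is fully \((U_i,C)\)\nbd restorable for each~\(i\), so the diagonalizing witness \(\dw(\xi)\) may safely enter~\(C\) and we activate \(\xi^\frown d\). In case~(2), \(\xi\) is only weakly restorable, because enumerating its witness into~\(D\) could injure a \(\Delta\)\nbd block; we instead turn the GREEN \(U_i\)\nbd outcome of \(\xi^{\sharp i}\) RED\@. This immediately blocks any further visit below that outcome during stage~\(s\), so no node that would enumerate a point into \(D\res y_\xi\) can act, which is why weak restorability suffices here.

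Finally, preservation of the restored computation throughout the remaining substages of stage~\(s\) is ensured by the combination of the slowdown condition (Definition~\ref{def:sd}\eqref{it:sd 2}), which forces every use block newly created at stage~\(s\) to sit above~\(y_\xi\), and the fact that the only pre-existing use blocks below~\(y_\xi\) that could be affected are precisely those classified by Definition~\ref{def:U restorable} and therefore already handled. The main obstacle, really a matter of careful bookkeeping, is verifying that \emph{every} potential source of injury within stage~\(s\) is neutralized by one of the restraint on \(\hat{C}\res s_\beta^{\ctr}\), the GREEN-to-RED flip in case~(2), or the slowdown condition; this is essentially the single-\(U\) analysis of Lemma~\ref{lem:1U main lemma} carried out simultaneously across all \(U_i\)\nbd components of the oracle.
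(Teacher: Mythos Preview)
Your proposal is correct and follows essentially the same approach as the paper: the paper presents this lemma as an immediate restatement of the preceding (unnamed) lemma, marked with a bare \qed, and that lemma in turn is proved by exactly the induction on \(\cS^{\ctr}(\beta)\) via Lemma~\ref{lem:induction step} that you describe. You have simply spelled out in greater detail what the paper leaves implicit, including the role of the controller's restraints in supplying the \(\bigsame\) hypotheses and the bookkeeping for preservation across substages; nothing in your outline departs from the paper's intended argument.
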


With Lemma~\ref{lem:2Utree}, Lemma~\ref{lem:2U block size}, Lemma~\ref
{lem:2U decision} and Lemma~\ref{lem:2U main lemma}, the following four
lemmas have essentially the same proofs as Lemma~\ref{lem:1U finite
initialization lemma}, Lemma~\ref{lem:1U R is satisfied}, Lemma~\ref{lem:1U S
is satisfied}, and Lemma~\ref{lem:1U G is satisfied}, respectively.

\begin{lemma}[Finite Initialization Lemma]
Let~\(p\) be the true path. Each node \(\alpha\in p\) is initialized finitely
often and~\(p\) is infinite. \qed
\end{lemma}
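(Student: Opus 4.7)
The plan is to follow the template of Lemma~\ref{lem:1U finite initialization lemma}, adapted to handle the new phenomena: multiple $U_i$-links at nested levels, the more elaborate outcome structure of an $R$-node, and the fact that a controller can now be created and destroyed through several layers of $U$-data. As in the single-$U$ case, I would argue by induction on $|\alpha|$ for $\alpha \subseteq p$, establishing simultaneously (a)~that $\alpha$ has a leftmost outcome visited infinitely often, and (b)~that this outcome is initialized only finitely often. The base case is the root, which is never initialized and is visited at every stage.

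For the inductive step with $\alpha$ an $S$-node, the obstacle is that at an $\alpha$-stage the construction may travel a $U_i$-link (established by some controller $\beta$ with $\alpha\subseteq \env^{<i}(\beta)^\frown\ldots$) instead of descending to $\alpha^\frown 0$. By Definition~\ref{def:2U strong data} each such link is destroyed either when it is traveled or when the responsible controller $\beta$ changes its decision or is initialized. A new $U_i$-link rerouting us away from $\alpha^\frown 0$ can only be set up by a controller that is not initialized after the last such rerouting, and by the slowdown condition a fresh $R$-node cannot become a controller at its first visit. Combining this with the inductive hypothesis that every $\beta\subsetneq \alpha$ is initialized finitely often, only finitely many such reroutings can occur after some stage, so $\alpha^\frown 0$ is the true outcome.

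For $\alpha$ an $R$-node, let $s_0$ be a stage after which $\alpha$ itself is not initialized. If $\alpha$ eventually becomes a controller at some $s_1>s_0$, then at every later $\alpha$-stage we have $\cD_s(\alpha)=\alpha$ by Lemma~\ref{lem:2U decision} and the controller-strategy restores $y_\alpha$ and activates $\alpha^\frown d$; hence the $d$-outcome is the true outcome. Otherwise $\alpha$ never becomes a (permanent) controller, in which case by construction $\alpha$ keeps encountering $\enc(\alpha,d)$, $\enc(\alpha,w)$ and its various $\enc(\alpha,U_i)$ outcomes, and the leftmost one visited infinitely often is well-defined because the set of outcomes at $\alpha$ is finite. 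This identifies the true outcome $\alpha^\frown o$ of $\alpha$.

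The final verification step is that $\alpha^\frown o$ is initialized only finitely often. Let $s_0$ be a stage after which no node strictly to the left of $\alpha^\frown o$ is ever visited (such $s_0$ exists by the choice of true outcome and the inductive hypothesis). After $s_0$, the only sources of initialization of $\alpha^\frown o$ are (i)~controllers $\gamma$ with $\gamma^\frown\ctr <_P \alpha^\frown o$, which see noise or threats and then initialize nodes to the right; and (ii)~traversal and destruction of $U_i$-links ending at such controllers. There are only finitely many such controllers, and each controller $\gamma$ can change its decision only finitely often because each of its $\Cond^i$-predicates switches value only finitely often (by Lemma~\ref{lem:2U block size} this is controlled by the d.c.e.\ bound on the relevant set $U_i$); similarly, $\gamma$ sees only finitely much noise. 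Hence $\alpha^\frown o$ is initialized finitely often, closing the induction, and $p$ is infinite because every $\alpha\subseteq p$ has a well-defined true outcome. The main delicate point I would dwell on in the full proof is the bookkeeping in the $S$-node case, ensuring that a cascade of nested $U_i$-links cannot perpetually reroute away from $\alpha^\frown 0$; this is exactly where the slowdown condition and the inductive hypothesis jointly guarantee finiteness.
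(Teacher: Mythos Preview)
Your proposal is correct and follows the same approach as the paper, which gives no separate proof here and simply defers to the single-$U$ argument of Lemma~\ref{lem:1U finite initialization lemma}. One minor quibble: the claim that $\cD_s(\alpha)=\alpha$ at every later $\alpha$-stage does not come from Lemma~\ref{lem:2U decision} (which only ensures $\cD_s$ is defined) but from the observation that if $\cD_s(\alpha)=\xi\subsetneq\alpha$ then the controller-strategy already redirects the construction at or above $\xi$, so $\alpha$ would not be visited at stage~$s$; the paper's single-$U$ proof makes the same assertion with the same level of informality.
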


\begin{lemma}
The \(R^e\)\nbd requirement is satisfied for each \(e\in \omega\). \qed
\end{lemma}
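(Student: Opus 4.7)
The plan is to mirror the single-\(U\) argument (Lemma~\ref{lem:1U R is satisfied}) essentially verbatim, using the ingredients that have now been generalized: Lemma~\ref{lem:2Utree} for the existence of a representative node on the true path, the Finite Initialization Lemma for stability, and Lemma~\ref{lem:2U main lemma} for the restorability of computations at a controller.

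First, let \(p\) denote the true path. By Lemma~\ref{lem:2Utree}(2), choose the longest \(R\)-node \(\alpha \subset p\) assigned to \(R^e\), say \(\alpha\) is an \(R_c(\Psi)\)-node. By the Finite Initialization Lemma, there is a stage \(s_0\) after which \(\alpha\) is never initialized, and to the left of the true outcome of \(\alpha\) no node is visited after \(s_0\). Either \(\alpha^\frown w \subset p\) or \(\alpha^\frown d \subset p\); I would split into these two cases.

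In the \(w\)-case: after \(s_0\) the only outcome of \(\alpha\) visited is \(w\), so in particular \(\dw(\alpha)\) stabilizes to some value \(x\). I claim \(\Psi^{E \oplus C_1 \oplus \cdots \oplus C_k}(x) \neq C(x)\): otherwise, at some later \(\alpha\)-stage \(s\), a computation \(y\) satisfying the slowdown condition (Definition~\ref{def:sd}) would be found — imposing the slowdown condition only delays this step finitely many times, so eventually it succeeds. But then \(\enc(\alpha, w)\) produces the \(\varnothing\)-data \(\cE^\varnothing(\alpha)\) and passes control to \(\enc(\alpha, U_{k-1})\), whereupon an outcome strictly to the left of \(w\) is visited, contradicting the choice of \(s_0\).

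In the \(d\)-case: the \(d\)-outcome is activated only by the controller strategy~\eqref{it:2U controller 3}(a), so for some stage \(s_1 > s_0\), \(\alpha\) becomes a controller at \(s_1\) with some \(\cE^{\ctr}(\alpha)\) and \(\cS^{\ctr}(\alpha)\), and thereafter \(\alpha\) never changes its decision (otherwise \(\alpha^\frown \ctr\) would be (re)visited infinitely often, contradicting that \(d\) is the true outcome, or there would be a change that re-routes traffic away from \(d\)). Thus \(\cD_s(\alpha) = \alpha\) for all \(s \geq s_1\) (since \(\alpha\) itself is the unique \(R_c\)-leaf of \(\cS^{\ctr}(\alpha)\) with the longest branch that can be chosen as the stable decision). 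By Lemma~\ref{lem:2U main lemma}(1), at each such stage \(s\), \(y_\alpha\) is restored, and it remains restored throughout stage \(s\). Hence at all sufficiently large stages the computation \(\Psi^{E \oplus C_1 \oplus \cdots \oplus C_k}(x) = 0\) is preserved, while \(x \in C\) has been enumerated by the \(\enc(\alpha,\ctr)\) step. Therefore \(\Psi^{E \oplus C_1 \oplus \cdots \oplus C_k}(x) = 0 \neq 1 = C(x)\), satisfying \(R_c(\Psi)\).

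The main obstacle — which is already resolved by the preceding lemmas — is the argument in the \(d\)-case that \(\cD_s(\alpha)\) settles on \(\alpha\) rather than oscillating: this rests on the fact that after \(s_0\), \(\alpha\) never sees noise from outside its own subtree (so it never changes decision) and on Lemma~\ref{lem:2U decision}, which guarantees \(\cD_s(\alpha)\) is defined. No new combinatorics are introduced here; the proof is a direct lift of the single-\(U\) argument using the fully general tools.
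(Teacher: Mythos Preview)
Your approach is exactly the paper's: the paper gives no separate proof for this lemma, stating only that it ``has essentially the same proof as Lemma~\ref{lem:1U R is satisfied}'' once Lemmas~\ref{lem:2Utree}, \ref{lem:2U decision}, and~\ref{lem:2U main lemma} are in hand, which is precisely the plan you outline.

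One small slip in your \(d\)-case: you write that \(\alpha\) itself becomes a controller and that \(\cD_s(\alpha)=\alpha\). The \(d\)-outcome of~\(\alpha\) is activated by the controller strategy~\eqref{it:2U controller 3}(a), but the controller doing the activating is some~\(\beta\) with \(\alpha\in\cE^{\ctr}(\beta)\) and \(\cD_s(\beta)=\alpha\); there is no reason this~\(\beta\) must equal~\(\alpha\). The fix is immediate: since the \(d\)-outcome is the true outcome, there is a controller~\(\beta\) that is never initialized after some stage and eventually has \(\cD_s(\beta)=\alpha\) permanently (noise is seen only finitely often), with~\(\alpha\) an \(R_c\)-node and hence not a problem for~\(\beta\); then Lemma~\ref{lem:2U main lemma}(1) applied to~\(\beta\) with \(\xi=\alpha\) gives that \(y_\alpha\) is restored at every stage, and the rest of your argument goes through unchanged.
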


\begin{lemma}
The \(S(U_i)\)\nbd requirement is satisfied for each \(i\in \omega\). \qed
\end{lemma}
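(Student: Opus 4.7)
The plan is to mirror the proof of Lemma~\ref{lem:1U S is satisfied} and apply the argument separately for each $i$. Let $p$ denote the true path. By Lemma~\ref{lem:2Utree}(1) applied to the $i$-th coordinate, I can fix an $S$-node $\alpha \subset p$ such that for every $S$-node $\beta$ with $\alpha \subseteq \beta \subset p$, we have $\seq(\beta)(i) = \seq(\alpha)(i) = (b_i,\xi_i)$ for some fixed $(b_i,\xi_i) \in m \times [T_\cL]$. It will then suffice to show that the $b_i$-th copy of every functional in $F_{\xi_i}(U_i)$ is total and correct, since by Definition~\ref{def:Fsigma} this will witness the $S(U_i)$-requirement via $\xi_i \in [T_\cL]$.

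The first step is to show that no $R$-node $\gamma$ with $\alpha \subseteq \gamma \subset p$ has $\gamma^\frown U_j \subset p$ for any $j \le i$. By inspection of Definition~\ref{def:2Utree}, visiting either a Type~I or a Type~II GREEN $U_j$-outcome resets all coordinates at positions $\ge j$ of $\seq$, and so for $j \le i$ this would alter coordinate $i$, contradicting the stabilization past $\alpha$. Combined with Definition~\ref{def:kill2U} (the set $\kl(\gamma,U_{\ge j})$ contains no $U_i$-functionals when $j > i$), this shows that no $R$-node past $\alpha$ on $p$ kills any functional occurring in the $b_i$-th copy of $F_{\xi_i}(U_i)$.

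The second step is to argue that every functional in $F_{\xi_i}(U_i)$ is maintained by some $S$-node on $p$ at or above $\alpha$. I will walk the true path from the root down to $\alpha$ and track how $\seq(\cdot)(i)$ evolves: every change of coordinate $i$ (triggered by an ancestor $R$-node visiting a $U_j$-outcome with $j \le i$) occurs at an $S$-node whose $\mt(\cdot)$ picks up precisely the newly required $U_i$-functionals via Cases~3 and~4 of Definition~\ref{def:maintain2U}. The cumulative effect along $p$ covers all of $F_{\xi_i}(U_i)$ in the $b_i$-th copy by the time we reach $\alpha$. By the Finite Initialization Lemma, there is a stage $s_0$ past which none of these $S$-nodes on $p$ is initialized, so the correcting strategy (together with Lemma~\ref{lem:2U block size} and the slowdown condition, which guarantee that use blocks eventually stabilize) keeps each such functional total and correct.

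Combining the two steps, past stage $s_0$ the $b_i$-th copy of $F_{\xi_i}(U_i)$ is satisfied, establishing the $S(U_i)$-requirement. The main obstacle will be the bookkeeping in the second step: verifying that every $\Gamma_\tau \in F_{\xi_i}(U_i)$ (respectively $\Delta_\tau$, whose oracle depends on the built sets~$C$) is covered by exactly one $S$-node's $\mt$-contribution along $p$, and that the interaction with the controller strategy and the $G$-requirement does not perpetually re-kill the relevant use blocks. Since functionals in $F_{\xi_i}(U_i)$ of $\Delta$-type carry oracles involving sets $C$ with $c \in \Ji(\cL)$, I will also need to confirm that the restraints imposed by the controllers past $s_0$ are consistent with the correcting strategy, but this follows from the first step together with the analysis already carried out for the preceding lemmas of Section~\ref{sec:2U verification}.
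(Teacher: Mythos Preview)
Your approach is correct and mirrors the paper's own treatment: the paper states this lemma with a bare \qed, explicitly declaring it to have ``essentially the same proof as Lemma~\ref{lem:1U S is satisfied}'', and your plan is precisely the coordinate-$i$ version of that argument (stabilize $\seq(\cdot)(i)$ along the true path via Lemma~\ref{lem:2Utree}, observe that no $U_j$-outcome with $j\le i$ is taken past the stabilization point so no relevant killing occurs, and let the correcting strategies finish the job).

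One small imprecision worth fixing in your second step: you say every functional in $F_{\xi_i}(U_i)$ is maintained by some \emph{$S$-node} on $p$ via Definition~\ref{def:maintain2U}, but the $\Delta$-functionals in $F_{\xi_i}(U_i)$ are maintained by \emph{$R$-nodes} along their Type~I $U_i$-outcomes (Definition~\ref{def:kill2U}, via $\mt(\beta,U_i)$), not by $S$-nodes. Your walk down the true path should therefore also collect, at each $R$-node $\gamma\subset\alpha$ with $\gamma^\frown U_i\subset p$ of Type~I, the unique $\Delta\in\mt(\gamma,U_i)$. With that amendment the bookkeeping goes through exactly as you outline.
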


\begin{lemma}
The \(G\)\nbd requirement is satisfied. \qed
\end{lemma}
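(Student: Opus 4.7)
The plan is to mirror the proof of Lemma~\ref{lem:1U G is satisfied} from the single-\(U\) setting, since the \(G\)-strategy used in the full construction is the same as the one described in Section~\ref{sec:G strategy}. The argument has three ingredients: first, that \(\Theta^{j(1)}(x)\) eventually becomes defined for every \(x\); second, that whenever it disagrees with \(K_s(x)\) the \(G\)-strategy is able to correct it; and third, that no stray enumeration by another requirement ever disturbs \(\Theta^{j(1)}(x)\) after \(K(x)\) has stabilized.

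First I would note that at stage \(s=x\), clause~\eqref{it:G strategy 1} of the \(G\)-strategy defines \(\Theta^{j(1)}(x)=K_s(x)\) with a fresh use \(\theta(x)+1\) that never subsequently changes; by Convention, \(\theta(x)\) is never placed inside any use block of a \(U_i\)-functional, so the \(\Theta\)-use cannot interfere with the rest of the construction. Next, whenever \(\Theta^{j(1)}(x)\downarrow\neq K_s(x)\) at some stage \(s\), clause~\eqref{it:G strategy 2} enumerates \(\theta(x)\) into the set \(\chi_s(\theta(x))\) determined in Section~\ref{sec:G strategy}, and then clause~\eqref{it:G strategy 3} redefines \(\Theta^{j(1)}(x)=K_s(x)\) with the same use. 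The crucial observation is that this enumeration is always legal: letting \(\beta_i\) be the highest-priority controller with \(\theta(x)<s_{\beta_i}^{\ctr}\), \(\beta_i\) is an \(R_{c_{\beta_i}}\)-node that restrains only the sets \(\hat{C}\res s_{\beta_i}^{\ctr}\) for \(\hat{c}\ne c_{\beta_i}\), while the chosen target \(\chi_s(\theta(x))=C_{\beta_i}\) is free; if no such \(\beta_i\) exists the target is \(E\) and no controller restrains \(\theta(x)\) either.

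Finally I would argue that \(K(x)\) changes at most finitely often, so \(\Theta^{j(1)}(x)\) is corrected only finitely many times and stabilizes to the correct value. The only subtle point is to rule out that some other requirement spontaneously enumerates or extracts \(\theta(x)\) from \(\chi_s(\theta(x))\): because \(\theta(x)\) is chosen fresh and lies outside every use block of every \(U_i\)-functional, no \(S\)- or \(R\)-node correcting or killing a functional ever touches \(\theta(x)\); and witnesses of \(R\)-requirements are likewise chosen to avoid the \(\Theta\)-uses. Hence the only agent ever enumerating \(\theta(x)\) into \(j(1)\) is the \(G\)-strategy itself, and its enumerations are made only in response to changes of \(K(x)\). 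Combining these observations, \(\Theta^{j(1)}(x)=K(x)\) for all sufficiently large \(s\), so \(K=\Theta^{j(1)}\).

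The main potential obstacle is the verification in the second paragraph that the enumeration of \(\theta(x)\) into \(\chi_s(\theta(x))\) is never blocked by an \(E\)-restraint and that the controllers of lower priority than \(\beta_i\), which do get initialized, do not cause cascading damage; this is handled exactly as in Section~\ref{sec:G strategy}, using that \(\theta(x)<s_{\beta_i}^{\ctr}\) forces \(\beta_i\) to see noise at stage \(s\) and hence to initialize everything strictly below it in priority before the correction is carried out.
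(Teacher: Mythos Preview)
Your proposal is correct and follows the same approach as the paper, which simply refers back to Lemma~\ref{lem:1U G is satisfied} without further argument; in fact you supply considerably more detail than either proof in the paper, explicitly unpacking why the enumeration into \(\chi_s(\theta(x))\) respects all controller restraints and why no other strategy ever touches the point \(\theta(x)\).
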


\section{Final remarks}
Lemma~\ref{lem:2U block size} remains true when our \(U\)\nbd sets are
\(\omega\)\nbd c.e.\ sets. In fact, we can list all \(\omega\)\nbd c.e.\ sets
(up to Turing degree) as a subsequence of the sets \(\{U_e\}_{e\in \omega}\)
where \(U_e(x)=\lim_ {y\to \infty} \Phi_e(x,y)\). \(U\) is a valid \(\omega\)
\nbd c.e.\ set if \(\abs{\{y\mid \Phi_e(x,y-1)\neq \Phi_e(x,y)\}}\le x\); and
invalid otherwise. Our construction is almost unaffected --- whenever we
detect for some~\(x\) that \(\abs{\{y\mid \Phi_e(x,y-1)\neq \Phi_e(x,y)\}} >
x\), then~\(S(U_e)\) wins by a finite outcome (we add this additional outcome
to an \(S\)\nbd node), claiming that it is an invalid requirement and does
not need to be satisfied. This yields

\begin{theorem}
If~\(\cL\) is a finite distributive lattice, then~\(\cL\) can be embedded
into \(\omega\)\nbd c.e.\ degrees as a final segment. \qed
\end{theorem}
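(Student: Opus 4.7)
The plan is to carry out essentially the same construction as in the proof of Theorem~\ref{thm:bool}, but with the sets~\(U\) in our \(S(U)\)-requirements ranging over a uniform enumeration of all \(\omega\)-c.e.\ sets rather than d.c.e.\ sets. Concretely, I would fix the effective list \(\{U_e\}_{e\in\omega}\) with \(U_e(x) = \lim_{y\to\infty}\Phi_e(x,y)\) suggested in the excerpt, and for each~\(e\) replace the requirement \(S(U_e)\) by the modified version
\[
S(U_e) : \bigl(\exists x\,\abs{\{y\mid\Phi_e(x,y-1)\neq\Phi_e(x,y)\}}>x\bigr)\;\vee\;\bigl(\exists\eta\in[T_\cL]\,F_\eta(U_e)\bigr).
\]
The left disjunct is handled by adding a leftmost ``invalid'' outcome to the \(S\)-node for~\(U_e\): as soon as we witness an illegal mind-change count, we visit this outcome forever and thereafter ignore~\(U_e\) entirely; this disjunct declares \(U_e\) not to represent a genuine \(\omega\)-c.e.\ set and so nothing further is required. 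The right disjunct is handled exactly as before.

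Next I would check that every ingredient of the construction and verification in Section~\ref{sec:2U} still goes through with~\(U_e\) a valid \(\omega\)-c.e.\ set. The definitions of \(T_\cL\), the \(R\)- and \(S\)-requirements, the priority tree, the \(*\)/\(\sharp\) operators, \(\varnothing\)-, weak and strong \(U_i\)-data, and controllers make no reference to the number of changes in~\(U\) and so transfer verbatim. The correcting strategies for \(\Gamma\)- and \(\Delta\)-functionals, the killing strategy, the slowdown condition, and the management of controllers and links are all formulated in terms of the predicates \(\same\), \(\diff\), \(\bigsame\), which are equally sensible for \(\omega\)-c.e.\ oracles. The verification lemmas (restorability, the decision map, the \emph{Strong Environment Lemma}~\ref{lem:2U strong env}, the Finite Initialization Lemma, and satisfaction of the \(R\)-, \(S\)-, and \(G\)-requirements) are purely combinatorial and do not use anywhere that a given position of~\(U\) changes at most twice.

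The main obstacle is the Block Size Lemma, because its proof (both in the 1-\(U\) and 2-\(U\) versions) relied on the fact that for a d.c.e.\ set~\(U\), the number of changes of \(U\restriction a\) is bounded by a computable function of~\(a\). I would verify that the analogous bound is available here: for a (presumptively valid) \(\omega\)-c.e.\ set \(U_e\) in our listing, the definition forces \(\abs{\{y\mid\Phi_e(x,y-1)\neq\Phi_e(x,y)\}}\le x\) for every~\(x\), so the total number of stages~\(s\) with \(\diff(U_e,a,s-1,s)\) is at most \(\sum_{x<a} x<a^2\), a computable function of~\(a\). When finitely many \(U_{e_0},\dots,U_{e_{k-1}}\) are relevant to a controller, the same argument gives a computable bound \(p(a,k)\) exactly as in Lemma~\ref{lem:2U block size}, so I can still choose each fresh use block \([a,a+p(a,a))\) sufficiently large, where \(a\) also majorises \(k\) by the convention used there. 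Should we fall into the left disjunct above for some~\(U_e\), the size bound we needed for that~\(U_e\) was only provisional and the associated requirement is now satisfied by the invalid outcome, so no problem arises.

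With the Block Size Lemma secured, every other lemma in Section~\ref{sec:2U verification} carries over unchanged, so the \(G\)-, \(R\)-, and surviving \(S(U_e)\)-requirements are all satisfied along the true path. By Lemma~\ref{lem:req_suffice} (whose proof uses only the statement of the \(S\)- and \(R\)-requirements, not any particular class of oracle sets), the map~\(\bj\) constructed in this way embeds~\(\cL\) onto a final segment of the \(\omega\)-c.e.\ Turing degrees, completing the proof.
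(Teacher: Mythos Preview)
Your proposal is correct and follows essentially the same approach as the paper: the paper's entire argument for this theorem is the short paragraph preceding it, which says to list the \(\omega\)-c.e.\ sets via \(U_e(x)=\lim_y\Phi_e(x,y)\), add a finitary ``invalid'' outcome to each \(S\)-node (triggered when some~\(x\) witnesses more than~\(x\) mind-changes), and observe that Lemma~\ref{lem:2U block size} remains true because the mind-change count is computably bounded. Your write-up is in fact more detailed than the paper's, explicitly computing the \(\sum_{x<a}x<a^2\) bound and checking that the remaining verification lemmas are insensitive to the d.c.e.\ versus \(\omega\)-c.e.\ distinction.
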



\begin{thebibliography}{CHLLS91}
\bibitem[AHLN18]{AHLN18}
   Adaricheva, Kira V.; Hyndman, Jennifer P.; Lempp, Steffen; Nation,
   James B.,
   \emph{Interval dismantlable lattices},
   Order \textbf{35}~(2018), 133–-137.
\bibitem[Ar88]{Ar88}
   Arslanov, Marat M.,
   \emph{The lattice of the degrees below $\oh'$},
   Izv. Vyssh. Uchebn. Zaved. Mat., 1988, no.~7, 27--33.
\bibitem[Ar00]{Ar00}
   Arslanov, Marat M.,
   \emph{Open questions about the $n$-c.e.\ degrees},
   in: \emph{Computability theory and its applications (Boulder, CO, 1999)},
   Contemp. Math. 257, Amer. Math. Soc., Providence, RI, 2000.
\bibitem[CSS12]{CSS12}
  Cai, Mingzhong; Shore, Richard A. and Slaman, Theodore A.,
  \emph{The $n$-r.e. degrees: undecidability and $\Sigma_1$-substructures},
  J. Math. Log. \textbf{12}~(2012), paper 1250005.
\bibitem[Co71]{Co71}
   Cooper, S. Barry,
   \emph{Degrees of Unsolvability},
   Ph.D. Thesis, Leicester University, Leicester, England, 1971.
\bibitem[CHLLS91]{CHLLS91}
   Cooper, S. Barry; Harrington, Leo; Lachlan, Alistair H.; Lempp, Steffen;
   Soare, Robert I.,
   \emph{The d.r.e.\ degrees are not dense},
   Ann. Pure Appl. Logic \textbf{55}~(1991), 125--151.
\bibitem[CLW89]{CLW89}
   Cooper, S. Barry; Lempp, Steffen; Watson, Philip,
   \emph{Weak density and cupping in the d-r.e.\ degrees},
   Israel J. Math. \textbf{67}~(1989), 137--152.
\bibitem[Do89]{Do89}
   Downey, Rodney G.,
   \emph{D.r.e.\ degrees and the nondiamond theorem},
   Bull. London Math. Soc. \textbf{21}~(1989), 43--50.
\bibitem[Er68a]{Er68a}
   Ershov, Yuri L.,
   \emph{A certain hierarchy of sets I},
   Algebra and Logic \textbf{7}~no.~1 (1968), 47--74
   (English translation pp.~25--43).
\bibitem[Er68b]{Er68b}
   Ershov, Yuri L.,
   \emph{A certain hierarchy of sets II},
   Algebra and Logic \textbf{7}~no.~4 (1968), 15--47
   (English translation pp.~212--232).
\bibitem[Er70]{Er70}
   Ershov, Yuri L.,
   \emph{A certain hierarchy of sets III},
   Algebra and Logic \textbf{9}~no.~1 (1970), 34--51
   (English translation pp.~20--31).
\bibitem[ET63]{ET63}
   Ershov, Yurii L.; Taitslin, Mikhail A.,
   \emph{Undecidability of certain theories},
   Algebra i Logika Sem. \textbf{2}~no.~5 (1963), 37--41.
\bibitem[Gr11]{Gr11}
   Gr\"atzer, George,
   \emph{Lattice Theory: Foundation},
    Birkh\"auser/Springer Basel AG, Basel, 2011.
\bibitem[La68]{La68}
   Lachlan, Alistair H.,
   \emph{Distributive initial segments of the degrees of unsolvability},
   Z. Math. Logik Grundlagen Math. \textbf{14}~(1968), 457--472.
\bibitem[LeLN]{LeLN}
   Lempp, Steffen,
   \emph{Lecture Notes on Priority Arguments},
   preprint available on line at web site
   \url{http://www.math.wisc.edu/~lempp/papers/prio.pdf}.
\bibitem[Le83]{Le83}
   Lerman, Manuel,
   \emph{Degrees of unsolvability. Local and global theory},
   Perspect. Math. Logic, Springer-Verlag, Berlin, 1983, xiii+307 pp.
\bibitem[Li17]{Li17}
   Liu, Yong,
   \emph{The structure of d.r.e.\ degrees},
   Ph.D. Thesis, National University of Singapore, Singapore, 2017.
\bibitem[Pu65]{Pu65}
   Putnam, Hilary W.,
   \emph{Trial and error predicates and the solution to a problem of
   Mostowski},
   J. Symbolic Logic \textbf{30}~(1965), 49--57.
\bibitem[So87]{So87}
   Soare, Robert I.,
   \emph{Recursively enumerable sets and degrees},
   Perspectives in Mathematical Logic, Springer-Verlag, Berlin, 1987.
\end{thebibliography}
\end{document}